\newtheorem{MainThm}{Theorem}
\theoremstyle{plain}
\newtheorem{thm}[equation]{Theorem}
\newtheorem{cor}[equation]{Corollary}
\newtheorem{lem}[equation]{Lemma}
\newtheorem{prop}[equation]{Proposition}
\theoremstyle{definition}
\newtheorem{definition}[equation]{Definition}
\newtheorem{definition-proposition}[equation]{Definition/Proposition}
\newtheorem{assumptions}[equation]{Assumptions}
\newtheorem{defn}[equation]{Definition}
\theoremstyle{remark}
\newtheorem{remark}[equation]{Remark}
\newtheorem{rem}[equation]{Remark}
\numberwithin{equation}{subsection}
\newcommand{\bC}{\mathbb{C}}
\newcommand{\bF}{\mathbb{F}}
\newcommand{\bI}{\mathbb{I}}
\newcommand{\bK}{\mathbb{K}}
\newcommand{\bN}{\mathbb{N}}
\newcommand{\bP}{\mathbb{P}}
\newcommand{\bQ}{\mathbb{Q}}
\newcommand{\bR}{\mathbb{R}}
\newcommand{\bZ}{\mathbb{Z}}
\newcommand{\gB}{\bold{B}}
\newcommand{\gD}{\bold{D}}
\newcommand{\gR}{\bold{R}}
\newcommand{\gT}{\bold{T}}
\newcommand{\gX}{\bold{X}}
\newcommand{\gp}{\mathbf{p}}
\newcommand{\gb}{\mathbf{b}}
\newcommand{\cC}{\mathcal{C}}
\newcommand{\cG}{\mathrm{G}}
\newcommand{\cR}{\mathcal{R}}
\newcommand{\fb}{\mathfrak{b}}
\newcommand{\fk}{\mathfrak{k}}
\newcommand{\abs}{\operatorname{abs}}
\newcommand{\ahat}{\hat{\mathscr{A}}}
\newcommand{\Ahat}{\hat{\mathscr{A}}}
\newcommand{\Aut}{\operatorname{Aut}}
\newcommand{\bott}{\operatorname{bott}}
\newcommand{\Bun}{\mathrm{Bun}}
\newcommand{\Cl}{\mathbf{Cl}}
\newcommand{\Cyl}{\mathrm{Cyl}}
\newcommand{\dtor}{\mathrm{dtor}}
\newcommand{\Dir}{\slashed{\mathfrak{D}}}
\newcommand{\Fred}{\operatorname{Fred}}
\newcommand{\GL}{\mathrm{GL}}
\newcommand{\hofib}{\operatorname{hofib}}
\newcommand{\im}{\operatorname{Im}}
\newcommand{\indx}[1]{\operatorname{Dir}(#1)}
\newcommand{\ind}[1]{\operatorname{ind}(#1)}
\newcommand{\id}{\operatorname{id}}
\newcommand{\inddiff}{\operatorname{inddiff}} 
\newcommand{\inddiffaps}{\operatorname{inddiff}^{\mathrm{GL}}} 
\newcommand{\bas}{\mathrm{bas}}
\newcommand{\KO}{\mathrm{KO}}
\newcommand{\KU}{\mathrm{KU}}
\newcommand{\ko}{\mathrm{ko}}
\newcommand{\ku}{\mathrm{ku}}
\newcommand{\Kom}{\mathbf{Kom}}
\newcommand{\Lin}{\mathbf{Lin}}
\newcommand{\loopinf}[1]{\Omega^{\infty #1}}
\newcommand{\MT}{\mathrm{MT}}
\newcommand{\MTSpin}{\mathrm{MTSpin}}
\newcommand{\MSpin}{\mathrm{MSpin}}
\newcommand{\MTSO}{\mathrm{MTSO}}
\newcommand{\map}{\operatorname{map}}
\newcommand{\normalize}[1]{\frac{#1}{(1+{#1}^{2})^{1/2}}}
\newcommand{\smb}{\mathrm{smb}}
\newcommand{\Path}{\mathcal{P}}
\newcommand{\ph}{\mathrm{ph}}
\newcommand{\Riem}{\cR}
\newcommand{\res}{\mathrm{res}}
\newcommand{\Spin}{\mathrm{Spin}}
\newcommand{\supp}{\operatorname{supp}}
\newcommand{\Sym}{\mathrm{Sym}}
\newcommand{\spinor}{\slashed{\mathfrak{S}}}
\newcommand{\scal}{\mathrm{scal}}
\newcommand{\trg}{\operatorname{trg}}
\newcommand{\Th}{\mathrm{Th}}
\newcommand{\tor}{\mathrm{tor}}
\newcommand\lra{\longrightarrow}
\newcommand\Diff{\mathrm{Diff}}
\newcommand\Emb{\mathrm{Emb}}
\newcommand\hocolim{\operatorname{hocolim}}
\newcommand\colim{\operatorname{colim}}
\newcommand{\round}{\circ}
\newcommand{\ch}{\mathrm{ch}}
\newcommand\hcoker{/\!\!/}
\newcommand{\hemi}{\mathrm{hemi}}
\newcommand{\inter}{\mathrm{int}}
\newcommand{\Sing}{\mathrm{Sing}}
\newcommand{\Gr}{\mathrm{Gr}}
\newcommand{\CircNum}[1]{\ooalign{\hfil\raise .00ex\hbox{\scriptsize #1}\hfil\crcr\mathhexbox20D}}
\newcommand{\hq}{/\!\!/}
\newcommand{\SE}{\mathrm{SE}}
\newcommand{\kothom}[1]{\lambda_{-#1}}
\newcommand\mnote[1]{\marginpar{\tiny #1}}
\title[Infinite loop spaces and positive scalar curvature]{Infinite loop spaces\\ and positive scalar curvature}
\author{Boris Botvinnik}
\email{botvinn@math.uoregon.edu}
\address{
Department of Mathematics\\
University of Oregon\\
Eugene OR\\
97403, U.S.A.
}
\author{Johannes Ebert}
\thanks{J. Ebert was partially supported by the SFB 878}
\email{johannes.ebert@uni-muenster.de}
\address{
Mathematisches Institut der Westf{\"a}lischen Wilhelms-Universit{\"a}t M{\"u}nster\\
Einsteinstr. 62\\
DE-48149 M{\"u}nster\\
Germany
}
\author{Oscar Randal-Williams}
\thanks{O.\ Randal-Williams acknowledges Herchel Smith Fellowship support.}
\email{o.randal-williams@dpmms.cam.ac.uk}
\address{
Centre for Mathematical Sciences\\
Wilberforce Road\\
Cambridge CB3 0WB\\
UK}
\date{\today}
\keywords{Positive scalar curvature, Gromov-Lawson surgery, cobordism categories, diffeomorphism groups, Madsen-Weiss type theorems, acyclic maps, secondary index invariant, homotopy groups of Thom spectra}
\subjclass[2010]{19D06, 19K56, 53C27, 55P47, 55R35, 55S35, 57R22, 57R65, 57R90, 58D17, 58D05, 58J20}
\begin{document}

\begin{abstract}
We study the homotopy type of the space of metrics of positive
scalar curvature on high-dimensional compact spin manifolds. Hitchin used the fact that there are no harmonic spinors on a manifold with positive scalar curvature to construct a secondary index map from the space of positive scalar metrics to a suitable space from the real $K$-theory spectrum. Our main results concern the nontriviality of this map.
We prove that for $2n \geq 6$, the natural $KO$-orientation from the infinite loop space of the Madsen--Tillmann--Weiss spectrum factors (up to homotopy) through the space of metrics of positive scalar curvature on any $2n$-dimensional spin manifold. For manifolds of odd dimension $2n+1 \geq 7$, we prove the existence of a similar factorisation.

When combined with computational methods from homotopy theory, these results have strong implications. For example, the secondary index map is surjective on all rational homotopy groups. We also present more refined calculations concerning integral homotopy groups.

To prove our results we use three major sets of technical tools and
results. The first set of tools comes from Riemannian geometry: we use a
parameterised version of the Gromov--Lawson surgery technique which allows
us to apply homotopy-theoretic techniques to spaces of metrics of positive scalar curvature. Secondly, we relate Hitchin's secondary index to several other index-theoretical results, such as the Atiyah--Singer family index theorem, the additivity theorem for indices on noncompact manifolds and the spectral flow index theorem. 
Finally, we use the results and tools developed recently in the study of moduli spaces of manifolds and cobordism categories. The key new ingredient we use in this paper is the high-dimensional analogue of the Madsen--Weiss theorem, proven by Galatius and the third named author.
\end{abstract}

\maketitle

\clearpage

\tableofcontents

\clearpage
\section{Introduction}

\subsection{Statement of results}

Among the several curvature conditions one can put on a Riemannian metric, the condition of \emph{positive scalar curvature} (hereafter, \emph{psc}) has the richest connection to topology, and in particular to cobordism theory. This strong link is provided by two fundamental facts.

The first comes from index theory. Let $\Dir_g$ be the Atiyah--Singer Dirac
operator on a Riemannian spin manifold $(W,g)$ of dimension $d$. The scalar curvature appears in the remainder term in the Weitzenb{\"o}ck formula (or, more appropriately, Schr{\"o}dinger--Lichnerowicz formula, cf.\ \cite{Schroed} and \cite{Lich}). 
This forces the Dirac operator $\Dir_g$ to be invertible provided that $g$ has positive scalar
curvature, and hence forces the index of $\Dir_g$ to vanish. The index of $\Dir_g$ is an element of the real $K$-theory group $KO^{-d}(*)=KO_d (*)$ of a point, due to the Clifford symmetries of $\Dir_g$. 
The Atiyah--Singer index theorem in turn equates $\ind{\Dir_g}$ with the
$\hat{\mathscr{A}}$-invariant of $W$, an element in $KO^{-d}(*)$ which can be defined in homotopy-theoretic terms through the Pontrjagin--Thom construction. 
Thus if $W$ has a psc metric, then $\hat{\mathscr{A}}(W)=0$.

The second fundamental fact, due to Gromov and Lawson \cite{GL2}, is that if a
manifold $W$ with a psc metric is altered by a suitable surgery to a
manifold $W'$, then $W'$ again carries a psc metric. These results
interact extremely well provided the manifolds are spin, simply-connected and of dimension at least five. Under these circumstances, the question of whether $W$ admits a psc metric depends only on the spin cobordism class of $W$, which reduces it to a problem in stable homotopy theory. Stolz \cite{Stolz} managed to solve this problem, and thereby showed that such manifolds admit a psc metric precisely if their $\hat{\mathscr{A}}$-invariant vanishes. Much work has been undertaken to relax these three hypotheses; see \cite{Rosenberg} and \cite{SchickICM} for surveys.

Rather than the existence question, we are interested in understanding the
\emph{homotopy type} of the space $\Riem^+ (W)$ of all psc metrics on a manifold
$W$. Our method requires to consider manifolds $W$ with boundary $\partial W$. In that case, we choose a collar $[-\epsilon,0] \times \partial W \subset W$ and consider the space $\Riem^+ (W)_h$ of all psc metrics on $W$ which are of the form $dt^2 + h$ on this collar, for a fixed psc metric $h$ on $\partial W$.

To state our results, we recall Hitchin's definition of a secondary
index-theoretic invariant for psc metrics \cite{HitchinSpin}, which we shall call the \emph{index difference}.
Ignoring some technical details for now, the definition is as follows.
For a closed spin $d$-manifold $W$ choose a basepoint psc metric $g_0 \in \cR^+(W)$, so for another psc metric $g$ we can form the path of metrics $g_t = (1-t) \cdot g + t \cdot g_0$ for $t \in [0,1]$. There is an associated path of Dirac operators in the space $\Fred^d$ of $\Cl^d$-linear self-adjoint odd Fredholm operators on a Hilbert space, and it starts and ends in the subspace of invertible operators, which is contractible. As the space $\Fred^d$ represents $KO^{-d}(-)$, we obtain an element $\inddiff_{g_0} (g) \in KO^{-d}([0,1], \{0,1\}) = KO^{-d-1}(*) = KO_{d+1}(*)$. This construction generalises to manifolds with boundary and to families, and gives a well-defined homotopy class of maps
$$\inddiff_{g_0} : \Riem^+ (W)_h \lra \loopinf{+d+1}\KO$$
to the infinite loop space which represents real $K$-theory. The main theorems of this paper are stated as Theorems \ref{factorization-theorem:even-case} and \ref{factorization-theorem:odd-case}. They involve the construction of maps $\rho: X \to \Riem^+(W)_h$ from certain infinite loop spaces $X$, and the identification of the composition $\inddiff_{g_0} \circ \rho$ with a well-known infinite loop map. Using standard methods from homotopy theory, one can then derive consequences concerning the induced map on homotopy groups,
\begin{equation*}
A_{k}(W,g_0): \pi_k (\Riem^+ (W)_h, g_0) \lra KO_{k+d+1}(*) = 
\begin{cases}
\bZ & k+d+1 \equiv 0 \pmod 4\\
\bZ/2 & k+d+1 \equiv 1 ,2 \pmod 8\\
0 & \text{else},
\end{cases}
\end{equation*}
and we state these consequences first. 

\begin{MainThm}\label{thm:intro:htpyGps}
Let $W$ be a spin manifold of dimension $d \geq 6$, and fix $h \in \Riem^+ (\partial W)$ and $g_0 \in \Riem^+ (W)_h$. If $k=4s-d-1\geq 0$ then the map
$$A_{k}(W,g_0) \otimes \bQ: \pi_{k} (\Riem^+ (W)_h, g_0) \otimes \bQ \lra KO_{4s}(*) \otimes \bQ = \bQ$$
is surjective. If $e=1,2$ and $k = 8s+e-d-1$, then the map
$$A_{k}(W,g_0): \pi_{k} (\Riem^+ (W)_h, g_0) \lra KO_{8s+e}(*) = \bZ/2$$
is surjective. In other words, the map $A_{k}(W, g_0)$ is nontrivial if $k\geq 0$, $d \geq 6$ and  the target is nontrivial.
\end{MainThm}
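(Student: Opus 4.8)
The plan is to deduce Theorem~\ref{thm:intro:htpyGps} formally from the factorisation Theorems~\ref{factorization-theorem:even-case} and~\ref{factorization-theorem:odd-case}, together with standard homotopy theory of $\KO$ and of the Atiyah--Bott--Shapiro orientation. Write $\mathbf{Y}$ for the Madsen--Tillmann--Weiss type spectrum supplied by the relevant factorisation theorem (a shift of $\MTSpin(2n)$ when $d=2n$, and the analogous spectrum when $d=2n+1$), so that one is handed a map $\rho\colon\loopinf{}\mathbf{Y}\to\Riem^+(W)_h$ and a homotopy $\inddiff_{g_0}\circ\rho\simeq\loopinf{}(\ell)$, where $\ell\colon\mathbf{Y}\to\Sigma^{-d-1}\KO$ is the canonical $\KO$-orientation, i.e.\ the $\KO$-theory Thom class of minus the universal spin bundle (the fibrewise Dirac index). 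As $A_k(W,g_0)$ is by construction the map $\inddiff_{g_0}$ induces on $\pi_k$, passing to homotopy groups gives $A_k(W,g_0)\circ\rho_*=\ell_*\colon\pi_k\mathbf{Y}\to KO_{k+d+1}$; hence it suffices to show that $\ell_*$ alone has the asserted image --- no control over $\rho_*$ is needed.

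\emph{The rational statement.} Here $k+d+1=4s$. Rationally $\pi_k\mathbf{Y}\otimes\bQ\cong H_k(\mathbf{Y};\bQ)$, which under the Thom isomorphism is a degree shift of $H_*(B\Spin(2n);\bQ)$ (resp.\ $H_*(B\Spin(2n+1);\bQ)$); in the relevant degree it is the graded dual of $H^{4s}(B\Spin(m);\bQ)$, a nonzero space (it contains $p_1^s$). Under the Thom isomorphism, the rationalisation of $\ell$ is recorded by the multiplicative sequence attached to the power series $\prod_i\frac{\sinh(x_i/2)}{x_i/2}$ --- the Pontryagin character of the $\KO$-Thom class of minus the universal bundle --- whose degree-$4s$ component is a class $\Phi_s\in H^{4s}(B\Spin(m);\bQ)$, and $\ell_*\otimes\bQ$ in degree $k$ is the pairing with $\Phi_s$. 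Pulling back along a maximal torus $BT\hookrightarrow B\Spin(m)$ shows $\Phi_s\neq 0$: the coefficient of $x_1^{2s}$ in $\prod_i\frac{\sinh(x_i/2)}{x_i/2}$ equals $\frac{4^{-s}}{(2s+1)!}\neq 0$, and $H^*(BG;\bQ)\hookrightarrow H^*(BT;\bQ)$. Since the rational cohomology pairing of $\mathbf{Y}$ is perfect, $\ell_*\otimes\bQ$ is surjective onto $KO_{4s}\otimes\bQ=\bQ$.

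\emph{The $\bZ/2$ statements.} Now $k+d+1=8s+e$ with $e\in\{1,2\}$ and $KO_{8s+e}=\bZ/2$. Here the point is that $\ell$ is a map of spectra, so $\ell_*$ is $\pi_*\mathbb{S}$-linear and in particular commutes with multiplication by $\eta\in\pi_1\mathbb{S}$. From $KO_*=\bZ[\eta,\alpha,\beta^{\pm 1}]/(2\eta,\eta^3,\alpha\eta,\alpha^2-4\beta)$ one reads off that $\cdot\,\eta\colon KO_{8s}\to KO_{8s+1}$ and $\cdot\,\eta\colon KO_{8s+1}\to KO_{8s+2}$ are surjective. Thus it is enough to produce $x\in\pi_{k-e}\mathbf{Y}$ with $\ell_*(x)\notin 2\cdot KO_{8s}$, for then $\ell_*(\eta^e x)=\eta^e\ell_*(x)$ generates $KO_{8s+e}$. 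Factoring $\ell$ through the Atiyah--Bott--Shapiro orientation as $\mathbf{Y}\xrightarrow{\tau}\Sigma^{-d-1}\MSpin\xrightarrow{\mathrm{ABS}}\Sigma^{-d-1}\KO$, and using that $\mathrm{ABS}\colon\Omega^{\Spin}_j\to KO_j$ is surjective for every $j\geq 0$ (classical), this reduces to exhibiting a class in $\pi_{k-e}\mathbf{Y}$ whose image under $\tau_*$ in $\Omega^{\Spin}_{8s}$ has odd $\ahat$-genus --- for instance one represented by a bundle of spin $2n$-manifolds over a sphere whose total space is spin-bordant to a power of the Bott manifold.

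\emph{Main obstacle.} The last realisation step is the crux. When $8s$ is small relative to $2n$ the forgetful map $\tau$ is highly enough connected that $\tau_*$ is already surjective in the relevant degree and one applies surjectivity of $\ahat\colon\Omega^{\Spin}_{8s}\to\bZ$ directly; but the hypothesis $k=8s+e-d-1\geq 0$ allows $8s$ to be arbitrarily large, where $\tau_*$ need not be surjective, and one must argue geometrically --- constructing the required bundle from the homological richness of the moduli spaces $B\Diff(\#^g(S^n\times S^n))$ for $g\gg 0$ provided by the Galatius--Randal-Williams theorem (the geometric core of the paper), together with the existence of spin manifolds of odd $\ahat$-genus in every dimension divisible by $8$. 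Finally, for $d=2n+1$ one runs the identical argument with Theorem~\ref{factorization-theorem:odd-case} replacing Theorem~\ref{factorization-theorem:even-case}; the one subtlety is that one needs the odd-dimensional factorisation at the level of spectra, so that the $\eta$-linearity above applies --- which is why the odd case requires slightly more care.
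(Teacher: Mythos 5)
Your reduction of Theorem~\ref{thm:intro:htpyGps} to surjectivity of $(\kothom{d})_*$ on homotopy groups is exactly right, and the rational part goes through essentially as in the paper (modulo a small slip: the Pontryagin character of $c(\lambda_{-\gamma_{2n}})$ is $\pm\hat{A}(\gamma_{2n})\,u_{-2n}$, i.e.\ the multiplicative sequence of $\frac{x/2}{\sinh(x/2)}$, not its reciprocal; the coefficients are then nonzero multiples of $(2^{2m}-2)B_m/(2m)!$ rather than $4^{-s}/(2s+1)!$, but nonvanishing still holds, so the conclusion is unaffected). The paper reduces to $d=2$ via the graded product $\MTSpin(d)\wedge\MTSpin(e)\to\MTSpin(d+e)$ before doing the Chern character computation, whereas you work directly with $B\Spin(m)$ and restrict to a torus; both routes are fine.

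The $\bZ/2$ argument, however, has a genuine gap which you have correctly diagnosed but not resolved, and it is fatal for large $k$. You reduce the claim to exhibiting $x\in\pi_{k-e}\MTSpin(d)$ with $\ell_*(x)\in KO_{8s}$ \emph{odd}, so that $\ell_*(\eta^e x)$ generates $\bZ/2$. But that is exactly the integral surjectivity question addressed by the paper's Theorem~\ref{integral-surjectivity}, which is only established in the range $k\leq d$ --- for good reason: once $8s\gg d$, a class in $\pi_{8s-d}(\MTSpin(d))$ is represented by an $8s$-manifold whose tangent bundle splits off an $(8s-d)$-dimensional trivial summand, a constraint which is very restrictive and not obviously compatible with odd $\hat{A}$-genus. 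Your ``main obstacle'' paragraph gestures at resolving this via the Galatius--Randal-Williams theorem, but the homological information it gives about $B\Diff$ does not translate into control of $\hat{A}$-genera of \emph{total spaces} of bundles, which is what you would need. The paper sidesteps all of this with a much shorter idea that you missed: the class $e_d=[\ast,\bR^d,\mathrm{id}]\in\pi_{-d}\MTSpin(d)$ realises a map of spectra $\mathrm{S}^{-d}\to\MTSpin(d)$, and the composite $\mathrm{S}^{-d}\xrightarrow{e_d}\MTSpin(d)\xrightarrow{\kothom{d}}\Sigma^{-d}\ko$ is the $d$-fold desuspension of the \emph{unit} of $\ko$. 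Surjectivity of the unit $\pi_i(\mathrm{S}^0)\to\pi_i(\ko)$ for $i\equiv 1,2\pmod 8$ is Adams' theorem on the image of $J$, and this holds for all $i\geq 0$ with no range restriction. So one never needs an integral lift at all; multiplying $e_d$ by the relevant stable homotopy class and applying $\kothom{d}$ does the job in every degree. You should replace your $\eta$-multiplication argument with this observation.
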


Theorem \ref{thm:intro:htpyGps} supersedes, to our knowledge, all previous results in the literature concerning the nontriviality of the maps $A_k(W, g_0)$, namely those of Hitchin \cite{HitchinSpin}, Gromov--Lawson \cite{GL}, Hanke--Schick--Steimle \cite{HSS}, and Crowley--Schick \cite{Crowley2013}.

Let us turn to the description of our main result, which implies Theorem \ref{thm:intro:htpyGps} by a fairly straightforward computation. To formulate it, we first recall the definition of a specific Madsen--Tillmann--Weiss spectrum. 
Let $\Gr^{\Spin}_{d,n}$ denote the spin Grassmannian (see Definition \ref{defn:spingrassmannian}) of $d$-dimensional subspaces of $\bR^n$ equipped with a spin structure. It carries a vector bundle $V_{d,n}\subset \Gr^\Spin_{d,n} \times \bR^n$ of rank $d$, which has an orthogonal complement $V^\perp_{d,n}$ of rank $(n-d)$.
There are structure maps $\Sigma\mathrm{Th}(V_{d,n}^{\perp}) \to \mathrm{Th}(V_{d,n+1}^{\perp})$ between the Thom spaces of these vector bundles, forming a spectrum (in the sense of stable homotopy theory) which we denote by $\MTSpin(d)$. This spectrum has associated infinite loop spaces
$$\loopinf{+l} \MTSpin (d) := \underset{n \to \infty}\colim \, \Omega^{n+l} \mathrm{Th} (V_{d,n}^{\perp})$$
for any $l \in \bZ$, where the colimit is formed using the adjoints of the structure maps.

The parametrised Pontrjagin--Thom construction associates to any smooth bundle $\pi:E \to B$ of compact $d$-dimensional spin manifolds a natural map 
\begin{equation}\label{intro:pt-map}
\alpha_E: B \lra \loopinf{} \MTSpin (d),
\end{equation}
which encodes many invariants of smooth fibre bundles. For example, there is a map of infinite loop spaces $\hat{\mathscr{A}}_d: \loopinf{} \MTSpin (d) \to \loopinf{+d} \KO$ such that the composition $\hat{\mathscr{A}}_d \circ \alpha_E: B \to \loopinf{+d} \KO$ represents the family index of the Dirac operators on the fibres of $\pi$ (a consequence of the Atiyah--Singer family index theorem). The collision of notation with the $\hat{\mathscr{A}}$-invariant mentioned earlier is intended: there is a map of spectra $\MTSpin (d) \to \Sigma^{-d} \MSpin$ into the desuspension of the classical spin Thom spectrum, and the classical $\hat{\mathscr{A}}$-invariant is induced by a spectrum map $\hat{\mathscr{A}}: \MSpin \to \KO$ constructed by Atiyah--Bott--Shapiro. Our map $\hat{\mathscr{A}}_d$ is the composition of these maps (or rather the infinite loop map induced by the composition). 

\begin{defn}\label{defn:intro:weaklyhomotopic}
Two continuous maps $f_0,f_1: X \to Y$ are called \emph{weakly homotopic} \cite{AdamsPhantom} if for each finite CW complex $K$ and each $g:K \to X$, the maps $f_0 \circ g$ and $f_1 \circ g$ are homotopic. Similarly for maps of pairs.
\end{defn}

With these definitions understood, we can now state the main results of this paper.

\begin{MainThm}\label{factorization-theorem:even-case}
Let $W$ be a spin manifold of dimension $2n \geq 6$. Fix $h \in \Riem^+ (\partial W)$ and $g_0 \in \Riem^+ (W)_{h}$. Then there is a map $\rho: \loopinf{+1} \MTSpin (2n) \to \Riem^+ (W)_{h}$ such that the composition 
$$\loopinf{+1} \MTSpin (2n) \stackrel{\rho}{\lra} \Riem^+ (W)_h \stackrel{\inddiff_{g_0}}{\lra} \loopinf{+2n+1} \KO$$
is weakly homotopic to $\Omega \hat{\mathscr{A}}_{2n}$, the loop map of $\hat{\mathscr{A}}_{2n}$.
\end{MainThm}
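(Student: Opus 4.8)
The plan is to realise the map $\rho$ via a parametrised Gromov--Lawson construction applied to a universal family of manifolds, and then to identify the composition with $\inddiff_{g_0}$ using the index-theoretic toolbox described in the abstract. The starting point is the high-dimensional Madsen--Weiss theorem of Galatius--Randal-Williams, which (after a group-completion argument) identifies $\loopinf{} \MTSpin(2n)$, in a range, with the classifying space of the cobordism category of $2n$-dimensional spin manifolds, and more usefully with $\mathbb{Z} \times B\Diff^{\Spin}(W_{g,1})_\infty$ for a suitable genus-$g$ surface-like manifold. The loop space $\loopinf{+1}\MTSpin(2n)$ should correspondingly be modelled by a space of spin manifold bundles over a circle (or by the ``once-delooped'' cobordism category), i.e.\ bundles of $(2n+1)$-dimensional manifolds; this is where the shift in dimension $2n \mapsto 2n+1$ in the target $\loopinf{+2n+1}\KO$ comes from.

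First I would set up the geometric input: starting from the fixed psc metric $g_0$ on $W$, and using the parametrised Gromov--Lawson surgery technique (the first set of tools from the abstract), I would construct, for any smooth bundle $E \to B$ of compact $2n$-dimensional spin manifolds equipped with fibrewise psc metrics near a chosen sub-bundle, a family of psc metrics on the ``stabilised'' manifold $W \natural (\text{fibres})$. The crucial point, which parametrised surgery buys us, is that this assignment is compatible with the cobordism-category structure and only depends on the bundle up to the appropriate notion of concordance, so that as $B$ ranges over finite CW complexes mapping to $\loopinf{+1}\MTSpin(2n)$ we obtain a well-defined weak homotopy class of maps $\rho : \loopinf{+1}\MTSpin(2n) \to \Riem^+(W)_h$ (weak homotopy being exactly the right notion here, since the construction is only controlled on finite complexes and invokes a colimit). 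One has to be careful that the psc metrics produced agree with $dt^2 + h$ near $\partial W$ — this is arranged because the surgeries are performed in the interior, away from the collar.

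Next I would compute $\inddiff_{g_0} \circ \rho$. By definition $\inddiff_{g_0}$ measures the spectral flow / relative index along a path of Dirac operators from $g_0$ to the new metric. Because the new metric differs from $g_0 \natural (\text{standard psc metric on the glued handles})$ only in a region where everything has psc, one can use the additivity theorem for indices on noncompact manifolds (splitting along the separating hypersurfaces) together with the vanishing of the index on the psc pieces to reduce $\inddiff_{g_0}\circ\rho$ to a purely index-theoretic quantity depending only on the underlying family of manifolds — not on the metrics. This quantity is the family index of the Dirac operators on the $(2n+1)$-dimensional total spaces of the bundles classified by a map to $\loopinf{+1}\MTSpin(2n)$, and by the Atiyah--Singer family index theorem this is precisely $\Omega\hat{\mathscr{A}}_{2n} \circ \alpha$, i.e.\ it agrees with $\Omega\hat{\mathscr{A}}_{2n}$ under the identification of $\loopinf{+1}\MTSpin(2n)$ with the relevant bundle-classifying space. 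The spectral flow index theorem is used to match up the two descriptions of the index (as spectral flow of Dirac operators along paths of metrics versus as the analytic index of a Dirac operator on a mapping-torus-like manifold).

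The main obstacle, I expect, is the compatibility bookkeeping in the first step: making the parametrised Gromov--Lawson construction sufficiently natural and functorial that it descends to the relevant (homotopy) colimit and interacts correctly with the cobordism-category model of $\loopinf{+1}\MTSpin(2n)$. One needs a version of surgery that works in families over an arbitrary finite complex, that is coherent under composition of cobordisms, and that can be fed into the Galatius--Randal-Williams machine. A secondary difficulty is keeping precise track of dimensions, degree shifts, and the Clifford/$KO$-theoretic grading throughout — ensuring that the output really lands in $\loopinf{+2n+1}\KO$ and matches $\Omega\hat{\mathscr{A}}_{2n}$ rather than some twist of it. The index-theoretic identification in the second step, while technically substantial (it requires the additivity theorem and the spectral flow theorem in a parametrised form), is conceptually the more standard part.
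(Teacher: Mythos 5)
Your high-level picture of the index-theoretic endgame is essentially correct and matches the paper: after $\rho$ is built, one uses the additivity theorem and the Atiyah--Singer family index theorem to identify $\inddiff_{g_0}\circ\rho$ with a loop of the family index map, which is $\Omega\hat{\mathscr{A}}_{2n}$. (One correction: the spectral flow index theorem, Theorem \ref{spectralflowindex}, is \emph{not} used in the proof of Theorem B; it is used only to pass from even to odd dimensions in the proof of Theorem C. The degree shift to $\loopinf{+2n+1}\KO$ comes from the transgression in the relative index construction of Section \ref{sec:abstract-nonsense}, not from modelling $\loopinf{+1}\MTSpin(2n)$ by bundles of $(2n+1)$-manifolds.)

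The genuine gap is in your construction of $\rho$. You propose to build it directly by a coherent, parametrised Gromov--Lawson construction applied to a universal family over a cobordism-category model of $\loopinf{+1}\MTSpin(2n)$, and you correctly identify as the ``main obstacle'' the problem of making this sufficiently natural and functorial to descend to the homotopy colimit. But you do not resolve this obstacle, and it is precisely the difficulty the authors chose to route around: the acknowledgements state that an earlier draft of the paper was built on such delicate geometric family-surgery arguments and that the authors later found a way to avoid them. Your proposal would require that missing technology, so as written it does not constitute a proof.

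What the paper actually does is softer and indirect. Reduce to a particular $W_0:\emptyset\rightsquigarrow S^{2n-1}$ (simply-connected, spin cobordant to $D^{2n}$ rel boundary, $n$-connected structure map), stabilise by gluing copies of $K=([0,1]\times S^{2n-1})\#(S^n\times S^n)$, and consider the Borel fibrations $\Riem^+(W_k)_h \to \Riem^+(W_k)_h\hq\Diff_\partial(W_k)\to B\Diff_\partial(W_k)$. The Chernysh--Walsh cobordism invariance theorem (Theorem \ref{chernysh-walsh-theorem}) makes the gluing maps $\Riem^+(W_k)_h\to\Riem^+(W_{k+1})_h$ weak equivalences after a suitable choice of stabilising metric (Theorem \ref{lemma:GluingNullCob}). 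The key ingredient your proposal misses entirely is Theorem \ref{abelianness}: an Eckmann--Hilton-type argument (Lemma \ref{lem:abstract-Eckmann-Hilton-trick}) showing that the monodromy action of $\pi_1(B\Diff_\partial(W_k))$ on the homotopy type of $\Riem^+(W_k)_h$ factors through an abelian group. Combined with the fact from \cite{GRWHomStab2} that $\alpha_\infty:B\Diff_\partial(W_\infty)\to\loopinf{}_0\MTSpin(2n)$ is acyclic (not merely a homology equivalence), this kills the relevant obstruction and lets one extend the stabilised Borel fibration over $\loopinf{}_0\MTSpin(2n)$ (Proposition \ref{result-of-obstruction-argument}). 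The map $\rho$ is then \emph{defined} as the fibre transport of this extended fibration, composed with the inverse of the stabilisation equivalences. No coherent family version of Gromov--Lawson surgery is invoked, at the cost that the homotopy class of $\rho$ is highly non-canonical. You should study Sections \ref{sec:action} and \ref{sec:ConstructingMaps} to see how abelianness and acyclicity replace the functorial surgery construction you had in mind.
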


For manifolds of odd dimension, we have a result which looks very similar; we state it separately as it is deduced from Theorem \ref{factorization-theorem:even-case}, and its proof is quite different.

\begin{MainThm}\label{factorization-theorem:odd-case}
Let $W$ be a spin manifold of dimension $2n+1 \geq 7$. Fix $h \in \Riem^+ (\partial W)$ and $g_0 \in \Riem^+ (W)_{h}$. Then there is a map $\rho: \loopinf{+2} \MTSpin (2n) \to \Riem^+ (W)_h$ such that the composition 
$$\loopinf{+2} \MTSpin (2n) \stackrel{\rho}{\lra} \Riem^+ (W)_{h} \stackrel{\inddiff_{g_0}}{\lra} \loopinf{+2n+2} \KO$$
is weakly homotopic to $\Omega^2 \hat{\mathscr{A}}_{2n}$, the double loop map of $\hat{\mathscr{A}}_{2n}$.
\end{MainThm}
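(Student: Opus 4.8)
The plan is to deduce this from the even-dimensional case. Since $2n+1 \geq 7$ we have $2n \geq 6$, so Theorem \ref{factorization-theorem:even-case} applies to the sphere $S^{2n}$ and provides a map $\rho_0 : \loopinf{+1}\MTSpin(2n) \to \Riem^+(S^{2n})$ such that $\inddiff_{g_1}\circ\rho_0$ is weakly homotopic to $\Omega\hat{\mathscr{A}}_{2n}$, where $g_1 := \rho_0(*)$. Looping once gives $\Omega\rho_0 : \loopinf{+2}\MTSpin(2n) \to \Omega_{g_1}\Riem^+(S^{2n})$ with $\Omega(\inddiff_{g_1})\circ\Omega\rho_0 = \Omega\bigl(\inddiff_{g_1}\circ\rho_0\bigr)$ weakly homotopic to $\Omega^2\hat{\mathscr{A}}_{2n}$; here I use that looping preserves weak homotopy, since a map from a finite CW complex into a loop space is adjoint to a map out of another finite complex.

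Next I would build a gluing map $G : \Omega_{g_1}\Riem^+(S^{2n}) \to \Riem^+(W)_h$. Given a loop $\gamma$ of psc metrics on $S^{2n}$ based at $g_1$, reparametrised to be constant near the basepoint, the standard trick of traversing it slowly enough produces a psc metric $g_\gamma$ on the mapping torus $S^{2n}\times S^1$, which near a chosen point lying over the constant part of $\gamma$ is the $\gamma$-independent product $g_1 + d\theta^2$. Using the parametrised Gromov--Lawson connected sum construction, glue $W$ (carrying the fixed metric $g_0$, put in standard form near an interior point away from the collar) to $S^{2n}\times S^1$ at that point, obtaining a psc metric $g_0 \# g_\gamma$ on $W\#(S^{2n}\times S^1)$. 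Since $W\#(S^{2n}\times S^1)$ arises from $W$ by a surgery along an embedded $S^0$, of codimension $2n+1\geq 3$, the induced map $\Riem^+(W)_h \to \Riem^+\bigl(W\#(S^{2n}\times S^1)\bigr)_h$ is a homotopy equivalence by the Chernysh--Walsh theorem; composing $\gamma\mapsto g_0\#g_\gamma$ with a homotopy inverse defines $G$, and I set $\rho := G\circ\Omega\rho_0$.

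It then remains to show $\inddiff_{g_0}\circ G$ is weakly homotopic to $\Omega(\inddiff_{g_1})$. Granting that the index difference is natural up to homotopy for the homotopy equivalence arising from a codimension $\geq 3$ surgery, this reduces to computing the index difference of the family $g_0\#g_\gamma$ on $W\#(S^{2n}\times S^1)$. As these metrics all restrict to $g_0$ on the $W$-summand and vary only over the $(S^{2n}\times S^1)$-summand, the additivity theorem for Dirac indices on noncompact manifolds identifies this index difference with the one on $S^{2n}\times S^1$ built from the family $\gamma$, the $W$-piece contributing nothing. Finally, the Dirac operator on $S^{2n}\times S^1$ is the mapping torus of the $S^1$-family $\Dir_{\gamma(\theta)}$ on $S^{2n}$, so the spectral flow index theorem identifies its $\Cl$-linear family index with $\Omega(\inddiff_{g_1})(\gamma)$. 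Chaining these gives $\inddiff_{g_0}\circ\rho \simeq \Omega(\inddiff_{g_1})\circ\Omega\rho_0 \simeq \Omega^2\hat{\mathscr{A}}_{2n}$ weakly, as required.

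The main obstacle is carrying out the last paragraph rigorously: one must run the parametrised connected sum, the additivity theorem on noncompact manifolds, and the spectral flow index theorem in a single coherent framework, with all the analytic choices (collars, standard forms, cylindrical ends, and the chosen point-set model of $\loopinf{+l}\KO$) aligned. A further technical point is verifying that the index difference is natural with respect to the Chernysh--Walsh equivalence; this should hold because the improved surgery construction keeps the metrics in a fixed standard form near the surgery locus, so the Dirac operators there are literally unchanged, but it requires revisiting that construction with the Dirac operator present.
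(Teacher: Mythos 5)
Your proposal follows a genuinely different route from the paper, but the key geometric step does not go through. You need a weak homotopy equivalence $\Riem^+(W)_h \simeq \Riem^+\bigl(W\#(S^{2n}\times S^1)\bigr)_h$ and cite the Chernysh--Walsh theorem, on the grounds that the surgery is of ``codimension $2n+1\geq 3$.'' However, Theorem~\ref{surgery-invariance} requires the surgery index $k$ to satisfy $3 \leq k \leq d-2$, and a connected sum is an $S^0$-surgery with $k=1$. The condition $d-k+1\geq 3$ that you quote gives only the inclusion of Theorem~\ref{chernysh-walsh-theorem} (for metrics standard near two points of $W \sqcup (S^{2n}\times S^1)$); it says nothing about the surgered manifold, for which the dual condition $k\geq 3$ fails. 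This is not a technicality: $\pi_1\bigl(W\#(S^{2n}\times S^1)\bigr)\cong \pi_1(W)\ast\bZ$ always differs from $\pi_1(W)$, and surgeries with $3\leq k\leq d-2$ on a $d$-manifold preserve $\pi_1$, so no chain of Chernysh--Walsh moves can connect the two spaces of psc metrics. The mapping torus intrinsically introduces a nontrivial fundamental group, and the surgery machinery of the paper cannot remove it.

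The paper avoids this entirely via Chernysh's quasifibration theorem (Theorem~\ref{chernysh-thm:qfibr}): the restriction map $\res:\Riem^+(W)\to\Riem^+(\partial W)$ is a quasifibration with fibre $\Riem^+(W)_h$, so the fibre-transport map $\Omega_{h}\Riem^+(\partial W)\to\hofib_{h}(\res)$ composed with a weak homotopy inverse of the fibre inclusion gives a map $T:\Omega_{h}\Riem^+(\partial W)\to\Riem^+(W)_{h}$ directly (Theorem~\ref{dimension-increasing}). This plays the role of your $G$, but never leaves the manifold $W$ and uses no surgery at all. The compatibility $\inddiff_{g_0}\circ T\simeq -\Omega\inddiff_{h}$ (weakly) is established via a relative index class $\beta\in KO^{d,0}(\res)$ whose transgression to the fibre is $-\inddiff_{g_0}$ and whose base class is $\inddiff_h$; that last computation is exactly where the additivity theorem and the spectral flow index theorem enter, as you anticipated in your final paragraph. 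Your index-theoretic instincts are on target; the obstacle is purely in the geometric construction of $G$.
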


It should be remarked that in both cases the homotopy class of the map $\rho$ is in no sense canonical and depends on many different choices (among them the metric $g_0$). 
Theorem \ref{thm:intro:htpyGps} is a consequence of Theorems \ref{factorization-theorem:even-case} and \ref{factorization-theorem:odd-case} and relatively easy computations in stable homotopy theory. 
The geometric form of Theorems \ref{factorization-theorem:even-case} and \ref{factorization-theorem:odd-case} gives an interpretation in terms of spaces of psc metrics to more difficult and interesting stable homotopy theory computations as well. In Section \ref{sec:computation} we make such computations, and the results obtained are stated below in Section \ref{subsec:intro-furthercomputations}.

\subsection{Outline of the proofs of the main results}

We now turn to a brief outline of the proofs of Theorems \ref{factorization-theorem:even-case} and \ref{factorization-theorem:odd-case}, starting with Theorem \ref{factorization-theorem:even-case}. All manifolds in the sequel are assumed to be compact spin manifolds. 

The first ingredient a refinement of the Gromov--Lawson surgery theorem. This is the cobordism invariance theorem of Chernysh \cite{Chernysh} and Walsh \cite{WalshC}, which says that the homotopy type of $\Riem^+ (W)_h$ is unchanged when $W$ is modified by appropriate surgeries in its interior (the precise formulation is stated as Theorem \ref{chernysh-walsh-theorem} below). Together with the cut-and-paste invariance of the index difference (which we discuss in detail in Section \ref{sec:additivity}), this has two important consequences. Firstly, it is enough to prove Theorem \ref{factorization-theorem:even-case} when $W=D^{2n}$ and $h=h_{\round}$ is the round metric on $S^{2n-1}$; secondly, we are free to replace $D^{2n}$ by any other simply-connected manifold $W$ within its spin cobordism class (relative to $S^{2n-1}$). Here the hypothesis $2n \geq 6$ is required for the first time. From now on, let us assume that $W^{2n}$ is a compact connected spin manifold of dimension $2n \geq 6$ with $\partial W = S^{
2n-1}$.

As in \cite{HitchinSpin} and \cite{Crowley2013}, our method relies on the action of the diffeomorphism group on the space of psc metrics. For a smooth manifold $W$ equipped with a collar $c : [-\epsilon,0] \times \partial W \hookrightarrow W$, we write $\Diff_{\partial}(W)$ for the topological group of those diffeomorphisms of $W$ which are the identity on the image of $c$. This group acts by pullback on $\Riem^+ (W)_{h_{\round}}$, and we may form the Borel construction $E \Diff_{\partial} (W) \times_{\Diff_{\partial} (W)} \Riem^+ (W)_{h_{\round}}$, which fits into a fibration sequence
\begin{equation}\label{intro-fibre-sequence}
\Riem^+ (W)_{h_{\round}} \lra E \Diff_{\partial} (W) \times_{\Diff_{\partial} (W)} \Riem^+ (W)_{h_{\round}} \lra B \Diff_{\partial} (W).
\end{equation}
The Borel construction $E \Diff_{\partial} (W) \times_{\Diff_{\partial} (W)} \Riem^+ (W)_{g_{\round}} $ is the homotopy theoretic quotient $\Riem^+ (W)_{g_{\round}} \hq \Diff_{\partial} (W)$, and we use this notation from now on. 

The action of $\Diff_{\partial}(W)$ on $\Riem^+ (W)_{h_{\round}}$ is free, so the projection map
$$\Riem^+ (W)_{h_{\round}} \hq \Diff_{\partial} (W) \lra \Riem^+ (W)_{h_{\round}} / \Diff_{\partial} (W)$$
is a weak homotopy equivalence. This quotient space is known as the \emph{moduli space of psc metrics on $W$} in the literature, e.g. \cite[\S 1]{TW}. 

We, however, find the following point-set model for the spaces in \eqref{intro-fibre-sequence} more enlightening. Choose an embedding $\partial W \subset \bR^{\infty-1}$ and then take as a model for $E \Diff_{\partial}(W)$ the space $\Emb_\partial(W, (-\infty,0] \times \bR^{\infty-1})$ of all embeddings $e : W \to (-\infty,0] \times \bR^{\infty-1}$ such that $e \circ c (t, x) = (t, x)$ for all $(t,x) \in [-\epsilon,0] \times \partial W$. With this model, $B \Diff_{\partial} (W)$ may be identified with the set of all compact submanifolds $X \subset (-\infty,0] \times \bR^{\infty-1}$ such that $X \cap ([-\epsilon, 0] \times \bR^{\infty-1}) = [-\epsilon, 0] \times \partial W$ and which are diffeomorphic (relative to $\partial W$) with $W$. One may therefore view $B \Diff_{\partial} (W)$ as the moduli space of manifolds diffeomorphic to $W$.
Using this model, the Borel construction $E \Diff_{\partial} (W) \times_{\Diff_{\partial} (W)} \Riem^+ (W)_{h_{\round}}$ is the space of pairs $(X,g)$ with $X \in B \Diff_{\partial} (W)$ and $g \in \Riem^+ (X)_{h_{\round}}$. 

One important feature of the fibre sequence \eqref{intro-fibre-sequence} is that it interacts well with index-theoretic constructions, by virtue of a homotopy commutative diagram
\begin{equation}\label{intro-index-fibre-diagram}
\begin{gathered}
\xymatrix{
\Riem^+ (W)_{h_{\round}} \ar[d] \ar[r]^{\inddiff} & \loopinf{+2n+1} \KO \ar[d]\\
\Riem^+ (W)_{h_{\round}} \hq \Diff_{\partial} (W) \ar[d]\ar[r] & \ast \ar[d] \\
 B \Diff_{\partial} (W) \ar[r]^{\mathrm{ind}} & \loopinf{+2n} \KO,
}
\end{gathered}
\end{equation}
which we establish in Section \ref{subsec:spin-fibre-bundles-and-inddiff}. The right-hand column is the path-loop-fibration, the top map is the index difference (with respect to a basepoint) and the bottom map is the ordinary family index of the Dirac operator on the universal $W$-bundle over $B \Diff_{\partial} (W)$, which can be computed in topological terms by the index theorem. 

If we choose $W$ so that the topology of $B \Diff_{\partial} (W)$ is well-understood, we might hope to extract information about the homotopy type of $\Riem^+ (W)_{h_{\round}}$ and hence about $\pi_* (\inddiff)$ from \eqref{intro-fibre-sequence}. 
For example, if one can show that the map $\mathrm{ind}_*:\pi_k (B \Diff_{\partial} (W)) \to KO^{-2n-k} (*)$ is nontrivial, then it follows that the map $\inddiff_* : \pi_{k-1} (\Riem^+(W)_{g_\circ}) \to KO^{-2n-k}(*)$ is also nontrivial. This is essentially the technique which was introduced by Hitchin \cite{HitchinSpin}. But precise knowledge of $\pi_* (B\Diff_{\partial} (W))$ is scarce, especially when additional information such as the map $\mathrm{ind}_*:\pi_{k} (B \Diff_{\partial } (W))    \to KO^{-2n-k} (*)$ is required as well. 

The driving force behind Theorem \ref{factorization-theorem:even-case} is the high-dimensional analogue of the Madsen--Weiss theorem, proved by Galatius and the third named author \cite{GRW, GRWHomStab2}, which makes it possible to understand the \emph{homology} of $B \Diff_{\partial} (W)$, rather than its homotopy. To describe this result, let $K:=  ([0,1] \times S^{2n-1}) \# (S^n \times S^n)$, and when $W_0$ is a $2n$-dimensional compact spin manifold with boundary $\partial W_0=S^{2n-1}$ write $W_k:= W \cup kK$ for the composition of $W_0$ with $k$ copies of $K$. We choose $W_0$ such that
\begin{enumerate}[(i)]
\item $W_0$ is spin cobordant to $D^{2n}$ relative to $S^{2n-1}$ and
\item the map $W_0 \to B \Spin (2n)$ classifying the tangent bundle of $W_0$ is $n$-connected.
\end{enumerate}
Such a manifold $W_0$ exists, and in fact by Kreck's stable diffeomorphism classification theorem \cite[Theorem C]{Kreck} it is unique in the sense that if two such manifolds $W_0$ and $W'_0$ are given, then $W_k \cong W'_l$ for some $k, l \in \bN$. 

Gluing in the cobordism $K$ induces stabilisation maps 
\begin{equation*}
B \Diff_{\partial} (W_0) \lra B \Diff_{\partial} (W_1) \lra B \Diff_{\partial} (W_2) \lra B \Diff_{\partial} (W_3) \lra  \cdots.
\end{equation*}
The parametrised Pontrjagin--Thom maps \eqref{intro:pt-map} induce a map
\begin{equation}\label{grw-map:intro}
\alpha_{W_{\infty}}:B \Diff_\partial (W_\infty) := \underset{k \to \infty}\hocolim \, B \Diff_\partial (W_k) \lra \loopinf{}_{0}\MTSpin(2n),
\end{equation}
and it follows from \cite{GRWHomStab2} that the map $\alpha_{W_{\infty}}$ is acyclic. Recall that a map $f : X \to Y$ of spaces is called \emph{acyclic} if for each $y \in Y$ the homotopy fibre $\hofib_y (f)$ has the singular homology of a point. In particular, the map \eqref{grw-map:intro} induces an isomorphism in homology and can in fact be identified with the Quillen plus construction of $B \Diff_\partial (W_\infty)$ 

Once a psc metric $m \in \Riem^+ (K)_{h_{\round}, h_{\round }}$ is chosen, gluing in the Riemannian cobordism $(K,m)$ induces stabilisation maps
$$\Riem^+ (W_0)_{h_{\round}} \lra \Riem^+ (W_1)_{h_{\round}} \lra \Riem^+ (W_2)_{h_{\round}} \lra \Riem^+ (W_3)_{h_{\round}} \lra \cdots.$$
The cobordism invariance theorem of Chernysh \cite{Chernysh} and Walsh \cite{WalshC} allows us to choose the psc metric $m$ so that all these stabilisation maps are homotopy equivalences, so in particular the map
\begin{equation}\label{eq:RStab}
\Riem^+ (W_0)_{h_{\round}} \lra \underset{k \to \infty}\hocolim \, \Riem^+ (W_k)_{h_{\round}} 
\end{equation}
is a homotopy equivalence. Acting on a fixed collection of compatible psc metrics gives a map
$$\underset{k \to \infty}\hocolim \, \Diff_\partial (W_k) \lra \underset{k \to \infty}\hocolim \, \Riem^+ (W_k)_{h_{\round}} \simeq \Riem^+ (W_0)_{h_{\round}} \simeq \Riem^+ (D^{2n})_{h_{\round}},$$
where we have used cobordism invariance twice, once in the form of \eqref{eq:RStab} and once to replace $W_0$ by the spin cobordant manifold $D^{2n}$. The map we wish to construct is an extension of this map along $\Omega \alpha_{W_\infty} : \hocolim_k \, \Diff_\partial (W_k) \to \loopinf{+1}\MTSpin(2n)$, but, as the map $\alpha_{W_\infty}$ is not a homotopy equivalence but merely acyclic, an argument is necessary to form such an extension.

The two stabilisation maps (on the moduli space of manifolds and on the space of psc metrics respectively) together yield stabilisation maps on the Borel construction. After passing to the (homotopy) colimit, we obtain a fibre sequence 
\[
 \underset{k \to \infty}\hocolim \, \Riem^+ (W_k)_{h_{\round}} \to \underset{k \to \infty}\hocolim \, ( \Riem^+ (W_k)_{h_{\round}} \hq \Diff_{\partial} (W_k) ) \stackrel{p_\infty}{\to} \underset{k \to \infty}\hocolim \, B\Diff_{\partial} (W_k). 
\]
The key step is now the construction of a fibration $T_\infty^+ \stackrel{p_{\infty}^+}{\to} \loopinf{}_0 \MT \Spin (2n)$ with fibre $\Riem^+ (W_0)_{h_\round}$ and a homotopy cartesian diagram 
\begin{equation}\label{intro-stable-fibre-sequence}
\begin{gathered}
\xymatrix{
 {\underset{k \to \infty}\hocolim \, ( \Riem^+ (W_k)_{h_{\round}} \hq \Diff_{\partial} (W_k) )} \ar[d]^{p_\infty} \ar[r] & T_\infty^+ \ar[d]^{p_{\infty}^+}\\
 {\underset{k \to \infty}\hocolim \, B\Diff_{\partial} (W_k)} \ar[r]^-{\alpha_{W_{\infty}}} & \loopinf{}_0 \MTSpin (2n). 
}
\end{gathered}
\end{equation}




The main input for the construction of the diagram \eqref{intro-stable-fibre-sequence} is the result that for each pair of diffeomorphisms $f_0,f_1 \in \Diff_\partial (W_k)$ the automorphisms $f_{0}^{*}, f_{1}^{*}: \Riem^+ (W_k)_{h_\round} \to \Riem^+ (W_k)_{h_\round}$ \emph{commute up to homotopy}. This is again derived from the cobordism invariance theorem, along with an argument of Eckmann--Hilton type. This commutativity property allows us to carry out the obstruction-theoretic argument to produce \eqref{intro-stable-fibre-sequence}. At this point it is crucial that $\alpha_{W_{\infty}}$ is acyclic and not only a homology equivalence. 

The map $\rho$ is defined to be the fibre transport map
$$\loopinf{+1}\MTSpin(2n) \lra \underset{k \to \infty}\hocolim \, \Riem^+ (W_k)_{h_{\round}}$$
of the fibration $p_\infty^+$, followed by the homotopy inverse of \eqref{eq:RStab} and the identification $\Riem^+(W_0)_{g_\circ} \simeq \Riem^+(D^{2n})_{g_\circ}$ coming from the cobordism invariance theorem. 
To show that $\inddiff \circ \rho$ is weakly homotopic to $\Omega\hat{\mathscr{A}}_{2n}$ we use the diagram \eqref{intro-index-fibre-diagram}, Bunke's additivity theorem for the index \cite{Bunke1995}, and the Atiyah--Singer family index theorem.

The deduction of Theorem \ref{factorization-theorem:odd-case} from Theorem \ref{factorization-theorem:even-case} is by a short index-theoretic argument. In the course of that argument, we need to compare indices of operators on manifolds of different dimension, and some index theorem is needed for that purpose. We phrase the argument by using an alternative description of the index difference map for closed manifolds, due to Gromov and Lawson \cite{GL2}, in terms of a boundary value problem of Atiyah--Patodi--Singer type on the cylinder $W \times [0,1]$. The proof of Theorem \ref{factorization-theorem:odd-case} relates the index difference for $(2n+1)$-dimensional manifolds (Hitchin's definition) with the index difference for $2n$-dimensional manifolds (Gromov--Lawson's definition). In order to make the argument conclusive we need to know that these definitions agree, but this follows from a family version of the spectral flow index theorem which was proved by the second named author \cite{Eb13}.

\begin{remark}
The argument for the deduction of Theorem \ref{factorization-theorem:odd-case} from Theorem \ref{factorization-theorem:even-case} also yields that $\im (A_{k+1} (\partial W,h)) \subset \im (A_{k}(W,g))$ when $g \in \cR^+(W)_h$. This allows one to prove Theorem \ref{thm:intro:htpyGps} by induction on the dimension, starting with $d=6$. Along the same lines, if Theorem \ref{factorization-theorem:even-case} is established for $2n=6$, we get for all $d \geq 6$ a factorisation
$$\Omega^{d-5}\hat{\mathscr{A}}_6 : \loopinf{+d-5} \MTSpin (6) \stackrel{\rho}{\lra} \Riem^+ (W^d)_h \stackrel{\inddiff_{g}}{\lra} \loopinf{+d+1} \KO,$$
which suffices for some of the computational applications. 

In addition, the proof of Theorem \ref{factorization-theorem:even-case} in the special case $2n=6$ enjoys several simplifications, the principal one being that the results of \cite{GRWHomStab2} may be replaced by those of \cite{GRW}. Consequently, we have given (in Section \ref{finishingproof:dim6}) a separate proof of this special case.
\end{remark}


\subsection{Further computations}\label{subsec:intro-furthercomputations}

We state the following more detailed computations for even-dimensional manifolds: they have odd-dimensional analogues too, which we leave to the reader to deduce from the results of Section \ref{sec:computation}.
The first concerns the surjectivity of the map on homotopy groups induced by the index difference, without any localisation.

\begin{MainThm}\label{thm:intro:integral}
Let $W$ be a spin manifold of dimension $2n \geq 6$. Fix $h \in \Riem^+ (\partial W)$ and $g_0 \in \Riem^+ (W)_{h}$. Then the map
$$A_{k}(W,g_0): \pi_{k} (\Riem^+ (W)_h, g_0) \lra KO_{k+2n+1}(*)$$
is surjective for $0 \leq k \leq 2n-1$.
\end{MainThm}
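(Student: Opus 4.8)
The plan is to combine Theorem~\ref{factorization-theorem:even-case} with a computation in stable homotopy theory. Since $S^k$ is a finite CW complex, the statement of Theorem~\ref{factorization-theorem:even-case} that $\inddiff_{g_0}\circ\rho$ is weakly homotopic to $\Omega\hat{\mathscr A}_{2n}$ forces these two maps to agree on $\pi_k$, so that
$$\im\big(A_k(W,g_0)\big)\ \supseteq\ \im\big(\pi_k(\Omega\hat{\mathscr A}_{2n})\big)\ =\ \im\big(\pi_{k+1}(\hat{\mathscr A}_{2n})\big),$$
where $\hat{\mathscr A}_{2n}\colon\MTSpin(2n)\to\Sigma^{-2n}\KO$ denotes the underlying map of spectra. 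It therefore suffices to prove the purely homotopy-theoretic assertion that
$$\pi_j(\hat{\mathscr A}_{2n})\colon \pi_j\MTSpin(2n)\lra \pi_j(\Sigma^{-2n}\KO)=KO_{j+2n}(*)$$
is surjective for $1\le j\le 2n$ (with $j=k+1$).

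To study this map I would use the Pontrjagin--Thom identification of $\pi_j\MTSpin(2n)$ with the bordism group of closed $(2n+j)$-manifolds $M$ equipped with a rank-$2n$ spin vector bundle $\xi$ and a stable isomorphism $TM\cong\xi\oplus\underline{\bR}^j$; under it $\hat{\mathscr A}_{2n}$ sends $[M,\xi]$ to the $\KO$-theoretic Dirac index of $M$, which depends only on the spin bordism class of $M$. So one must show: for $1\le j\le 2n$, every element of $KO_{j+2n}(*)$ is the Dirac index of a spin $(2n+j)$-manifold whose stable tangent bundle reduces to a rank-$2n$ spin bundle. The three possible shapes of $KO_{j+2n}(*)$ are treated differently. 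The zero case is trivial. When $KO_{j+2n}(*)\cong\bZ/2$ (i.e.\ $j+2n\equiv 1,2\bmod 8$) one uses the bottom cell $\Sigma^{-2n}\mathbb{S}\to\MTSpin(2n)$, over which $\hat{\mathscr A}_{2n}$ restricts to the $(-2n)$-fold desuspension of the unit $\mathbb{S}\to\KO$: the relevant classes are carried by framed manifolds (which trivially admit the trivial rank-$2n$ spin tangential structure), and $\pi_m^s\to KO_m(*)$ is surjective for $m\equiv 1,2\bmod 8$ by the image-of-$J$/$\mu$-family elements.

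The substantive case is $KO_{j+2n}(*)\cong\bZ$ (i.e.\ $j+2n\equiv 0,4\bmod 8$), and here I would argue in two steps. First, rational surjectivity: with $H^{*}(\MTSpin(2n);\bQ)=u\cdot\bQ[p_1,\dots,p_{n-1},e]$ ($u$ the Thom class in degree $-2n$), the map $\hat{\mathscr A}_{2n}^{*}$ in rational cohomology carries the degree-$j$ generator of $H^{*}(\Sigma^{-2n}\KO;\bQ)$ to $u$ times the degree-$(j+2n)$ component of the $\hat A$-polynomial evaluated on a rank-$2n$ bundle, which is nonzero because its leading Pontrjagin monomial $p_{(j+2n)/4}$ survives precisely when $(j+2n)/4\le n$, i.e.\ when $j\le 2n$; dualising gives rational surjectivity on $\pi_j$. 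Second, upgrading ``finite index'' to ``onto'': realize a generator of $KO_{j+2n}(*)$ by an explicit closed spin $(2n+j)$-manifold (a power of the Bott manifold, with a $K3$ factor when $j+2n\equiv 4\bmod 8$); by surgery below the middle dimension — legitimate since $j-1<(2n+j)/2$ — replace it by a $(j-1)$-connected representative $N$ of its spin bordism class; for such $N$ one checks that every obstruction to reducing $TN$ to a rank-$2n$ bundle vanishes except a single top obstruction $o(N)\in\pi_{2n+j-1}(\Spin/\Spin(2n))$, and since $o$ is additive under connected sum, $o(N)$ can be cancelled by connect-summing $N$ with a suitable $(j-1)$-connected spin $(2n+j)$-manifold of vanishing Dirac index. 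Finally, the product pairings $\MTSpin(a)\wedge\MTSpin(b)\to\MTSpin(a+b)$, on which $\hat{\mathscr A}$ is multiplicative, together with the invertibility of the Bott class $\beta\in KO_8(*)$ (periodic $\KO$) realized by $[B^8]\in\pi_0\MTSpin(8)$, reduce the whole problem to finitely many fibre dimensions $2n$.

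The step I expect to be the main obstacle is the second step above, for the infinite cyclic groups. The difficulty is that $\MTSpin(2n)\to\Sigma^{-2n}\MSpin$ is only about $1$-connected (an isomorphism on $\pi_{\le 0}$, onto on $\pi_1$, and no better), so the classical surjectivity of $\MSpin\to\KO$ cannot simply be quoted: there is a genuine obstruction to realizing a prescribed Dirac index by a $(2n+j)$-manifold admitting a rank-$2n$ tangential reduction, and the theorem asserts precisely that this obstruction can be circumvented exactly in the range $j\le 2n$ and not beyond. Making this explicit — determining the image of $\pi_*\MTSpin(2n)$ in $KO_*(*)$, for instance by inducting over $2n$ through the cofibre sequences $\MTSpin(2n)\to\Sigma^\infty_+B\Spin(2n)\to\MTSpin(2n-1)$ and the known structure of $\pi_*^s\to KO_*(*)$, or by the surgery and plumbing constructions indicated above — is the technical core of the argument.
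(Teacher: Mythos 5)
Your reduction to the stable homotopy statement is exactly right: weak homotopy suffices to compare on homotopy groups, so Theorem~\ref{factorization-theorem:even-case} reduces Theorem~\ref{thm:intro:integral} to the surjectivity of $(\kothom{2n})_*\colon\pi_{j}(\MTSpin(2n))\to\pi_{j+2n}(\ko)$ for $1\le j\le 2n$, which is the paper's Theorem~\ref{integral-surjectivity}. Your treatment of the $\bZ/2$ summands via $e_{2n}$ and Adams's image-of-$J$ result is the same as the paper's, and your rational computation is essentially the paper's Theorem~\ref{rational-surjectivity-htpy-thy} (though your ``precisely when $j\le 2n$'' is not correct: the restriction of $\hat{A}_m$ to a rank-$2n$ bundle is nonzero for all $m$, since it already pulls back nontrivially to $B\Spin(2)=\bC\bP^{\infty}$; rationally there is in fact no constraint, and the bound $j\le 2n$ comes entirely from the integral argument).

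The integral $\bZ$-case is where your proposal and the paper diverge, and where the gap lies. You propose to take a bordism representative, surger it to be $(j-1)$-connected, observe a single top obstruction $o(N)$ to reducing $TN$ to a rank-$2n$ spin bundle, and then ``cancel $o(N)$ by connect-summing with a suitable $(j-1)$-connected spin manifold of vanishing Dirac index.'' This last assertion is the entire difficulty and is not proved. Two concrete problems: first, the obstruction is not literally additive under connected sum (even the primary Euler-class contribution satisfies $\chi(M\#N)=\chi(M)+\chi(N)-2$ rather than additivity), and the obstruction can depend on a choice of partial lift, so the ``additivity'' must be formulated and proved with care; second, and more seriously, one needs to show that the map from bordism classes of $(j-1)$-connected spin $(2n+j)$-manifolds of zero index to $\pi_{2n+j-1}(V_j(\bR^{2n+j}))$ actually hits $-o(N)$, which is a nontrivial realization statement about Stiefel-manifold homotopy groups that you give no argument for. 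The paper circumvents all of this via the multiplicative structure on $\{\MTSpin(d)\}$: using $J_{d-1,k+1}\subset J_{d,k}$ and $J_{d,k}\cdot J_{e,l}\subset J_{d+e,k+l}$ it reduces the whole $\bZ$-case to constructing a single class $\fk\in\pi_2(\MTSpin(2))$ with $\ahat(\fk)=\kappa$ (a $K3$ surface with an explicit spin line bundle $L_a^{\otimes 2}$, where $a$ is chosen so that $q(a)=-12$, and the stable identification with $TK$ is checked via the $8$-connectivity of $p_1/2\colon B\Spin\to K(\bZ,4)$) and a single class $\fb\in\pi_4(\MTSpin(4))$ with $\ahat(\fb)=\beta$ (the capped-off $E_8$-plumbing $8$-manifold, connect-summed with $\#^{12}(S^4\times S^4)$ so that an explicit rank-$4$ bundle with the right Pontrjagin classes exists, the $12$ being chosen to kill the extension obstruction in $\pi_7(B\Spin(4))\cong\bZ/12\oplus\bZ/12$). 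This is a far more economical strategy: two tractable low-dimensional constructions instead of an obstruction analysis at each $(j,n)$. Your proposal would need to be supplemented by something like the paper's argument to close the gap at the cancellation step.
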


One application of this theorem is as follows. Let $B^8$ be a spin manifold such that $\hat{\mathscr{A}}(B)\in KO_8(*)$ is the Bott class (such a manifold is sometimes called a ``Bott manifold''). By the work of Joyce \cite[\S 6]{Joyce} there is a Bott manifold which admits a metric $g_B$ with holonomy group $\Spin (7)$. Then $g_B$ must be Ricci-flat and hence scalar-flat. For any closed spin $d$-manifold $W$, cartesian product with $(B, g_B)$ thus defines a direct system
$$\cR^+(W) \lra \cR^+(W \times B) \lra \cR^+(W \times B \times B) \lra \cdots$$
and as $\hat{\mathscr{A}}(B)$ is the Bott class there is an induced map from the direct limit
$$\inddiff_{h}[B^{-1}] : \cR^+(W)[B^{-1}] := \underset{k \to \infty} \hocolim \, \cR^+(W \times B^k) \lra \Omega^{\infty+d+1} \KO.$$
It then follows from Theorem \ref{thm:intro:integral} (or its odd-dimensional analogue) that this map is surjective on all homotopy groups.

Secondly, working away from the prime 2 we are able to use work of Madsen--Schlichtkrull \cite{MadsenSchlichtkrull} to obtain an upper bound on the index of the image of the index difference map on homotopy groups.

\begin{MainThm}\label{thm:intro:awayfrom2}
Let $W$ be a spin manifold of dimension $2n \geq 6$. Fix $h \in \Riem^+ (\partial W)$ and $g_0 \in \Riem^+ (W)_{h}$. Then the image of the map
$$A_{4m-2n-1}(W, g_0)[\tfrac{1}{2}] : \pi_{4m-2n-1}(\cR^+(W)_h, g_0)[\tfrac{1}{2}] \lra KO_{4m}(*)[\tfrac{1}{2}]$$
has finite index, dividing
$$A(m, n):= \gcd\left\{ \prod_{i=1}^n(2^{2m_i-1}-1) \cdot \mathrm{Num}\left(\frac{B_{m_i}}{2m_i}\right)  \,\Bigg\vert\, m_i \geq 0,  \sum_{i=1}^n m_i = m\right\}$$
(where we adopt the convention that $(2^{2m-1}-1)\cdot \mathrm{Num}(\frac{B_{m}}{2m})=1$ when $m=0$).
\end{MainThm}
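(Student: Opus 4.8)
The plan is to derive Theorem \ref{thm:intro:awayfrom2} from Theorem \ref{factorization-theorem:even-case} by transporting the problem entirely to stable homotopy theory, where after inverting $2$ the relevant Thom spectrum splits in a controllable way. First I would use the factorisation provided by Theorem \ref{factorization-theorem:even-case}: since $\inddiff_{g_0} \circ \rho$ is weakly homotopic to $\Omega\hat{\mathscr{A}}_{2n}$, the image of $A_{k}(W,g_0)$ contains the image of $\pi_k(\Omega\hat{\mathscr{A}}_{2n})[\tfrac12] = \pi_{k+1}(\hat{\mathscr{A}}_{2n})[\tfrac12]$, the map induced on homotopy groups by the infinite loop map $\hat{\mathscr{A}}_{2n}\colon \loopinf{}\MTSpin(2n) \to \loopinf{+2n}\KO$. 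Because $\loopinf{}\MTSpin(2n)$ is an infinite loop space, on homotopy groups this map factors through $\pi_*^s(\MTSpin(2n))$ via the Hurewicz/unit map, so it suffices to bound the image of the composite of spectrum maps $\pi_{k+1+2n}(\MTSpin(2n)) \to \pi_{k+1+2n}(\Sigma^{-2n}\MSpin) = \pi_{k+1}(\MSpin) \xrightarrow{\hat{\mathscr{A}}_*} KO_{k+1}(*)$, all after inverting $2$.

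Next I would invoke the work of Madsen--Schlichtkrull \cite{MadsenSchlichtkrull}. Away from $2$, $\MTSpin(2n)$ (and $\MSpin$) splits as a wedge of suspensions of $\ko$ (or of $\bS[\tfrac12]$-modules built from $\ko$), and the relevant computation identifies $\pi_*(\MTSpin(2n))[\tfrac12]\to KO_*(*)[\tfrac12]$ in terms of characteristic numbers. Concretely, the image in $KO_{4m}(*)[\tfrac12] \cong \bZ[\tfrac12]$ of the $\hat{\mathscr{A}}$-genus composed with the Pontryagin--Thom map for the universal $W_\infty$-bundle (equivalently, the subgroup of $\bZ[\tfrac12]$ generated by $\hat{\mathscr{A}}$-numbers of bundles over spheres arising from $\MTSpin(2n)$) is governed by the denominators appearing in the Hirzebruch $\hat{A}$-class. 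The key number-theoretic fact is that the $\hat{A}$-genus evaluated on products of complex projective spaces or of Milnor generators produces exactly the quantities $(2^{2m_i-1}-1)\cdot\mathrm{Num}(B_{m_i}/2m_i)$; taking the $\gcd$ over all partitions $\sum m_i = m$ reflects that the image is generated by the $\hat{A}$-numbers of all $2n$-dimensional spin manifolds (or $n$-fold products of $4m_i$-dimensional ones fitting in dimension $2n$, using that $\MTSpin(2n)$ only sees manifolds of dimension $\le 2n$ in the relevant range — this is where the constraint $n$ factors appear). So the image of $A_{4m-2n-1}(W,g_0)[\tfrac12]$ contains the subgroup $A(m,n)^{-1}\cdot$(something) $\subseteq \bZ[\tfrac12]$, hence has index dividing $A(m,n)$.

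I would organise the argument as: (1) reduce to $\pi_*$ of the spectrum map $\MTSpin(2n)\to \KO[\tfrac12]$ via the factorisation of Theorem \ref{factorization-theorem:even-case} and infinite loop space formalities; (2) cite Madsen--Schlichtkrull for the rational/odd-primary structure of $\MTSpin(2n)$, reducing the image computation to evaluating $\hat{\mathscr{A}}$-numbers; (3) perform the Bernoulli-number bookkeeping, using the classical formula for the order of the image of the $\hat{A}$-genus (due to Anderson--Brown--Peterson and the $J$-homomorphism denominators) to identify the bound with $A(m,n)$; (4) account for the dimension restriction that only products $\prod W_i^{4m_i}$ with $\sum 4m_i \le 2n$, i.e.\ using at most $n$ factors, contribute, which produces the $\gcd$ over partitions of length $\le n$ and yields exactly the stated $A(m,n)$.

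The main obstacle I anticipate is step (4): controlling precisely which $\hat{\mathscr{A}}$-numbers are realised by the image of $\pi_*(\MTSpin(2n))$ — as opposed to $\pi_*(\MSpin)$ — since $\MTSpin(2n)$ is a Thom spectrum of a \emph{virtual} bundle of dimension $-2n$ and its homotopy in a given degree only captures stable information about $2n$-manifolds, so one must carefully check that the realising manifolds can be taken to be $2n$-dimensional (or products respecting this bound) rather than arbitrary spin manifolds, and that the Madsen--Schlichtkrull splitting is compatible with the $\KO$-orientation $\hat{\mathscr{A}}_{2n}$ used in Theorem \ref{factorization-theorem:even-case}. Reconciling the ``$n$ factors'' combinatorics with the actual image requires matching the cell structure of $\MTSpin(2n)[\tfrac12]$ against the Bernoulli denominators, and this is where the bulk of the genuine work lies.
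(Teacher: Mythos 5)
Your high-level strategy is the same as the paper's — use Theorem~\ref{factorization-theorem:even-case} to reduce to bounding the image of the Thom-class map $\pi_*(\MTSpin(2n))[\tfrac12]\to\pi_*(\ko)[\tfrac12]$, invoke Madsen--Schlichtkrull, and do Bernoulli-number bookkeeping, with the $n$-factor bound coming from the multiplicative structure. But as written, the argument has genuine gaps of exactly the kind you flag in step (4), and some of the intermediate claims are incorrect.

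First, Madsen--Schlichtkrull is not used here as a splitting result for $\MTSpin(2n)$ away from $2$ (there is no analogue of the Anderson--Brown--Peterson splitting for $\MTSpin(d)$). What the paper actually extracts from \cite{MadsenSchlichtkrull} is the existence of a specific loop map $f:(\loopinf{+2}_0\ko)_{(p)}\to\Omega^\infty_0\MTSpin(2)_{(p)}$, built from their map $\Omega\tilde s$ and an Adams-operation diagram, such that $\Omega^\infty\lambda_{-2}\circ f$ acts on $\pi_{4m-2}$ by $(2^{2m-1}-1)\cdot\mathrm{Num}(B_m/2m)$ up to a $p$-local unit (Theorem~\ref{madsenschlichtkrull}). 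This explicit \emph{section} is the non-formal input; describing it as a structure theorem for $\MTSpin(2n)$ mischaracterises the ingredient. Second, your characterisation of the image as ``generated by the $\hat A$-numbers of $2n$-dimensional spin manifolds'' is not right: elements of $\pi_{4m-2n}(\MTSpin(2n))$ are represented by $4m$-dimensional spin manifolds $M$ equipped with a rank-$2n$ spin bundle $V$ and a stable isomorphism $V\oplus\epsilon^{4m-2n}\cong TM$ (Theorem~\ref{pontrjagin-thom-theorem}); the rank constraint is on $V$, not on $\dim M$. Third, and most importantly, what must be proved is that the image is \emph{large} — it contains a subgroup of index dividing $A(m,n)$ — and your step (4) acknowledges, rather than carries out, precisely this lower bound. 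The paper establishes it constructively: Theorem~\ref{madsenschlichtkrull} gives $J_{2,4m-2}[\tfrac12]\supseteq(2^{2m-1}-1)\mathrm{Num}(B_m/2m)\cdot\bZ[\tfrac12]$, the multiplicativity $J_{d,k}J_{e,l}\subseteq J_{d+e,k+l}$ of Corollary~\ref{containment-image-thomclass} then forces $J_{2n,4m-2n}[\tfrac12]$ to contain all the partition products, and the stabilisation $J_{d,k}\supseteq J_{d-1,k+1}$ handles higher $d$ and the odd case. Also note that in your step (1), passing through $\MSpin$ is either a tautology (the composite image equals $J_{2n,k}$) or, if used naively to bound by $\pi_*(\MSpin)\to KO_*$, discards the $n$-dependence entirely since that map is onto. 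So the real work you correctly anticipate in step (4) is exactly the content of Theorem~\ref{madsenschlichtkrull} and Corollary~\ref{containment-image-thomclass}, and the proof as sketched does not yet do it.
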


While the numbers $A(m,1) = (2^{2m-1}-1) \cdot \mathrm{Num}(\tfrac{B_{m}}{2m})$ are complicated, computer calculations (for which we thank Benjamin Young) show that $A(m,2)=1$ for $m \leq 45401$. Hence $A(m,2\ell)=1$ for $m \leq 45401\cdot \ell$, and so for $W$ a spin manifold of dimension $4\ell \geq 6$ the map
\begin{equation*}
A_{k}(W, g_0)[\tfrac{1}{2}] : \pi_{k}(\cR^+(W)_h, g_0)[\tfrac{1}{2}] \lra KO_{k+4\ell+1}(*)[\tfrac{1}{2}]
\end{equation*}
is surjective for $k < 45400 \cdot 4\ell$. (One can deduce similar ranges for manifolds whose dimensions have other residues modulo 4, cf.\ Section \ref{sec:awayfrom2}.)


\begin{rem}
In Section \ref{sec:sharp} we show that the estimate in Theorem \ref{thm:intro:awayfrom2} is approximately sharp: the quotient of $A(m,n)$ by the index of the image of the map
\begin{equation*}
\pi_{4m-2n}(\MTSpin(2n))[\tfrac{1}{2}] \lra \pi_{4m-2n-1}(\cR^+(W)_h, g_0)[\tfrac{1}{2}] \lra  KO_{4m}(*)[\tfrac{1}{2}]
\end{equation*}
has all prime factors $p$ relatively small, in the sense that $p \leq 2m+2$. Thus any prime number $q$ dividing $A(m,n)$ with $q > 2m+2$ must in fact divide the index of the image of this composition.
\end{rem}

Thirdly, we study the $p$-local homotopy type of the spaces $\cR^+(S^d)$ of positive scalar curvature metrics on spheres. In \cite{WalshH}, Walsh has shown that $\cR^+(S^d)$ admits the structure of an $H$-space, so for any prime $p$ we may form the localisation $\cR^+_\round(S^d)_{(p)}$ of the identity component of this $H$-space, that is, the component of the round metric $g_\round$. This may be constructed, for example, as the mapping telescope of the $r$th power maps on this $H$-space, over all $r$ coprime to $p$.
\begin{MainThm}\label{thm:intro:split}
Let $d \geq 6$ and $p$ be an odd prime. Then there is a map
$$f : (\loopinf{+d+1}\KO)_{(p)} \lra \cR^+_\round(S^d)_{(p)}$$
such that $(\inddiff_{g_\round})_{(p)} \circ f$ induces multiplication by $(2^{2m-1}-1)\cdot \mathrm{Num}(\frac{B_m}{2m})$ times a $p$-local unit on $\pi_{4m-d-1}$.
\end{MainThm}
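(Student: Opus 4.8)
The plan is to exhibit the desired splitting $f$ by combining the factorisation of Theorem \ref{factorization-theorem:even-case} (or its odd-dimensional analogue, for $d$ odd) with a $p$-local retraction onto a wedge summand of $\loopinf{}\MTSpin(2n)$ coming from the Madsen--Schlichtkrull splitting. First I would reduce to the sphere $W=S^d$, which is already the shape of the factorisation theorem (take $W=D^{d+1}$ with boundary $S^d$ in the odd case, or use the cobordism-invariance reductions of Section \ref{sec:additivity} to replace an arbitrary disc-bundle situation by the round sphere). Thus we have a map $\rho: \loopinf{+d+1-2n}\MTSpin(2n) \to \Riem^+(S^d)$ with $\inddiff_{g_\round}\circ\rho \simeq \Omega^{d+1-2n}\hat{\mathscr{A}}_{2n}$ (weakly), where $2n = d$ or $d-1$ according to the parity of $d$.

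Next, since $p$ is odd, I would invoke the work of Madsen--Schlichtkrull \cite{MadsenSchlichtkrull} on the $p$-local homotopy type of $\MTSpin$-spectra: after inverting $2$, $\MTSpin(2n)$ splits as a wedge in which one summand receives a map from (a shift of) $\KO$ so that the composite with $\hat{\mathscr{A}}_{2n}$ is, on homotopy, multiplication by the relevant Bernoulli-type number $(2^{2m-1}-1)\cdot\mathrm{Num}(\frac{B_m}{2m})$ up to a $p$-local unit --- this is exactly the numerology already appearing in Theorem \ref{thm:intro:awayfrom2} and its sharpness remark. Concretely, one builds a $p$-local map $s: (\loopinf{+d+1}\KO)_{(p)} \to (\loopinf{+d+1-2n}\MTSpin(2n))_{(p)}$ such that $(\Omega^{d+1-2n}\hat{\mathscr{A}}_{2n})_{(p)}\circ s$ is multiplication by $(2^{2m-1}-1)\cdot\mathrm{Num}(\frac{B_m}{2m})$ times a unit on $\pi_{4m-d-1}$. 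One must check that $s$ can be taken to be an infinite loop map, or at least a map of $H$-spaces, so that it is compatible with the $p$-localisation of $H$-spaces; this is where Walsh's $H$-space structure \cite{WalshH} on $\cR^+(S^d)$ enters, letting us localise the target component and land in $\cR^+_\round(S^d)_{(p)}$.

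The map $f$ is then defined as the composite $(\loopinf{+d+1}\KO)_{(p)} \xrightarrow{s} (\loopinf{+d+1-2n}\MTSpin(2n))_{(p)} \xrightarrow{\rho_{(p)}} \cR^+_\round(S^d)_{(p)}$, where we note $\rho$ lands in the identity component after composing with the canonical path to $g_\round$, so the localised source of $\rho$ makes sense. On homotopy groups $\pi_{4m-d-1}$ the composite $(\inddiff_{g_\round})_{(p)}\circ f$ equals $(\Omega^{d+1-2n}\hat{\mathscr{A}}_{2n})_{(p)}\circ s$ by Theorem \ref{factorization-theorem:even-case} (weak homotopy suffices since we are evaluating on spheres), which by construction is multiplication by $(2^{2m-1}-1)\cdot\mathrm{Num}(\frac{B_m}{2m})$ times a $p$-local unit, as claimed.

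The main obstacle I expect is the homotopy-theoretic bookkeeping in the second step: extracting from \cite{MadsenSchlichtkrull} a \emph{map} $s$ (not merely an abstract wedge decomposition) realising precisely the stated Bernoulli numerator on homotopy, and ensuring it is sufficiently multiplicative to interact with the $H$-space localisations --- in particular that $f$ really factors through the identity component $\cR^+_\round(S^d)_{(p)}$ and that the weak-homotopy ambiguity in Theorem \ref{factorization-theorem:even-case} does not obstruct reading off the multiplication factor on each $\pi_{4m-d-1}$. Once the relevant $p$-local splitting of $\loopinf{}\MTSpin(2n)$ is pinned down with its effect on $\hat{\mathscr{A}}_{2n}$ computed (this is essentially the input already used for Theorem \ref{thm:intro:awayfrom2}), the rest is formal.
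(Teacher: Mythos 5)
Your proposal is correct and arrives at the same theorem, but by a genuinely different route than the paper. Both arguments take the Madsen--Schlichtkrull map $(\loopinf{+2}_0\ko)_{(p)} \to \Omega^\infty_0\MTSpin(2)_{(p)}$ as the crucial input realising the Bernoulli factor, and both then stabilise along $\MTSpin(2) \to \MTSpin(2n)$ and postcompose with a factorisation of $\hat{\mathscr{A}}_{2n}$ through a space of psc metrics. Where you differ from the paper is the choice of $2n$: you use $\MTSpin(d)$ (for $d$ even) or $\MTSpin(d-1)$ (for $d$ odd), invoking Theorem \ref{factorization-theorem:even-case} or \ref{factorization-theorem:odd-case} at the dimension at hand and landing in $\cR^+(S^d)$ directly (via Proposition \ref{inheritance-of-detection}). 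The paper instead fixes $2n=6$, uses Theorem \ref{factorization-theorem:even-case} only for that case to land in $\Omega^{d-6}\cR^+_\round(S^6)$, and then applies an iterated ``dimension-increasing'' map $L$ --- a $(d-6)$-fold composition of the fibre-transport map $T$ of Theorem \ref{dimension-increasing} with the gluing map $\mu_{g_\tor}$, whose compatibility with $\inddiff$ is the content of Theorem \ref{dimension-increasing} plus the additivity theorem --- to reach $\cR^+_\round(S^d)$. Your route is more direct and avoids iterating $L$ (for odd $d$, Theorem \ref{factorization-theorem:odd-case} already invokes the machinery of Theorem \ref{dimension-increasing} once, so the total technical cost is comparable); the paper's route has the advantage of only needing the $2n=6$ case of Theorem \ref{factorization-theorem:even-case}, which has a simpler proof relying on \cite{GRW} rather than the more general \cite{GRWHomStab2}. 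Two small points you should tidy: the phrase ``take $W=D^{d+1}$ with boundary $S^d$ in the odd case'' is a slip --- you want $W=S^d$ (closed) in both parities, fed into Theorem \ref{factorization-theorem:even-case} or \ref{factorization-theorem:odd-case} respectively; and the localisation step does not need $s$ or $\rho$ to be $H$-maps --- it suffices that source and target are simple spaces (infinite loop spaces on one side, $\cR^+_\round(S^d)$ on the other by Walsh's $H$-space structure), since $p$-localisation is functorial for all maps between nilpotent spaces.
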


Thus if $p$ is an odd regular prime (i.e.\ is not a factor of $\mathrm{Num}(B_m)$ for any $m$) and in addition does not divide any number of the form $(2^{2m-1}-1)$, then there is a splitting
$$\cR^+_\round(S^d)_{(p)} \simeq (\loopinf{+d+1}_0\KO)_{(p)} \times F_{(p)},$$
where $F$ is the homotopy fibre of $\inddiff_{g_\round}$. In particular, for such primes the map induced by $\inddiff_{g_\round}$ on $\bF_p$-cohomology is injective.

At the prime $2$ we are not able to obtain such a strong splitting result, but we can still establish enough information to obtain the cohomological implication.

\begin{MainThm}\label{thm:intro:mod2}
For $d \geq 6$, the map $\inddiff_{g_\round} :\Riem^+(S^{d}) \to \loopinf{+d+1}\KO$ is injective on $\bF_2$-cohomology.
\end{MainThm}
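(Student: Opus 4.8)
The plan is to leverage Theorem~\ref{thm:intro:split} at odd primes together with a separate argument at the prime $2$, and then assemble the pieces via a Künneth/base-change argument across all primes. Concretely, $\bF_p$-cohomology of $\Riem^+(S^d)$ (taken in the identity component, which is all that matters since $\inddiff_{g_\round}$ is a map of $H$-spaces up to homotopy by Walsh \cite{WalshH} and lands in a connected infinite loop space) is the same as the $\bF_p$-cohomology of its $p$-localisation, and a map of finite type is injective on $\bF_p$-cohomology if and only if the localised map is. So it suffices to treat $\inddiff_{g_\round}$ one prime at a time. For odd primes this is exactly the last sentence of Theorem~\ref{thm:intro:split}, or rather its underlying reason: the map $f$ there provides a section up to a $p$-local unit of $(\inddiff_{g_\round})_{(p)}$ on the relevant homotopy groups, hence $(\inddiff_{g_\round})_{(p)}$ is split surjective on rational homotopy and on mod-$p$ homology in the appropriate range, which forces injectivity on $\bF_p$-cohomology. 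Thus the real content of Theorem~\ref{thm:intro:mod2} is the prime $2$.

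At the prime $2$ the plan is to go back to the geometric input, Theorem~\ref{factorization-theorem:even-case} (in the form of its iterated-loop consequence mentioned in the Remark: for all $d \geq 6$ there is a factorisation $\Omega^{d-5}\hat{\mathscr{A}}_6 : \loopinf{+d-5}\MTSpin(6) \xrightarrow{\rho} \Riem^+(S^d)_{g_\round} \xrightarrow{\inddiff_{g_\round}} \loopinf{+d+1}\KO$, or more naturally the $\MTSpin(d)$-level statement). It is enough to show that the composite $\inddiff_{g_\round}\circ\rho$ is surjective on $\bF_2$-cohomology, since then $\inddiff_{g_\round}$ must be injective on $\bF_2$-cohomology. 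Equivalently, one shows that $\hat{\mathscr{A}}_{2n} : \loopinf{}\MTSpin(2n) \to \loopinf{+2n}\KO$ (appropriately looped) is surjective on mod-$2$ cohomology. The $\bF_2$-cohomology of $\loopinf{+d+1}\KO$ is known — it is a free module over the mod-$2$ Steenrod algebra modulo the relations coming from the $\KO$ $k$-invariants, and as a Hopf algebra it is a polynomial/exterior algebra on classes detected in $KO$-homotopy by the Bott element and the $\eta$-classes — and one checks that each polynomial generator of $H^*(\loopinf{+d+1}\KO;\bF_2)$ is hit. For this one uses that the induced map on homotopy groups (or homotopy groups with $\bZ/2$ coefficients) is surjective onto $KO_{k+d+1}(*)$ by Theorem~\ref{thm:intro:htpyGps}, combined with the infinite-loop-space structure: a map of infinite loop spaces which is surjective on homotopy and whose target has mod-$2$ cohomology generated (as an algebra over the Dyer--Lashof and Steenrod operations) by primitive classes dual to homotopy is automatically surjective on $\bF_2$-cohomology. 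One must be slightly careful: $H^*(\loopinf{+d+1}\KO;\bF_2)$ is not just generated by homotopy-detected primitives, it has higher polynomial generators coming from Dyer--Lashof operations on those; so the argument is that $\rho$ (and $\hat{\mathscr{A}}_{2n}$) is a map of infinite loop spaces, hence commutes with all homology operations, so hitting the primitive generators on $\pi_*$ (plus a surjectivity statement on the bottom homotopy of each $K$-theory Postnikov section) forces hitting all of $H^*$.

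The main obstacle I expect is the prime-$2$ cohomology computation of $\loopinf{+d+1}\KO$ and the verification that the relevant classes are indeed hit: one needs an explicit enough hold on $H^*(\loopinf{+d+1}\KO;\bF_2)$ as an algebra over the Steenrod algebra (this is classical, following from the $\KO$ Postnikov tower and the known $H^*$ of the relevant Eilenberg--MacLane spaces, but bookkeeping the $k$-invariants and the resulting algebra structure is delicate), together with the input that $\inddiff_{g_\round}$ is surjective on homotopy groups with $\bZ/2$-coefficients in the necessary range. A cleaner route, which I would pursue in parallel, is to reduce everything to the statement that the map of \emph{spectra} $\MTSpin(2n) \to \Sigma^{-2n}\MSpin \xrightarrow{\hat{\mathscr{A}}} \Sigma^{-2n}\KO$ — or rather the relevant delooping of $\rho$ followed by this — is \emph{surjective on mod-$2$ (co)homology of the associated infinite loop spaces}; for this one can invoke that $\hat{\mathscr{A}} : \MSpin \to \KO$ is known to be surjective on mod-$2$ homology (indeed $\KO$ is a wedge summand of $\MSpin$ after $2$-completion by Anderson--Brown--Peterson), so it is really the surjectivity of (the looped) $\rho$ composed with the split inclusion that must be checked, and this again comes down to Theorem~\ref{thm:intro:htpyGps} plus the infinite-loop-space formalism. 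The residual difficulty is matching up the two looping conventions (the $\Omega^{d-5}$ in the Remark versus the direct $\loopinf{+d+1}$ statement) so that the $H$-space structure used for localisation agrees with Walsh's; but this is exactly the kind of point the authors will have dealt with when proving Theorem~\ref{thm:intro:split}, and the same bookkeeping applies.
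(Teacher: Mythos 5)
Your overall strategy at the prime $2$ matches the paper's: use Theorems \ref{factorization-theorem:even-case} and \ref{factorization-theorem:odd-case} to factor $\loopinf{+j}\kothom{2n}$ through $\inddiff_{g_\round}$, then show $\loopinf{+j}\kothom{2n}$ is surjective on $\bF_2$-homology (equivalently, injective on $\bF_2$-cohomology over a field) by combining the infinite-loop structure with the Dyer--Lashof algebra and the surjectivity on homotopy from Theorems~\ref{mod2-surjectivity-htpy-thy} and~\ref{integral-surjectivity}. This is the paper's Proposition~\ref{surjectivity-mod2-homology}.

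A few points need fixing, though. First, the preamble about decomposing prime-by-prime and invoking Theorem~\ref{thm:intro:split} at odd primes is irrelevant: the statement is purely about $\bF_2$-cohomology, and you correctly observe this mid-paragraph but only after a detour. Second, there is a logical slip: you write that showing $\inddiff_{g_\round}\circ\rho$ is ``surjective on $\bF_2$-cohomology'' would give injectivity of $\inddiff_{g_\round}^*$; this does not follow. What you need is that $(\inddiff_{g_\round}\circ\rho)^*$ is \emph{injective} on $\bF_2$-cohomology (equivalently, $(\inddiff_{g_\round}\circ\rho)_*$ is surjective on $\bF_2$-homology), and then injectivity of $\inddiff_{g_\round}^*$ follows by factoring the pullback. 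Third, you assert that ``$\rho$ $\ldots$ is a map of infinite loop spaces''; it is not, and you do not need it to be. The only infinite-loop structure that matters is on $\Omega^\infty\kothom{d}$, which is the infinite loop map of a spectrum map; that is what makes its $\bF_2$-homology image closed under Dyer--Lashof operations, and since $\Omega^\infty\kothom{d}$ factors through $\inddiff_{g_\round}$, surjectivity of the former on homology already forces surjectivity of the latter.

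The last and most substantive gap is in your description of $H_*(\loopinf{+k}\ko;\bF_2)$. You vaguely describe a ``polynomial/exterior algebra on classes detected in $KO$-homotopy by the Bott element and the $\eta$-classes'' together with higher Dyer--Lashof generators, and you admit uncertainty about which classes need to be hit. The paper uses a much cleaner input: $H_*(\Omega^k(\bZ\times BO);\bF_2)$ is \emph{monogenic} as an algebra over the Dyer--Lashof algebra, generated by a single class (the component class when $\Omega^k(\bZ\times BO)$ is disconnected, and otherwise the lowest nonvanishing reduced homology class). This is due to Kochman and Priddy. With this, only one class needs to be in the Hurewicz image of $\loopinf{+j}\kothom{2n}$, and that is exactly what Theorems~\ref{mod2-surjectivity-htpy-thy} and~\ref{integral-surjectivity} provide. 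Without this monogenicity statement your argument has a real gap, since the bookkeeping of ``which classes are hit'' would otherwise be substantial. (Your alternative via the Anderson--Brown--Peterson splitting of $\MSpin$ is also not coherent as stated: one needs $\loopinf{+j}\kothom{2n}$ itself surjective on $\bF_2$-homology, not merely the map $\hat{\mathscr{A}}:\MSpin\to\KO$, and the composite ``$\rho$ composed with the split inclusion'' does not even typecheck.)
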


\subsection*{Acknowledgements}
The authors would like to thank Mark Walsh for 
helpful discussions concerning surgery results for psc metrics.
An early draft of the paper had intended to contain a detailed
appendix written by Walsh on this subject; later we found a way to prove the main results without using these delicate geometric arguments. The appendix grew instead into a separate paper, \cite{Walsh4}, which is of independent interest.

The authors would also like to thank Don Zagier and Benjamin Young for their interest and advice regarding the numbers $A(m,2)$ arising in Theorem \ref{thm:intro:awayfrom2}.

\section{Spaces of metrics of positive scalar curvature}\label{chap:pscspaces}

We begin this chapter by precisely defining the spaces which we shall study, in Section \ref{sec:defnpsc}. In the other sections, we survey results on spaces of positive scalar curvature metrics which we shall need later. Section \ref{sec:basic-constructions-psc} provides technical but mostly elementary results, for later reference. 
The most important result, to be discussed in Section \ref{sec:bordismtheorem}, is the cobordism invariance theorem of Chernysh and Walsh and the hasty reader can jump directly to that section.

\subsection{Definitions}\label{sec:defnpsc}
Let $W: M_0 \rightsquigarrow M_1$ be a cobordism between closed manifolds. For us, a cobordism will always be a morphism in the cobordism category in the sense of \cite[\S 2.1]{GMTW}. In particular, the boundary of $W$ is collared: for some $a_0 < c_0 < c_1 <a_1 \in \bR$, there is an embedding $b=b_0 \sqcup b_1: [a_0,c_0] \times M_0 \sqcup [c_1, a_1] \times M_1 \to W$ which induces the canonical identification of $\{a_i\}\times M_i$ with $M_i$.
Let $\Gamma (W;\Sym^2 (TW))$ be the space of smooth symmetric $(0,2)$-tensor fields on $W$.
This is a Frech\'et topological vector space; the topology is generated by the maximum
norms $\| \nabla^k u \|_{C^0}$, where $u \in \Gamma (W;\Sym^2 (TW))$,
and the gradients are taken with respect to any fixed reference metric on $W$; the topology so defined does not depend on the specific choice of this reference metric.

For $\epsilon_i>0$ small enough, we write $\Riem (W)^{\epsilon_0,\epsilon_1}\subset \Gamma (W; \Sym^2(TW))$ for the subspace of all Riemannian metrics $g$ on $W$ for which there exist Riemannian metrics $h_i$ on $M_i$ such that $b_i^*(g) = h_i+ dt^2$ on the collars $[a_0,a_0+\epsilon_0] \times M_0$ and $[a_1-\epsilon_1,a_1] \times M_1$. This is convex and hence contractible.
The subspace $\Riem^+(W)^{\epsilon_0,\epsilon_1} \subset \Riem (W)^{\epsilon_0,\epsilon_1}$ of metrics with positive scalar curvature is open. 
Any metric $g$ on $W$ induces a metric $h$ on $\partial W$ by restriction, and if $g$ is of product form $g=h+ dt^2$ on the collar and has positive scalar curvature then $h$ also has positive scalar curvature. This yields a
continuous map
$$\res: \Riem^+ (W)^{\epsilon_0,\epsilon_1} \lra \Riem^+ (M_0)\times \Riem^+ (M_1).$$
For a pair $(h_0,h_1)\in \Riem^+ (M_0)\times \Riem^+ (M_1)$ we write
$$\Riem^+(W)^{\epsilon_0,\epsilon_1}_{h_0,h_1}:= \res^{-1}(h_0,h_1) \subset \Riem^+(W)^{\epsilon_0,\epsilon_1}.$$
In plain language: this is the space of all positive scalar curvature metrics whose restriction to the collar around $M_i$ coincides with $h_i + dt^2 $.

Let us write
$$r: \Gamma (W; \Sym^2(TW)) \lra \Gamma ([a_0,a_0+\epsilon_0] \times M_0 \sqcup [a_1-\epsilon_1,a_1] \times M_1; \Sym^2 (TW))$$
for the restriction map. Note that $\Riem^+(W)^{\epsilon_0,\epsilon_1}_{h_0,h_1}$ is an open subspace of the space $r^{-1} (h_0+dt^2 \sqcup h_1+dt^2)$, and that the latter is homeomorphic to a Frech\'et space. Therefore $\Riem^+ (W)_{h_0,h_1}^{\epsilon_0,\epsilon_1}$ is a metric space, and hence paracompact. From \cite[Theorem 13]{PalaisInfMan} and \cite[Proposition A.11]{Hatcher}, it follows that $\Riem^+ (W)_{h_0,h_1}^{\epsilon_0,\epsilon_1}$ has the homotopy type of a CW complex. 

From now on, we abbreviate 
$$\Riem^+ (W):= \Riem^+ (W)^{\epsilon_0,\epsilon_1} \quad \text{and}\quad \Riem^+ (W)_{h_0,h_1} :=\Riem^+ (W)_{h_0,h_1}^{\epsilon_0,\epsilon_1}$$
for implicitly fixed values of $\epsilon_i$. In Lemma \ref{collar-stretching} below we will show that the homotopy type of this space does not depend on $\epsilon_i$, which justifies this short notation. 

If one of the manifolds $M_i$ is empty then we write $\Riem^+ (W)_h$, where $h$ is the boundary metric. 

Let $W: M_0 \rightsquigarrow M_1$ and $W': M_1 \rightsquigarrow M_2$
be two cobordisms and $W\cup W'=W\cup_{M_1} W'$ be their composition. Let $h_i \in \Riem^+ (M_i)$, $i=0,1,2$, be
given. Then there is a gluing map
$$\mu: \Riem^+ (W)_{h_0,h_1} \times \Riem^+ (W')_{h_1,h_2} \lra \Riem^+ (W \cup W')_{h_0,h_2},$$
where the metric $\mu(g,g')$ is defined to agree with $g$ on $W$ and with $g'$ on $W'$. If we fix $g' \in \Riem^+ (W')_{h_1,h_2}$, then we obtain a map
\begin{align*}
\mu_{g'}: \Riem^+ (W)_{h_0,h_1} & \lra \Riem^+ (W \cup W')_{h_0,h_2}\\
g & \longmapsto \mu (g,g')
\end{align*}
by gluing in the metric $g'$. Sometimes we abbreviate $g \cup g':= \mu(g,g')$.

\subsection{Some basic constructions with psc metrics}\label{sec:basic-constructions-psc}

\subsubsection{Collar stretching}

\begin{lem}\label{collar-stretching}
For $0<\delta_i\leq \epsilon_i<|a_i-c_i|$, the inclusion $\Riem^+ (W)_{h_0,h_1}^{\epsilon_0,\epsilon_1}\hookrightarrow \Riem^+ (W)_{h_0,h_1}^{\delta_0,\delta_1}$ is a homotopy equivalence.
\end{lem}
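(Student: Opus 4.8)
The plan is to construct an explicit deformation retraction of $\Riem^+(W)_{h_0,h_1}^{\delta_0,\delta_1}$ onto $\Riem^+(W)_{h_0,h_1}^{\epsilon_0,\epsilon_1}$ by ``stretching the collar''. First I would introduce, for each parameter $s \geq 0$, a diffeomorphism $\phi_s : W \to W$ which is the identity outside a neighbourhood of the collar $b_0([a_0,c_0]\times M_0)$ and which, on that collar, is a reparametrisation of the $[a_0,c_0]$-coordinate pushing the point $a_0$ further in (and symmetrically on the $M_1$-collar). Concretely one picks a smooth family of orientation-preserving embeddings $\psi_s : [a_0,c_0] \to [a_0,c_0]$ with $\psi_0 = \mathrm{id}$, with $\psi_s$ equal to the identity near $c_0$, and with $\psi_s$ linear with small slope near $a_0$, so that as $s$ increases the preimage $\psi_s^{-1}([a_0, a_0+\delta_0])$ grows to contain $[a_0, a_0+\epsilon_0]$. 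Pulling back a metric $g$ of the form $h_0 + dt^2$ on $[a_0,a_0+\delta_0]\times M_0$ by $\phi_s$ keeps it a psc metric (pullback by a diffeomorphism preserves positivity of scalar curvature) and, for $s$ large enough depending only on the ratio $\epsilon_0/\delta_0$, it becomes a product $h_0 + dt^2$ on the larger collar $[a_0,a_0+\epsilon_0]\times M_0$ — precisely because near $a_0$ the map $\psi_s$ is linear and a product metric pulls back under a linear rescaling of $t$ and a constant rescaling to another product metric after reparametrising; this is where one must be slightly careful, and I address it in the next paragraph.

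The subtlety is that pulling back $h_0 + dt^2$ by a rescaling $t \mapsto \lambda t$ gives $h_0 + \lambda^2\, dt^2$, which is not literally $h_0 + dt^2$. The standard fix is to take $\psi_s$ near $a_0$ to be a \emph{translation} rather than a genuine rescaling: that is, $\psi_s$ is the identity with slope $1$ on an initial segment $[a_0, a_0 + \epsilon_0]$ (so that the product form is exactly preserved there) and then interpolates smoothly to the identity before reaching $c_0$; the ``stretching'' is done by having $\psi_s$ map $[a_0, a_0+\epsilon_0]$ isometrically onto a segment that is pushed forward as $s$ grows, with the compression absorbed in the region where $g$ is not assumed to be a product. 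One checks that $\psi_s$ can be chosen to be a diffeomorphism of $[a_0,c_0]$ for every finite $s$ (it is a reparametrisation, injective by construction), so $\phi_s \in \Diff(W)$ and $\phi_s$ fixes $M_0$ and $M_1$ pointwise. Since $\phi_0 = \mathrm{id}$, the assignment $g \mapsto \phi_s^* g$ is a homotopy starting at the identity of $\Riem^+(W)_{h_0,h_1}^{\delta_0,\delta_1}$; for $s \geq s_0$ (with $s_0$ depending only on the $\epsilon_i,\delta_i$) it lands in the subspace $\Riem^+(W)_{h_0,h_1}^{\epsilon_0,\epsilon_1}$, and it is constant on that subspace since there $g$ is already a product on the larger collar and $\phi_s$ is the identity on the support of the non-product part — actually, to make it a genuine deformation retraction one arranges $\phi_{s}$ to be the identity on all of $b_0([a_0, a_0+\epsilon_0]\times M_0) \cup b_1(\cdots)$, so that $\phi_s^* g = g$ for $g \in \Riem^+(W)^{\epsilon_0,\epsilon_1}_{h_0,h_1}$ and all $s$.

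Combining the homotopy $H(g,s) = \phi_s^* g$ (reparametrised so $s$ runs over $[0,1]$) with the inclusion $\iota : \Riem^+(W)_{h_0,h_1}^{\epsilon_0,\epsilon_1} \hookrightarrow \Riem^+(W)_{h_0,h_1}^{\delta_0,\delta_1}$ exhibits $\iota$ as a deformation retract, hence a homotopy equivalence. Finally, the same construction applied with the $\Riem^+(W)^{\epsilon}$ replaced by $\Riem^+(W)$ (the colimit) and compatibly across all collar widths shows the stated consequence that the colimit is weakly equivalent to each finite stage. The main obstacle, as indicated, is purely the bookkeeping in choosing the family $\psi_s$: one needs it simultaneously to (a) be a diffeomorphism for each $s$, (b) fix the $\epsilon_i$-collars pointwise, (c) preserve the product form exactly where it is assumed, and (d) eventually pull any $\delta_i$-product metric back to an $\epsilon_i$-product metric — and all of (a)--(d) while depending smoothly on $s$. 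None of this is deep, but it is the step that requires care, and I would present the function $\psi_s$ explicitly (e.g. built from a fixed smooth bump function) to make the verification routine.
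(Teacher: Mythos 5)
Your plan is close in spirit to the paper's, but the pullback step cannot work as you describe, and, as you half-sense in the phrase ``this is where one must be slightly careful,'' this is exactly where the real content lies. For $\phi_s^*g$ to be of the form $h_0+dt^2$ on $[a_0,a_0+\epsilon_0]\times M_0$ when $g$ is only known to have this form on $[a_0,a_0+\delta_0]\times M_0$, the map $\phi_s$ would have to be a unit-speed $t$-translation on the whole $\epsilon_0$-collar (any other slope produces $h_0+(\psi_s')^2dt^2$), say $\psi_s(t)=t+c$ with $c\ge 0$; but then $\psi_s([a_0,a_0+\epsilon_0])=[a_0+c,a_0+c+\epsilon_0]$ would need to lie inside $[a_0,a_0+\delta_0]$, which forces $c+\epsilon_0\le\delta_0$ and is impossible for $\delta_0<\epsilon_0$. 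If instead $c=0$, then $\psi_s=\mathrm{id}$ on the $\epsilon_0$-collar and $\phi_s^*g=g$ there, which is unknown on $(a_0+\delta_0,a_0+\epsilon_0]$; if $c>0$, then $\psi_s(a_0)=a_0+c>a_0$, so $\psi_s$ is \emph{not} a diffeomorphism of $[a_0,c_0]$ (contrary to your claim) and does not fix the $\epsilon_0$-collar pointwise (contrary to your third paragraph). In short, pullback by any self-map of $W$ can only rearrange $g$; it cannot manufacture fresh product-collar material on the part of the $\epsilon_0$-collar where $g$ is unconstrained, so your conditions (a), (c), and (d) are jointly impossible.

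The paper's proof uses the same collar-stretching idea but with the one additional ingredient your proposal is missing: the map $H_s$ is a \emph{self-embedding} of $W$ (not a diffeomorphism) that pushes everything in the positive $t$-direction, exposing a thin shell near the boundary, and the homotopy $F_s$ is defined \emph{piecewise}, gluing the cylinder metric $dt^2+h$ onto the collar $[0,H_s(\delta)]\times M_0$ and using the suitably reparametrised $g$ on the complement, with the smooth matching at $t=H_s(\delta)$ arranged by imposing $H_s'=1$ near $\delta$. That glued-in cylinder piece is what supplies the longer product collar. The resulting $F_1$ is a two-sided homotopy inverse to the inclusion (not a strict deformation retraction, which, as the argument above shows, one should not expect). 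If you want to repair your write-up, keep your $\psi_s$ as an honest embedding pushing the boundary inward and add the explicit gluing of $h_0+dt^2$ on the exposed collar; your remaining conditions (a)--(d) then become consistent, and the verification becomes the routine bookkeeping you anticipated.
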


\begin{proof}
For typographical simplicity, we assume that $a_0=0$, $M_1 = \emptyset$ and write $(\epsilon,\delta,c,h):=(\epsilon_0,\delta_0,c_0,h_0)$. Let $H_s : \bR \to \bR$, $s \in [0,1]$, be an isotopy such that 

\begin{enumerate}[(i)]
\item $H_0=\id$,  
\item\label{it:collar-stretching:2} $H_s = \id$ near $[c,\infty)$ and near $(-\infty,0]$,
\item $(H_s)' =1$ near $\delta$,
\item $H_1 (\delta)=\epsilon$ and 
\item $H_s \leq H_u$ for $s \leq u$. 
\end{enumerate}
This induces an isotopy of embeddings, also denoted $H_s$, of the collar to itself, and by condition (\ref{it:collar-stretching:2}) also of $W$ into itself. Define a homotopy $F_s : \Riem^+ (W)^{\delta}_{h} \to \Riem^+ (W)^{\delta}_{h}$ by the formula
$$F_s (g):=
\begin{cases}
dt^2 + h & \text{ on $[0,H_s (\delta)]$},\\
(H_s)^* g & \text{ elsewhere. }
\end{cases}$$

By construction, $F_s$ maps the subspace $\Riem^+ (W)_{h}^{\epsilon}$ to itself, $F_0$ is the identity and $F_1$ maps $\Riem^+ (W)_{h}^{\delta}$ into $\Riem^+ (W)_{h}^{\epsilon}$. This proves that $F_1$ is a two-sided homotopy inverse of the inclusion, as claimed.
\end{proof}

Lemma \ref{collar-stretching} has the following immediate consequence.

\begin{cor}\label{cor:collarstretching}\mbox{}
\begin{enumerate}[(i)]
\item The map $\Riem^+ (W)_{h_0,h_1}^{\epsilon_0,\epsilon_1}\to \underset{\epsilon_0,\epsilon_1 \to 0}\colim \Riem^+ (W)_{h_0,h_1}^{\epsilon_0,\epsilon_1}$ is a weak homotopy equivalence.
\item\label{it:CollarStretching} For $a_2 >a_1$ and $h_1 \in \Riem^+ (M_1)$, let $g=dt^2 + h_1 \in \Riem^+ ([a_1,a_2] \times M_1)$. The gluing map $\mu_{g}: \Riem^+ (W)_{h_0,h_1} \to \Riem^+ (W \cup ([a_1,a_2] \times M_1))_{h_0,h_1} $ is a homotopy equivalence.
\end{enumerate}
\end{cor}

\subsubsection{The quasifibration theorem}

Let $W$ be a manifold with collared boundary $M$ and $\res: \Riem^+ (W) \to \Riem^+ (M)$ be the restriction map. For $h \in \Riem^+ (M)$, the geometric fibre $\res^{-1} (h)$ is the space $\Riem^+ (W)_{h}$ while the homotopy fibre $\hofib_h (\res)$ is the space of pairs $(g,p)$, with $g \in \Riem^+ (W)$ and $p$ a continuous path in $\Riem^+ (M)$ from $\res(g)$ to $h$. Inside the homotopy fibre, we have the space $(\hofib_h (\res))_{C^{\infty}}$, which is defined by the condition that $p$ has to be a smooth path. The inclusion $i:(\hofib_h (\res))_{C^{\infty}} \to \hofib_h (\res)$ is a homotopy equivalence \cite[Lemma 2.3]{Chernysh2}. Chernysh constructs a map 
$$S': (\hofib_h (\res))_{C^{\infty}} \lra \Riem^+ (W)_{h}^{\epsilon}$$
roughly as follows: pick an embedding $j: W \to W$ onto the complement of a collar $[0,1] \times \partial W \subset W$, then the metric $S'(g,p)$ is defined to be $(j^{-1})^* g$ on the image of $j$, and a suitably tempered form of the metric $dt^2 + p_t$ on the collar (the metric $dt^2 + p_t$ has in general neither positive scalar curvature nor a product form near the boundary $\{0,1\}\times \partial W$, but Chernysh shows how to carefully modify it to have these properties). Chernysh proves that $S'$ is a two-sided homotopy inverse to the fibre inclusion $\Riem^+ (W)_{h} \to(\hofib_h (\res))_{C^{\infty}}$ \cite[Lemma 2.2]{Chernysh2}. By inverting $i$, we obtain a homotopy class of map $S:\hofib_h (\res)\to \Riem^+ (W)_{h} $. 

\begin{thm}{\rm (Chernysh \cite{Chernysh2})}\label{chernysh-thm:qfibr}
\begin{enumerate}[(i)]
\item The map $S$ is a two-sided homotopy inverse to the fibre inclusion $\Riem^+ (W)_{h} \to \hofib_h (\res)$.
\item In particular, the restriction map $\res: \Riem^+ (W) \to \Riem^+(M)$ is a quasifibration.
\end{enumerate}
\end{thm}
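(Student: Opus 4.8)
The plan is to bootstrap the statement from Chernysh's finite-collar results recalled in the discussion preceding the theorem, by passing to the colimit $\epsilon \to 0$ defining $\Riem^+(W)$ and $\Riem^+(W)_h$. Write $\res^\epsilon : \Riem^+(W)^\epsilon \to \Riem^+(M)$ for the restriction map at the finite collar stage. At that stage, \cite[Lemma 2.2]{Chernysh2} asserts that $S'$ is a two-sided homotopy inverse to the fibre inclusion $\Riem^+(W)_h^\epsilon \hookrightarrow (\hofib_h(\res^\epsilon))_{C^\infty}$, and \cite[Lemma 2.3]{Chernysh2} asserts that $i : (\hofib_h(\res^\epsilon))_{C^\infty} \hookrightarrow \hofib_h(\res^\epsilon)$ is a homotopy equivalence; composing, the fibre inclusion $\Riem^+(W)_h^\epsilon \hookrightarrow \hofib_h(\res^\epsilon)$ is a homotopy equivalence whose inverse is the finite-stage model $i^{-1}\circ S'$ of $S$. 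So part (i) holds at every finite stage, and it remains to compare finite stages with the colimit.

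For this comparison I would consider the commuting square whose horizontal maps are the fibre inclusions $\Riem^+(W)_h^\epsilon \hookrightarrow \hofib_h(\res^\epsilon)$ and $\Riem^+(W)_h \hookrightarrow \hofib_h(\res)$, and whose vertical maps are the canonical comparisons $\Riem^+(W)_h^\epsilon \to \Riem^+(W)_h$ and $\hofib_h(\res^\epsilon) \to \hofib_h(\res)$. The left vertical map is a weak homotopy equivalence by Corollary \ref{cor:collarstretching}(i) (equivalently Lemma \ref{collar-stretching}). Since $\Riem^+(W) = \colim_\epsilon \Riem^+(W)^\epsilon$ with structure maps that are homotopy equivalences by Lemma \ref{collar-stretching}, and since the homotopy fibre over a fixed basepoint is built from $E\mapsto E\times_B P_h B$ and hence commutes with this filtered colimit, one gets $\hofib_h(\res) = \colim_\epsilon \hofib_h(\res^\epsilon)$ with all comparison maps weak homotopy equivalences; in particular the right vertical map is one. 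As the top horizontal map is a homotopy equivalence, the bottom fibre inclusion $\Riem^+(W)_h \hookrightarrow \hofib_h(\res)$ is a weak homotopy equivalence, and since $S$ is by construction the composite of $S'$ with $i^{-1}$ and the (inverse of the) colimit comparison, it is the resulting two-sided homotopy inverse. This proves (i).

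Part (ii) is then formal: a map $p : E \to B$ is a quasifibration precisely when, for every $b \in B$, the canonical comparison map from the geometric fibre $p^{-1}(b)$ to the homotopy fibre $\hofib_b(p)$ is a weak homotopy equivalence. Applying this to $p = \res$, statement (i) is exactly this condition at every basepoint $h \in \Riem^+(M)$, so $\res$ is a quasifibration. The only point requiring genuine care in the argument above is the interchange of the homotopy fibre with the colimit over $\epsilon \to 0$ and the identification of the resulting weak equivalence with the specific map $S$; the substantive geometry — Chernysh's explicit gluing construction of $S'$ and the proof that it inverts the fibre inclusion at fixed collar width — is imported wholesale, and that is where essentially all the difficulty resides.
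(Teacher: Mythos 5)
Your proposal takes essentially the same route the paper intends: the paper cites Chernysh's Lemmas 2.2 and 2.3 for the fixed-collar statement, describes $S$ as being obtained "by inverting $i$ and passing to the colimit $\epsilon\to 0$", and states the theorem as a consequence. You have fleshed out precisely that colimit/comparison step (which the paper leaves as a remark), and your reduction of (ii) to (i) via the characterisation of quasifibrations by the fibre-to-homotopy-fibre comparison is the standard one. Correct, same approach.
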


\subsection{The cobordism theorem}

\subsubsection{Standard metrics}
On the disc $D^d$, fix a collar of its boundary $S^{d-1} \subset D^d$  by the formula
\begin{align*}
b : S^{d-1} \times (-1,0] &\lra D^d\\
(v, t) &\longmapsto (1+t)\cdot v.
\end{align*}
We assume that disks are always equipped with this collar.

On the sphere $S^d$, let $g_{\round}^d = g_{\round} \in \Riem^{+} (S^d)$ and $h_{\round}^{d-1} = h_{\round} \in \Riem^{+} (S^{d-1})$ be the
ordinary metrics of Euclidean spheres of radius $1$ (of positive scalar curvature as long as the sphere has dimension at least 2)\footnote{The notation $g$ versus $h$ carries no mathematical meaning, but we typically use $g$'s for metrics on a cobordism and $h$'s for metrics on the boundary of a cobordism.}. Let $g_{\hemi}^d$ be the
metric on $D^d$ which comes from identifying $D^d \subset \bR^d$ with the lower
hemisphere of $S^d \subset \bR^{d+1}$ via 
\begin{align*}
D^d &\lra S^d\\
x &\longmapsto (x, - \sqrt{1-|x|^2})
\end{align*}
and taking $g^d_\round$ under this identification. (Note that $g_{\hemi}^d$ does not have a product form near the boundary of $D^d$.)

We say that a rotation-invariant psc-metric $g$ on $D^d$ is a \emph{torpedo metric} if 
\begin{enumerate}[(i)]
\item $b^*(g)$ agrees with the product metric $h_{\round}^{d-1} + dt^2$ near $S^{d-1} \times \{0\}$,
\item $g$ agrees with $g_{\hemi}^d$ near the origin.
\end{enumerate}
We fix a torpedo metric $g_{\tor}^{d}$ on $D^d$ once and for all (for
each $d \geq 3$). In \cite[\S 2.3]{Walsh01}, it is proved that $g_{\tor}^d$ can be chosen to have the following extra property: the metric on
$S^d$ obtained by gluing together two copies of $g_{\tor}^d$ on the
upper and lower hemispheres is isotopic to $g_{\round}^d$. 
Such a metric on $S^d$ will be called a \emph{double torpedo metric} and denoted by
$g^d_{\dtor}$.

\subsubsection{Spaces of metrics which are standard near a submanifold}

Let $W$ be a compact manifold of dimension $d$ with boundary $M$, equipped with a collar $b: M \times (-1,0] \to W$. Let $X$ be a closed $(k-1)$-dimensional manifold and $\phi :  X^{k-1} \times D^{d-k+1} \to W^d$ be an embedding, and suppose that $\phi$ and $b$ are disjoint. Let $g_X \in \Riem (X)$ be a Riemannian metric, not necessarily of positive scalar curvature. However, we assume that the metric $g_X + g_{\tor}^{d-k+1}$ on $X \times D^{d-k+1}$ has positive scalar curvature (this is the case for example if $g_X$ has non-negative scalar curvature). Fix $h \in \Riem^+ (M)$ and let 
$$\Riem^+ (W ; \phi, g_X)_h \subset \Riem^+ (W)_h$$
be the subspace of those metrics $g$ such that $\phi^* g = g_X + g_{\tor}^{d-k+1}$. We call this the \emph{space of psc metrics on $W$ which are standard near $X$}. One of the main ingredients of the proof of Theorem \ref{factorization-theorem:even-case} is the following result, due to Chernysh \cite{Chernysh}. A different proof was later given by Walsh \cite{WalshC}.

\begin{thm}\label{chernysh-walsh-theorem}{\rm (Chernysh, Walsh)}
If $d -k+1 \geq 3$, then the inclusion map 
$$
\Riem^+(W ; \phi, g_X)_{h} \lra \Riem^+(W)_{h}
$$
is a homotopy equivalence.
\end{thm}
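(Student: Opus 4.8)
The plan is to deduce the result from a parametrised form of the Gromov--Lawson surgery technique \cite{GL2}. Write $\cR^+ := \Riem^+(W)_{h}$ and $\cR^+_{\std} := \Riem^+(W;\phi,g_X)_{h}$. Since we only need a weak homotopy equivalence, it is enough to show that $\cR^+_{\std}$ meets every path component of $\cR^+$ and that the relative homotopy groups $\pi_{j}(\cR^+,\cR^+_{\std})$ vanish for all $j \geq 1$; the long exact sequence of the pair then finishes the argument. In geometric terms we must show: given a compact family $(g_{b})_{b \in D^{j}}$ of psc metrics on $W$ whose members over $S^{j-1}$ are already standard near $X$, the whole family can be deformed through psc metrics --- keeping the members over $S^{j-1}$ standard near $X$ throughout --- to a family all of whose members are standard near $X$.

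The geometric heart of this is the Gromov--Lawson construction performed fibrewise over the parameter space. For a single psc metric $g$ on $W$ it produces a path $t \mapsto g_{t}$ of psc metrics with $g_{0} = g$ and $\phi^{*}g_{1} = g_{X} + g^{d-k+1}_{\tor}$, in two stages. First one bends $g$ inside a tubular neighbourhood of $\phi(X \times \{0\})$ so that, within a concentric smaller tube, it becomes a Riemannian product near the boundary sphere-bundle; the error terms introduced into the scalar curvature by this bending are controlled, and are forced to stay positive precisely by the hypothesis $d-k+1 \geq 3$, i.e.\ by the normal disc having dimension at least $3$. Second, one replaces the radial and base directions of the resulting product region by a thin torpedo and by $g_{X}$, respectively, using the elementary fact that $g_{Y} + \tau$ has positive scalar curvature for any fixed metric $g_{Y}$ on a closed manifold whenever $\tau$ is a sufficiently thin torpedo metric --- uniformly in $g_{Y}$ over compact sets; this uniformity also licenses a convex interpolation from $g|_{X}$ to the prescribed $g_{X}$ (which, recall, is itself allowed to have non-positive scalar curvature), and a final rescaling and isotopy matches the outcome to the fixed torpedo $g^{d-k+1}_{\tor}$.

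To globalise one checks that all the choices involved (how far to bend, how thin to take the torpedo) can be made uniformly over a compact parameter space, so that the construction depends continuously on the family and yields, for any such space, a homotopy through psc metrics terminating in a family standard near $X$. Applied with parameter space $S^{j}$ this already shows the inclusion is surjective on $\pi_{j}$, and with parameter space a point it shows $\cR^+_{\std}$ meets every component of $\cR^+$. The genuinely delicate point --- and the reason this is a theorem of Chernysh \cite{Chernysh} and Walsh \cite{WalshC} rather than a formal consequence of \cite{GL2} --- is that the construction must be \emph{compatible with the subspace} $\cR^+_{\std}$: it should leave fixed, throughout the whole deformation and not merely at the endpoint, any member of the family that is already standard near $X$. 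Granting this, a map of pairs $f\colon (D^{j},S^{j-1}) \to (\cR^+,\cR^+_{\std})$ is carried by the construction to a homotopy of pairs (it is constant on $S^{j-1}$, hence stays in $\cR^+_{\std}$ there, and it fixes the basepoint), whence $\pi_{j}(\cR^+,\cR^+_{\std}) = 0$. Arranging the bending and torpedo-insertion steps to act as the identity on $\cR^+_{\std}$ --- for instance by first deforming every metric into a normal form near the tube (a product region abutting a small concentric sphere-bundle), a deformation which is the identity on metrics already in that form, in particular on $\cR^+_{\std}$ because the fixed torpedo $g^{d-k+1}_{\tor}$ has precisely this shape near $\partial D^{d-k+1}$ --- is where the real work lies, and I expect this bookkeeping, together with the attendant uniformity estimates, to be the main obstacle. (Walsh's proof organises the same geometric input differently, via the notion of Gromov--Lawson cobordisms and their composability, but the essential content is the same.)
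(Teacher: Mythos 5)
The paper does not actually prove this theorem: it states it as a result of Chernysh \cite{Chernysh} and Walsh \cite{WalshC}, and the only remark it offers in lieu of a proof is that those authors work with closed $W$ but the deformations involved take place inside a tubular neighbourhood of $X$, so the argument applies verbatim when $W$ has boundary (with boundary condition $h$ held fixed). So there is no ``paper's own proof'' to compare your proposal against, only a citation.

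Your proposal is a reasonable outline of what the cited proofs do: reduce to vanishing of relative homotopy groups, run a parametrised version of the Gromov--Lawson bending and torpedo-insertion, note that codimension $d-k+1 \geq 3$ is what controls the scalar-curvature error terms, and observe that the genuine difficulty is making the whole deformation \emph{relative} to the subspace $\cR^+_{\std}$ rather than merely landing in it at time $1$. That last observation is exactly right, and correctly identifies why this is a theorem of Chernysh and Walsh rather than a formal corollary of \cite{GL2}. But you then say explicitly that arranging this relative compatibility, together with the uniformity estimates over compact parameter spaces, ``is where the real work lies,'' and you leave it. The one concrete suggestion you make for handling it --- first deform every metric into a ``normal form near the tube'', a step which is supposed to be the identity on $\cR^+_{\std}$ --- is not obviously a reduction: bringing an arbitrary psc metric into product normal form near a concentric sphere-bundle while staying psc is essentially the Gromov--Lawson bending step itself, so as stated this risks being circular, and in any case the claim that such a preliminary deformation fixes $\cR^+_{\std}$ pointwise would need to be engineered carefully (the actual arguments of Chernysh and Walsh do not proceed by a literal deformation retraction onto $\cR^+_{\std}$). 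In short: your sketch identifies the shape of the argument and its crux correctly, but it is an outline deferring the hard technical content, not a proof; since the paper likewise treats the theorem as an imported result, this is not a defect relative to the paper, but it should not be mistaken for a self-contained argument.
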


Both authors state the result when the manifold $W$ is closed. However, the deformations of the metrics appearing in the proof take place in a given tubular neighbourhood of $X$, and therefore the global structure of $W$ does not play a role. The precursor of Theorem \ref{chernysh-walsh-theorem} is the famous surgery theorem of Gromov and Lawson \cite{GL}: if $\Riem^+(W)_{h}$ is nonempty, then $\Riem^+(W; \phi,g_X )_{h}$ is nonempty. One might state this by saying that the inclusion map is $(-1)$-connected. Gajer \cite{Gajer} showed that the inclusion map is $0$-connected, i.e.\ that each psc metric on $W$ is isotopic to one which is standard near $X$.

\subsubsection{Cobordism invariance of the space of psc metrics}\label{sec:bordismtheorem}
The original application of Theorem \ref{chernysh-walsh-theorem} was
to show that for a closed, simply-connected, spin manifold $W$ of dimension at least 5,
the homotopy type of $\Riem^+ (W)$ only depends on the spin cobordism
class of $W$. We recall the precise statement and its proof.

\begin{thm}\label{surgery-invariance}{\rm (Chernysh, Walsh)}
Let  $W: M_0 \rightsquigarrow M_1$  be a compact $d$-dimensional cobordism, $\phi: S^{k-1} \times D^{d-k+1} \to \inter \ W$ be an embedding, and $W'$ be the result of surgery
along $\phi$. Fix $h_i \in \Riem^+ (M_i)$. 
If $3 \leq k \leq d-2$ then there is a homotopy
equivalence 
$$\SE_{\phi}:\Riem^+ (W)_{h_0,h_1} \simeq \Riem^+ (W')_{h_0,h_1}.$$
Furthermore, the surgery datum $\phi$ determines a preferred homotopy
class of $\SE_{\phi}$.
\end{thm}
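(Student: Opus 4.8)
The plan is to deduce Theorem \ref{surgery-invariance} from the "standard near a submanifold" statement of Theorem \ref{chernysh-walsh-theorem} together with the observation that performing a surgery along $\phi$ does not change the manifold \emph{outside} a tubular neighbourhood of the attaching sphere. First I would fix notation: given the embedding $\phi : S^{k-1} \times D^{d-k+1} \to \inter\, W$, let $U = \phi(S^{k-1}\times D^{d-k+1})$ be its image, so that $W \setminus \phi(S^{k-1}\times \{0\})$ and $W'\setminus (\{0\}\times S^{d-k})$ are canonically identified (here $W' = (W \setminus \inter\, U)\cup_{S^{k-1}\times S^{d-k}} (D^k \times S^{d-k})$). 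The key geometric point is that $S^{k-1}$ with its round metric $g_X := h_{\round}^{k-1}$ satisfies the hypothesis of Theorem \ref{chernysh-walsh-theorem} provided $d-k+1 \geq 3$, i.e.\ $k \leq d-2$; and dually, $S^{d-k}$ with its round metric satisfies the hypothesis provided $k \geq 3$. So under $3 \leq k \leq d-2$ both surgery spheres are "allowed".

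Next I would introduce the common space $\Riem^+(W;\phi, g_X)_{h_0,h_1}$ of psc metrics on $W$ that restrict to $h_{\round}^{k-1} + g_{\tor}^{d-k+1}$ on $U$. By Theorem \ref{chernysh-walsh-theorem} the inclusion
$$\Riem^+(W;\phi, g_X)_{h_0,h_1} \lra \Riem^+(W)_{h_0,h_1}$$
is a weak homotopy equivalence. Now I would observe that the surgery on $W$ produces $W'$ by cutting out $U$ and gluing back $D^k \times S^{d-k}$, and that there is a corresponding embedding $\phi' : D^k \times S^{d-k} \to W'$ (a tubular neighbourhood of the dual sphere). Put the product metric $g_{\tor}^k + h_{\round}^{d-k}$ on this handle. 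The crucial claim is that
$$\Riem^+(W;\phi, g_X)_{h_0,h_1} \cong \Riem^+(W';\phi', g_{X'})_{h_0,h_1}$$
as spaces, where $g_{X'} = h_{\round}^{d-k}$: indeed a metric standard near $S^{k-1}$ is determined by its restriction to $W \setminus \inter\, U$ (which is identified with $W' \setminus \inter(\text{handle})$) since the piece over $U$ is the \emph{fixed} metric $h_{\round}^{k-1}+g_{\tor}^{d-k+1}$, and similarly on the $W'$ side; one checks the two standard pieces match up along the common boundary $S^{k-1}\times S^{d-k}$, using that both equal $h_{\round}^{k-1} + h_{\round}^{d-k} + dt^2$ near that sphere (after a collar identification). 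Composing the three weak equivalences — the inclusion for $W$, this homeomorphism, and the inverse of the inclusion for $W'$ from Theorem \ref{chernysh-walsh-theorem} applied to $\phi'$ — yields the desired weak homotopy equivalence $\SE_\phi$.

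For the "preferred homotopy class" assertion I would note that the only choices made are: the isotopy class of $\phi$ (which is given as part of the surgery datum), the fixed torpedo metrics $g_{\tor}^{d-k+1}$ and $g_{\tor}^k$ (fixed once and for all), and the homotopy inverse to the inclusion in Theorem \ref{chernysh-walsh-theorem}, which is canonical up to homotopy. Hence $\SE_\phi$ is well-defined up to homotopy given $\phi$; moreover an isotopy of $\phi$ induces a homotopy of $\SE_\phi$ by naturality of all the constructions, so only the isotopy class of $\phi$ matters.

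The main obstacle is the compatibility check in the homeomorphism step: one must verify precisely that the fixed "standard" metric $h_{\round}^{k-1} + g_{\tor}^{d-k+1}$ on $S^{k-1}\times D^{d-k+1}$ and the fixed standard metric $h_{\round}^{d-k} + g_{\tor}^{k}$ on $D^{k}\times S^{d-k}$ agree on the overlap region so that cutting and regluing is genuinely the identity on the relevant collar neighbourhoods. This is exactly the statement that the round sphere metrics on the two dual spheres, together with the torpedo caps, fit together — which is essentially the Gromov--Lawson construction repackaged, and is the content of $\S 2.3$ of \cite{Walsh01}; here one uses the fact (recorded in the discussion of double torpedo metrics) that gluing two torpedoes gives something isotopic to the round metric, so after a harmless isotopy supported in the tube the identification is literal. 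Once this point-set matching is nailed down, everything else is formal.
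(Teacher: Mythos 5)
Your proof is essentially the paper's: pass to the subspace of metrics standard near the surgery sphere (via Theorem \ref{chernysh-walsh-theorem}), identify it with the corresponding subspace for the dual sphere in $W'$, and quote the theorem again on the $W'$ side. The one place you overcomplicate is the final ``compatibility check'': no isotopy and no double-torpedo fact is needed, because by the very definition of a torpedo metric both $g_{\round}^{k-1}+g_{\tor}^{d-k+1}$ and $g_{\tor}^{k}+g_{\round}^{d-k}$ restrict to $g_{\round}^{k-1}+g_{\round}^{d-k}+dt^2$ on a collar of $S^{k-1}\times S^{d-k}$, so both standard subspaces are literally the same space of psc metrics on $W^{\circ}$ with this boundary condition (together with $h_0,h_1$) and the middle identification is an equality of spaces rather than something that needs a further isotopy.
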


The map $\SE_{\phi}$ is called the \emph{surgery equivalence} induced by the surgery datum $\phi$.

\begin{proof}
Since the surgery is in the interior of $W$, the boundary of $W$ and the metric on $\partial W$ are not affected. So we may, for typographical simplicity, assume that $W$ is closed. Let $W^{\circ}:= W \setminus
\phi(S^{k-1} \times \inter(D^{d-k+1}))$, a manifold with boundary $S^{k-1} \times
S^{d-k}$, and let
$$W'=W^{\circ} \cup_{S^{k-1} \times D^{d-k+1}} (D^{k} \times S^{d-k})$$
be the result of doing a surgery on $\phi$ to $W$. There is a
canonical embedding $\phi': D^{k} \times S^{d-k} \to W'$, and if
we do surgery on $\phi'$, we recover $W$. Note that the restriction of the psc metric
$g_{\round}^{k-1} + g_{\tor}^{d-k+1}$ on $S^{k-1} \times D^{d-k+1}$ to the boundary $S^{k-1} \times S^{d-k}$ is $g_{\round}^{k-1} + g_{\round}^{d-k}$, by the definition of a
torpedo metric. Similarly, the restriction of the psc metric $g_{\tor}^{k} + g_{\round}^{d-k}$ on $D^{k} \times S^{d-k}$ to the boundary is $g_{\round}^{k-1} + g_{\round}^{d-k}$. Therefore we get
maps
\begin{equation}\label{sec2:eq2}
\begin{gathered}
\xymatrix{   
\Riem^+ (W; \phi, g_\round^{k-1})\ar[d]^{\iota_0} \!\!\!\!\!\!\!\!\!\!\!\!\!\!\!\!&\cong &\!\!\!\!\!\!\!\!\!\!\!\!\!\!\!\!
  \Riem^+ (W' ; \phi', g_\round^{d-k}) \ar[d]^{\iota_1}
\\
\Riem^+ (W) & & \Riem^+ (W')}
\end{gathered}
\end{equation}
By Theorem \ref{chernysh-walsh-theorem}, the map $\iota_0$
($\iota_1$, respectively) is a homotopy equivalence if $d-k+1
\geq 3$ (if $k\geq 3$, respectively).
\end{proof}

The cobordism invariance of the space $\Riem^+ (W)$ for closed, simply-connected, spin manifolds of dimension at least five follows by the
same use of Smale's handle cancellation technique as in
\cite{GL}.

\subsubsection{Existence of stabilising metrics}

We use Theorem \ref{chernysh-walsh-theorem} to deduce the existence of psc metrics $g$ on certain cobordisms $K$ such that the gluing map $\mu_{g}$ is a homotopy equivalence.

\begin{thm}\label{lemma:GluingNullCob}
Let $d \geq 5$ and $M^{d-1} $ be a closed simply-connected spin manifold. Let $K : M \leadsto M$ be a cobordism which is simply-connected and spin, and which is in turn spin cobordant to $[0,1] \times M$ relative to its boundary. Then for any boundary condition $h \in \cR^+(M)$ there is a $g \in \cR^+(K)_{h,h}$ with the following property: if $W: N_0 \leadsto M$ and $V: M \leadsto N_1$ are cobordisms, and $h_i \in \Riem^+ (N_i)$ are boundary conditions, then the two gluing maps 
\begin{align*}
\mu(-, g) : \cR^+(W)_{h_0,h} &\lra \cR^+(W \cup_M K)_{h_0,h}\\
\mu(g,-) : \cR^+(V)_{h,h_0} &\lra \cR^+(K \cup_M V )_{h,h_0}
\end{align*}
are homotopy equivalences.
\end{thm}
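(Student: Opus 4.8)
The plan is to reduce the cobordism $K$ to the trivial cylinder $[0,1]\times M$ through a finite sequence of admissible surgeries, to manufacture the stabilising metric $g$ by transporting the product metric $dt^{2}+h$ along this sequence, and finally to identify $\mu(-,g)$ and $\mu(g,-)$ up to homotopy with the corresponding composites of surgery equivalences, which are weak equivalences by construction. Concretely, I would first use that $M$ and $K$ are simply-connected and spin, that $d\geq 5$, and that $K$ is spin cobordant to $[0,1]\times M$ relative to its boundary, in order to apply Smale's handle-cancellation technique --- exactly as in the proof of the cobordism invariance of $\Riem^{+}$ going back to \cite{GL} --- and thereby obtain compact cobordisms $[0,1]\times M=K_{0},K_{1},\dots,K_{r}$, with $K_{r}$ diffeomorphic to $K$ relative to the boundary, in which each $K_{j+1}$ is the result of surgery along an embedding $\phi_{j}\colon S^{k_{j}-1}\times D^{d-k_{j}+1}\hookrightarrow\inter(K_{j})$ with $3\leq k_{j}\leq d-2$. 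This is the step that consumes all of the topological hypotheses on $K$, as well as (in essence) the dimension bound $d\geq 5$.

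Next I would build metrics $g_{j}\in\Riem^{+}(K_{j})_{h,h}$ by induction, with $g_{0}:=dt^{2}+h$. Given $g_{j}$, Theorem~\ref{chernysh-walsh-theorem} applied to the surgery datum $\phi_{j}$ shows that the inclusion $\Riem^{+}(K_{j};\phi_{j},g_{\round}^{k_{j}-1})_{h,h}\hookrightarrow\Riem^{+}(K_{j})_{h,h}$ of metrics standard near the surgery sphere is a weak equivalence, hence surjective on $\pi_{0}$; I then choose $\tilde g_{j}$ in that subspace lying in the path component of $g_{j}$, and define $g_{j+1}\in\Riem^{+}(K_{j+1})_{h,h}$ to be the metric obtained from $\tilde g_{j}$ by the usual surgery construction --- retain $\tilde g_{j}$ away from the surgery sphere and insert $g_{\tor}^{k_{j}}+g_{\round}^{d-k_{j}}$ on the new handle, the two pieces agreeing along $S^{k_{j}-1}\times S^{d-k_{j}}$ by the definition of a torpedo metric. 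Because the surgeries lie in the interior, $g_{j+1}$ is still a product near $\partial K_{j+1}$. Finally I set $g\in\Riem^{+}(K)_{h,h}$ to be the image of $g_{r}$ under a boundary-fixing diffeomorphism $K_{r}\cong K$; observe that $g$ is built on $K$ alone, independently of $W$, $V$, $h_{0}$ and $h_{1}$.

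It then remains to prove, for arbitrary $W$, $V$, $h_{0}$, $h_{1}$ as in the statement, that $\mu(-,g)$ is a weak equivalence --- the case of $\mu(g,-)$ is identical, with $W\cup_{M}K_{j}$ replaced throughout by $K_{j}\cup_{M}V$. For each $j$ the surgery $\phi_{j}$ lies in $\inter(K_{j})\subseteq\inter(W\cup_{M}K_{j})$, so Theorem~\ref{surgery-invariance} provides a weak equivalence $\SE_{\phi_{j}}\colon\Riem^{+}(W\cup_{M}K_{j})_{h_{0},h}\to\Riem^{+}(W\cup_{M}K_{j+1})_{h_{0},h}$ in a preferred homotopy class, and $\mu(-,g_{0})$ is a weak equivalence by Corollary~\ref{cor:collarstretching}(ii) (or its analogue for gluing on the left). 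The crux is the homotopy $\SE_{\phi_{j}}\circ\mu(-,g_{j})\simeq\mu(-,g_{j+1})$ for each $j$: one has $\mu(-,g_{j})\simeq\mu(-,\tilde g_{j})$ since $g_{j}$ and $\tilde g_{j}$ lie in one path component and $\mu$ is jointly continuous; since $\tilde g_{j}$ is standard near $\phi_{j}$, the map $\mu(-,\tilde g_{j})$ factors as $\iota_{0}\circ\hat\mu_{j}$ through the subspace of metrics on $W\cup_{M}K_{j}$ standard near $\phi_{j}$, whence, unwinding the definition $\SE_{\phi_{j}}=\iota_{1}\circ c\circ\iota_{0}^{-1}$ from the proof of Theorem~\ref{surgery-invariance}, $\SE_{\phi_{j}}\circ\mu(-,\tilde g_{j})\simeq\iota_{1}\circ c\circ\hat\mu_{j}$; and one reads off from the cut-and-paste formula for $\mu$ that the canonical homeomorphism $c$ carries $g'\cup\tilde g_{j}$ to $g'\cup g_{j+1}$, so that $\iota_{1}\circ c\circ\hat\mu_{j}=\mu(-,g_{j+1})$. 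Iterating over $j$ and composing with the diffeomorphism $K_{r}\cong K$ then exhibits $\mu(-,g)$ as homotopic to $\SE_{\phi_{r-1}}\circ\dots\circ\SE_{\phi_{0}}\circ\mu(-,g_{0})$, a composite of weak equivalences.

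The main obstacle I anticipate is precisely this compatibility $\SE_{\phi_{j}}\circ\mu(-,g_{j})\simeq\mu(-,g_{j+1})$: making it rigorous demands careful bookkeeping of the construction of the surgery equivalence together with the explicit cut-and-paste description of $\mu$, the essential geometric point being that inserting the fixed metric $\tilde g_{j}$ on $K_{j}$ commutes on the nose --- away from the surgery locus --- with performing a surgery supported in $\inter(K_{j})$. The handle-trading step, while standard, also deserves some care in the borderline case $d=5$, where only index-$3$ surgeries are available.
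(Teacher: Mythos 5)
Your proposal is correct and follows essentially the same route as the paper's proof: both reduce $K$ to $[0,1]\times M$ by a chain of surgeries of index $3 \le k \le d-2$ coming from a $2$-connected handle decomposition of the relative cobordism $X$, then use Theorem~\ref{chernysh-walsh-theorem} to show gluing is compatible with each surgery equivalence (your compatibility $\SE_{\phi_j}\circ\mu(-,g_j)\simeq\mu(-,g_{j+1})$ is exactly what the paper's commutative diagram of standard-near-$\phi$ subspaces encodes), and finish with Corollary~\ref{cor:collarstretching} for the cylinder. The only cosmetic difference is that you build the metric $g$ explicitly by iterated torpedo insertion, whereas the paper merely specifies that $g$ lie in the path component corresponding to $dt^2+h$ under the composite surgery equivalence.
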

\begin{proof}
By assumption there is a relative spin cobordism $X^{d+1}$ from $K$ to $[0,1] \times M$. Since $\dim (X) \geq 6$, by doing surgery in the interior of $X$ we can achieve that $X$ is $2$-connected, so the inclusions $[0,1] \times M \to X$ and $K \to X$ are both $2$-connected maps. By an application of Smale's handle cancellation technique to $X$, we can assume that the cobordism $X$ is obtained by attaching handles of index $3 \leq k \leq d-2$ to the interior of either of its boundaries. (A reference which discusses handle cancellation for cobordisms between manifolds with boundary is \cite{WallGC}.)

Let $g\in \cR^+(K)_{h,h}$ be a psc metric, and $\phi : S^{k-1} \times D^{d-k+1} \hookrightarrow K$ be a piece of surgery data in the interior of $K$ such that surgery along it yields a manifold $K'$ (this corresponds to a surgery of index $k$) and suppose that $3 \leq k \leq d-2$. Let $g' \in \cR^+(K')_{h,h}$ be in the path component corresponding to that of $g$ under the surgery equivalence $\SE_{\phi}: \cR^+(K)_{h,h} \simeq \cR^+(K')_{h,h}$ of Theorem \ref{surgery-invariance}.

As in the proof of Theorem \ref{surgery-invariance} there is a commutative diagram
\begin{equation*}
\xymatrix{
\cR^+(W)_{h_0, h} \times \cR^+(K)_{h,h} \ar[r] &  \cR^+(W \cup K)_{h_0, h}\\
\cR^+(W)_{h_0, h} \times \cR^+(K; \phi, g_\round^{k-1})_{h,h} \ar[r] \ar[u] \ar[d] &  \cR^+(W \cup K; \phi, g_\round^{k-1})_{h_0, h} \ar[u] \ar[d]\\
\cR^+(W)_{h_0, h} \times \cR^+(K')_{h,h} \ar[r] &  \cR^+(W \cup K')_{h_0, h}
}
\end{equation*}
where all the vertical maps are homotopy equivalences (since $2 \leq k-1 \leq d-3$). Thus gluing on the metric $g \in \cR^+(K)_{h,h}$ from the right induces a homotopy equivalence if and only if gluing on the corresponding metric $g' \in \cR^+(K')_{h,h}$ does. The same is true for gluing in metrics from the left. 

Gluing $([0,1] \times M, dt^2 + h)$ on to either side induces a homotopy equivalence, by Corollary \ref{cor:collarstretching}. 
By Theorem \ref{surgery-invariance} the cobordism $X$ induces a surgery equivalence $\cR^+(K)_{h,h} \simeq \cR^+([0,1] \times M)_{h,h}$, so if we let $g \in \cR^+(K)_{h,h}$ be in a path component corresponding to that of $h + dt^2$ under the surgery equivalence then gluing on $(K, g)$ from either side also induces a homotopy equivalence, as required.
\end{proof}

\section{The secondary index invariant}\label{chap:indextheory} 

This chapter contains the index theoretic arguments that go into the proof of our main results. We begin by stating our framework for $K$-theory in Section \ref{generalities-k-theory}. Then we recall the basic properties of the Dirac operator on a spin manifold and on bundles of spin manifolds, including those with noncompact fibres, in Section \ref{sec:generaldirac}. These analytical results allow the definition of the secondary index invariant, $\inddiff$, to be presented in Section \ref{sec:inddiff}. 

Conceptually simple as the definition of $\inddiff$ is, it seems to be impossible to compute directly. The purpose of the rest of this chapter is to provide computational tools. One computational strategy is to use the additivity property of the index, which results in a cut-and-paste property for the index difference. This is done in Section \ref{sec:additivity}.
The other computational strategy is to relate the secondary index to a primary index. In Section \ref{sec:abstract-nonsense}, we describe the abstract setting necessary to carry out such a comparison. This is then applied in two different situations. The first is the passage from even to odd dimensions, in other words the derivation of Theorem \ref{factorization-theorem:odd-case} from Theorem \ref{factorization-theorem:even-case}, which is carried out in Section \ref{sec:increasing-dimension}.
The second situation in which we apply the general comparison pattern is when we compute the index difference by a family index in the classical sense. This will be essential for the proof of Theorem \ref{factorization-theorem:even-case}, and is done in Section \ref{sec:FibBunIndDiff}. The classical index can be computed using the Atiyah--Singer index theorem for families of Clifford-linear differential operators, which we first discuss in Section \ref{sec:atiyah-singer}.
Also in Section \ref{sec:FibBunIndDiff}, the index theorem is interpreted in homotopy-theoretic terms, and there, another key player of this paper enters the stage: the Madsen--Tillmann--Weiss spectra.

\subsection{Real K-theory}\label{generalities-k-theory}

The homotopy theorists' definition of real $K$-theory is in terms of the periodic $K$-theory spectrum $\KO$. By definition, the $KO$-groups of a CW-pair $(X,Y)$ are given by 
$$KO^k (X,Y) := [(X,Y), (\loopinf{-k}\KO,*)].$$
In general, as is usual in homotopy theory, one first replaces a space pair by a weakly equivalent CW-pair to which one applies the above definition, but in Sections \ref{chap:indextheory} and \ref{sec:PfMain} we shall take care to only apply it to pairs of the homotopy type of CW-pairs.

The specific choice of a model for $\KO$ is irrelevant, as long as one considers only spaces having the homotopy type of CW complexes. For index theoretic arguments, we use the Fredholm model, which we now briefly describe. Our model is a variant of a classical result by Atiyah--Singer \cite{AS69} and Karoubi \cite{karoubi-espaces}. More details and further references can be found in \cite[\S 2]{Eb13}. We begin by recalling some subtleties concerning Hilbert bundles and their maps.

\subsubsection{Hilbert bundles}\label{hilbert-bundles}
Let $X$ be a space (usually paracompact and Hausdorff) and let $H \to X$ be a real or complex Hilbert bundle. For us, a Hilbert bundle will always have separable fibres and the unitary group with the compact-open topology as a structure group. An \emph{operator family} $F: H_0 \to H_1$ is a fibre-preserving, fibrewise linear continuous map. It is determined by a collection $(F_x)_{x \in X}$ of bounded operators $F_x: (H_0)_x \to (H_1)_x$. Some care is necessary when defining properties of the operator family $F$ by properties of the individual operators $F_x$. We will recall the basic facts and refer the reader to \cite[\S 2.3]{Eb13} for a more detailed discussion.
An operator family $F$ is \emph{adjointable} if the collection of adjoints $(F_x^*)$ also is an operator family. The algebra of adjointable operator families on $H$ is denoted by $\Lin_X (H)$. There is a notion of a \emph{compact} operator family which is due to Dixmier--Douady \cite[\S 22]{DixDou}; the set of compact operator families is denoted $\Kom_X (H)$ and is a $*$-ideal in $\Lin_X(H)$. A \emph{Fredholm family} is an element in $\Lin_X (H)$ which is invertible modulo $\Kom_X (H)$. 

The reader is warned that being compact (or Fredholm) is a stronger condition on an operator family $F$ than just saying that all $F_x$ are compact (or Fredholm), and it can be difficult to check in concrete cases. To prove that a given operator family is compact (or invertible, or Fredholm), one can use the following sufficient criterion \cite[Lemma 2.16]{Eb13}. To state the criterion, let us say that $F:H_0 \to H_1$ is \emph{locally norm-continuous} if each point $x \in X$ \emph{admits} a neighborhood $U$ and trivialisations of $H_i|_U$ such that in this trivialisation, $F$ is given by a continuous map $U \to \Lin ((H_0)_x, (H_1)_x)$ (with the norm topology in the target). If $F$ is locally norm-continuous and each $F_x$ is compact (or invertible, or Fredholm), then $F$ is compact (or invertible, or Fredholm), at least when the base space $X$ is paracompact. One has to keep in mind that local norm-continuity always refers to a specific local trivialization. Therefore, the composition $H_0 \stackrel{F_0}{\to} H_1  \stackrel{F_1}{\to} H_2$ of two locally norm-continuous operator families is again locally norm-continuous only if $F_0$ and $F_1$ are locally norm continuous with respect to the same local trivialization of $H_1$. 

\subsubsection{Clifford bundles}

\begin{definition}
Let $V\to X$ be a Riemannian vector bundle and let $\tau: V \to V$ be a self-adjoint involution on $V$. A \emph{$\Cl(V^\tau)$-Hilbert bundle} over $\bK=\bR$ or $\bC$ is a triple $(H, \iota,c)$, where $H \to X$ is a $\bK$-Hilbert bundle (always assumed to have separable fibres), $\iota$ is a $\bZ/2$-grading (i.e.\ a self-adjoint involution) on $H$ and $c =(c_x)_{x \in X}$ is a collection of linear maps $c_x: V_x \to \Lin (H_x)$ such that
\begin{enumerate}[(i)]
 \item For all $v,v' \in V_x$, the following identities hold:
\begin{align*}
c_x(v) \iota + \iota c_x(v) &=0\\
c_x(v)^* &= -c_x(\tau v)\\
c_x (v) c_x(v')+ c_x(v') c_x (v) &= -2 \langle v, \tau v' \rangle .
\end{align*}
\item If $s \in \Gamma(X;V)$ is a continuous section, then the collection $(c_x (s(x))_{x \in X}$ of bounded operators is an element of $\Lin_X (H)$.
\end{enumerate}
\end{definition}


To ease notation, we typically write $c(v):=c_x (v)$. If the grading and Clifford multiplication is understood, we denote a $\Cl(V^\tau)$-Hilbert bundle simply by the letter $H$.
The \emph{opposite} $\Cl(V^\tau)$-Hilbert bundle has the same underlying Hilbert bundle, but the Clifford multiplication and grading are replaced by $-c$ and $-\iota$. 
We write ``$\Cl (V^+ \oplus W^-)$-Hilbert bundle'' when the involution $\tau(v,w)=(v,-w)$ on $V \oplus W$ is considered. We denote by $\bR^{p,q}$ the space $\bR^{p+q}$, with the standard scalar product and the involution $\tau(v,w)=(v,-w)$, $v \in \bR^p$, $w \in \bR^q$ (note that this convention differs from that in \cite{AtKR}, we think it is easier to memorise). We abbreviate the term ``$\Cl ((X \times \bR^p)^+\oplus (X \times \bR^q)^- )$-Hibert bundle'' to ``$\Cl^{p,q}$-Hilbert bundle''. Instead of ``finite-dimensional $\Cl(V^+ \oplus W^-)$-Hilbert bundle'', we will rather say \emph{$\Cl(V^+ \oplus W^-)$-module}.
A $\Cl^{p,q}$-\emph{Fredholm family} is a Fredholm family such that $F \iota=-\iota F$ and $Fc(v) = c(v)F$ for all $v \in \bR^{p,q}$.

\subsubsection{$K$-theory}
We denote the product of pairs of spaces by $(X,Y) \times (A,B) :=(X \times A, X \times B \cup Y \times A)$. A \emph{$(p,q)$-cycle} on $X$ is a tuple $(H,\iota,c,F)$, consisting of a $\Cl^{p,q}$-Hilbert bundle and a $\Cl^{p,q}$-Fredholm family. 
If $Y \subset X$ is a subspace, then a \emph{relative} $(p,q)$-cycle is a $(p,q)$-cycle $(H,\iota,c,F)$ with the additional property that the family $F|_Y$ is invertible. Clearly $(p,q)$-cycles can be pulled back along continuous maps, and there is an obvious notion of isomorphism of $(p,q)$-cycles and of direct sum of finitely many $(p,q)$-cycles.
A \emph{concordance} of relative $(p,q)$-cycles $(H_i,\iota_i,c_i,F_i)$, $i=0,1$ on $(X,Y)$ consists of a relative $(p,q)$-cycle $(H,\iota,c,F)$ on $(X ,Y) \times [0,1]$ and isomorphisms $(H,\iota,c,F)|_{X \times \{i\}}\cong (H_i,\iota_i,c_i, F_i)$. A $(p,q)$-cycle $(H,\iota,c,F)$ is \emph{acyclic} if $F$ is invertible.
Often, we abbreviate $(H,\iota,c,F)$ to $(H,F)$ if there is no risk of confusion. Occasionally, we write $x \mapsto (H_x,F_x)$ to describe a $(p,q)$-cycle on $X$.

\begin{definition}\label{defn-fpq}
Let $X$ be a paracompact Hausdorff space and $Y \subset X$ be a closed subspace. The group $F^{p,q} (X,Y)$ is the quotient of the abelian monoid of concordance classes of relative $(p,q)$-cycles, divided by the submonoid of those concordance classes which contain acyclic $(p,q)$-cycles.
\end{definition}

The monoid so obtained is in fact a group, and the additive inverse is given by 
\begin{equation}\label{eq:KOInv}
-[H,\iota,c,F] = [H,-\iota,-c,F],
\end{equation}
see \cite[Lemma 2.19]{Eb13}. 

We let $(\bI,\partial \bI):=([-1,1], \{-1,1\})$ and use the following notation:
\begin{equation*}
\Omega F^{p,q} (X,Y):= F^{p,q} ((X,Y)  \times (\bI,  \partial \bI)).
\end{equation*}

Bott periodicity \cite[\S 2.4]{Eb13} in this setting states that the map
\begin{align}\label{bott-map}
\bott:F^{p,q} (X,Y) &\lra \Omega F^{p-1,q} (X,Y)\\
(H,F) &\longmapsto \left((x,s) \mapsto (H_x,F_x + s \iota_x c(e_1)_x )\right) 
\end{align}
is an isomorphism of abelian groups (here $e_i$ denotes the $i$th basis vector of $\bR^p$). 
By iteration, we get an isomorphism 
\begin{equation}\label{eqn:bott-isomorphism}
 F^{p,0} (X,Y) \lra \Omega^p F^{0,0} (X,Y).
\end{equation}

\subsubsection{Classifying spaces and relation to the homotopical definition of $K$-theory}
In this section we shall explain the relation between $F^{p,q}$ and the $KO$-groups, and in particular produce a comparison map $F^{p,q} (X,Y) \to KO^{q-p}(X,Y)$.

First, we recall a classical result by Atiyah--Singer \cite{AS69} and Karoubi \cite{karoubi-espaces}. A $\Cl^{p,q}$-Hilbert space is \emph{ample} if it contains any finite-dimensional irreducible $\Cl^{p,q}$-Hilbert space with infinite multiplicity, and we fix such an ample $\Cl^{p,q}$-Hilbert space $U$. Let $\Fred^{p,q}$ be the space of $\Cl^{p,q}$-Fredholm operators on $U$, with the norm topology and let $\cG^{p,q} \subset \Fred^{p,q}$ be the (contractible) space of invertible operators. These spaces are open subsets of Banach spaces and hence are paracompact and have the homotopy type of CW complexes, by \cite[Theorem 13]{PalaisInfMan} and \cite[Proposition A.11]{Hatcher}.
There is a map
$$\bott: \Fred^{p,q}\lra \map ((\bI,\partial \bI),(\Fred^{p-1,q},\cG^{p-1,q})) \simeq \Omega \Fred^{p-1,q},$$
defined by a formula analogous to \ref{bott-map} and the main result of \cite{AS69} asserts that it is a homotopy equivalence. Moreover, it is proven in \cite{AS69} that there are homotopy equivalences 
\[
(\Fred^{p,q},\cG^{p,q}) \simeq (\Omega^{\infty+p-q} \KO,*)  
\]
of space pairs. In \cite[Definition A.3]{Eb13}, a coarser topology on $\Fred^{p,q}$ is defined, and with the new topology, the space pair is denoted $(K^{p,q},D^{p,q})$. We have the following comparison result.

\begin{thm}\cite[Theorem 2.21 and Theorem 2.22]{Eb13}\label{atsingkar}\mbox{}
\begin{enumerate}[(i)]
\item There is a natural map $[(X,Y);(K^{p,q},D^{p,q})] \to F^{p,q}(X,Y)$ which is bijective if $X$ is paracompact and compactly generated and if $Y \subset X$ is closed.
\item The identity map $(\Fred^{p,q},\cG^{p,q})\to (K^{p,q},D^{p,q})$ is a weak equivalence of pairs.
\end{enumerate}
\end{thm}
Hence there are maps
\begin{equation*}
KO^{q-p}(X,Y) \longleftarrow [(X,Y);(\Fred^{p,q},\cG^{p,q})] \lra [(X,Y);(K^{p,q},D^{p,q})] \lra F^{p,q}(X,Y)
\end{equation*}
which are isomorphisms when $(X,Y)$ has the homotopy type of a CW pair. Therefore for such a pair $(X,Y)$ and any class $\gb \in F^{p,q}(X,Y)$, we obtain a homotopy class of maps $(X,Y) \to (\loopinf{+p-q}\KO,*)$, the \emph{homotopy-theoretic realisation of $\gb$}. We shall use these isomorphisms to abuse notation slightly, and for pairs with the homotopy type of CW-pairs we shall from now on write $KO^{-p}(X,Y) = F^{p,0}(X,Y)$. 

For a general paracompact space pair $(X,Y)$, we at least have a comparison map $F^{p,q} (X,Y) \to KO^{q-p}(X,Y)$: a CW approximation $(X',Y') \to (X,Y)$ induces a map $F^{p,q}(X,Y) \to F^{p,q}(X',Y') \cong KO^{q-p}(X',Y') $, and the latter group is isomorphic to $KO^{q-p}(X,Y)$, by the homotopy-theoretic definition of the $KO$-groups. We will use the comparison map to identify elements in $F^{p,q}(X,Y)$ with their image in $KO^{q-p}(X,Y)$. That the comparison map is not an isomorphism in full generality does not matter to us, since we only use it to construct elements in $KO^{q-p}(X,Y)$ out of analytical data.


\subsection{Generalities on Dirac operators}\label{sec:generaldirac}

\subsubsection{The spin package}\label{subsection:spinpackage}

A basic reference for spin vector bundles and associated constructions is \cite[\S II.7]{SpinGeometry}.
Let $V \to X$ be a real vector bundle of rank $d$. A \emph{topological spin structure} is a reduction of the structure group of $V$ to the group $\widetilde{\GL}^{+}_{d} (\bR)$, the connected $2$-fold covering group of the group $\GL_d^+ (\bR)$ of matrices of positive determinant. In the presence of a Riemannian metric on $V$, a topological spin structure induces a reduction of the structure group to $\Spin (d)$, which is the familiar notion of a spin structure. Explicitly, a spin structure is given by a $\Spin (d)$-principal bundle $P \to X$ and an isometry $\eta:P \times_{\Spin (d)} \bR^d \cong V$. From a spin structure on $V$, one can construct a fibrewise irreducible real $\Cl (V^+ \oplus \bR^{0,d})$-module $\spinor_V$, the \emph{spinor bundle}. One can reconstruct $P$ and $\eta$ from $\spinor_V$, and it is often more useful to view the spinor bundle $\spinor_V$ as the more fundamental object. The \emph{opposite} spin structure $\spinor_V^{op}$ has the same underlying vector bundle as $\spinor_V$ and the 
same Clifford multiplication, but the grading is inverted (note that this is \emph{not} the opposite bundle in the sense of the previous section). 
If $V=TM$ is the tangent bundle of a Riemannian manifold $M^d$ with metric $g$, we denote spin structures typically by $\spinor_M$. There is a canonical connection $\nabla$ on $\spinor_M$ derived from the Levi-Civita connection on $M$. The spin Dirac operator $\Dir=\Dir_g$ acts on sections of $\spinor_M$ and is defined as the composition
$$\Gamma (M; \spinor_M) \stackrel{\nabla}{\lra} \Gamma (M; TM \otimes \spinor_M) \stackrel{c}{\lra} \Gamma(M; \spinor_M).$$
It is a linear formally self-adjoint elliptic differential operator of order $1$, which anticommutes with the grading and Clifford multiplication by $\bR^{0,d}$. We can change the $\Cl^{0,d}$-multiplication on $\spinor_M$ to a $\Cl^{d,0}$-multiplication by replacing $c(v) $ by $\iota c(v)$. With this new structure, $\Dir$ becomes $\Cl^{d,0}$-linear. Passing to the opposite spin structure leaves the operator $\Dir$ unchanged, but changes the sign of the grading and of the $\Cl^{d,0}$-multiplication. The relevance of the Dirac operator to scalar curvature stems from the well-known \emph{Schr\"odinger--Lichnerowicz formula} (also known as \emph{Lichnerowicz--Weitzenb\"ock formula}) \cite{Schroed}, \cite{Lich}, or \cite[Theorem II.8.8]{SpinGeometry}:
\begin{equation}\label{weitzenboeck}
\Dir^2 = \nabla^* \nabla + \frac{1}{4} \scal(g).
\end{equation}

\subsubsection{The family case}
We need to study the Dirac operator for families of manifolds and also for nonclosed manifolds. Let $X$ be a paracompact Hausdorff space. We study bundles $\pi:E \to X$ of possibly \emph{noncompact} manifolds with $d$-dimensional fibres. The fibres of $\pi$ are denoted $E_x:=\pi^{-1}(x)$. The vertical tangent bundle is $T_v E \to E$ and we always assume implicitly that a (topological) spin structure on $T_v E$ is fixed (then of course the fibres are spin manifolds).
A fibrewise Riemannian metric $(g_x)_{x \in X}$ on $E$ then gives rise to the spinor bundle $\spinor_E \to E$, a $\Cl ((T_v E)^+ \oplus \bR^{0,d})$-module. The restriction of the spinor bundle to the fibre over $x$ is denoted $\spinor_x \to E_x$, and the Dirac operator on $E_x$ is denoted $\Dir_x$. 
When we consider bundles of noncompact manifolds, they are always required to have a simple structure outside a compact set. 

\begin{definition}\label{definition:cylindricalends}
Let $\pi:E \to X$ be a bundle of noncompact $d$-dimensional spin manifolds. Let $t:E \to \bR$ be a fibrewise smooth function such that $(\pi,t):E \to X \times \bR$ is proper. Let $a_0< a_1: X \to [-\infty, \infty]$ be continuous functions. Abusing notation, we denote $X \times (a_0,a_1)=\{ (x,s)\in X \times \bR \,|\, a_0 (x) < s < a_1(x)\}$ and $E_{(a_0,a_1)}:= (\pi,t)^{-1} (X \times (a_0,a_1))$. 
For a closed $(d-1)$-dimensional spin manifold $M$, we consider the trivial bundle $X \times \bR \times M \to X$, with the obvious projection map to $\bR$. We say that $E$ \emph{is cylindrical over $(a_0,a_1)$} if the projection map $E_{(a_0,a_1)} \to X  \times (a_0,a_1)$ is a smooth fibre bundle and if there is a $(d-1)$-dimensional spin manifold $M$ such that there is an isomorphism $E_{(a_0,a_1)} \cong ( X \times \bR \times M)_{(a_0,a_1)}$ of spin manifold bundles over $X \times (a_0,a_1)$. The bundle is said to have \emph{cylindrical ends} if there are functions $a_-,a_+: X\to \bR$ such that $E$ is cylindrical over $(-\infty,a_-)$ and $(a_+,\infty)$.
A fibrewise Riemannian metric $g=(g_x)_{x \in X}$ on $E$ is called cylindrical over $(a_0,a_1)$ if $g_x=dt^2+h_x $ for some metric $h_x$ on $M$, over $(a_0,a_1)$. We always consider Riemannian metrics which are cylindrical over the ends.
We say that a bundle with cylindrical ends and metrics has \emph{positive scalar curvature at infinity} if there is a function $\epsilon: X \to (0, \infty)$, such that the metrics on the ends of $E_x$ have scalar curvature $\geq \epsilon(x)$.
\end{definition}

Any bundle of closed manifolds, equipped with an arbitrary function $t$, tautologically has cylindrical ends and positive scalar curvature at infinity.
Note that the fibres of a bundle with cylindrical ends are automatically complete in the sense of Riemannian geometry.
Fibre bundles $\pi:E \to X$ of spin manifolds with boundary can be fit into the above framework by a construction called \emph{elongation}. 

\begin{defn}
Let $\pi:E \to X$ be a bundle of compact manifolds with collared boundary, and assume that the boundary bundle is trivialised: $\partial E = X \times N$ as a spin bundle. Let $g=(g_x)_{x \in X}$ be a fibrewise metric on $E$ so that $g_x$ is of the form $dt^2 +h_x $ near the boundary, for psc metrics $h_x \in \Riem^+ (N)$. The \emph{elongation} of $(E,g)$ is the bundle $\hat{E}= E \cup_{\partial E} (X  \times [0,\infty)\times N)$, with the metric $(dt^2 + h_x )$ on the added cylinders. The elongation has cylindrical ends and positive scalar curvature at infinity. 
\end{defn}

\subsubsection{Analysis of Dirac operators}
Let $E \to X$ be a spin manifold bundle, equipped with a Riemannian metric $g$, and with cylindrical ends.
Let $L^2 (E;\spinor)_x$ be the Hilbert space of $L^2$-sections of the spinor bundle $\spinor_x \to E_x$. These Hilbert spaces assemble to a $\Cl^{d,0}$-Hilbert bundle on $X$. The Dirac operator $\Dir_x$ is a densely defined symmetric unbounded operator on the Hilbert space $L^2 (E_x; \spinor_x)$, with initial domain $\Gamma_c (E_x, \spinor_x)$, the space of compactly supported sections. The domain of its closure is the Sobolev space $W^1 (E_x; \spinor_x)$, and the closure of $\Dir_x$ is self-adjoint (this is true because $E_x$ is a complete manifold). 
Thus we can apply the functional calculus for unbounded operators to form $f(\Dir_x)$, for (say) continuous bounded functions $f: \bR \to \bC$. In particular, we can take $f(x)=\normalize{x}$ and obtain the \emph{bounded transform}
$$F_x := \normalize{\Dir_x}.$$
\emph{From now on, we make the crucial assumption that the scalar curvature of $g$ is positive at infinity}.
This has the effect that $\normalize{\Dir_x}$ is a bounded $\Cl^{d,0}$-\emph{Fredholm} operator. The collection of operators $(F_x)_{x \in X}$ is a $\Cl^{d,0}$-Fredholm family over $X$, which is moreover locally norm-continuous in the sense of Section \ref{hilbert-bundles}. These are standard facts, we refer to \cite[\S 3.1]{Eb13} for detailed proofs geared to fit into the present framework. Furthermore, if for each $y \in Y \subset X$ the metric $g_y$ has positive scalar curvature (on all of $E_y$, not only at the ends), then the operator $\Dir_y$ has trivial kernel, by the Bochner method using the Schr\"odinger--Lichnerowicz formula \eqref{weitzenboeck}, cf.\ \cite[II Corollary 8.9]{SpinGeometry}. Consequently, the operator $F_y$ is invertible for all $y \in Y$.

\begin{definition}
Let $X$ be a paracompact Hausdorff space and $Y \subset X$ be a closed subspace. Let $\pi:E \to X$ be a bundle of Riemannian spin manifolds with cylindrical ends and positive scalar curvature at infinity, with metric $g$. We denote by $\indx{E,g}$ the $(d,0)$-cycle $x \mapsto (L^2 (E_x, \spinor_{E_x}), F_x)$ 
and by $\ind{E,g}$ its class in $KO^{-d}(X)$. If it is understood that the metric $g_y$ has positive scalar curvature for all $y \in Y$, we use the same symbol to denote the class in the relative $K$-group $KO^{-d}(X, Y)$.
\end{definition}

When we meet a bundle with boundary, by the symbols $\indx{E,g}$ and $\ind{E,g}$ we always mean the index of the elongated manifolds.

\begin{lem}\label{index:independent-of-metric}
Let $\pi: E \to X$ be a spin manifold bundle with cylindrical ends and let $g_0$, $g_1$ be fibrewise metrics which both have positive scalar curvature at infinity. Assume that $g_0$ and $g_1$ agree on the ends. Let $Y \subset X$ and assume that $g_0$ and $g_1$ agree over $Y$ and have positive scalar curvature there.
Then $\ind{E,g_0}=\ind{E,g_1}\in KO^{-d}(X,Y)$.
\end{lem}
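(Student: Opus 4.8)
The plan is to connect $g_0$ and $g_1$ by the linear path of metrics and to run the index construction over the enlarged base $X \times [0,1]$, so that the resulting $(d,0)$-cycle becomes an honest concordance between $\indx{E,g_0}$ and $\indx{E,g_1}$.

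First I would set $g_s := (1-s)\,g_0 + s\,g_1$ for $s \in [0,1]$; each $g_s$ is a fibrewise Riemannian metric on $E$ because a convex combination of positive-definite forms is positive-definite. I would then form the spin manifold bundle $E \times [0,1] \to X \times [0,1]$, $(e,s) \mapsto (\pi(e),s)$, with vertical tangent bundle, topological spin structure and fibrewise proper function $t$ all pulled back from $E$, and with the fibrewise metric taking the value $(g_s)_x$ over $(x,s)$. Since $g_0$ and $g_1$ agree on the cylindrical ends of $E$, every $g_s$ does too; hence this new bundle again has cylindrical ends (with the same cylindrical structure) and the fibrewise metric is cylindrical over them. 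If $\epsilon \colon X \to (0,\infty)$ witnesses positive scalar curvature at infinity for $(E,g_0)$, then $(x,s) \mapsto \epsilon(x)$ witnesses it for $E \times [0,1]$, because on the ends $g_s = g_0$ for all $s$. And over $Y \times [0,1]$ one has $g_s = g_0 = g_1$, which has positive scalar curvature there by hypothesis.

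Next I would apply the constructions of Section \ref{sec:generaldirac} to this bundle to obtain the $(d,0)$-cycle $\indx{E \times [0,1], g_\bullet}$ on $X \times [0,1]$, with fibre $\bigl(L^2(E_x;\spinor_x),\ \normalize{\Dir_{(g_s)_x}}\bigr)$ over $(x,s)$. Positivity of the scalar curvature at infinity makes the bounded transforms Fredholm, and over $Y \times [0,1]$ the operators $\Dir_{(g_0)_y}$ have vanishing kernel by the Schr\"odinger--Lichnerowicz formula \eqref{weitzenboeck}, so the family is invertible there; thus one gets a relative $(d,0)$-cycle on $(X,Y) \times [0,1]$, hence a class in $KO^{-d}(X \times [0,1], Y \times [0,1])$. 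By construction its restriction to $X \times \{i\}$ is canonically $\indx{E,g_i}$ for $i = 0,1$, so this relative cycle is exactly a concordance in the sense of the discussion preceding Definition \ref{defn-fpq}; therefore $\ind{E,g_0} = \ind{E,g_1}$ in $KO^{-d}(X,Y)$. The case of bundles with boundary would be handled identically after elongation, using that $g_0$, $g_1$ — and hence all $g_s$ — are of product form near the boundary.

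I expect the only genuinely delicate point to be the implicit assertion that the metric-dependent spinor bundles and their $L^2$-completions, together with the bounded transforms, assemble into an honest $\Cl^{d,0}$-Hilbert bundle carrying a locally norm-continuous $\Cl^{d,0}$-Fredholm family over $X \times [0,1]$; but this is precisely the content of the analytic framework recalled in Section \ref{sec:generaldirac}, with detailed proofs in \cite[\S 3.1]{Eb13}, applied to $E \times [0,1] \to X \times [0,1]$ instead of $E \to X$. Everything else is formal once that is granted.
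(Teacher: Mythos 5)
Your proposal is correct and is essentially the paper's own argument: the authors also prove this by taking the linear interpolation $(1-t)g_0 + tg_1$ on the product bundle $E \times I \to X \times I$ and observing that the resulting relative $(d,0)$-cycle is a concordance between $\indx{E,g_0}$ and $\indx{E,g_1}$. Your write-up simply spells out the verification that the hypotheses (agreement on the ends, psc at infinity, psc over $Y$) propagate to every $g_s$, which the paper leaves implicit.
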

\begin{proof}
This follows from the homotopy invariance of the Fredholm index or by considering the metric $(1-t)g_0+t g_1$ on the product bundle $E \times I \to X \times I$, which gives a concordance of cycles. 
\end{proof}
In particular, if the fibres of $\pi$ are closed and if $Y=\emptyset$, then $\ind{E,g}\in KO^{-d}(X)$ does not depend on $g$ at all. This observation justifies the notation $\ind{E}\in KO^{-d}(X)$ for closed bundles and $\ind{E,g}\in KO^{-d}(X)$ when $E$ is a bundle with cylindrical ends, and the psc metric $g$ is only defined on the ends. In that case, $g=dt^2 +h$, and we could also write $\ind{E,h}$, emphasising the role of $h$ as a boundary condition. Finally, we remark that if we pass to the opposite bundle $E^{op} \to X$ (with the opposite spin structure), then 
\begin{equation}\label{formula:index-opposite-bundle}
\ind{E^{op},g} =-\ind{E,g}.
\end{equation}
This follows from the definition of the opposite spin structure given in subsection \ref{subsection:spinpackage} and from the formula \eqref{eq:KOInv} for the additive inverse of a $K$-theory class.

\subsection{The secondary index invariants}\label{sec:inddiff}

We now define the secondary index invariant, the \emph{index difference}. There are two definitions of this invariant which we will consider. The first one is due to Hitchin \cite{HitchinSpin} and we take it as the main definition. 

\subsubsection{Hitchin's definition}

Let $W$ be a manifold with collared boundary $M$, and let $h \in \Riem^+(M)$. On the space $\bI \times \Riem^+ (W)_h \times \Riem^+ (W)_h$, we consider the elongation of the trivial bundle with fibre $W$, and introduce the following fibrewise Riemannian metric $g$: on the fibre over $(t,g_0,g_1)$ it is $\frac{1-t}{2}g_0 + \frac{1+t}{2}g_1$ (since both metrics agree on $M$, this has positive scalar curvature at infinity). For $t=\pm 1 $, this metric has positive scalar curvature and thus applying the results from the previous section, we get an element
\begin{equation}\label{inddiff-path-bounded-case}
\inddiff :=\ind{\bI\times \Riem^+ (W)_h \times \Riem^+ (W)_h \times W,g} \in \Omega KO^{-d} (\Riem^+ (W)_h \times \Riem^+ (W)_h,\Delta)
\end{equation}
(where $\Delta$ is the diagonal) which is the path space version of the \emph{index difference}. 

\begin{rem}
One can phrase this construction in a slightly imprecise but conceptually enlightening way. Each pair $(g_0,g_1)$ of psc metrics defines a path $t \mapsto \Dir_{\frac{1-t}{2}g_0 + \frac{1+t}{2}g_1}$ in $F^{d,0} $, which for $t=\pm 1$ is invertible, and hence in the contractible subspace $D^{d,0}$. The space of all paths $\gamma:(\bI,\{\pm 1\}) \to (F^{d,0}, D^{d,0})$ is homotopy equivalent to the loop space $\Omega F^{d,0}$. Since the spinor bundle depends on the underlying metric and thus the operators $\Dir_{\frac{1-t}{2}g_0 + \frac{1+t}{2}g_1}$ do not act on the same Hilbert space, this does not strictly make sense.
\end{rem}

If we keep a basepoint $g \in \Riem^+ (W)_h$ fixed, we get an element 
$$\inddiff_g \in \Omega KO^{-d} (\Riem^+ (W)_h,g)$$
by fixing the first variable. Using Theorem \ref{atsingkar}, we can represent this element in a unique way by a homotopy class of pointed maps 
$$\inddiff_g:(\Riem^+ (W)_h,g) \lra (\loopinf{+d+1} \KO,*).$$

\begin{rem}\label{rem:NoExtension}
Note that it is important that the metrics are fixed on $M$, since otherwise the metrics $\frac{1-t}{2}g_0 + \frac{1+t}{2}g_1$ might not have positive scalar curvature at infinity. In fact, if $\partial W \neq \emptyset$, then there does not exist an extension of the map $\inddiff_g$ to $\Riem^+ (W) \supset \Riem^+ (W)_h$, see Corollary \ref{cor:inddiff-doesnt-extend} below.
\end{rem}

\subsubsection{Gromov--Lawson's definition}
The second version of the index difference is due to Gromov and Lawson \cite{GL2}, and we will use it as a computational tool. We define it (and use it) only for \emph{closed} manifolds. 
Let $W$ be a $d$-dimensional closed manifold and consider the trivial bundle over $\Riem^+ (W) \times \Riem^+ (W)$ with fibre $\bR \times W$. Choose a smooth function $\varphi: \bR \to [0,1]$ that is equal to $0$ on $(-\infty,0]$ and equal to $1$ on $[1,\infty)$. Equip the fibre over $(g_0,g_1)$ with the metric $h_{(g_0,g_1)}:=dt^2+(1-\varphi(t))g_0 + \varphi(t)g_1 $, which has positive scalar curvature at infinity and so gives an element
\begin{equation}\label{inddiff-aps}
 \inddiffaps := \ind{\Riem^+ (W) \times \Riem^+ (W) \times \bR \times W,h} \in KO^{-d-1} (\Riem^+ (W) \times \Riem^+ (W),\Delta)
\end{equation}
(the degree shift appears since $ \bR \times W$ has dimension $d+1$). Again, we obtain $\inddiffaps_g \in KO^{-d-1} (\Riem^+ (W), g)$ by fixing a basepoint.
The homotopy-theoretic realisations of both index differences are maps of pairs
\begin{equation}\label{indexdifferences}
\inddiff, \inddiffaps: (\Riem^+ (W) \times \Riem^+ (W),\Delta)  \lra (\loopinf{+d+1} \KO,*).
\end{equation}
The following theorem answers the obvious question whether both definitions of the index difference yield the same answer. It will be the main ingredient for the derivation of Theorem \ref{factorization-theorem:odd-case} from Theorem \ref{factorization-theorem:even-case}. It follows from a generalisation of the classical spectral flow index theorem \cite{RobSal} to the Clifford-linear family case.

\begin{thm}[Spectral flow index theorem \cite{Eb13}]\label{spectralflowindex}
For each closed spin manifold $W$, the two definitions of the secondary index invariant agree, i.e.\ the maps \eqref{indexdifferences} are weakly homotopic (cf.\ Definition \ref{defn:intro:weaklyhomotopic}).
\end{thm}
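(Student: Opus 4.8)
The plan is to produce an explicit concordance between the two $(d,0)$-cycles realising the invariants, by interpolating between the two families of metrics on the cylindrical-end manifold. The two definitions differ only in how a pair $(g_0,g_1)\in\Riem^+(W)\times\Riem^+(W)$ is turned into a family of fibrewise metrics on a bundle with cylindrical ends over a one-dimensional parameter: Hitchin's definition uses the linear interpolation $\tfrac{1-t}{2}g_0+\tfrac{1+t}{2}g_1$ on the (elongated) trivial bundle with fibre $W$, while the Gromov--Lawson definition uses the metric $dt^2+(1-\varphi(t))g_0+\varphi(t)g_1$ on $\bR\times W$. Since $W$ is closed, by Lemma \ref{index:independent-of-metric} (and the remarks following it) the relevant index class depends only on the data specified on the cylindrical ends and the locus where psc holds, so there is ample freedom to deform the metrics in the interior. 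First I would set up, over $(\Riem^+(W)\times\Riem^+(W),\Delta)$, a single bundle on which both constructions live — concretely, the elongation $\widehat{W\times\bI}$ of $\bI\times W$ — and exhibit both cycles as restrictions of cycles on this bundle, using that $\bR\times W$ and the elongation of $\bI\times W$ are isometric on their ends.

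The key steps, in order: (1) Reduce to comparing the two classes in $\Omega KO^{-d}(\Riem^+(W)\times\Riem^+(W),\Delta)\cong KO^{-d-1}(\Riem^+(W)\times\Riem^+(W),\Delta)$ via the Bott isomorphism \eqref{bott-map}, so that both invariants are genuinely elements of the same group. (2) Build an explicit path of metric-families on the cylinder bundle: linearly deform the affine interpolation $\tfrac{1-t}{2}g_0+\tfrac{1+t}{2}g_1$ to a reparametrised warped version $dt^2 + (1-\varphi(t))g_0+\varphi(t)g_1$; the interpolation can be carried out in two stages, first reparametrising the interval $[-1,1]\to\bR$ and inserting the $dt^2$ factor (a standard stretching argument as in the proof of Lemma \ref{collar-stretching}), then deforming the profile function $\tfrac{1+t}{2}$ into $\varphi$. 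Throughout the deformation one must maintain positive scalar curvature at infinity and positivity over the diagonal; positivity at infinity is automatic because the ends are never touched (the metrics agree with $dt^2+g_i$ there), and over $\Delta$ both $g_0=g_1$ so the whole fibre metric stays psc. (3) Conclude that the deformation gives a concordance of relative $(d,0)$-cycles in the sense of Definition \ref{defn-fpq}, hence the two classes agree in $KO$. (4) Finally, translate "equal $KO$-class on every compact pair" into "weakly homotopic maps": this is exactly the content of Definition \ref{defn:intro:weaklyhomotopic} combined with the representability statement of Theorem \ref{atsingkar}, since for any finite CW complex $K$ and map $g\colon K\to\Riem^+(W)$ the pullbacks of the two cycles are concordant over $K$.

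The main obstacle I anticipate is step (2): one must choose the deformation of metric-families so that the resulting bundle over $(\Riem^+(W)\times\Riem^+(W))\times[0,1]$ genuinely has the structure required by the definition of the index cycle — in particular the function $t\colon E\to\bR$ making $(\pi,t)$ proper must persist, the metrics must remain cylindrical on the ends for the whole deformation, and the profile-function homotopy from $\tfrac{1+t}{2}$ to $\varphi$ must be done with uniformly controlled derivatives so that no new negative scalar curvature is introduced in a way that destroys Fredholmness or breaks local norm-continuity of the bounded transform. This is genuinely a piece of parametrised spectral geometry rather than soft homotopy theory; in the actual paper it is the content of the cited spectral flow index theorem of the second author \cite{Eb13}, and the honest proof is carried out there — here one simply invokes it, but a self-contained argument would need to reproduce that analysis. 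The remaining steps (1), (3), (4) are then formal, using the machinery already assembled in Section \ref{generalities-k-theory}.
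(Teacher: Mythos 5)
The paper gives no proof of this statement at all: it simply cites the second author's paper \cite{Eb13}, with the one-sentence remark that the result rests on a Clifford-linear family version of the classical spectral flow index theorem of \cite{RobSal}. So there is no in-paper argument to compare your proposal against.

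You do correctly identify at the end that the honest content is in \cite{Eb13} and would have to be reproduced. However, the concordance argument you sketch in step (2) would not work as stated, and the obstruction you anticipate there is not the real one. The two constructions produce cycles of genuinely different kinds: Hitchin's is a one-parameter family (over $\bI$) of $d$-dimensional Dirac operators on $W$, giving an element of $\Omega KO^{-d}(\cR^+(W)^2,\Delta)$, whereas Gromov--Lawson's is a single $(d+1)$-dimensional Dirac operator on $\bR\times W$, giving an element of $KO^{-d-1}(\cR^+(W)^2,\Delta)$. These groups are identified by the Bott map \eqref{bott-map}, but that map is the operator-theoretic construction $F_x\mapsto F_x+s\,\iota\,c(e_1)$ --- it does not insert a geometric $dt^2$ factor, and the result of applying it to Hitchin's cycle is \emph{not} a Dirac operator on a $(d+1)$-manifold. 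There is no ``concordance of relative $(p,q)$-cycles'' in the sense of Definition \ref{defn-fpq} that deforms a $d$-dimensional $\bI$-family into a $(d+1)$-dimensional operator, because a concordance is by definition a deformation within a fixed $(p,q)$; the dimension of the Clifford structure cannot change along a concordance. So step (2) cannot be ``carried out in two stages'' by reparametrisation and profile deformation: the first stage you propose is not a morphism of cycles at all. The theorem asserts that the image of the Hitchin cycle under the Bott isomorphism coincides with the Gromov--Lawson cycle, and that is precisely the (families, Clifford-linear) spectral flow index theorem --- an analytical statement relating a path of $d$-dimensional Dirac operators to a $(d+1)$-dimensional cylinder operator, not an isotopy of metrics. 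Worrying about keeping positive scalar curvature on the ends during a homotopy of profile functions is not the difficulty; that part is indeed soft. Keeping that in mind, your final paragraph (``one simply invokes \cite{Eb13}'') is the correct takeaway, but the intermediate sketch would mislead a reader into thinking a geometric deformation could substitute for the spectral flow analysis.
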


\subsection{The additivity theorem}\label{sec:additivity}

An efficient tool to compute the secondary index invariant is the additivity theorem for the index of operators on noncompact manifolds. There are several versions of this result in the literature, but the version that is most useful for our purposes is due to Bunke \cite{Bunke1995}.
\subsubsection{Statement of the additivity theorem}
\begin{assumptions}\label{assumpt-additivity}
Let $X$ be a paracompact Hausdorff space. Let $E \to X$ and $E' \to X$ be two Riemannian spin manifold bundles of fibre dimension $d$ with metrics $g$ and $g'$ and with cylindrical ends, such that $E$ and $E'$ have positive scalar curvature at infinity. Assume that there exist functions $a_0 < a_1:X \to \bR$ such that $E$ and $E'$ are cylindrical over $X \times (a_0,a_1)$ and agree there: $E_{(a_0,a_1)}=E'_{(a_0,a_1)}$ (more precisely, we mean that there exists a spin-preserving isometry of bundles of closed manifolds over $X \times (a_0,a_1)$). Assume that the scalar curvature on $E_{(a_0,a_1)}$ is positive. Let
\begin{equation*}
 E_0=E_{(-\infty,a_1)}; \; E_1=E_{(a_0,\infty)}; \; E_2=E'_{(-\infty,a_1)}; \; E_3=E'_{(a_0,\infty)}
\end{equation*}
and define $E_{ij}:=E_i \cup E_j$ for $(i,j) \in \{(0,1),(2,3),(0,3),(2,1)\}$. Note that $E= E_{01}$ and $E'= E_{23}$. These are bundles of spin manifolds with cylindrical ends, having positive scalar curvature at infinity. 
\end{assumptions}
\begin{thm}[Additivity theorem]\label{thm:additivity}
Under Assumptions \ref{assumpt-additivity}, we have
\begin{equation*}
 \ind{E_{01}} + \ind{E_{23}} = \ind{E_{03}}+\ind{E_{21}} \in KO^{-d}(X).
\end{equation*}
Furthermore, if both bundles $E_{01}$ and $E_{23}$ have positive scalar curvature over the closed subspace $Y \subset X$, then the above equation holds in $KO^{-d}(X,Y)$.
\end{thm}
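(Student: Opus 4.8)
The strategy is to reduce the additivity statement to a single concordance of $(d,0)$-cycles over $X$ (respectively over $(X,Y)$), built by a rotation/interpolation argument in the spirit of Bunke's original proof, but adapted to the Clifford-linear family setup of Section~\ref{generalities-k-theory}. First I would reformulate the claim: writing the four indices as classes of explicit cycles $\indx{E_{01}}$, $\indx{E_{23}}$, $\indx{E_{03}^{op}}$, $\indx{E_{21}^{op}}$, and using formula~\eqref{formula:index-opposite-bundle} together with the description of the additive inverse after Definition~\ref{defn-fpq}, the assertion becomes $\ind{E_{01}} + \ind{E_{23}} = \ind{E_{03}} + \ind{E_{21}}$ in $KO^{-d}(X)$. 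So it suffices to exhibit, after stabilising by a fixed ample acyclic cycle $(U,J)$ as permitted by Proposition~\ref{prop:kuiper}, a concordance between $\indx{E_{01}} \oplus \indx{E_{23}} \oplus (U,J)$ and $\indx{E_{03}} \oplus \indx{E_{21}} \oplus (U,J)$.

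The key construction is the following. On the common cylindrical piece $X \times (a_0,a_1) \times M$ one can cut all four bundles and reglue; the point is that the Dirac operators on $E_{01} \sqcup E_{23}$ and on $E_{03} \sqcup E_{21}$ differ only by how the two "halves" $E_0, E_2$ (the left ends, both cylindrical over $M$ near the cut) are matched with the two "right halves" $E_1, E_3$. I would introduce a parameter $s \in [0,1]$ and a family of manifold bundles obtained by gluing $E_0 \sqcup E_2$ to $E_1 \sqcup E_3$ along the cylinder via a rotation in the $\mathrm{SO}(2)$ acting on the two copies of $M$: at $s=0$ one recovers $E_{01} \sqcup E_{23}$, at $s=1$ one recovers (up to spin-preserving isometry) $E_{03} \sqcup E_{21}$. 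Lengthening the collar $(a_0,a_1)$ first (which does not change the index, by Lemma~\ref{index:independent-of-metric} and Corollary~\ref{cor:collarstretching}-type arguments) ensures there is enough room to perform this rotation while keeping the metric a product $dt^2 + h_x$ on the relevant region; since the scalar curvature is positive there, the interpolating bundle continues to have positive scalar curvature at infinity, and indeed positive scalar curvature on the whole rotated region, so the bounded transforms stay Fredholm and the resulting family is a genuine $(d,0)$-cycle over $X \times [0,1]$. Local norm-continuity in the sense of Section~\ref{hilbert-bundles} holds because the construction is smooth in $s$; this gives the desired concordance, proving the equation in $KO^{-d}(X)$.

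For the relative refinement, note that if $g$ and $g'$ both have positive scalar curvature over $Y$, then over $Y$ all the Dirac operators in sight — including those in the interpolating family — are invertible by the Bochner argument via the Schrödinger–Lichnerowicz formula~\eqref{weitzenboeck}, so the concordance restricts to a concordance of \emph{acyclic} cycles over $Y$; hence the identity holds in $KO^{-d}(X,Y)$.

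The main obstacle I anticipate is making the rotation/regluing argument honest at the analytic level: one must verify that the interpolating family of operators really does assemble into a continuous $\Cl^{d,0}$-Fredholm family (not merely a fibrewise Fredholm family), and that the identification at $s=1$ is through a \emph{spin-preserving} isometry so that the Clifford structure is respected throughout. This is exactly where Bunke's additivity theorem \cite{Bunke1995} does the work, so in practice I would invoke it directly after setting up the bundles $E_{ij}$, the elongations, and the spin identifications as above, and only check by hand that the hypotheses of \cite{Bunke1995} (complete fibres, product structure and positive scalar curvature along the separating hypersurface, cylindrical ends with positive scalar curvature at infinity) are met in our family-with-parameters situation, and that the bookkeeping of opposite spin structures matches~\eqref{formula:index-opposite-bundle}.
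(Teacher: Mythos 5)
Your proposal does not match the paper's proof, and more importantly it contains a genuine gap at the central construction. The paper's argument is entirely operator-theoretic: all four Hilbert bundles are collected into one graded $\Cl^{d,0}$-bundle $H = H_{01} \oplus H_{23} \oplus H_{03} \oplus H_{21}$, an odd invertible $\Cl^{d,0}$-involution $J = J_0\iota$ is defined from cutoff functions $\mu,\lambda$ supported on the overlap cylinder, and the homotopy $s \mapsto \cos(s)F + \sin(s)J$ — a path of \emph{operators} on a fixed Hilbert bundle — connects the cycle representing the sum of indices to the acyclic cycle $(H,J)$. The rotation happens in the Hilbert space, not in the manifolds. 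Your ``family of manifold bundles over $X \times [0,1]$ obtained by regluing via an $\mathrm{SO}(2)$ rotation'' has no geometric meaning: there is no continuous family of manifolds interpolating between gluing $(E_0,E_1)$ and $(E_2,E_3)$ on the one hand and $(E_0,E_3)$ and $(E_2,E_1)$ on the other, since this would amount to a continuous path between the identity and the swap of two connected components of the separating hypersurface. So the concordance you posit is not produced by your construction.

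This has a downstream consequence for the relative case, which is in fact the part the paper highlights as requiring work beyond Bunke's original argument. Your argument that ``all the Dirac operators in sight — including those in the interpolating family — are invertible by the Bochner argument'' would be fine \emph{if} the interpolating family consisted of Dirac operators on psc manifolds, but in the actual proof the interpolating operators $\cos(s)F + \sin(s)J$ are not Dirac operators and Bochner does not apply to them. Indeed $(\cos(s)F_y + \sin(s)J_y)^2$ contains the cross term $\cos(s)\sin(s)(F_yJ_y + J_yF_y)$, which is only compact, not zero, so invertibility of $F_y$ and $J_y$ does not give invertibility of the interpolant. The paper handles this by proving a quantitative bound $\| F_x J_x + J_x F_x \| \leq C/\ell_x$ (Lemma \ref{bound-commutator}), combining it with the Schr\"odinger--Lichnerowicz bound $D_y^2 \geq \kappa(y)/4$ to show the homotopy is invertible over $Y$ provided the neck length $\ell_y$ satisfies the inequality \eqref{keyestimate}, and then inserting a cylinder-stretching argument to achieve that inequality without altering the concordance class. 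Your proposed fallback of ``invoking Bunke's theorem directly'' also does not cover this: Bunke's theorem treats $X = *$, and as the paper notes, for $Y \neq \emptyset$ one must re-open the proof, not just cite the statement. Both the replacement of the geometric regluing by the operator involution $J$ and the neck-stretching estimate are ideas your proposal is missing.
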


If $X=*$, this is due to Bunke \cite{Bunke1995}, and the case of arbitrary $X$ and $Y =\emptyset$ is straightforward from his argument. For the case $Y \neq \emptyset$, we need to give an additional argument, and this forces us to go into some details of Bunke's proof. Furthermore, the setup used by Bunke is slightly different from ours (cf.\ \S 1.1 loc.cit.), and so we decided to sketch the full proof here.

\subsubsection{The proof of the additivity theorem}
Using \eqref{formula:index-opposite-bundle}, we have to prove that
\begin{equation}\label{eqn:additivityproof-1}
 \ind{E_{01}} + \ind{E_{23}} + \ind{E_{03}^{op}}+\ind{E_{21}^{op}}=0 \in KO^{-d}(X).
\end{equation}
Let $H_{ij}:= L^2 (E_{ij};\spinor)$ be the Hilbert bundle on $X$ associated with the bundle $E_{ij}$. The sum of the indices showing up in \eqref{eqn:additivityproof-1} is represented by the tuple $(H, \iota,c, F)$; the graded $\Cl^{d,0}$-bundle is $H:=H_{01}\oplus H_{23} \oplus H_{03}\oplus H_{21}$, with Clifford action, involution, and operator given by
\begin{align*}
c &:=\begin{pmatrix}
c_{01} & & & \\
&  c_{23} & & \\
 & &-c_{03} & \\
 & & & -c_{21}
\end{pmatrix} &
\iota &:=\begin{pmatrix}
\iota_{01} & & & \\
&  \iota_{23} & & \\
 & &-\iota_{03} & \\
 & & & -\iota_{21}
\end{pmatrix}\\
D &:=\begin{pmatrix}
\Dir_{01} & & & \\
&  \Dir_{23} & & \\
 & &\Dir_{03} & \\
 & & & \Dir_{21}
\end{pmatrix} &
F &:= \normalize{D}.
\end{align*}

Pick smooth functions $\lambda_0, \mu_0: \bR \to [0,1]$ with $\supp (\lambda_0)\subset [0, \infty)$, $\supp (\mu_0) \subset (-\infty,1]$ and $\mu^{2}_{0} + \lambda^{2}_{0} =1$; we can choose them to have $|\lambda_{0}'|, |\mu'_{0}| \leq 2$. We obtain functions $\mu, \lambda: X \times \bR \to [0,1]$ by $\mu (x,t)=\mu_0 (\frac{t-a_0(x)}{a_1 (x)-a_0(x)})$ and $\lambda (x,t)=\lambda_0 (\frac{t-a_0(x)}{a_1 (x)-a_0(x)})$. The formula
\begin{equation*}
J_0  :=\begin{pmatrix}
 & & -\mu & -\lambda\\
 & &-\lambda  &\mu \\
\mu &\lambda & & \\
\lambda & -\mu& &
\end{pmatrix} 
\end{equation*}
defines an operator on $H$ (the interpretation should be clear: for example, multiplying a spinor over $E_{01}$ by $\mu$ gives a spinor with support in $E_0$, and we can transplant it to $E_{03}$). Then $J:= J_0 \iota $ is an odd, $\Cl^{d,0}$-linear involution.

Bunke proves that the anticommutator $FJ+JF$ is compact (stated and proved as Lemma \ref{anticommutator-compact} below), whence 
\begin{equation}\label{additivity-proof-homotopy}
s \longmapsto \cos (s) F + \sin (s) J
\end{equation}
defines a homotopy from $F$ to $J$; since $J$ is invertible, the element $[H, \iota,c,F] \in KO^{-d}(X)$ is zero, which proves Theorem \ref{thm:additivity} for $Y= \emptyset$. 
Assume that the scalar curvature is positive over $Y \subset X$, so that $F$ is invertible over $Y$. If the homotopy \eqref{additivity-proof-homotopy} would go through invertible operators over $Y$, the proof of Theorem \ref{thm:additivity} would be complete. However, $s \mapsto \cos (s) F + \sin (s) J$ is not in general invertible if $F$ is, and so we have to adjust this homotopy.

\begin{lem}\label{bound-commutator}
There exists $C>0$ with the following property. Let $x \in X$ and $\ell_x := a_1 (x)-a_0(x)$ be the length of the straight cylinder in the middle. Then $\|F_x J_x+J_x F_x\| \leq \frac{C}{\ell_x}$.
\end{lem}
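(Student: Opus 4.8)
The goal is a quantitative version of the statement that the anticommutator $F_xJ_x+J_xF_x$ is compact, with an explicit bound decaying like the reciprocal of the length $\ell_x$ of the straight cylinder $E_{(a_0,a_1)}$. The key observation is that $J_x = J_{0,x}\iota_x$ is built out of multiplication operators by the functions $\mu,\lambda$ (together with the spin-structure-preserving isometric identifications over the cylinder), and that the Dirac operator $\Dir_{ij}$ is a first-order differential operator. Hence, in the formal commutator $[\Dir,J_0]$, the purely algebraic terms (where $\Dir$ commutes past $\mu,\lambda$ and past the Clifford-identifications) cancel by the very choice $\mu_0^2+\lambda_0^2=1$ and the structure of $J_0$; what survives is exactly the contribution of the derivatives of $\mu$ and $\lambda$ hitting the cutoffs. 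Since $\mu(x,t)=\mu_0\bigl(\tfrac{t-a_0(x)}{\ell_x}\bigr)$ and likewise for $\lambda$, one has $|\partial_t\mu(x,t)|,|\partial_t\lambda(x,t)|\leq \tfrac{2}{\ell_x}$ by the choice $|\mu_0'|,|\lambda_0'|\leq 2$, and these derivatives are supported in $X\times(a_0,a_1)$, precisely where scalar curvature is positive.

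\textbf{Steps, in order.} First I would reduce from $F=\normalize{\Dir}$ to $\Dir$ itself: using the integral (or resolvent) formula for the bounded transform, $\normalize{\Dir} = \tfrac{2}{\pi}\int_0^\infty \Dir(\Dir^2+1+s^2)^{-1}\,ds$ (or the analogous Mellin-type representation), and the identity
\begin{equation*}
[F,J] = \tfrac{2}{\pi}\int_0^\infty (\Dir^2+1+s^2)^{-1}\bigl([\Dir,J] - \Dir(\Dir^2+1+s^2)^{-1}[\Dir^2,J]\bigr)(\Dir^2+1+s^2)^{-1}\,ds + \cdots,
\end{equation*}
one sees that $\|[F,J]\|$ is controlled by $\|[\Dir,J]\|$ together with $\|[\Dir^2,J]\|$ and the standard resolvent bounds $\|(\Dir^2+1+s^2)^{-1}\|\leq (1+s^2)^{-1}$, $\|\Dir(\Dir^2+1+s^2)^{-1}\|\leq (1+s^2)^{-1/2}$; the $s$-integral then converges, yielding $\|[F,J]\|\leq C'\bigl(\|[\Dir,J]\| + \|[\Dir^2,J]\|^{1/2}\|[\Dir,J]\|^{1/2}\bigr)$ or a similar combination. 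Since $J=J_0\iota$ and $\iota$ anticommutes with $\Dir$ with $[\Dir,\iota]$ again algebraic, the problem localises to bounding $\|[\Dir,J_0]\|$ and $\|[\Dir,[\Dir,J_0]]\|$. Second, I would compute $[\Dir,J_0]$ blockwise: each entry is of the form $[\Dir_{kl}, (\text{mult. by }\mu \text{ or }\lambda)]$ composed with an isometry, which by the Leibniz rule equals Clifford multiplication by $\operatorname{grad}(\mu)$ (resp.\ $\operatorname{grad}(\lambda)$). Hence $\|[\Dir,J_0]\|\leq \sup_{E_x}(|d\mu|+|d\lambda|)\leq \tfrac{4}{\ell_x}$ (the factor coming from $|\mu_0'|,|\mu_0'|\le 2$ and the cylindrical metric making $|dt|=1$). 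Third, $[\Dir,[\Dir,J_0]] = [\Dir, c(\operatorname{grad}\mu)\oplus\cdots]$ is the Dirac operator applied to a bundle endomorphism supported on $X\times(a_0,a_1)$, giving another term bounded by a universal constant times $\sup(|\nabla d\mu| + |d\mu|^2 + \cdots)$, i.e.\ by $\tfrac{C''}{\ell_x^2} + \tfrac{C''}{\ell_x}$. Feeding these into the resolvent estimate, and absorbing $\ell_x$-powers $\geq 1$ into $\tfrac{1}{\ell_x}$ (or, if $\ell_x$ may be small, simply using the crude compactness estimate which is already in Bunke's Lemma \ref{anticommutator-compact} to reduce to $\ell_x$ bounded below), I obtain $\|F_xJ_x+J_xF_x\|\leq \tfrac{C}{\ell_x}$ with $C$ independent of $x$.

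\textbf{Main obstacle.} The genuine difficulty is not the algebra of the cancellation — that is forced by $\mu_0^2+\lambda_0^2=1$ and the block shape of $J_0$ — but making the reduction from $\normalize{\Dir}$ to $\Dir$ uniform in $x$ while keeping track of the fact that the functions $a_0(x),a_1(x)$, hence the cutoffs, vary continuously with $x$. One must ensure that the resolvent bounds $\|(\Dir_x^2+1+s^2)^{-1}\|$ are uniform (they are, being $\le (1+s^2)^{-1}$ for any self-adjoint $\Dir_x$, which holds since each fibre is complete), and that the estimate $|d\mu(x,\cdot)|\le 2/\ell_x$ is genuinely uniform — this is where the precise scaling $\mu(x,t)=\mu_0((t-a_0(x))/\ell_x)$ is essential, and where the hypothesis that the middle cylinder is straight (product metric, so $|dt|\equiv 1$) is used. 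A secondary technical point is that $[\Dir,[\Dir,J_0]]$ involves second derivatives of the cutoffs and the second fundamental form of the cylinder ends, but on the straight cylinder the latter vanishes, so only $\mu_0'', \lambda_0''$ enter, again scaling like $\ell_x^{-2}$; since $\ell_x^{-2}\le \ell_x^{-1}$ once $\ell_x\ge 1$, and the $\ell_x < 1$ case is handled by the non-quantitative compactness already established, the final bound $C/\ell_x$ follows. I would remark that only the order of magnitude matters for the subsequent argument, so I would not optimise the constant $C$.
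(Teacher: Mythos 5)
Your high-level plan is on the right track — reduce from $F=\normalize{\Dir}$ to $\Dir$ via the integral representation of $(1+D^2)^{-1/2}$, compute the anticommutator $P=DJ+JD$ explicitly as multiplication by first derivatives of the cutoffs, and extract the $\ell^{-1}$ scaling from $|\mu_0'|,|\lambda_0'|\le 2$. That much matches the paper. But there is a genuine gap in the middle step, and it is precisely the step that makes this lemma nontrivial.

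You claim a reduction of the form $\|FJ+JF\| \lesssim \|[\Dir,J]\| + \|[\Dir^2,J]\|^{1/2}\|[\Dir,J]\|^{1/2}$ "or a similar combination", and then in your third step you proceed to bound $[\Dir,[\Dir,J_0]]$ using second derivatives of the cutoffs, giving $O(\ell^{-2})$ terms and forcing a case split on the size of $\ell$. Two problems. First, $[\Dir^2,J]=DP-PD$ is a first-order differential operator, hence \emph{unbounded}, so $\|[\Dir^2,J]\|$ is not a finite quantity and the reduction as written is vacuous. Second, even if one interprets this charitably and bounds the zeroth-order part of $[D,P]$, one needs control of $\mu_0'',\lambda_0''$, which the paper's construction of the cutoffs does not supply (only $|\mu_0'|,|\lambda_0'|\le 2$ is arranged). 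The paper avoids this entirely: after writing $FJ+JF = \frac{2}{\pi}\int_0^\infty (Z(t)DJ + JZ(t)D)\,dt$ with $Z(t)=(1+D^2+t^2)^{-1}$, it rewrites the integrand using $DJ = P - JD$ and $[J,Z(t)]=Z(t)(DP-PD)Z(t)$ to get
\[
Z(t)DJ + JZ(t)D = Z(t)P - Z(t)PD^2Z(t) + Z(t)DPZ(t)D,
\]
in which every factor of $D$ is already paired with an adjacent resolvent $Z(t)$ and absorbed by the uniform bounds $\|D^2Z(t)\|\le 1$, $\|Z(t)D\|\le (2\sqrt{1+t^2})^{-1}$. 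The only geometric input is $\|P\|\le 2\sqrt{2}/\ell$, i.e.\ first derivatives of the cutoffs only, and the $t$-integral converges to give $C/\ell$ outright, with no $\ell^{-2}$ term and no case distinction.

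A secondary issue: your fallback for $\ell_x<1$ invokes Lemma \ref{anticommutator-compact}, but compactness gives no norm bound. The correct trivial observation, had you needed it, is $\|F_x\|,\|J_x\|\le 1$, hence $\|F_xJ_x+J_xF_x\|\le 2\le 2/\ell_x$ once $\ell_x\le 1$; but with the identity above this fallback is unnecessary. So the missing ingredient in your proposal is the algebraic rearrangement of the integrand that eliminates $[D^2,J]$ in favour of $P$ sandwiched by resolvents — without it, the argument either does not close or requires hypotheses on the cutoffs that the paper does not (and need not) impose.
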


\begin{proof}
We ease notation by dropping the subscript $x$.
First, compute the anticommutator
\begin{equation*}
P:=DJ+JD =
\begin{pmatrix}
 & &- \mu' &- \lambda' \\
 & & -\lambda' & \mu'\\
 \mu' &  \lambda'& & \\
 \lambda' & -\mu' & &
\end{pmatrix} e \iota,
\end{equation*}
where $e$ denotes Clifford multiplication by the unit vector field $\partial_t$ on $E$.
The functions $\mu$ and $\lambda$ have been chosen so that $|\mu'|,|\lambda'|  \leq 2/\ell$. Thus $DJ+JD$ is bounded, and $\|DJ + JD\|\leq \frac{2\sqrt{2}}{\ell}$. As in \cite{Bunke1995}, we write $FJ+JF$ as an integral, starting with the absolutely convergent integral
\begin{equation*}
\frac{1}{(1+D^2)^{1/2}}=\frac{2}{\pi} \int_{0}^{\infty} (1 + D^2 + t^2)^{-1} dt \in \Lin (L^2 (E_x, \spinor_x)).
\end{equation*}
Let $Z(t):=(1 + D^2 + t^2)^{-1}$. For each $u \in W^1 (E_x; \spinor_x)$, we get
\begin{equation}\label{integral0}
( FJ+JF)u = \frac{2}{\pi} \int_{0}^{\infty}( Z(t)DJ+ J Z(t) D ) u dt \in L^2(E_x, \spinor_x),
\end{equation}
and we claim that the formula
\begin{equation}\label{integral}
FJ+JF = \frac{2}{\pi} \int_{0}^{\infty} Z(t)DJ+ J Z(t) D  dt \in \Lin (L^2 (E_x, \spinor_x))
\end{equation}
is true and the integral is absolutely convergent. To prove this, it is enough to show that the integral \eqref{integral} converges absolutely, the equality then follows from 
\eqref{integral0}. 
The operator $Z(t)DJ+ J Z(t) D$ is bounded, and we estimate its norm as follows. 
Note that $Z(t)$ and $D$ commute. Thus $Z(t) DJ + JZ(t)D= Z(t)P + [J,Z(t)]D$. Moreover, $[J,Z(t)] = Z(t)[Z(t)^{-1},J] Z(t)$ and $[Z(t)^{-1},J] =DP-PD$. Altogether, this shows that the integrand can be written as
\begin{equation}\label{integrand}
Z(t)DJ+ J Z(t) D  = Z(t) P - Z(t) P D^2 Z(t) + Z(t)D P Z(t) D
\end{equation}
(these formal computations can be justified by restricting to the dense domains).
We have the following estimates:
\begin{align*}
\| Z(t) \| &\leq \frac{1}{1+t^2}\\
\|D^2 Z(t)\| &\leq 1\\
\|Z(t) D\| &\leq \frac{1}{2\sqrt{1+t^2}}.
\end{align*}
The first two are clear, and the third follows from $\sup_{x} \frac{x}{1+x^2 + t^2} = \frac{1}{2\sqrt{1+t^2}}$. Therefore, the norm of the operator (\ref{integrand}) is bounded by
\begin{equation*}
\frac{2 \sqrt{2}}{\ell} \left(\frac{1}{1+t^2}  + \frac{1}{1+t^2} + \frac{1}{4(1+t^2)} \right) = \frac{ 9\sqrt{2}}{2\ell} \frac{1}{1+t^2}.
\end{equation*}
Therefore \eqref{integral} is true and the integral converges absolutely. Moreover, \eqref{integral} implies $\| FJ+JF\| \leq \frac{C}{\ell}$, with $C= \frac{9 \sqrt{2}}{2}$.
\end{proof}

\begin{lem}\label{anticommutator-compact}
The anticommutator $FJ+JF$ is compact.
\end{lem}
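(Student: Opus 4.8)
The plan is to deduce the lemma from the criterion recorded at the end of Section~\ref{hilbert-bundles}: an operator family which is locally norm-continuous and whose value at every point is a compact operator is itself a compact family. Local norm-continuity of $FJ+JF$ will follow from that of $F$ (part of the package recalled in Section~\ref{sec:generaldirac}) and of $J=J_0\iota$, the latter being assembled from the grading, from Clifford multiplications, and from multiplication by the functions $\mu,\lambda$, which vary continuously (indeed smoothly) with $x\in X$. So the real content is to show, for each fixed $x$, that $F_xJ_x+J_xF_x$ is a compact operator on $L^2(E_x;\spinor_x)$; I drop the subscript $x$ from now on.

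For this I would start from the absolutely convergent integral representation \eqref{integral}, $FJ+JF=\tfrac2\pi\int_0^\infty R(t)\,dt$, with integrand $R(t)=Z(t)P-Z(t)PD^2Z(t)+Z(t)DPZ(t)D$ as in \eqref{integrand}, where $P=DJ+JD$. Since the compact operators form a norm-closed subspace of the bounded operators and the integral converges in operator norm (Lemma~\ref{bound-commutator}), it suffices to prove that each $R(t)$ is compact. The key structural observation, already visible in the proof of Lemma~\ref{bound-commutator}, is that $P$ equals multiplication by a bundle endomorphism $\Phi$ built from $\mu'$ and $\lambda'$ (write this as $M_\Phi$), followed by the pointwise operator $e\iota$, so $P=M_\Phi e\iota$; moreover $\Phi$ is supported inside the cylindrical region $E_{(a_0,a_1)}\cong X\times(a_0,a_1)\times M$, so its restriction to the fibre is supported in a subset of $[a_0(x),a_1(x)]\times M$, which is \emph{compact} because $M$ is closed. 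Choosing a smooth, compactly supported scalar function $\psi$ on $E_x$ (with multiplication operator $M_\psi$) which equals $1$ on this support, I have $M_\Phi=M_\psi M_\Phi=M_\Phi M_\psi$, and $M_\psi$ commutes with the pointwise operator $e\iota$.

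The remaining ingredients are elliptic regularity and the Rellich--Kondrachov theorem. From the estimates $\|Z(t)D\|\le\tfrac12$ and $\|D^2Z(t)\|\le1$ (proof of Lemma~\ref{bound-commutator}) together with the identification of $W^1(E_x;\spinor_x)$ with the domain of the closure of $\Dir_x$, both $Z(t)$ and $Z(t)D$ map $L^2(E_x;\spinor_x)$ boundedly into $W^1(E_x;\spinor_x)$; and since $\psi$ is compactly supported, $M_\psi$ is a compact operator $W^1(E_x;\spinor_x)\to L^2(E_x;\spinor_x)$ (apply Rellich--Kondrachov in local charts, using local elliptic estimates for $\Dir_x$). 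Hence $M_\psi Z(t)$ and $M_\psi Z(t)D$ are compact, and so are the adjoints, in particular $Z(t)M_\psi$. Inserting $\psi$ into the three summands of $R(t)$ and using boundedness of $D^2Z(t)$, $Z(t)D$, $e\iota$ and $M_\Phi$, one writes $Z(t)P=(Z(t)M_\psi)(M_\Phi e\iota)$, $Z(t)PD^2Z(t)=(Z(t)M_\psi)(M_\Phi e\iota D^2Z(t))$ and $Z(t)DPZ(t)D=(Z(t)DM_\Phi e\iota)(M_\psi Z(t)D)$, each a product of a compact operator with bounded operators, hence compact. This yields compactness of $R(t)$, and then of $FJ+JF$. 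I expect the only genuinely subtle point to be the passage from pointwise to family compactness --- but that is exactly what the criterion of Section~\ref{hilbert-bundles} supplies, once local norm-continuity has been checked.
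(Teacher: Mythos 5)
Your argument is correct and follows the same overall strategy as the paper: establish the integral formula~\eqref{integral} with absolutely convergent integrand $R(t)=Z(t)P - Z(t)PD^2Z(t) + Z(t)DPZ(t)D$, prove that each $R(t)$ is compact so that the norm-limit $F_xJ_x+J_xF_x$ is compact fibrewise, and then promote this to compactness of the family $FJ+JF$ by the local-norm-continuity criterion recalled in Section~\ref{hilbert-bundles}. Where you genuinely diverge is in the justification of the fibrewise compactness of the individual terms. The paper disposes of this by citing \cite[Prop.\ 10.5.1]{HigRoe} to conclude directly that $Z_x(t)P_x$ and $P_xZ_x(t)D_x$ are compact, then multiplies by the bounded operators $D_x^2Z_x(t)$ and $Z_x(t)D_x$. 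You instead give a self-contained proof of the needed compactness, based on the observation that $P = M_\Phi\, e\iota$ is supported inside the middle cylinder $[a_0(x),a_1(x)]\times M$, which has \emph{compact} closure because $M$ is closed; this allows you to insert a compactly supported cutoff $\psi$ with $M_\Phi = M_\Phi M_\psi = M_\psi M_\Phi$, to use the estimates from Lemma~\ref{bound-commutator} to check that $Z(t)$ and $Z(t)D$ map $L^2$ boundedly into $W^1=\dom(\overline{\Dir_x})$, and to conclude compactness of $M_\psi Z(t)$, $M_\psi Z(t)D$, and their adjoints from local elliptic regularity together with Rellich--Kondrachov. Your factorisations of the three terms of $R(t)$ as ``compact $\times$ bounded'' then close the argument. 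The trade-off is that your route is longer but elementary and avoids the external reference, whereas the paper's is shorter but opaque unless the reader has \cite{HigRoe} at hand. One point worth making explicit in your write-up: when you assert $Z(t)D\colon L^2\to W^1$ is bounded, you are implicitly using that $Z(t)D$ and $DZ(t)$ agree as bounded extensions, so the range lies in $\dom(D)$ and $D\circ(Z(t)D)=D^2Z(t)$ has norm $\leq 1$; this is correct, but a sentence acknowledging it would tighten the argument.
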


\begin{proof}
We use the integral formula \eqref{integral}, and revert to using the index $x$. By an application of \cite[Prop.\ 10.5.1]{HigRoe}, the operators $Z_x(t) P_x$ and $P_x Z_x (t) D_x$ are compact. Since $D^2_x Z_x (t)$ and $Z_x (t) D_x$ are bounded, it follows that the integrand \eqref{integrand}, namely $Z_x(t) P_x - Z_x(t) P_x D_x^2 Z_x(t) + Z_x(t)D_x P_x Z_x(t) D_x$, is compact.
Since the integral \eqref{integral} converges absolutely, we get that $F_x J_x+J_x F_x$ is a compact operator, for each $x \in X$.
This is not yet enough to guarantee compactness of $FJ+JF$, see the discussion in Section \ref{hilbert-bundles}. However, by \cite[Proposition 3.7]{Eb13}, the family $x \mapsto F_x$ is a Fredholm family, and the proof in loc.\ cit.\ shows that $F$ is locally norm-continuous, with respect to a local trivialization of the Hilbert bundle that comes from a local trivialization of the original fibre bundle which is compatible with the cylindrical structure at infinity. The operator family $J$ is locally norm-continuous with respect to the same local trivializations, and hence $FJ+JF$ is locally norm-continuous. Therefore, using the criterion mentioned in Section \ref{hilbert-bundles}, compactness of $FJ+JF$ follows.
\end{proof}

\begin{proof}[Proof of Theorem \ref{thm:additivity}]
Let $\kappa: Y \to (0, \infty)$ be a lower bound for the scalar curvature of $E$ and $E'$, i.e. a function such that for $y \in Y$
$$\kappa (y) \leq \scal(g_y), \scal (g'_y).$$
By the Schr\"odinger--Lichnerowicz formula we have $D_y^2 \geq \frac{\kappa(y)}{4}$. Consider the operator homotopy $H(s)=\cos (s) F + \sin(s) J$. By Lemma \ref{bound-commutator}
\begin{align*}
H(s)_y^2 &= \cos(s)^2 F_y^2 + \sin(s)^2 + \cos(s)\sin(s) (F_y J_y+J_y F_y)\\
& \geq \cos(s)^2 \frac{\kappa(y)/4}{1+\kappa(y)/4} + \sin (s)^2 - \frac{C}{2 \ell_y},
\end{align*}
$C$ being the constant from Lemma \ref{bound-commutator}. Therefore $H_y(s)$ is invertible for all $s \in [0,\pi/2]$ and $y \in Y$, provided that
\begin{equation}\label{keyestimate}
\ell_y>\frac{2 C(1+ \kappa(y)/4)}{\kappa(y)}
\end{equation}
for all $y \in Y$. This proves Theorem \ref{thm:additivity} under the additional assumption that the length $\ell_y$ of the straight cylinder in the middle is long enough to satisfy \eqref{keyestimate} for all $y \in Y$.

The key observation to treat the general case is now that stretching the cylinder $(a_0,a_1)$ to arbitrarily big length $\ell$ does \emph{not} affect the lower bound $\kappa$ for the scalar curvature (the metric is a product metric on the cylinder, and stretching the cylinder in the $\bR$-direction does not change the scalar curvature).
Here is how the stretching is done. We will simultaneously change the function to $\bR$ and the metric on the common piece $E_{(a_0,a_1)}= E'_{(a_0,a_1)}$. 
Let $b: X \times \bR \to [0,\infty)$ be a function (the restriction to $x \times \bR$ should be smooth) with support in $X \times (a_0,a_1)$. We change the metric $g=g'$ on $E_{(a_0,a_1)}= E'_{(a_0,a_1)}$ by adding $b(x,t)^2 dt^2$. The straight cylinder now has length $\int_{a_0(x)}^{a_1(x)} \sqrt{1+b^2(x,\tau)} d \tau$. We change the projection maps $E, E'\to \bR$ to ($ z\in E,E'$) to
\begin{equation*}
\tilde{t} (z):= t(z) + \int_{-\infty}^{t(z)} \sqrt{1+b^2(\pi(z),\tau)} d\tau.
\end{equation*}

Let $a'_1 (x):= a_0(x) + \int_{a_0(x)}^{a_1(x)} \sqrt{1+b^2(x,\tau)} d\tau$; with the new metric and the new projection functions, the two bundles are cylindrical over a $X \times (a_0,a'_1)$. Clearly, the new family with the stretched cylinder is concordant to the original one. If the function $b$ is picked such that for $y \in Y$, the inequality 
$$\int_{a_0(y)}^{a_1(y)} \sqrt{1+b^2(y,\tau)} d\tau \geq 3 \frac{C(1+ \kappa(y) /4)}{\kappa (y)}$$
holds, then the homotopy \eqref{additivity-proof-homotopy} is through invertible operators over $Y$.
\end{proof}

\subsubsection{A more useful formulation of the additivity theorem}
 
We reformulate the additivity theorem slightly, to a form better adapted to our needs. Let $(X,Y)$ be as above and let $W: M_0 \rightsquigarrow M_1$, $W': M_1 \rightsquigarrow M_2$ be $d$-dimensional spin cobordisms. Let $E \to X$ ($E' \to X$ resp.) be a $W$-bundle ($W'$-bundle, resp.) with trivialised boundary and a spin structure. Let $h_i \in \Riem^+ (M_i)$ be psc metrics and let $g$ ($g'$, resp.) be a fibrewise Riemannian metric on $E$ ($E'$, resp.) which coincides over the boundaries with $h_i$. Assume that over $Y$, the metrics $g$ and $g'$ have positive scalar curvature.

\begin{cor}\label{additivity-concordance-version}
Under the above assumptions, the following holds:
\[
\ind{E,g} + \ind{E',g'} = \ind{E \cup_{X \times M_1} E', g \cup g'} \in KO^{-d}(X,Y).
\]
\end{cor}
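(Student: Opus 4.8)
The plan is to deduce this from the additivity theorem (Theorem \ref{thm:additivity}) applied to a well-chosen pair of bundles. Recall first that, by our standing convention, $\ind{E,g}$, $\ind{E',g'}$ and $\ind{E\cup E',g\cup g'}$ denote the indices of the respective elongations $\widehat{E}$, $\widehat{E'}$ and $\widehat{E\cup E'}$; here $\widehat{E\cup E'}$ is the elongation of the $(W\cup W')$-bundle $E\cup_{X\times M_1}E'$ carrying the glued fibrewise metric $g\cup g'$. Since $g$ and $g'$ are of product form $dt^2+h_1$ in collars of $X\times M_1$, the metric $g\cup g'$ is cylindrical over a neighbourhood $X\times(a_0,a_1)\times M_1$ of the gluing locus and, as $h_1\in\Riem^+(M_1)$, has positive scalar curvature there. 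Choose a fibrewise smooth function $t$ on $\widehat{E\cup E'}$ which equals the cylinder coordinate on the two elongated ends and on the product region $X\times(a_0,a_1)\times M_1$, satisfies $t<a_0$ on the part of $E$ away from the $M_1$-collar together with the $M_0$-end, and $t>a_1$ on the corresponding part of $E'$ together with the $M_2$-end; then $(\pi,t)$ is proper.

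Now apply Theorem \ref{thm:additivity} with first bundle $\mathbf{A}:=\widehat{E\cup E'}$, equipped with the metric and function just described, and second bundle the trivial cylinder $\mathbf{B}:=X\times\bR\times M_1$ with product metric $dt^2+h_1$ and $t$ the $\bR$-coordinate. Both have cylindrical ends and positive scalar curvature at infinity; they are cylindrical over $X\times(a_0,a_1)$ and agree there --- each being $X\times(a_0,a_1)\times M_1$ with the metric $dt^2+h_1$ --- and the scalar curvature is positive on this common piece. Over the closed subspace $Y$ the metric $g\cup g'$ has positive scalar curvature (because $g$ and $g'$ do, and they are glued along the positively curved product region $X\times M_1$), hence so does $\mathbf{A}$; and $\mathbf{B}$ has positive scalar curvature everywhere. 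Thus Assumptions \ref{assumpt-additivity} hold with this $Y$. Unwinding the definition of the bundles $E_{ij}$ in Theorem \ref{thm:additivity}, one finds $E_{01}=\mathbf{A}=\widehat{E\cup E'}$ and $E_{23}=\mathbf{B}$, while $E_{03}=E_0\cup E_3$ is $E$ equipped with a full cylindrical end over $M_0$ and a full cylindrical end over $M_1$, i.e.\ (isometrically, after translating the cylinder coordinate; cf.\ Lemma \ref{index:independent-of-metric}) the elongation $\widehat{E}$; similarly $E_{21}=\widehat{E'}$.

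Finally, $\ind{\mathbf{B}}=0\in KO^{-d}(X,Y)$: the fibre $\bR\times M_1$ carries the product metric $dt^2+h_1$ of positive scalar curvature, so by the Schr\"odinger--Lichnerowicz formula \eqref{weitzenboeck} the Dirac operator $\Dir_x$ on each fibre has trivial kernel, the bounded transform $F_x$ is invertible for every $x$, and --- the family being (even constant, hence) locally norm-continuous --- the cycle $\indx{\mathbf{B}}$ is acyclic. Substituting into Theorem \ref{thm:additivity} and using $\ind{E_{03}^{op}}=-\ind{\widehat{E}}$ and $\ind{E_{21}^{op}}=-\ind{\widehat{E'}}$ from \eqref{formula:index-opposite-bundle}, we obtain
\[
\ind{\widehat{E\cup E'}}+0-\ind{\widehat{E}}-\ind{\widehat{E'}}=0\in KO^{-d}(X,Y),
\]
which is the assertion. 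The only delicate point is the bookkeeping identifying $E_{03}$ and $E_{21}$ with the standard elongations $\widehat{E}$ and $\widehat{E'}$ and, in the relative case, checking that $\widehat{E\cup E'}$ has positive scalar curvature over $Y$; both are immediate from the product structure of $g$ and $g'$ near $X\times M_1$.
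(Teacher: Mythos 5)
Your proof is correct and is essentially the paper's argument with a cosmetic relabeling: the paper takes $\widehat{E}$ and $\widehat{E'}$ as the two input bundles of Theorem \ref{thm:additivity} (so that $E_{01}=\widehat{E}$, $E_{23}=\widehat{E'}$, $E_{03}=\widehat{E\cup E'}$, $E_{21}=$ cylinder), whereas you take $\widehat{E\cup E'}$ and the cylinder as the input bundles, which permutes the four pieces but yields the identical algebraic identity. Both versions use the same ingredients --- Theorem \ref{thm:additivity}, vanishing of the index of the psc cylinder, and \eqref{formula:index-opposite-bundle} --- and your verification of Assumptions \ref{assumpt-additivity} over $Y$ is if anything more carefully spelled out than the paper's.
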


\begin{proof}
Let $E_0:= X \times (-\infty,0] \times M_0 \cup_{X \times M_0} E$, $E_1:= X \times [1, \infty) \times M_1$, $E_2 := X \times (-\infty,1] \times M_1$ and $E_3:= E' \cup_{X \times M_2} [2, \infty) \times M_2$, with the metrics extended by cylindrical metrics. By Theorem \ref{thm:additivity}
\[
 \ind{E_{01}} + \ind{E_{23}} = \ind{E_{03}}+\ind{E_{21}}=0 \in KO^{-d}(X,Y).
\]
The manifold bundle $E_{21}$ has positive scalar curvature and thus $\ind{E_{21}}=0$. Tracing through the definitions shows that $ \ind{E_{01}} = \ind{E,g}$, $\ind{E_{23}} = \ind{E',g'}$ and $\ind{E_{03}}= \ind{E \cup_{X \times M_1} E',g \cup g'}$. This completes the proof.
\end{proof}

\subsubsection{Additivity property of the index difference}

Here we show what the additivity theorem yields for the index difference.

\begin{thm}\label{additivity-index-difference}
Let $M_0 \stackrel{V}{\rightsquigarrow}M_1 \stackrel{W}{\rightsquigarrow} M_2$ be spin cobordisms, $h_i \in \Riem^+ (M_i)$ be boundary conditions, and $g \in \Riem^+ (V)_{h_0,h_1}$ and $m \in \Riem^+ (W)_{h_1,h_2}$ be psc metrics satisfying these boundary conditions. Then
\begin{equation*}
\xymatrix{
\Riem^+ (V)_{h_0,h_1} \times \Riem^+ (W)_{h_1,h_2}\ar[rr]^-{\mu} \ar[d]_{\inddiff_{g}\times \inddiff_m } & & \Riem^+ (V \cup W)_{h_0,h_2} \ar[d]^{\inddiff_{g \cup m}} \\
\loopinf{+d+1} KO \times \loopinf{+d+1} KO \ar[rr]^-{+} & & \loopinf{+d+1} \KO
}
\end{equation*}
is homotopy commutative.
\end{thm}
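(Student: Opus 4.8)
The plan is to deduce this directly from the additivity theorem in the form of Corollary \ref{additivity-concordance-version}. Write $d$ for the common dimension of the cobordisms, $X := \Riem^+(V)_{h_0,h_1}\times\bI$ and $Y := \Riem^+(V)_{h_0,h_1}\times\partial\bI \cup \{g\}\times\bI$, so that by the definition of $\Omega KO^{-d}$ one has $KO^{-d}(X,Y) = \Omega KO^{-d}(\Riem^+(V)_{h_0,h_1},g)$. By Hitchin's definition (\S\ref{sec:inddiff}), $\inddiff_g$ is the homotopy-theoretic realisation of $\ind{E,\gamma} \in KO^{-d}(X,Y)$, where $(E,\gamma)\to X$ is the elongated trivial $V$-bundle whose fibrewise metric over $(g',t)$ is $\tfrac{1-t}{2}g' + \tfrac{1+t}{2}g$; likewise $\inddiff_{g\cup m}$ is the realisation of the analogous class for the elongated trivial $(V\cup W)$-bundle over $\Riem^+(V\cup W)_{h_0,h_2}\times\bI$ carrying, over $(G,t)$, the metric $\tfrac{1-t}{2}G + \tfrac{1+t}{2}(g\cup m)$. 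The first step is to observe that pulling this second bundle back along $\mu_m\times\id_\bI$ yields exactly the elongated trivial $(V\cup W)$-bundle over $X$ with fibrewise metric $\tfrac{1-t}{2}(g'\cup m)+\tfrac{1+t}{2}(g\cup m)$ over $(g',t)$, which equals $\gamma(g',t)\cup m$; by naturality of the index under pullback, $\inddiff_{g\cup m}\circ\mu_m$ is then the realisation of $\ind{E\cup_{X\times M_1}E',\,\gamma\cup m}$, where $E' := X\times W$ carries the constant fibrewise metric $m$. Since equal $K$-theory classes have homotopic homotopy-theoretic realisations (stabilise by an ample acyclic cycle using Proposition \ref{prop:kuiper}, obtain a concordance, and realise it to a pointed homotopy), the theorem reduces to the identity $\ind{E,\gamma} = \ind{E\cup_{X\times M_1}E',\,\gamma\cup m}$ in $KO^{-d}(X,Y)$.

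To prove this identity I would apply Corollary \ref{additivity-concordance-version} to the $V$-bundle $(E,\gamma)$ and the $W$-bundle $(E',m)$ over $X$, relative to $Y$. Its hypotheses must be checked over the whole of $Y$, not merely over the basepoint: the metrics restrict to the fixed collar metrics $h_0,h_1$ (resp.\ $h_1,h_2$) on the boundaries, since a convex combination of metrics of the form $dt^2+h_i$ near a collar is again of that form; and both $\gamma$ and $m$ have positive scalar curvature over $Y$ --- $\gamma$ because over $Y$ either $t=\pm1$ (so the metric is $g'$ or $g$) or the first variable is the basepoint $g$, and $m$ because it is a fixed psc metric on $W$. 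The corollary then gives
$$\ind{E,\gamma} + \ind{E',m} = \ind{E\cup_{X\times M_1}E',\,\gamma\cup m} \qquad\text{in }KO^{-d}(X,Y).$$
The key point --- really the whole content --- is that the $W$-summand of the interpolating metric $\gamma\cup m$ is the constant metric $m$. Hence $E'$ has positive scalar curvature everywhere, so by the Schr\"odinger--Lichnerowicz formula \eqref{weitzenboeck} and the Bochner argument the Dirac operator on every fibre of the elongation of $E'$ is invertible, the cycle computing $\ind{E',m}$ is acyclic, and therefore $\ind{E',m}=0$ in $KO^{-d}(X,Y)$. The displayed equation then collapses to the desired identity.

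I do not expect a genuine obstacle: the statement is essentially a formal corollary of the additivity theorem, whose proof (already carried out above) is where the analytic work lies. The only delicate points are bookkeeping: identifying the bundle defining $\mu_m^*\inddiff_{g\cup m}$ with the glued bundle $E\cup_{X\times M_1}E'$, and checking the positive-scalar-curvature hypotheses of Corollary \ref{additivity-concordance-version} precisely over the relative subspace $Y$ so that the additivity identity holds in the relative group rather than just the absolute one. Both follow from the single observation that the $W$-part of the metric $\gamma\cup m$ is the fixed psc metric $m$ throughout.
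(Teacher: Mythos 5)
Your proof is correct and takes essentially the same route as the paper's: set up the relevant elongated trivial bundles over $\cR^+(V)_{h_0,h_1}\times\bI$, apply Corollary \ref{additivity-concordance-version} relative to the subspace $Y$, and observe that the $W$-summand carries the fixed psc metric $m$, hence contributes zero by the Bochner vanishing argument. The paper phrases steps 3--4 as producing directly, via (the proof of) Corollary \ref{additivity-concordance-version}, a concordance between $\operatorname{Dir}(E_{01})\oplus(U,J)$ and $\operatorname{Dir}(E_{03})\oplus(U,J)$, which is exactly the stabilisation-and-concordance step you spell out via Proposition \ref{prop:kuiper}; the two are equivalent.
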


\begin{proof}
Write $X := \Riem^+ (V)_{h_0,h_1}\times \Riem^+ (W)_{h_1,h_2}$. On the space $X \times \bI$, we consider the following trivial fibre bundles equipped with Riemannian metrics:
\begin{enumerate}[(i)]
\item $E_{01}$ has fibre $(-\infty,0] \times M_0 \cup V \cup [1,\infty) \times M_1$; the metric over $(x,y,t)$ is $\frac{1-t}{2}g + \frac{1+t}{2}x$, extended to the cylinders by a product.
\item $E_{23}$ has fibre $(-\infty,1] \times M_1 \cup W \cup [2, \infty) \times M_2$; the metric over $(x,y,t)$ is $\frac{1-t}{2}m+\frac{1+t}{2} y$, extended to the cylinders by a product.
\end{enumerate}

These two bundles are cylindrical over a neighborhood of $X \times \bI \times \{1\}$ and agree there. As in Section \ref{sec:additivity}, these bundles are partitioned along $M_1$ into $E_i$, $i=0,\ldots,3$. By Theorem \ref{thm:additivity}, we get 
\[
 \ind{E_{01}} + \ind {E_{13}} = \ind{E_{03}} + \ind{E_{21}}.
\]
But $\ind{E_{21}}=0$ because the bundles $E_1$ and $E_2$ have positive scalar curvature. By construction $\ind{E_{01}}=\inddiff_g$, $\ind{E_{23}}=\inddiff_m$ and $\ind{E_{03}}=\inddiff_{g \cup m} \circ \mu$. 
\end{proof}

\begin{rem}
Let $W^d$ be closed and $\phi: S^{k-1} \times D^{d-k+1} \to W$ be an embedding, with $3 \leq k \leq d-2$. We use the notation of the proof of Theorem \ref{surgery-invariance}. There are maps 
\begin{align*}
\mu_{ g_{\round}^{k-1}+ g_{\tor}^{d-k+1}}:\Riem^+ (W^{\circ})_{g_{\round}^{k-1}+ g_{\round}^{d-k}}  &\lra  \Riem^+ (W)\\
\mu_{g_{\tor}^{k}+ g_{\round}^{d-k}}: \Riem^+ (W^{\circ})_{g_{\round}^{k-1}+ g_{\round}^{d-k}}  &\lra  \Riem^+ (W'),
\end{align*}
and the surgery equivalence map $\SE_\phi$ is the composition of a homotopy inverse of the first with the second map. Hence, the index difference satisfies an appropriate cobordism invariance. We leave the precise formulation to the interested reader, as we will not use this fact.
\end{rem}

\subsubsection{Propagating a detection theorem}
We spell out a consequence of Theorem \ref{additivity-index-difference} which is important for the global structure of this paper. If one manages to prove a detection theorem for the index difference for a certain single spin cobordism $W^d$ (satisfying the conditions listed below), then the same detection theorem follows for \emph{all} spin manifolds of dimension $d$.

\begin{prop}\label{inheritance-of-detection}
Let $W : \emptyset \leadsto S^{d-1}$ be a simply-connected spin cobordism of dimension $d \geq 6$, which is spin cobordant to $D^d$ relative to its boundary. Let $g \in \cR^+(W)_{h_\round^{d-1}}$ be in a path component which corresponds to that of $g_\tor^d \in \cR^+(D^d)_{h_\round^{d-1}}$ under a surgery equivalence $\cR^+(W)_{h_\round^{d-1}} \simeq \cR^+(D^d)_{h_\round^{d-1}}$.

Let $W'$ be an arbitrary $d$-dimensional spin cobordism with boundary $M'$ and let $h' \in \Riem^+ (M')$ and $g' \in \Riem^+ (W')_{h'}$.
Let $X$ be a CW complex, $\hat{a}: X \to \loopinf{+d+1} \KO$ be a map and let a factorisation
$$X \stackrel{\rho}{\lra} \Riem^+ (W)_{h_\round^{d-1}} \stackrel{\inddiff_{g}}{\lra} \loopinf{+d+1} \KO$$
of $\hat{a}$ up to homotopy be given. Then there exists another factorisation 
$$X \stackrel{\rho'}{\lra} \Riem^+ (W')_{h'} \stackrel{\inddiff_{g'}}{\lra} \loopinf{+d+1} \KO$$
of $\hat{a}$ up to homotopy.
\end{prop}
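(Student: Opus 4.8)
\textit{Overall strategy.} I would route the argument through the model pair $(W,g)=(D^d,g_\tor^d)$. In one direction, transport the given factorisation through $\Riem^+(W)_{h_\round^{d-1}}$ to one through $\Riem^+(D^d)_{h_\round^{d-1}}$, using that the index difference is compatible with surgery equivalences. In the other direction, build a map $\Riem^+(D^d)_{h_\round^{d-1}}\to\Riem^+(W')_{h'}$ by gluing a variable disc metric into $W'$ in place of a fixed standard disc, and use the additivity theorem (Theorem \ref{additivity-index-difference}) to control its composition with $\inddiff_{g'}$. Composing these produces $\rho'$.

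\textit{The forward map into $\Riem^+(W')_{h'}$.} Fix an interior point $p\in W'$ and an embedded disc $\phi\colon D^d\hookrightarrow\inter W'$ centred at $p$. Theorem \ref{chernysh-walsh-theorem}, applied to the $0$-dimensional submanifold $\{p\}$ (here $k=1$, so $d-k+1=d\geq 3$), shows that the subspace of $\Riem^+(W')_{h'}$ consisting of metrics whose restriction to $\phi(D^d)$ is the fixed torpedo metric $g_\tor^d$ meets every path component. Hence I may replace $g'$ by a metric $\bar g'$ in the same path component with $\phi^*\bar g'=g_\tor^d$, which changes $\inddiff_{g'}$ only by a homotopy. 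Set $W'_\circ:=W'\setminus\phi(\inter D^d)$ and $m:=\bar g'|_{W'_\circ}$, which by the torpedo property has product form $h_\round^{d-1}+dt^2$ near the newly created boundary sphere. Gluing in this fixed metric defines
$$\Psi\colon \Riem^+(D^d)_{h_\round^{d-1}}\lra \Riem^+(W')_{h'},\qquad g\longmapsto g\cup m,$$
with $\Psi(g_\tor^d)=\bar g'$. By Theorem \ref{additivity-index-difference} the triangle formed by $\Psi$, $\inddiff_{g_\tor^d}$ and $\inddiff_{\bar g'}$ commutes up to homotopy, so $\inddiff_{\bar g'}\circ\Psi\simeq\inddiff_{g_\tor^d}$.

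\textit{Reduction of $(W,g)$ and conclusion.} By hypothesis there is a surgery equivalence $\Phi\colon\Riem^+(W)_{h_\round^{d-1}}\simeq\Riem^+(D^d)_{h_\round^{d-1}}$ taking the path component of $g$ to that of $g_\tor^d$. By its construction (the proof of Theorem \ref{surgery-invariance} together with the remark following Theorem \ref{additivity-index-difference}), such an equivalence is a finite composite of gluing maps---each gluing a torpedo collar onto a space of metrics which are standard near a surgery sphere---and collar stretchings. Applying Theorem \ref{additivity-index-difference} to each gluing map, and using that all the spaces involved have the homotopy type of CW complexes (so weak equivalences may be inverted up to homotopy), one obtains $\inddiff_{g_\tor^d}\circ\Phi\simeq\inddiff_g$; the required matching of basepoint components is exactly the hypothesis on $g$. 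Now set $\rho':=\Psi\circ\Phi\circ\rho\colon X\to\Riem^+(W')_{h'}$. Then
$$\inddiff_{g'}\circ\rho'\;\simeq\;\inddiff_{\bar g'}\circ\Psi\circ\Phi\circ\rho\;\simeq\;\inddiff_{g_\tor^d}\circ\Phi\circ\rho\;\simeq\;\inddiff_g\circ\rho\;\simeq\;\hat a,$$
which is the desired factorisation.

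\textit{Main obstacle.} The delicate point is the compatibility of the index difference with the surgery equivalence $\Phi$: the remark after Theorem \ref{additivity-index-difference} only asserts an ``appropriate cobordism invariance'' of $\inddiff$ and leaves the precise formulation open, so here it must be made explicit---one has to unwind $\SE_\phi$ as a zig-zag of the gluing maps $\iota_0,\iota_1$ from the proof of Theorem \ref{surgery-invariance}, apply the additivity theorem to each, and track basepoints throughout. The remaining bookkeeping (cobordism-category orientations, and the precise sense in which ``standard near a point'' supplies the product collar used to define $\Psi$) is routine.
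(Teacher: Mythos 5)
Your overall strategy---routing everything through $(D^d,g_\tor^d)$, deforming $g'$ to be torpedo near an interior disc, and then gluing a variable disc metric into $W'$---matches the second half of the paper's proof exactly. The $W'$-direction (your map $\Psi$) is the same as the paper's gluing map $\mu_{g_1}$, and your appeal to Theorem \ref{chernysh-walsh-theorem} and Theorem \ref{additivity-index-difference} there is correct.

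Where you diverge from the paper is in the reduction of $(W,g)$ to $(D^d,g_\tor^d)$. You invoke the surgery equivalence $\Phi$ directly and then claim $\inddiff_{g_\tor^d}\circ\Phi\simeq\inddiff_g$ by unwinding $\Phi$ as a zig-zag of gluings of tubular neighbourhoods, appealing to the remark after Theorem \ref{additivity-index-difference}---a remark the paper explicitly leaves vague and says it will \emph{not} use. The paper instead writes $W=D^d\cup W_0$ with $W_0=W\setminus\inter D^d$ a cobordism $S^{d-1}\rightsquigarrow S^{d-1}$, and appeals to Theorem \ref{lemma:GluingNullCob} to produce a metric $g_0$ on $W_0$ such that the \emph{single} gluing map $\mu_{g_0}\colon\cR^+(D^d)_{h_\round}\to\cR^+(W)_{h_\round}$ is a weak equivalence with $g_\tor^d\cup g_0\simeq g$; since $X$ is a CW complex one then lifts $\rho$ through $\mu_{g_0}$, and Theorem \ref{additivity-index-difference} applies to $\mu_{g_0}$ directly because it is a cobordism composition along $S^{d-1}$.

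The difference is not cosmetic. Your $\Phi$ decomposes into gluings along the sphere $S^{k-1}\times S^{d-k}$ cut out by the surgery datum, and Theorem \ref{additivity-index-difference} as stated glues a \emph{fixed} metric onto the right of a cobordism composition; in your decomposition one alternately needs the symmetric variant ($\mu(m,-)$ rather than $\mu(-,m)$), and one must iterate over the finitely many surgery steps while tracking path components of the intermediate basepoint metrics at each stage. You correctly flag this as the ``main obstacle,'' but the sketch you give (``unwind, apply additivity to each, track basepoints'') stops short of actually carrying it out, whereas the paper's route via Theorem \ref{lemma:GluingNullCob} has already packaged this bookkeeping: that theorem's proof does the handle decomposition once and for all and outputs a gluing map along $S^{d-1}$ in exactly the form the additivity theorem consumes. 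So your argument is a workable alternative but, as written, has a real (if fillable) gap precisely at the step you identify, while the paper's choice of lemma was designed to sidestep it.
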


\begin{proof}
Let $W_0 = W \setminus \mathrm{int}(D^d)$ be $W$ with an open disc in the interior removed. As $W$ is simply-connected and spin cobordant to $D^d$ relative to its boundary, it follows that $W_0$ is simply-connected and spin cobordant to $[0,1] \times S^{d-1}$ relative to its boundary. Thus by Theorem \ref{lemma:GluingNullCob} there is a metric $g_0 \in \cR^+(W_0)_{h_\round^{d-1}, h_\round^{d-1}}$ such that the map
$$\mu_{g_0} : \cR^+(D^d)_{h_\round^{d-1}} \lra \cR^+(D^d \cup W_0)_{h_\round^{d-1}}$$
is a weak homotopy equivalence, and $g_\tor^d \cup g_0$ is isotopic to $g$. As $X$ is a CW-complex, there is a map $\rho' : X \to \Riem^+ (D^d )_{h_{\round}^{d-1}}$ so that $\mu_{g_0} \circ \rho' \simeq \rho$, and using Theorem \ref{additivity-index-difference} the composition
$$X \stackrel{\rho'}{\lra} \Riem^+ (D^d)_{h_{\round}^{d-1}} \stackrel{\inddiff_{g_{\tor}^d}}{\lra} \loopinf{+d+1} \KO$$
is homotopic to $\hat{a}$.

Let $W'$ be a spin cobordism with a psc metric $g'$ as in the assumption of the proposition. We can change $g'$ by an isotopy supported in a small disc in the interior so that $g'$ agrees with $g_{\tor}^d$ in that disc, by an easy application of Theorem \ref{chernysh-walsh-theorem}. This does not change the homotopy class of the index difference, and we can write $W'=D^d \cup W'_0 $ and $g'= g_{\tor}^d \cup g_1$. The desired factorisation is 
$$X \stackrel{\rho'}{\lra} \Riem^+ (D^d)_{h_{\round}^{d-1}} \stackrel{\mu_{g_1}}{\lra} \Riem^+ (W')_{h'} \stackrel{\inddiff_{g'}}{\lra} \loopinf{+d+1} \KO$$
and the composition is homotopic to $\hat{a}$, again by Theorem \ref{additivity-index-difference}.
\end{proof}

\subsection{The relative index construction in an abstract setting}\label{sec:abstract-nonsense}

For the proof of Theorems \ref{factorization-theorem:even-case} and \ref{factorization-theorem:odd-case}, we need a precise tool to express secondary indices in terms of primary indices. Our tool is the \emph{relative index construction}.

Write $I=[0,1]$. Let $f:(X,x_0) \to (Y,y_0)$ be a map of pointed spaces (in practice, $f$ will be a fibration of some kind).
Recall the notions of \emph{mapping cylinder} $\Cyl(f):=( X \times I) \coprod Y/\sim$, $(x,1)\sim f(x)$ and \emph{homotopy fibre} $\hofib_y (f)=\{ (x,c) \in X \times Y^I \vert f(x)=c(0), c(1)=y\}$. There is a natural map $\epsilon_{y_0}:f^{-1} (y_0) \to \hofib_{y_0} (f)$, $x \mapsto (x,c_{y_0})$, where $c_{y_0}$ denotes the constant path at $y_0$. Let $(x_0,c_{y_0}) \in \hofib_{y_0}(f)$ and $x_0 \in f^{-1}(y_0)$ be the basepoints, so that $\epsilon_{y_0}$ is a pointed map.
There is a natural map 
$$\eta_{y_0}:(\bI, \{\pm 1\}) \times (\hofib_{y_0} (f),*)  \lra (\Cyl (f), X \times \{0\}\cup (\{x_0\} \times I))$$
defined by
$$(t,x,c) \longmapsto \begin{cases}
(x,1+t) & t \leq 0 \\
c(t) & t \geq 0
\end{cases}$$
and the composition $\eta_{y_0} \circ (\id_{\bI} \times \epsilon_{y_0})$ is homotopic to the map $\iota_{y_0}:\bI \times f^{-1}(y_0) \to \Cyl (f)$; $(t,x)\mapsto (x, \frac{1+t}{2})$ (as maps of pairs).
Moreover, there is the \emph{fibre transport map} from the loop space of $Y$ based at $y_0$ to the homotopy fibre (it is a based map):
\begin{align*}
\tau: \Omega_{y_0} Y &\lra \hofib_{y_0} (f)\\
c &\longmapsto (x_0,c).
\end{align*}
We write $KO^{-p}(f):= KO^{-p}(\Cyl (f), X  \times \{0\}\cup (  \{x_0\} \times I) )$. There is an induced map
\begin{align*}
\trg:KO^{-p}(f) &\lra \Omega KO^{-p} (\hofib_{y_0} (f),*)\\
\alpha &\longmapsto \eta_{y_0}^{*} \alpha,
\end{align*}
called the \emph{transgression}. (Often, we consider the composition $\epsilon_{y_0}^{*} \circ \trg$ and call this composition transgression as well.) The inclusion $i: Y \to \Cyl(f)$ induces 
\begin{align*}
\bas: KO^{-p}(f) &\lra KO^{-p}(Y,y_0)\\
\alpha &\longmapsto i^* \alpha.
\end{align*}
We call $\bas(\alpha)$ the \emph{base class} of the relative class $\alpha$. Finally, there is a map, the \emph{loop map} $\Omega:KO^{-p} (Y,y_0) \to \Omega KO^{-p} (\Omega_{y_0} Y,*)$, given by pulling back by the evaluation map $\bI \times \Omega_{y_0} Y \to Y$. 

\begin{lem}\label{relativeindex:formal-lemma}
Let $\alpha \in KO^{-p} (f)$ be given. Then 
$$\tau^* \trg (\alpha) = \Omega \bas (\alpha) \in \Omega KO^{-p} (\Omega_{y_0} Y,*).$$
\end{lem}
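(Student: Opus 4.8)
The statement is a purely formal identity in $KO$-theory, and the plan is to prove it by unwinding the definitions of the maps involved and exhibiting an explicit homotopy of pairs whose induced map in $KO$-theory relates the two sides. Concretely, both $\tau^*\trg(\alpha)$ and $\Omega\bas(\alpha)$ are classes in $\Omega KO^{p,q}(\Omega_{y_0}Y,*) = KO^{p,q}((\Omega_{y_0}Y,*)\times(\bI,\partial\bI))$, obtained from $\alpha \in KO^{p,q}(f)$ by pulling back along two different maps
$$
(\bI,\partial\bI)\times(\Omega_{y_0}Y,*) \lra (\Cyl(f),\, X\times\{0\}\cup(\{x_0\}\times I)).
$$
For the left-hand side the map is $\eta_{y_0}\circ(\id_\bI\times\tau)$; for the right-hand side it is the composite of the evaluation-based map $\bI\times\Omega_{y_0}Y\to Y$ with the inclusion $i\colon Y\to\Cyl(f)$. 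So the whole lemma reduces to checking that these two maps of pairs are homotopic (as maps of pairs), and then invoking homotopy invariance of $KO^{p,q}$.

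The key step is therefore to write down both maps explicitly and see that they agree after a reparametrisation of the interval $\bI$. Unravelling: $\tau(c) = (x_0,c)\in\hofib_{y_0}(f)$, and then $\eta_{y_0}(t,x_0,c)$ equals $(x_0,1+t)$ for $t\le 0$ and $c(t)$ for $t\ge 0$. Thus $\eta_{y_0}\circ(\id_\bI\times\tau)$ sends $(t,c)$ to the point of $\Cyl(f)$ which is $(x_0,1+t)$ in the cylinder coordinate for $t\in[-1,0]$ and to $c(t)\in Y$ (sitting inside $\Cyl(f)$ via $i$) for $t\in[0,1]$. On the other hand $i\circ\ev$ sends $(t,c)$ simply to $c(\tfrac{1+t}{2})\in Y\subset\Cyl(f)$ (parametrising the loop on all of $\bI$). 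A straight-line homotopy in the $\bI$-direction, shrinking the portion $t\in[-1,0]$ on which the first map lives in the cylinder down to the endpoint and simultaneously reparametrising $c$, exhibits the required homotopy of pairs: throughout the homotopy the endpoints $t=\pm1$ map to the subspace $X\times\{0\}\cup(\{x_0\}\times I)$ (at $t=1$ one gets $c(1)=y_0\in Y$, which after the homotopy lands on $(x_0,0)$ via the mapping-cylinder identification $f(x_0)=y_0$; at $t=-1$ one is at $(x_0,0)$ throughout), so the homotopy respects the pair structure. This is essentially the observation, already flagged in the paragraph preceding the lemma, that $\eta_{y_0}\circ(\id_\bI\times\epsilon_{y_0})$ is homotopic to $\iota_{y_0}$; here one applies the same bookkeeping with $\tau$ in place of $\epsilon_{y_0}$ and the basepoint component $\{x_0\}$ in place of the whole fibre.

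The main obstacle is purely one of care rather than depth: one must track precisely which subspace each endpoint of $\bI$ is sent into at every stage of the homotopy, so that the homotopy is genuinely a homotopy \emph{of pairs} $(\bI,\partial\bI)\times(\Omega_{y_0}Y,*)\to(\Cyl(f),X\times\{0\}\cup(\{x_0\}\times I))$ — otherwise the induced equality in relative $KO$-theory is not justified. I would handle this by first checking the claim on the nose at $t=\pm1$ and on the basepoint loop $c\equiv c_{y_0}$, and then writing the homotopy as an explicit formula piecewise in $t$, verifying the pair condition is preserved for all homotopy parameters. Once the homotopy of pairs is in hand, the lemma follows immediately: $\tau^*\trg(\alpha) = (\eta_{y_0}\circ(\id_\bI\times\tau))^*\alpha = (i\circ\ev)^*\alpha = \Omega\bas(\alpha)$.
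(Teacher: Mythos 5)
Your strategy is exactly the paper's: recognise that both classes are pullbacks of $\alpha$ along two maps of pairs $(\bI,\partial\bI)\times(\Omega_{y_0}Y,*)\to(\Cyl(f),\,X\times\{0\}\cup(\{x_0\}\times I))$, namely $\eta_{y_0}\circ(\id_\bI\times\tau)$ and $i\circ\ev$, and then show these two maps are homotopic as maps of pairs. The paper phrases this as the homotopy commutativity of the square with vertical arrows $\tau$ and $i$, and leaves the homotopy implicit; your plan is to write it out.

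Two details in your sketch need repair, though. First, the mapping cylinder identification is $(x,1)\sim f(x)$, so $y_0=f(x_0)$ is glued to $(x_0,1)$, not $(x_0,0)$ as you write. Second, the homotopy you describe --- shrink the cylinder portion towards $t=-1$ while keeping the image of $t=-1$ fixed at $(x_0,0)$ --- is discontinuous at $(s,t)=(1,-1)$: as the cylinder interval $[-1,-s]$ degenerates, the point $t=-1$ must jump from $(x_0,0)$ to $c(0)=y_0=(x_0,1)$ in the limit, so the two endpoints of the collapsing interval cannot both be held still. The fix is to let the bottom of the cylinder slide as well: at homotopy time $s$, send $t\in[-1,-s]$ to $(x_0,\,s+t+1)$ (so $t=-1$ moves from $(x_0,0)$ to $(x_0,1)=y_0$ as $s$ goes from $0$ to $1$) and $t\in[-s,1]$ to $c(\tfrac{t+s}{1+s})$. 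This is continuous, agrees at $s=0$ and $s=1$ with the two maps in question, and has both endpoints $t=\pm1$ landing in $\{x_0\}\times I\subset X\times\{0\}\cup(\{x_0\}\times I)$ for every $s$. (Alternatively: post-compose the first map with the deformation retraction $r_s(x,u)=(x,(1-s)u+s)$, $r_s\vert_Y=\id$, of $\Cyl(f)$ onto $Y$, then do an innocuous loop reparametrisation inside $Y$.) With that correction the argument is sound and matches the paper's proof, which asserts the homotopy commutativity without making the homotopy explicit.
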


\begin{proof}
The proof is easier to understand than the statement. The diagram 
\begin{equation*}
\xymatrix{
(\bI, \{\pm 1 \}) \times (\Omega_{y_0} Y,*) \ar[r]^-{ev} \ar[d]^{\tau} & (Y,y_0) \ar[d]^{i}\\
(\bI, \{\pm 1 \}) \times (\hofib_{y_0} (f) ,*) \ar[r]^-{\eta_{y_0}} & (\Cyl(f),X \cup (\{x_0\} \times I))
}
\end{equation*}
is homotopy commutative (in space pairs), and hence the associated diagram in $K$-theory is commutative. By tracing through the definitions, this commutativity is expressed by the formula 
in the statement of the lemma.
\end{proof}

This lemma has the following homotopy-theoretic interpretation. Recall that we write relative $K$-theory classes as homotopy classes of maps, via the bijection in Theorem \ref{atsingkar}.

\begin{cor}\label{relativeindex:formal-lemma-homotopy-version}
Let $\alpha \in KO^{-p} (f)$ be a relative $K$-theory class. Then the diagram
\begin{equation*}
\xymatrix{
\Omega_{y_0} Y \ar[d]^{\tau} \ar[dr]^{\Omega \bas(\alpha)} &  \\
\hofib_{y_0} (f) \ar[r]_-{\trg (\alpha)}  & \loopinf{+p-q+1} \KO
}
\end{equation*}
is homotopy commutative.
\end{cor}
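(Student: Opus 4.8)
\textbf{Proof plan for Corollary \ref{relativeindex:formal-lemma-homotopy-version}.}

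The plan is to translate Lemma \ref{relativeindex:formal-lemma} into homotopy-theoretic language by unwinding the identifications established in Section \ref{generalities-k-theory}, in particular the natural bijection \eqref{k-theory-fred-iso} of Theorem \ref{atsingkar} and the description of the $\Omega$-operation in terms of the evaluation map $\bI \times \Omega_{y_0}Y \to Y$. First I would recall that a relative class $\alpha \in KO^{p,q}(f) = KO^{p,q}(\Cyl(f), X\times\{0\}\cup(\{x_0\}\times I))$ is represented, via \eqref{k-theory-fred-iso}, by a pointed map $\Cyl(f)/(X\times\{0\}\cup\{x_0\}\times I) \to (F^{p,q},D^{p,q})$, and that restricting along $\eta_{y_0}$ produces a pointed map from $(\bI,\{\pm1\})\times(\hofib_{y_0}(f),*)$, i.e. (after the standard adjunction and the weak equivalence $(F^{p,q},D^{p,q})\simeq(\loopinf{+p-q}\KO,*)$) a map $\hofib_{y_0}(f) \to \Omega\loopinf{+p-q}\KO = \loopinf{+p-q+1}\KO$; this is precisely $\trg(\alpha)$ in the homotopy-theoretic realisation. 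Likewise $\bas(\alpha) = i^*\alpha$ realises as a pointed map $Y \to \loopinf{+p-q}\KO$, and its loop map $\Omega\bas(\alpha)$ realises as $\Omega_{y_0}Y \to \loopinf{+p-q+1}\KO$.

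With these dictionaries in place, the corollary is a direct restatement of the equation $\tau^*\trg(\alpha) = \Omega\bas(\alpha)$ from Lemma \ref{relativeindex:formal-lemma}: both sides are elements of $\Omega KO^{p,q}(\Omega_{y_0}Y,*)$, and under \eqref{k-theory-fred-iso} and Bott periodicity \eqref{bott-map} this group is identified with the set of homotopy classes of pointed maps $\Omega_{y_0}Y \to \loopinf{+p-q+1}\KO$. The left-hand side $\tau^*\trg(\alpha)$ is by definition the class of $\trg(\alpha)\circ\tau$, and the right-hand side is the class of $\Omega\bas(\alpha)$; so the equation of Lemma \ref{relativeindex:formal-lemma} says exactly that the triangle in the statement commutes up to homotopy. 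I would also remark that the vertical-plus-diagonal composite makes sense because all the maps involved are pointed and the identifications are natural, so no basepoint subtleties arise.

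The one point requiring a little care — and the step I expect to be the main (minor) obstacle — is checking that the homotopy-theoretic realisation functor of \eqref{k-theory-fred-iso} is compatible with the three operations $\trg$, $\bas$, and $\Omega$ in the precise way just asserted: namely that realising $\trg(\alpha)$ gives $\mathrm{realise}(\alpha)\circ(\text{map induced by }\eta_{y_0})$, and similarly for $\bas$ and $\Omega$. This is essentially formal, since each of $\trg$, $\bas$, $\Omega$ is defined by pulling back along an explicit continuous map of pairs ($\eta_{y_0}$, $i$, and the evaluation map respectively), and \eqref{k-theory-fred-iso} is natural for such pullbacks; the only thing to verify is that the adjunction turning a map out of $(\bI,\{\pm1\})\times(-)$ into a map into $\Omega(-)$ matches the identification $\Omega KO^{p,q}(-) = KO^{p,q}((-)\times(\bI,\partial\bI))$ used to define the target $\loopinf{+p-q+1}\KO$, which is exactly the content of Bott periodicity \eqref{bott-map} together with Theorem \ref{atsingkar}. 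Once this compatibility is recorded, the corollary follows immediately from Lemma \ref{relativeindex:formal-lemma} with no further computation.
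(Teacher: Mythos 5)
Your proposal is correct and takes exactly the route the paper intends: the paper gives no separate proof of this corollary, stating only that it is the "homotopy-theoretic interpretation" of Lemma \ref{relativeindex:formal-lemma} via the representability bijection \eqref{k-theory-fred-iso}, and your write-up simply makes that translation explicit. The only very minor imprecision is the invocation of Bott periodicity at the end — the passage from $\Omega KO^{p,q}(-)$ to homotopy classes of maps into $\Omega\loopinf{+p-q}\KO$ uses naturality of \eqref{k-theory-fred-iso} and the loop/suspension adjunction (plus contractibility of $D^{p,q}$), not periodicity per se — but this does not affect the validity of the argument.
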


We will use this to translate index-theoretic results into homotopy-theoretic conclusions.

\begin{rem}\label{rem:relative-index-hmtpythy}
An instructive way to view these constructions is as follows. First note that $f^* \bas (\alpha)\in KO^{-p}(X,x_0)$ is the zero class (there is a canonical concordance to an acyclic cycle). 
One might say that $\alpha \in KO^{-p}(f)$ induces a homotopy commutative diagram
\[
\xymatrix{
\hofib_{y_0} (f) \ar[r]^{\trg(\alpha)} \ar[d] & \loopinf{+p+1} \KO \ar[d] \\
X \ar[r] \ar[d]^{f} & \ast \ar[d]\\
Y \ar[r]^{\bas(\alpha)} & \loopinf{+p} \KO
}
\]
whose columns are homotopy fibre sequences.
\end{rem}

\subsection{Increasing the dimension}\label{sec:increasing-dimension}

The first application of the generalities from Section \ref{sec:abstract-nonsense} is the derivation of Theorem \ref{factorization-theorem:odd-case} from Theorem \ref{factorization-theorem:even-case} (the true ingredients are Theorems \ref{spectralflowindex}, \ref{chernysh-thm:qfibr} and the additivity theorem).

\begin{thm}\label{dimension-increasing}
Let $W^d$ be a compact spin manifold with boundary $M$. Let $h_0\in \Riem^+ (M)$ and $g_0 \in \Riem^+ (W)_{h_0}$. Under the equivalence $\Riem^+ (W)_{h_0} \overset{\sim}\to \hofib_{h_0}(\res)$ of Theorem \ref{chernysh-thm:qfibr}, fibre transport gives a weak homotopy class of map $T:\Omega_{h_0} \Riem^+ (M) \to \Riem^+ (W)_{h_0}$, which makes the diagram
\begin{equation*}
\xymatrix{
\Omega_{h_0}\Riem^+ (M) \ar[rr]^{T}  \ar[dr]_{-\Omega \inddiff_{h_0}} &   & \Riem^+ (W)_{h_0}  \ar[dl]^{\inddiff_{g_0}} \\
 &  \loopinf{+d+1}\KO.
}
\end{equation*}
weakly homotopy commutative.
\end{thm}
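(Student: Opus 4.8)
The plan is to realise $T$ via the relative index construction of Section~\ref{sec:abstract-nonsense} applied to the restriction fibration $\res: \Riem^+(W) \to \Riem^+(M)$, and then to identify the transgression of a suitable relative class with the index difference on the homotopy fibre. First I would invoke Theorem~\ref{chernysh-thm:qfibr}: the map $\res$ is a quasifibration with homotopy fibre over $h_0$ canonically identified, via the homotopy equivalence $S$, with $\Riem^+(W)_{h_0}$. Hence $\hofib_{h_0}(\res) \simeq \Riem^+(W)_{h_0}$, and the fibre transport map $\tau: \Omega_{h_0}\Riem^+(M) \to \hofib_{h_0}(\res)$ of Section~\ref{sec:abstract-nonsense} composes with this equivalence to give the desired map $T: \Omega_{h_0}\Riem^+(M) \to \Riem^+(W)_{h_0}$ (up to the usual homotopy ambiguity, so only a weak homotopy statement is claimed).

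Next I would build the relative $K$-theory class. Consider the mapping cylinder $\Cyl(\res)$ and the bundle over it whose fibre is the elongation of $W$; more precisely, over the $Y$-end ($=\Riem^+(M)$) put the half-cylinder $(-\infty,0]\times M$ with a metric of the form $dt^2+h$, and over the $X$-end ($=\Riem^+(W)$) put $W$ elongated by a cylinder. Because metrics on $\Riem^+(M)$ carry psc and metrics on $\Riem^+(W)$ carry psc, the resulting Dirac operator family is invertible over $X\times\{0\}\cup(\{x_0\}\times I)$, so it defines a class $\alpha \in KO^{-d}(\res) = KO^{-d}(\Cyl(\res), X\times\{0\}\cup(\{x_0\}\times I))$. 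Its base class $\bas(\alpha) \in KO^{-d}(\Riem^+(M), h_0)$ is, by construction of the half-cylinder metric $dt^2 + h$ together with the reference metric $h_0$, precisely the Gromov--Lawson-style index difference $\inddiffaps_{h_0}$ on $\Riem^+(M)$ (this is where the degree bookkeeping, $\bR\times M$ having dimension $d$, works out). By Theorem~\ref{spectralflowindex} this agrees weakly with Hitchin's $\inddiff_{h_0}$. On the other hand, the transgression $\epsilon_{h_0}^*\trg(\alpha) \in \Omega KO^{-d}(\Riem^+(W)_{h_0}, g_0)$ should, after unwinding $\eta_{h_0}$ and using Bunke's additivity theorem (Theorem~\ref{thm:additivity}) to cut the elongated $W$ glued to a half-cylinder into its pieces, be identified with $\pm\inddiff_{g_0}$ — the sign coming from the orientation reversal in the definition of $\eta_{y_0}$ on the $t\le 0$ half versus the $t\ge 0$ half, i.e.\ from \eqref{formula:index-opposite-bundle}.

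With these two identifications in hand, Corollary~\ref{relativeindex:formal-lemma-homotopy-version} applied to $\alpha$ gives exactly the homotopy-commutative triangle
\begin{equation*}
\xymatrix{
\Omega_{h_0}\Riem^+(M) \ar[d]_{\tau} \ar[dr]^{\Omega\bas(\alpha)} & \\
\hofib_{h_0}(\res) \ar[r]^-{\trg(\alpha)} & \loopinf{+d+1}\KO,
}
\end{equation*}
and translating $\trg(\alpha) = -\inddiff_{g_0}\circ S$ and $\Omega\bas(\alpha) = \Omega\inddiff_{h_0}$ (weakly) yields the statement, with $T = S\circ\tau$. I expect the main obstacle to be the precise identification of the transgressed class $\epsilon_{h_0}^*\trg(\alpha)$ with $-\inddiff_{g_0}$: this requires carefully matching the metric on the bundle defining $\alpha$ restricted along $\eta_{h_0}$ with the interpolating metric $\tfrac{1-t}{2}g_0+\tfrac{1+t}{2}g$ used in Hitchin's definition \eqref{inddiff-path-bounded-case}, and controlling the gluing/elongation via the additivity theorem and the homotopy invariance of the index (Lemma~\ref{index:independent-of-metric}). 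The sign is genuinely there and is what makes the factorisation in Theorem~\ref{factorization-theorem:odd-case} come out as $\Omega^2\hat{\mathscr{A}}_{2n}$ rather than its negative, so it must be tracked honestly.
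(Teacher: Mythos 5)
Your overall strategy matches the paper's: construct a relative class in $KO^{-d}(\res)$, apply the transgression formalism of Corollary~\ref{relativeindex:formal-lemma-homotopy-version}, use the quasifibration theorem (Theorem~\ref{chernysh-thm:qfibr}) to turn the fibre transport into a map landing in $\Riem^+(W)_{h_0}$, and invoke the spectral flow theorem to reconcile the two definitions of the index difference. But the construction of the relative class as you describe it does not actually make sense: a single bundle over $\Cyl(\res)$ must have a single fibre manifold, and you cannot ``put the half-cylinder $(-\infty,0]\times M$ over the $Y$-end'' while putting elongated $W$ over the $X$-end. The paper's construction avoids this by using the (non-psc) extension map $\sigma:\Riem^+(M)\to\Riem(W)$, $\sigma(h)=a(h+dt^2)+(1-a)g_0$ with $a$ a cutoff supported in the collar, so that the fibre is always the elongated $W$ and only the metric varies over $\Cyl(\res)$. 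Consequently the base class is $\bas(\beta)=\ind{W,\sigma(h)}$, which is \emph{not} the Gromov--Lawson class $\inddiffaps_{h_0}$ ``by construction'' — the fibre is $W$, not $\bR\times M$. Identifying the two is the content of Proposition~\ref{dimension-increasing-lemma}(ii) and requires the additivity theorem: one shows $\ind{W,\sigma(h)}+\ind{M\times\bR,[h,h_0]}+\ind{W^{op},g_0}=0$ by gluing to a closed double with a psc metric, then uses $\ind{W^{op},g_0}=0$ (Bochner vanishing), and only then appeals to Theorem~\ref{spectralflowindex}. You gesture at the additivity theorem for cutting up the elongation, but collapse into ``by construction'' the step that is the real substance of the proof. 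This is the main gap.

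A smaller but genuine error: your explanation of the sign is wrong. The $-1$ does not come from \eqref{formula:index-opposite-bundle} (opposite spin structure) or from any orientation reversal on the $t\le 0$ versus $t\ge 0$ halves of $\eta_{y_0}$. It arises because $\iota_{h_0}^*\beta$ is the family $(t,g)\mapsto\frac{1+t}{2}g_0+\frac{1-t}{2}g$, which is Hitchin's path $\frac{1-t}{2}g_0+\frac{1+t}{2}g$ with $t$ replaced by $-t$; reversing the $\bI$-parameter inverts the class in $\Omega KO^{-d}$. That is Proposition~\ref{dimension-increasing-lemma}(i).
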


Before we embark on the proof of Theorem \ref{dimension-increasing}, we show how it accomplishes the promised goal.

\begin{proof}[Proof of Theorem \ref{factorization-theorem:odd-case}, assuming Theorems \ref{factorization-theorem:even-case} and \ref{dimension-increasing}]
By Proposition \ref{inheritance-of-detection}, it is enough to consider the case $W= D^{2n+1}$ and $g_0=g_{\tor}^{2n+1}$.
Then $h_0$ is the round  metric in $\Riem^+ (S^{2n})$. Consider the diagram
\begin{equation*}
\xymatrix{
\loopinf{+2} \MTSpin (2n) \ar[r]^-{-\Omega \rho} & \Omega_{h_0} \Riem^+ (S^{2n}) \ar[r]^{T} \ar[dr]_{-\Omega \inddiff_{h_0}} &  \Riem^+ (D^{2n+1})_{h_0} \ar[d]^{\inddiff_{g_0}} \\
 & & \loopinf{+2n+2} \KO.
}
\end{equation*}
The map $\Omega \rho$ is provided by Theorem \ref{factorization-theorem:even-case}, the composition $(-\Omega \inddiff_{h_0}) \circ (-\Omega \rho) \simeq \Omega (\inddiff_{h_0} \circ \rho)$ is weakly homotopic to $\Omega^2 \hat{\mathscr{A}}_{2n}$, also by Theorem \ref{factorization-theorem:even-case}. The triangle is weakly homotopy commutative by Theorem \ref{dimension-increasing}, and $T \circ (-\Omega \rho)$ is the map whose existence is claimed by Theorem \ref{factorization-theorem:odd-case}.
\end{proof}

The rest of this section is devoted to the proof of Theorem \ref{dimension-increasing}. 
Consider the restriction map $\res:\Riem^+ (W) \to \Riem^+ (M)$. We will construct and analyse a relative class $\beta \in KO^{-d}(\res)$. 

Metrics of positive scalar curvature on $M$ can be extended to (arbitrary) metrics on $W$, by the following procedure. Pick a cut-off function $a: W \to [0,1]$ which is supported in the collar around $M$ where $g_0$ is of product form and is equal to $1$ near the boundary. For any $h \in \Riem^+ (M)$, let $\sigma(h):= a(h+dt^2) + (1-a) g_0$. The result is an extension map $\sigma: \Riem^+ (M) \to \Riem (W)$ (to $\Riem (W)$, not to $\Riem^+ (W)$!) such that $
\sigma(h_0)=g_0$ and such that $\sigma(h)$ restricted to a collar is equal to $h + dt^2$.

Consider the trivial fibre bundle with fibre $W$  over the mapping cylinder $\Cyl (\res)$ of the restriction map, and define the following fibrewise Riemannian metric $m$ on it: over the point $h \in \Riem^+ (M) \subset \Cyl(\res)$ take the metric $\sigma(h)$, and over the point $(g,t) \in \Riem^+ (W)\times [0,1]$ take the metric $m_{(g,t)}:=t \sigma(\res(g)) + (1-t) g$. By the construction of $\sigma$,
\begin{enumerate}[(i)]
\item the metric $m$ is psc when restricted to the boundary bundle $\Cyl (\res) \times M$,
\item $m_{(g,0)}= g$ for $g \in \Riem^+ (W)$ and
\item $m_{(g_0,t)} = t \sigma (\res(g_0)) + (1-t) g_0 =g_0$, for $t \in [0,1]$.
\end{enumerate}
 
Therefore, $m$ has positive scalar curvature over $(\Riem^+ (W) \times \{0\}) \cup (\{g_0\} \times I) \subset \Cyl (\res)$. We define the class
$$\beta = \ind{W,m} \in KO^{-d} (\Cyl(\res), (\Riem^+ (W) \times \{0\}) \cup (\{g_0\} \times I)).$$
The choices made in the construction of $\beta$ are the metric $g_0$ and the cut-off function $a$. The function $a$ is a convex choice, and so $\beta$ depends  only on $g_0$.

\begin{prop}\label{dimension-increasing-lemma}\mbox{}
\begin{enumerate}[(i)]
\item\label{it:dimension-increasing-lemma:1} Let $\epsilon_{h_0} : \Riem^+ (W)_{h_0} \to \hofib_{h_0} (\res)$ be the fibre comparison map. Then the map $\epsilon_{h_0}^{*} (\trg(\beta)): \Riem^+ (W)_{h_0} \to \loopinf{+d+1} \KO$ is homotopic to $- \inddiff_{g_0}$.
\item\label{it:dimension-increasing-lemma:2} $\bas (\beta) : \Riem^+ (M) \to \loopinf{+d} \KO$ is weakly homotopic to $\inddiff_{h_0}$.
\end{enumerate}
\end{prop}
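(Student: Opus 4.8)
The plan is to treat the two claims separately, the first by an essentially formal manipulation with the transgression, the second by a carefully arranged application of the additivity theorem. For the first claim, I would begin with the material of Section~\ref{sec:abstract-nonsense}: since $\eta_{h_0}\circ(\id_{\bI}\times\epsilon_{h_0})$ is homotopic to the map $\iota_{h_0}\colon(\bI,\partial\bI)\times(\Riem^+(W)_{h_0},g_0)\to(\Cyl(\res),\dots)$, $(t,g)\mapsto(g,\tfrac{1+t}{2})$, we have $\epsilon_{h_0}^{*}\trg(\beta)=\iota_{h_0}^{*}\beta$. Unwinding the definition of $\beta$, and using that $\res(g)=h_0$ and $\sigma(h_0)=g_0$ on the fibre $\Riem^+(W)_{h_0}$, the class $\iota_{h_0}^{*}\beta$ is represented by the (elongated) trivial $W$-bundle over $\bI\times\Riem^+(W)_{h_0}$ whose fibrewise metric over $(t,g)$ is $\tfrac{1+t}{2}g_0+\tfrac{1-t}{2}g$. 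By \eqref{inddiff-path-bounded-case}, $\inddiff_{g_0}$ is represented by the same bundle with metric $\tfrac{1-t}{2}g_0+\tfrac{1+t}{2}g$; the two representatives differ precisely by the reflection $t\mapsto-t$ of the interval coordinate, which corresponds to passing to the additive inverse in $\Omega KO^{-d}(\Riem^+(W)_{h_0},g_0)$. Hence $\iota_{h_0}^{*}\beta=-\inddiff_{g_0}$.

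For the second claim, the class $\bas(\beta)=i^{*}\beta$ is represented by the constant bundle $\hat W$ over $\Riem^+(M)$ carrying, over $h$, the elongation of $\sigma(h)$. Since $\hat W$ has a single end and the index of a bundle with cylindrical ends depends only on the metric near the ends (Lemma~\ref{index:independent-of-metric}), we may replace this metric by any fibrewise metric $\tilde m_h$ that still equals $dt^2+h$ near the end and equals the elongation of $g_0$ over $h_0$; I would choose $\tilde m_h$ to be $g_0$ on $W$, then the product $dt^2+h_0$ on an initial segment of the attached cylinder, then a fibrewise and continuously varying interpolation from $h_0$ to $h$, and finally $dt^2+h$. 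Now I apply Theorem~\ref{thm:additivity} to $E=(\hat W,\tilde m)$ and $E'=(\bR\times M,\,dt^2+h_0)$, the \emph{constant} cylinder on the basepoint metric, with the common cylinder of Assumptions~\ref{assumpt-additivity} taken inside the segment on which $\tilde m_h=dt^2+h_0$ (so that the scalar curvature there is that of $h_0$, hence positive). A direct inspection of the four pieces shows: $E_{01}=E$ has index $\bas(\beta)$; $E_{23}=E'$ has positive scalar curvature, hence index $0$; $E_{03}$ is $\hat W$ equipped with the elongation of $g_0$, which has positive scalar curvature, hence index $0$; and $E_{21}$ is $\bR\times M$ carrying the Gromov--Lawson metric interpolating from $h_0$ at $-\infty$ to $h$ at $+\infty$, hence has index $\inddiffaps_{h_0}$. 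Over the basepoint $h_0$ both $E_{01}$ and $E_{23}$ have positive scalar curvature, so the additivity identity holds in $KO^{-d}(\Riem^+(M),h_0)$ and reduces to $\bas(\beta)-\inddiffaps_{h_0}=0$, i.e. $\bas(\beta)=\inddiffaps_{h_0}$. As $M$ is closed, Theorem~\ref{spectralflowindex} identifies $\inddiffaps_{h_0}$ with $\inddiff_{h_0}$ up to weak homotopy, and the second claim follows.

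The step I expect to be the main obstacle is the choice of $E$ and $E'$ in the second claim. The additivity identity is vacuous unless the ``left halves'' of $E$ and $E'$ (and likewise the ``right halves'') genuinely differ, and the obvious attempts --- for instance taking $E'$ to be a cylinder that carries the same $h_0\rightsquigarrow h$ interpolation as $E$ --- make one of these pairs coincide, degenerating the identity to $0=0$. Taking $E'$ to be the constant $h_0$-cylinder and cutting $\hat W$ inside its $h_0$-collar is exactly what makes the two relevant cross-terms $E_{03}$ and $E_{21}$ recognisable as ``a positive scalar curvature bundle'' and ``the Gromov--Lawson index difference'' respectively. The remaining points --- the collar and continuity bookkeeping for $\tilde m_h$, and tracking the reflection sign in the first claim (which is why $\inddiff_{g_0}$ appears there with a minus sign while $\inddiff_{h_0}$ appears without one in the second, consistently with Theorem~\ref{dimension-increasing}) --- are routine.
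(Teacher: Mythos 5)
Your proof of part (i) is essentially the paper's, with the useful extra observation (correct, but left implicit in the paper) that the representatives of $\iota_{h_0}^*\beta$ and of $\inddiff_{g_0}$ are related by the reflection $t\mapsto -t$ of the $\bI$-coordinate, which is why the minus sign appears.

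For part (ii) your argument is correct but passes through the additivity theorem by a slightly different route. The paper glues $W$ (with $\sigma(h)$), the cylinder $M\times\bR$ (with $[h,h_0]$), and $W^{op}$ (with $g_0$) into a closed manifold, invokes Corollary~\ref{additivity-concordance-version} to write the closed-manifold index as the sum of the three pieces, observes that the closed total vanishes because it admits a psc metric, and finally uses $\ind{W^{op},g_0}=0$ to conclude $\ind{W,\sigma(h)}=\ind{M\times\bR,[h_0,h]}$. You instead feed the four-term Theorem~\ref{thm:additivity} a carefully chosen pair $E=(\hat W,\tilde m)$ and $E'=(\bR\times M,\,dt^2+h_0)$ after first normalising the metric on $\hat W$ via Lemma~\ref{index:independent-of-metric}; the cross-terms $E_{03}$ (elongation of $g_0$, index $0$ by psc) and $E_{21}$ (Gromov--Lawson cylinder, index $\inddiffaps_{h_0}$) then produce the identity directly, with the vanishing of the glued-up closed manifold already built into the right-hand side of Theorem~\ref{thm:additivity}. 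Your remark about why $E'$ must be the \emph{constant} $h_0$-cylinder (so that $E_{03}$ and $E_{21}$ are both nontrivially different from $E_{01}$ and $E_{23}$, rather than the identity collapsing to $0=0$) is exactly the right thing to be careful about. The two approaches buy the same thing — $\bas(\beta)=\inddiffaps_{h_0}$, followed by the spectral flow theorem — but yours is marginally more economical, trading two applications of the two-term corollary plus a closed-manifold vanishing argument for a single application of the four-term theorem and one use of Lemma~\ref{index:independent-of-metric}.
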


\begin{proof}
The first part is straightforward. The class $\epsilon_{h_0}^{*} (\trg(\beta))$ is represented by the cycle $\iota^*_{h_0} \beta$, and $\iota_{h_0}: \bI \times \Riem^+ (W)_{h_0} \to \Cyl(\res)$ is the map $(t,g) \mapsto (g,\frac{1+t}{2})$. So $\iota^*_{h_0} \beta$ is represented by the cycle $(t,g) \mapsto \indx{W,m_{g,\frac{1+t}{2}}}$, but
$$m_{g,\frac{1+t}{2}}= \frac{1+t}{2}\sigma (\res(g)) + \frac{1-t}{2} g = \frac{1+t}{2}\sigma (h_0) + \frac{1-t}{2} g= \frac{1+t}{2} g_0 + \frac{1-t}{2} g,$$
because $g \in \Riem^+ (W)_{h_0}$ and $\sigma(h_0)=g_0$, and this represents minus the index difference.

The second part is deeper and uses the additivity theorem and the spectral flow theorem. The base class $\bas(\beta)$ lies in $KO^{-d}(\Riem^+ (M), h_0)$, and by tracing through the definitions, we find that 
$$\bas(\beta) = \ind{W,\sigma(h)},$$
the notation indicates that $\bas(\beta)$ is represented by the cycle whose value at $h \in \Riem^+ (M)$ is the Dirac operator of the metric $\sigma(h)$ on the manifold $W$.
For two metrics $h_0,h_1 \in \Riem^+ (M)$, denote by $[h_0,h_1]$ the metric $ds^2 + \varphi(s) h_1 + (1-\varphi(s))h_0$ on $M \times [0,1]$, where $\varphi:[0,1] \to [0,1]$ is a smooth function that is $0$ near $0$ and $1$ near $1$. Now we calculate, using the additivity theorem (Corollary \ref{additivity-concordance-version}), that
\begin{equation*}
\begin{aligned}
\ind{W,\sigma(h)} + \ind{M \times [0,1], [h,h_0]} + \ind{W^{op}, g_0}\\
 = \ind{W \cup M \times [0,1] \cup W^{op}, \sigma(h) \cup [h,h_0] \cup g_0}=0\in KO^{-d}(\Riem^+ (M), h_0).
\end{aligned}
\end{equation*}

To see that the index is zero, note first that the manifold $W \cup (M \times [0,1]) \cup W^{op}$ is closed and therefore the index on the right hand side does not depend on the choice of the metric, by Lemma \ref{index:independent-of-metric}. Moreover $g_0 \cup [h_0,h_0] \cup g_0$ is a psc metric on $W \cup (M \times [0,1]) \cup W^{op}$ which agrees with $\sigma(h) \cup [h,h_0]\cup g_0$ over the basepoint $h_0 \in \Riem^+ (M)$, and therefore the right-hand side is zero. On the other hand, the contribution $\ind{W^{op}, g_0}$ on the left hand side is zero because $g_0$ has positive scalar curvature, by Bochner's vanishing argument with the Lichnerowicz--Schr\"odinger formula. Therefore
\begin{equation*}
\ind{W,\sigma(h)} = - \ind{M \times [0,1], [h,h_0]} = \ind{M \times [0,1], [h_0,h]}.
\end{equation*}
The last equality is true by the addivitity theorem and by Lemma \ref{index:independent-of-metric}.
But, by Theorem \ref{spectralflowindex}, the right hand side is equal to $\inddiff_{h_0} \in  KO^{-d-1} (\Riem^+ (M))$. 
\end{proof}

\begin{proof}[Proof of Theorem \ref{dimension-increasing}]
Consider the diagram:
\begin{equation}\label{dimension-increasing:index-theory}
\xymatrix{
\Riem^+ (W)_{h_0} \ar[r]^-{\epsilon_{h_0} } \ar[dr]_-{-\inddiff_{g_0}} & \hofib_{h_0}(\res)\ar[d]^-{\trg(\beta)} & \Omega_{h_0}\Riem^+ (M) \ar[l]_-{\tau}  \ar[dl]^-{\Omega \inddiff_{h_0}}\\
 &  \loopinf{+d+1}\KO. &
}
\end{equation}
The left triangle is homotopy commutative by Proposition \ref{dimension-increasing-lemma} (\ref{it:dimension-increasing-lemma:1}), and the right triangle is homotopy commutative by Proposition \ref{dimension-increasing-lemma} (\ref{it:dimension-increasing-lemma:2}) and Corollary \ref{relativeindex:formal-lemma-homotopy-version}. 
The final ingredient is the quasifibration theorem (Theorem \ref{chernysh-thm:qfibr}): to get the map $T$, take the homotopy inverse of $\epsilon_{h_0}$ provided by Theorem \ref{chernysh-thm:qfibr} and compose it with $\tau$.
\end{proof}

\begin{rem}
The homotopy inverse to $\epsilon_{h_0}$ is given by an explicit construction, which was sketched before Theorem \ref{chernysh-thm:qfibr}.
\end{rem}

As mentioned in Remark \ref{rem:NoExtension}, we can now explain why the index difference map cannot be extended from $\Riem^+ (W)_{h_0}$ to $\Riem^+ (W)$.

\begin{cor}\label{cor:inddiff-doesnt-extend}
If $\dim (W) \geq 7$, $\partial W \neq \emptyset$ and $\Riem^+ (W)_{h_0} \neq \emptyset$, then there does not exist a map $\Riem^+ (W) \to \loopinf{+d+1} \KO$ which extends $\inddiff_{g_0}: \Riem^+ (W)_{h_0} \to \loopinf{+d+1} \KO$.
\end{cor}

\begin{proof}
Fix $g_0 \in \Riem^+ (W)_{h_0}$ and consider the class $\beta$ constructed in Proposition \ref{dimension-increasing-lemma}. By that Proposition and Remark \ref{rem:relative-index-hmtpythy}, we obtain a homotopy commutative diagram whose columns are fibration sequences
\begin{equation*}
\xymatrix{
{\Riem^+ (W)_{h_0}} \ar[rr]^{-\inddiff_{g_0}} \ar[d] && {\loopinf{+d+1} \KO} \ar[d] \\
{\Riem^+ (W)} \ar[rr] \ar[d] && {*} \ar[d]\\
{\Riem^+ (M)} \ar[rr]^{\inddiff_{h_0}} && {\loopinf{+d} \KO}.
}
\end{equation*}
A hypothetical extension of $\inddiff_{g_0}$ to $\Riem^+ (W)$ would force the map $\inddiff_{h_0}$ to be zero on homotopy groups in positive degrees. This follows from the long exact homotopy sequence and contradicts Theorem \ref{thm:intro:htpyGps}.
\end{proof}

\subsection{The Atiyah--Singer index theorem}\label{sec:atiyah-singer}

In the case when the spin manifold bundle $\pi:E \to X$ has closed $d$-dimensional fibres, the index $\ind{E}\in KO^{-d}(X)$ can be expressed in homotopy-theoretic terms using the Clifford-linear version of the Atiyah--Singer family index theorem. We recall the result here. 

\subsubsection{$KR$-theory and the Atiyah--Bott--Shapiro construction}
The formulation of the index theorem which we shall use is in terms of Atiyah's $KR$-theory \cite{AtKR}. Let $Y$ be a locally compact space with an involution $\tau$ (a ``Real space" in the terminology of \cite{AtKR}). 
Classes in the \emph{compactly supported} Real $K$-theory $KR_c (Y)$ are represented by triples $(E, \iota,f)$, with $E \to Y$ a finite-dimensional ``Real vector bundle'' as defined in \cite[\S 1]{AtKR}, $\iota$ a $\bZ/2$-grading on $E$ and $f: E \to E$ an odd, self-adjoint bundle map which is an isomorphism outside a compact set. Both $\iota$ and $f$ are required to be Real homomorphisms as defined in \cite[\S 1]{AtKR}. Out of the set of concordance classes of triples $(E,\iota,f)$, one defines $KR_c (Y)$ by a procedure completely analogous to that of Definition \ref{defn-fpq}. 
Recall the Atiyah--Bott--Shapiro construction \cite{ABS}: let $X$ be a compact space, $V,W \to X$ be Riemannian vector bundles and $p: V \oplus W \to X$ be their sum. Let $V^+ \oplus W^-$ be the total space of $V \oplus W$, equipped with the involution $(x,y) \mapsto (x,-y)$. Let $E $ be a real $\Cl(V^+ \oplus W^-)$-module. Then the triple $(p^* E \otimes \bC, \iota, \gamma)$ with $\gamma_{v,w} := \iota (c(v)+i c(w))$ represents a class $\abs(E) \in KR_c(V^+ \oplus W^-)$, the \emph{Atiyah--Bott--Shapiro class} of $E$.

If $V \to X$ is a rank $d$ spin vector bundle with spin structure $\spinor_V$, then the $KO$-theory Thom class of $V$ is defined to be
$$\lambda_V := \abs (\spinor_V) \in KR_c (V^+ \oplus \bR^{0,d}) \stackrel{}{\cong} KO_c^{d} (V)\stackrel{}{\cong} \widetilde{KO}^d (\mathrm{Th} (V)),$$
where the first isomorphism is proved in \cite[\S 2-3]{AtKR} (and the second one is a standard property of compactly supported $K$-theory).

\subsubsection{The Clifford-linear index theorem}

Let $\pi:E \to X$ be a spin manifold bundle with closed $d$-dimensional fibres over a compact base space. Assume that $E \subset X \times \bR^n$ and that $\pi$ is the projection to the first factor. Let $N_v E$ be the vertical normal bundle, so that $T_v E \oplus N_v E = E \times \bR^n$ and let $U \subset X \times \bR^n$ be a tubular neighborhood of $E$. The spin structure on $T_v E$ induces a spin structure on $N_v E$. The open inclusion $N_v E \cong U \subset X \times \bR^n$ induces a map (via a Pontrjagin--Thom type collapse construction)
\begin{equation}\label{gysin-map-k-theory}
\psi:KR_c (N_v E^+\oplus \bR^{0,n-d}) \lra KR_c (X \times \bR^{n,n-d}) \cong KO^{-d} (X), 
\end{equation}
where the last map is the $(1,1)$-periodicity isomorphism \cite[Theorem 2.3]{AtKR}. The Clifford-linear index theorem is

\begin{thm}\label{cl-index-1}
With this notation, $\ind{E} = \psi (\lambda_{N_v E}) \in KO^{-d}(X)$.
\end{thm}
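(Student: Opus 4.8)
The plan is to recognise both sides of the asserted identity as the values of, respectively, an \emph{analytic} and a \emph{topological} index homomorphism applied to one and the same class — the fibrewise symbol of the Dirac operator — and then to appeal to the $\Cl^{d,0}$-linear version of the Atiyah--Singer family index theorem, which identifies these two homomorphisms.

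First I would record the shape of the symbol. Using the fibrewise metric to identify $T_v E$ with $T_v^* E$, the principal symbol of the $\Cl^{d,0}$-linear operator $\Dir_x$ at a covector $\xi$ is the bundle endomorphism $\iota\, c(\xi)$ of $\spinor_{E_x}$: it is odd, self-adjoint, $\Cl^{d,0}$-linear, and invertible off the zero section. Comparing with the Atiyah--Bott--Shapiro construction of Section \ref{generalities-k-theory} applied to $V = T_v E$, $W = 0$, this symbol \emph{is} the class $\abs(\spinor_{T_v E})$, i.e.\ the fibrewise $KO$-theory Thom class $\lambda_{T_v E}$ of the vertical tangent bundle, regarded as an element of the compactly-supported-vertically group $KR_{\mathrm{cv}}(T_v E)$. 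On the other hand, homotopy invariance of the Fredholm index (Lemma \ref{index:independent-of-metric}, together with the constructions of \cite{Eb13}) shows that the assignment sending such a fibrewise symbol to the $(d,0)$-cycle $x \mapsto (L^2(E_x;\spinor_{E_x}), F_x)$, with $F_x$ the bounded transform, descends to a homomorphism $\mathrm{ind}^a \colon KR_{\mathrm{cv}}(T_v E) \to KR(X)$; by the previous sentence, $\ind{E} = \mathrm{ind}^a(\lambda_{T_v E})$.

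Next I would identify the right-hand side with the \emph{topological} index of the same class. Choosing a fibrewise tubular neighbourhood $U \cong N_v E$ of $E$ in $X \times \bR^n$ and using the splitting $T_v E \oplus N_v E \cong \underline{\bR^n}$ together with the multiplicativity of Atiyah--Bott--Shapiro Thom classes, the excision isomorphism attached to $U \subset X\times\bR^n$ turns $\psi(\lambda_{N_v E})$ precisely into the image of $\lambda_{T_v E}$ under the $K$-theoretic Gysin (umkehr) map for the factorisation $E \hookrightarrow X \times \bR^n \xrightarrow{\pr} X$; this map is, by definition, the topological index $\mathrm{ind}^t$. Hence the theorem is reduced to the equality $\mathrm{ind}^a = \mathrm{ind}^t$ of homomorphisms $KR_{\mathrm{cv}}(T_v E) \to KR(X)$.

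This last equality is proved along the classical lines, in two steps. \emph{Normalisation}: for the trivial bundle $X \times \bR^n \to X$ equipped with a symbol of compact vertical support, the analytic index coincides with the inverse of iterated Bott periodicity — concretely, this is the computation of the index of the Euclidean Dirac family, where the input from \cite{AtKR} and \cite{ABS} enters. \emph{Compatibility with embeddings}: the analytic index of $\Dir$ on $E$ equals the analytic index of the Euclidean Dirac family over $X\times\bR^n$ twisted by the excised spinor data on $N_v E$. The latter is the content of the Atiyah--Singer embedding trick and is the main obstacle; for a general elliptic family it requires the normal-bundle/harmonic-oscillator deformation argument, and in the present Dirac situation the twisting bundle that occurs is exactly $\spinor_{N_v E}$, which is precisely why the Thom-class multiplicativity used above matches it. Since $X$ is compact and $E \subset X\times\bR^n$, the relevant estimates are uniform in $x \in X$, so the deformation argument goes through verbatim in the family setting; alternatively one may simply invoke the $\Cl^{d,0}$-linear family index theorem from the literature (building on \cite{AS69} and \cite{AtKR}) and check only that its normalisations agree with our conventions for $\ind{E}$, $\lambda_V$, and $\psi$.
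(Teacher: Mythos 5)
Your proposal takes a genuinely different route from the paper's, and the difference matters. The paper does \emph{not} attempt to reprove (or cite) a $\Cl^{d,0}$-linear family index theorem directly. Instead, it exploits the Fredholm model: the $\Cl^{d,0}$-linear Fredholm family $x\mapsto \normalize{\Dir_x}$ is converted, via $R_{x,v}=\normalize{\Dir_x}+\iota c(v)$ over $X\times D^d$ with invertibility over $X\times S^{d-1}$, into an \emph{ordinary} real elliptic family whose index lives in the relative group $KO^0(X\times D^d, X\times S^{d-1})$. The residual problem — that the Atiyah--Singer real family index theorem as proved in \cite{AtSiV} is a statement in absolute $K$-theory — is handled by the paper's Theorem \ref{relative-family-index}, which is a short clutching argument extending the index theorem to families that are bundle isomorphisms over a closed $Y\subset X$. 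After a homotopy making $R_{x,v}$ a bundle isomorphism over $X\times S^{d-1}$, this applies and the rest is a topological computation quoted from Hitchin. What this buys is that one never needs a Clifford-linear version of the family index theorem: one only needs the classical \cite{AtSiV} plus an elementary relative extension.

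Your route is essentially the one Hitchin and \cite[\S 16]{SpinGeometry} take, and it has the very gap the paper was written to close. The step where you posit a homomorphism $\mathrm{ind}^a\colon KR_{\mathrm{cv}}(T_vE)\to KR(X)$ and identify it with a topological index — and then propose either to ``invoke the $\Cl^{d,0}$-linear family index theorem from the literature'' or to redo the embedding/harmonic-oscillator argument ``verbatim'' — is precisely what the paper flags as the critical missing detail. No clean reference for the $\Cl^{d,0}$-linear family index theorem for \emph{real} operators is available (that is the point of the surrounding discussion), and ``verbatim'' undersells the work: the Atiyah--Singer multiplicativity and excision steps would have to be redone with graded $\Cl^{d,0}$-module bookkeeping throughout, and the well-definedness of your $\mathrm{ind}^a$ on all of $KR_{\mathrm{cv}}(T_vE)$ (not just on the Dirac symbol) is itself part of what has to be established. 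Lemma \ref{index:independent-of-metric}, which you cite, only gives invariance under change of metric, not the full homotopy/excision package needed to descend to $K$-theory classes. If you want a rigorous argument along your lines you would in effect be re-proving the Atiyah--Singer theorem in this setting; the paper's reduction via $R_{x,v}$ and Theorem \ref{relative-family-index} is specifically designed to avoid that.
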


This result is due to Hitchin \cite[Proposition 4.2]{HitchinSpin} and is of course a consequence of the Atiyah--Singer index theorem for families of real elliptic operators \cite{AtSiV}. However, Hitchin's explanation of the argument (as well as the treatment in \cite[\S 16]{SpinGeometry}) leaves out a critical detail. For sake of completeness, we discuss this detail.

Recall that $\spinor_{T_v E}$ has an action $c$ of the Clifford algebra $\Cl^{d,0}$. The Dirac operator $\Dir_x$ on $E_x$ anticommutes with the self-adjoint bundle endomorphism $\iota c(v)$, for each $(x,v) \in X \times \bR^{d,0}$. Using the isomorphism \eqref{eqn:bott-isomorphism}, the family of Dirac operators defines a family 
$$R_{x,v} := \normalize{\Dir_x} + \iota c(v)$$
over $X \times \bR^d$ which is invertible unless $v=0$ and so gives an element in the group $KO^0 (X \times D^d , X \times S^{d-1})$. Hitchin uses the real family index theorem \cite{AtSiV} to compute the index of the family $(R_{x,v})_{(x,v) \in X \times \bR^{d,0}}$. However, the family index theorem as proven in \cite{AtSiV} only compares elements in absolute $K$-theory, not in relative $K$-theory, and it is not completely evident how the topological index of the family $(R_{x,v})$ (as an element in relative $K$-theory) is defined. Therefore, a slight extension of the real family index theorem is necessary.

\begin{thm}\label{relative-family-index}
Let $\pi: E \subset X \times \bR^n \to X$ be a closed manifold bundle over a compact base. Let $Q: \Gamma (E;V_0)\to \Gamma (E;V_1)$ be a family of real elliptic pseudo-differential operators of order $0$. Assume that $Y \subset X$ is a closed subspace such that for $y \in Y$, the operator $Q_y$ is a bundle isomorphism. The symbol class $\smb (Q)$ is then an element in $KR_c (T_v (E_{X-Y})^-)$ and the family index $\ind{Q} \in KO (X,Y) = KO_c(X-Y)$ can be computed as the image of $\smb (Q)$ under the composition
\begin{equation*}
\begin{split}
KR_c (T_v (E_{X-Y})^-) \lra KR_c (T_v (E_{X-Y})^- \oplus N_v (E_{X-Y})^- \oplus N_v (E_{X-Y})^+) = \\
KO_c^n (N_v E_{X-Y}) \lra KO_c^n ((X-Y) \times \bR^n) \cong KO_c (X-Y)
\end{split}
\end{equation*}
of the Thom isomorphism, the map induced by the inclusion and Bott periodicity.
\end{thm}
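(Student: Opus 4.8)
The plan is to obtain the relative statement from the \emph{absolute} real family index theorem of Atiyah--Singer \cite{AtSiV} (for closed manifold bundles over a compact base, in the form recalled in \cite[\S 16]{SpinGeometry}) by re-examining the embedding proof of that theorem and carrying the subspace $Y$ along at every stage. The first observation is that both the analytic index $\ind{Q}$ and the topological index (the composition in the statement) depend on $Q$ only through its symbol class: for the topological side this is built into the construction, and for the analytic side it is the homotopy invariance of the Fredholm index of the bounded transform of an order-$0$ elliptic family (cf.\ Lemma~\ref{index:independent-of-metric} and the elliptic estimates of \cite{Eb13}). Here the hypothesis that $Q_y$ is a \emph{bundle isomorphism} for $y \in Y$, rather than merely Fredholm, is used twice: it makes $\smb(Q)$ invertible on all of $T_v(E_y)$, so that $\smb(Q)$ genuinely represents a class in the relative group $KR(T_v(E_{X-Y})^-) = KR(T_vE^-, T_v(E_Y)^-)$, and it makes the bounded transform of $Q$ a relative $(d,0)$-cycle in the sense of Section~\ref{generalities-k-theory}, so that $\ind{Q}\in KO(X,Y)$ is defined. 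Thus it suffices to prove that the analytic and topological index maps $KR(T_vE^-,T_v(E_Y)^-)\rightrightarrows KO(X,Y)$, both natural in the compact pair $(X,Y)$ and additive, coincide; on absolute pairs $(X,\emptyset)$ they coincide by \cite{AtSiV}.

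Next I would run through the two ingredients of the Atiyah--Singer embedding proof and check that each survives in relative $K$-theory: the multiplicativity statement $\ind{j_!\sigma}=\ind{\sigma}$, where $j:E\hookrightarrow X\times\bR^n$ and $j_!$ is the composite of the Thom isomorphism for $N_vE^-\oplus N_vE^+$ (whose $KR$-Thom class is the Atiyah--Bott--Shapiro class) with extension by zero across the tubular neighbourhood $U\cong N_vE$; and the identification of the index map for the trivial bundle $X\times\bR^n\to X$ with the $(1,1)$-periodicity of $KR$ \cite[Theorem~2.3]{AtKR}. The Thom isomorphism, the extension-by-zero map, and Bott periodicity are all natural in the space and are performed fibrewise in the $E$- and $\bR^n$-directions, so they preserve the condition of being invertible over $Y$; and the operator deformations underlying the multiplicativity statement remain through \emph{relative} cycles, i.e.\ stay invertible over $Y$, precisely because $\smb(Q)$ is invertible on all of $T_v(E_Y)$, so that over the $Y$-part of everything in sight the relevant symbols are already invertible from the start. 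Composing the relativised steps then yields exactly the stated formula for $\ind{Q}$ in $KO(X,Y)$. (Alternatively one may first reduce, by a collar argument together with the contractibility of the space of invertible Clifford-linear Fredholm operators (Theorem~\ref{atsingkar}), to the case $(X,Y)=(B,\partial B)$ with $B$ compact, then double $B$ to a closed base and invoke the absolute theorem directly; this requires essentially the same care.)

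The step I expect to be the main obstacle is exactly the one flagged in the text preceding the statement: one cannot use the absolute family index theorem as a black box, because the forgetful map $KO(X,Y)\to KO(X)$ need not be injective, so no formal diagram chase through the long exact sequence of the pair $(X,Y)$ will do. One must open up the proof of the absolute theorem, and the delicate point is to verify that each deformation appearing in it can be carried out through relative cycles --- remaining invertible over $Y$ throughout --- and that the Thom-isomorphism, extension, and periodicity steps are compatible with restriction to $Y$. Each of these verifications is routine in isolation, but assembling them into a single coherent relative argument, and in particular tracking the relative symbol class (not merely the operator) through the reduction, is what the proof really amounts to.
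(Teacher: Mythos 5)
Your primary route---re-opening the Atiyah--Singer embedding proof and checking that every deformation, the multiplicativity step, the Thom push-forward and Bott periodicity all preserve the condition ``invertible over $Y$''---is correct in principle, but it is a genuinely different and much heavier argument than the one the paper gives. The paper instead uses a \emph{doubling} trick that reduces the relative statement to the absolute theorem \emph{formally}, without opening it up at all: set $Z := X \cup_Y X$ and $E' := E \cup_{E|_Y} E$; because $Q_y$ is a bundle \emph{isomorphism} for $y\in Y$ one may clutch $V_0' := V_0\cup_Q V_1$ and $V_1' := V_1 \cup_{\id} V_1$ over $E'$, and extend $Q$ by the identity over the second copy $X_2$ to get a new family $P$ over $Z$. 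Under $KO(X,Y)\cong KO(Z,X_2)\to KO(Z)$, both the analytic and the topological index of $Q$ map to the corresponding index of $P$; the absolute theorem gives that these agree in $KO(Z)$; and the essential observation you are missing is that the inclusion $X_2\hookrightarrow Z$ has a retraction (the fold map), so $KO(Z,X_2)\to KO(Z)$ \emph{is} injective---in contrast to $KO(X,Y)\to KO(X)$, whose failure of injectivity you correctly identify as the obstacle. Your parenthetical alternative (``double and invoke the absolute theorem directly'') is pointed in the right direction, but you discount it as requiring ``essentially the same care,'' which undersells it: once one sees that the clutching makes sense and that $X_2$ is a retract of $Z$, the reduction is a short diagram chase, not a re-run of the embedding proof. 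So: what you propose would work, but there is a substantially shorter argument, and the specific lever (retraction makes the restriction map injective on the doubled pair) is the idea your write-up does not supply.
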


\begin{proof}
For $Y=\emptyset$, this is precisely the Atiyah--Singer family index theorem \cite{AtSiV}. In the general case, form $Z= X \cup_Y X$, $E'= E \cup_{E|_Y} E$. Since $Q$ is a bundle isomorphism over $Y$, we can form the clutching $V'_0 = V_0 \cup_Q V_1$ and $V'_1 = V_1 \cup_{\id} V_1$ over $E'$ and extend the family $Q$ by the identity over the second copy $X_2$ of $X$. We obtain a new family $P$, which coincides with $Q$ over the first copy $X_1$ of $X$ and is a bundle isomorphism over $X_2$. Therefore, under the map $KO(X,Y) \cong KO(Z, X_2) \to KO(Z)$, the topological (resp. analytical) index of $Q$ is mapped to the topological (resp. analytical) index of $P$. By the index theorem, the topological and analytical indices of $P$ agree. Finally, as the inclusion $X_2 \to Z$ splits, the map 
$KO(Z, X_2) \to KO(Z)$ is injective, and the indices agree in $KO(X,Y) =KO(Z, X_2)$.
\end{proof}

\begin{proof}[Proof of Theorem \ref{cl-index-1}]
We have to compute the family index of the family $R_{x,v} := \normalize{\Dir_x} + \iota v$ over $X \times D^d$, as an element in $KO(X \times (D^d ,S^{d-1}))$. 
Consider the homotopy ($t \in [0,1]$)
$$R_{t,x,v} := \sqrt{1-t^2 |v|^2}\normalize{\Dir_x} + \iota v$$
of families of order $0$ pseudo-differential operators. Because 
$$R_{t,x,v}^{2} = (1-t^2 |v|^2) \frac{\Dir_x^2}{1+\Dir_x^2} + |v|^2,$$
we find that $R_{t,x,v}$ is invertible for $|v|=1$. If $\xi \in T_vE$ has norm $1$, then 
$$\smb_{R_{t,x,v}^{2}}(\xi) = (1-t^2 |v|^2) \frac{|\xi|^2}{1+|\xi|^2} + |v|^2$$
and this is invertible for all $x$, $v$, and $t$, so the family $R_{t,x,v}$ is elliptic. Thus we can replace the original family $R_{x,v}=R_{0,x,v}$ by $R_{1,x,v}$, which over $X \times S^{d-1}$ is just the family $\iota v$ of bundle automorphisms. Thus the relative family index theorem (Theorem \ref{relative-family-index}) applies. The computation of the topological index of this family is done in the proof of \cite[Proposition 4.2]{HitchinSpin}.
\end{proof}

\subsection{Fibre bundles, Madsen--Tillmann--Weiss spectra, and index theory}\label{sec:FibBunIndDiff}

We can reformulate the Atiyah--Singer index theorem in terms of the Madsen--Tillmann--Weiss spectrum $\MTSpin (d)$. The basic reference for these spectra is \cite{GMTW} and the connection to the index theorem was pointed out in \cite{Eb09}. We refer to those papers for more detail.
Later, we will present the variation for manifolds with boundary, and finally relate the index difference to the family index. 

\subsubsection{Madsen--Tillmann--Weiss spectra}
Let $\gamma_{d} \to B \Spin (d)$ be the universal spin vector bundle. By definition, the Madsen--Tillmann-Weiss spectrum $\MTSpin (d)$ is the Thom spectrum of the virtual vector bundle $-\gamma_{d}$, which may be described concretely as follows. 
\begin{defn}\label{defn:spingrassmannian}
For $n \geq d$, we define the spin Grassmannian $\Gr_{d,n}^{\Spin}$ as the homotopy fibre of the natural map   
$$ B\Spin (d) \times B\Spin (n-d) \lra B\Spin (n).$$
\end{defn}
The spin Grassmannian comes with a map $\theta: \Gr_{d,n}^{\Spin} \to \Gr_{d,n} = \frac{O(n)}{O(d) \times O(n-d)}$ to the ordinary Grassmannian, and we let 
$$V_{d,n} \subset \Gr^\Spin_{d,n} \times \bR^n \text{ and } V_{d,n}^{\bot} \subset \Gr^\Spin_{d,n} \times \bR^n$$ 
be the pullback of the tautological $d$-dimensional vector bundle on $\Gr_{d,n}$ and its orthogonal complement.

By stabilising with respect to $n$, one obtains structure maps 
$$\sigma_n : \Sigma\mathrm{Th}(V_{d,n}^{\bot}) \lra  \mathrm{Th}(V_{d,n+1}^{\bot}),$$
and by definition the sequence of these spaces form the spectrum $\MTSpin (d)$. The ($n$ times looped) adjoints of the $\sigma_n$ yield maps
$$\Omega^{n} \mathrm{Th}(V_{d,n}^{\bot}) \lra \Omega^{n+1} \mathrm{Th}(V_{d,n+1}^{\bot}),$$
and by definition $\loopinf{} \MTSpin(d)$ is the colimit over these maps. Similarly the space $\Omega^{\infty+l} \MTSpin(d)$ is the colimit of the $\Omega^{n+l} \mathrm{Th}(V_{d,n}^{\bot})$, for $l \in\bZ$. There are maps $V_{d-1,n-1}^{\bot} \to V_{d,n}^{\bot}$ which are compatible with the structure maps of the spectra and give a map of spectra $\MTSpin (d-1) \to \Sigma \MTSpin (d)$. On infinite loop spaces this induces a map $\loopinf{} \MTSpin (d-1) \to \loopinf{-1}\MTSpin (d)$.

\subsubsection{Spin fibre bundles and the Pontrjagin--Thom construction}
A bundle $\pi: E \subset X \times \bR^n \to X$ of $d$-dimensional closed manifolds with a fibrewise spin structure has a Pontrjagin--Thom map
$$\alpha_E: X \lra \Omega^n \mathrm{Th} (V_{d,n}^{\bot}) \lra \loopinf{} \MTSpin (d)$$
whose homotopy class does not depend on the embedding of $E$ into $X \times \bR^n$. 
We also need a version for manifolds with boundary. Let $W^d$ be a manifold with boundary $M$, and consider fibre bundles $E \to X$ with structure group given by the diffeomorphisms that fix the boundary pointwise. The boundary bundle is then trivialised, $\partial E \cong X \times M$. Assume that there is a (topological) spin structure on the vertical tangent bundle $T_v E$, which is constant on the boundary bundle. 
Under favourable circumstances (which always hold for the manifolds to be considered in this paper), the following lemma shows that such spin structures exist.

\begin{lem}\label{lem:existence-spin-structure}
Let $W$ be a manifold with boundary $M$ such that $(W, M)$ is 1-connected, and let $\pi: E \to X$ be a smooth fibre bundle with fibre $W$ and trivialised boundary bundle, $\partial E \cong X \times M$. For each spin structure on $W$ there is a unique spin structure on $T_v E$ which is isomorphic to the given one on the fibre and which is constant over $\partial E$.
\end{lem}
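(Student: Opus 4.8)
The plan is to interpret spin structures on a vector bundle as homotopy classes of lifts of its classifying map along $B\widetilde{\GL}^+_d(\mathbb R) \to B\GL_d(\mathbb R)$ (or $B\Spin(d) \to B\SO(d)$ after choosing a metric and orientation), and then to reduce the existence and uniqueness statement to an obstruction-theoretic argument using the relative $1$-connectivity of $(W,M)$. First I would fix a fibrewise metric and a fibrewise orientation on $T_vE$ (these exist and are unique up to contractible choice once we fix them on the boundary, since $W$ is spin hence orientable), reducing the structure group to $\SO(d)$, so that a spin structure is a lift of the classifying map $\tau_E : E \to B\SO(d)$ of $T_vE$ to $B\Spin(d)$, restricting to the given lift on $\partial E = X \times M$. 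The obstruction to the existence of such a relative lift, and the group acting on the set of such lifts, live in $H^{2}(E,\partial E;\mathbb Z/2)$ and $H^{1}(E,\partial E;\mathbb Z/2)$ respectively, because the homotopy fibre of $B\Spin(d)\to B\SO(d)$ is a $K(\mathbb Z/2,1)$.

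Next I would show both of these relative cohomology groups vanish. The hypothesis that $(W,M)$ is $1$-connected gives, by the relative Hurewicz theorem, that $H_i(W,M;\mathbb Z)=0$ for $i\le 1$ and hence $H^i(W,M;\mathbb Z/2)=0$ for $i\le 1$; moreover $H_2(W,M;\mathbb Z)\cong \pi_2(W,M)$ and $H^2(W,M;\mathbb Z/2)$ may be nonzero, so the existence claim needs a genuine argument rather than pure vanishing. Here I would use that the fibre bundle $\pi$ is smooth with trivialised boundary bundle: choosing a fibrewise handle decomposition of $W$ relative to $M$, the spin structure given on $\partial E$ extends over the $1$-handles uniquely (obstruction in $H^2$ of the pair built from cells of dimension $\le 1$ vanishes, and the indeterminacy in $H^1$ vanishes), and the obstruction to extending over a $2$-handle is the value of $w_2(T_vE)$ on the corresponding relative $2$-cell; but $w_2(T_vE)$ restricts on each fibre $W$ to $w_2(TW)$, which vanishes since $W$ is spin, and is determined fibrewise because $\pi$ is a fibre bundle, so these obstructions vanish. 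The cleanest phrasing: the obstruction to a relative lift is the image of $w_2(T_vE)\in H^2(E;\mathbb Z/2)$ under $H^2(E;\mathbb Z/2)\to H^2(E,\partial E;\mathbb Z/2)$, composed with the statement that $w_2(T_vE)$ lies in the image of $H^2(E,\partial E;\mathbb Z/2)$ pulled back from the base — and a Leray--Hirsch / fibre-bundle argument plus the spinnability of $W$ kills it.

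For uniqueness I would argue that the set of spin structures on $T_vE$ extending the given one on $\partial E$ is either empty or a torsor over $H^1(E,\partial E;\mathbb Z/2)$, and that the latter group vanishes: from the fibration $W \to E \to X$ with $\partial E = X\times M$ one gets a Serre spectral sequence for the pair, whose $E_2$-page is $H^p(X;\mathcal H^q(W,M;\mathbb Z/2))$, and since $H^0(W,M;\mathbb Z/2)=H^1(W,M;\mathbb Z/2)=0$ by relative $1$-connectivity, all contributions to $H^1(E,\partial E;\mathbb Z/2)$ vanish. The main obstacle I anticipate is the existence half: one must make precise that the mod-$2$ obstruction class for the relative lifting problem really is controlled fibrewise by $w_2(TW)$ and is not contaminated by the topology of the base $X$. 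The correct way to see this is that the obstruction is natural in $X$ and pulls back from the universal situation, where $E$ is the total space of the universal $W$-bundle over $B\Diff_\partial(W)$; over a point it is $w_2(TW)=0$, and relative $1$-connectivity of $(W,M)$ together with naturality forces the universal relative obstruction class, living in $H^2$ of the universal pair, to vanish because its restriction to a fibre is zero and (as above) the relevant low-degree relative cohomology of the fibre vanishes in degrees $0$ and $1$. Once existence is established, uniqueness is the routine torsor computation, and the resulting spin structure is manifestly isomorphic to the given one on each fibre and constant over $\partial E$ by construction.
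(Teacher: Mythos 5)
Your proposal takes the same route as the paper: interpret spin structures as lifts of the classifying map, put the existence obstruction in $H^2(E,\partial E;\mathbb Z/2)$ and the uniqueness indeterminacy in $H^1(E,\partial E;\mathbb Z/2)$, and use the Leray--Serre spectral sequence of the pair $(E,\partial E)\to X$ with fibre $(W,M)$ to control both groups via the vanishing of $H^0(W,M;\mathbb Z/2)$ and $H^1(W,M;\mathbb Z/2)$. The uniqueness half is exactly the paper's argument.

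One sentence in your existence argument is confused and should be deleted: there is no natural map $H^2(E;\mathbb Z/2)\to H^2(E,\partial E;\mathbb Z/2)$ (the map in the long exact sequence of the pair goes the other way), and Leray--Hirsch is not the relevant tool. Your self-corrected version that follows is on the right track, but it leaves the decisive step implicit. Make it explicit: the same $E_2^{p,q}=H^p(X;H^q(W,M;\mathbb Z/2))$ computation you use for $H^1$ also shows that $E_2^{2,0}=E_2^{1,1}=0$, so $H^2(E,\partial E;\mathbb Z/2)$ is concentrated on the $E_\infty^{0,2}$ line and the edge map --- the restriction to the fibre $H^2(E,\partial E;\mathbb Z/2)\to H^2(W,M;\mathbb Z/2)$ --- is \emph{injective}. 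The obstruction class restricts on the fibre to the obstruction to extending the chosen spin structure from $M$ to $W$, which vanishes because that extension is given; by injectivity the obstruction itself vanishes. This is precisely what the paper says, and once stated this way you do not need the detour through the universal $W$-bundle over $B\Diff_\partial(W)$.
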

\begin{proof}
The Leray--Serre spectral sequence for the fibration pair $(E, \partial E) \to X$ proves that $(E,\partial E)$ is homologically $1$-connected and that the fibre inclusion $(W,M) \to (E, \partial E)$ induces an injection $H^2 (E,\partial E; \bZ/2) \to H^2 (W,M; \bZ/2)$. The obstruction to extending the spin structure on $\partial E$ to all of $E$ lies in $H^2 (E, \partial E; \bZ/2)$ and goes to zero in $H^2 (W,M;\bZ/2)$ since $W$ is assumed to be spin and the spin structure on $M$ is assumed to extend over $W$. Thus the obstruction is trivial, which shows the existence of the spin structure. Uniqueness follows from $H^1 (E,\partial E; \bZ/2)=0$.
\end{proof}
A closed spin manifold $M^{d-1}$ determines a point $M \in \loopinf{-1} \MTSpin (d)$, namely the image under 
$$\loopinf{} \MTSpin (d-1) \lra  \loopinf{-1} \MTSpin (d)$$
of the point in $ \loopinf{} \MTSpin (d-1)$ determined by the Pontrjagin--Thom map of the trivial bundle $M \to *$. Of course, this point is not unique, but depends on an embedding of $M$ into $\bR^{\infty}$ and a tubular neighborhood, which is a contractible choice.

If $W$ is a $d$-dimensional manifold with boundary $M$, and $\pi : E \to X$ is a smooth manifold bundle with fibre $W$ equipped with a trivialisation $\partial E \cong X \times M$ of the boundary and a fibrewise spin structure which is constant along the boundary, then there is a Pontrjagin--Thom map
$$\alpha_E : X \lra \Omega_{[\emptyset, M]} \Omega^{\infty-1} \MTSpin(d)$$
to the space of paths in $\Omega^{\infty-1} \MTSpin(d)$ from the basepoint $\emptyset$ to $[M]$. A spin nullbordism $V:M\rightsquigarrow \emptyset$ determines a path in $\Omega^{\infty-1} \MTSpin(d)$ from $M$ to $\emptyset$ and thus a homotopy equivalence
$$\theta_V:\Omega_{[\emptyset, M]} \Omega^{\infty-1} \MTSpin(d) \overset{\sim}\lra  \Omega^{\infty} \MTSpin(d)$$
and we define $\alpha_{E,V}:=\theta_V \circ \alpha_E: X \to \loopinf{} \MTSpin (d)$. For two nullbordisms $V_0$ and $V_1$ of $M$ the maps $\alpha_{E,V_0}$ and $\alpha_{E,V_1}$ differ by loop addition with the constant map $\alpha_{V_0^{op}\cup V_1}$. 

\subsubsection{The index theorem and homotopy theory}

The vector bundles $V_{d,n}^\perp \to \Gr^\Spin_{d,n}$ have spin structures, so have $KO$-theory Thom classes $\lambda_{V_{d,n}^{\bot}} \in KO^{n-d} (\mathrm{Th} (V_{d,n}^{\bot}))$. These assemble to a unique spectrum $KO$-theory class $\kothom{d} \in KO^{-d} (\MTSpin (d))$, alias a spectrum map
$$\kothom{d}: \MTSpin (d) \lra \Sigma^{-d} \KO.$$
\begin{remark}
Arguments for this---especially uniqueness---are not so well-known, so we briefly give one (see also \cite[VIII.6]{BMMS} for a related discussion). The collection $\{\lambda_{V_{d,n}^{\bot}}\}_n$ defines a class in the inverse limit $\lim_n KO^{n-d}(\mathrm{Th} (V_{d,n}^{\bot}))$, so by Milnor's $\lim^1$ exact sequence it suffices to show that $\lim_n^1 KO^{n-d-1}(\mathrm{Th} (V_{d,n}^{\bot}))$ vanishes. The $KO$-theory Thom isomorphisms given by the $\lambda_{V_{d,n}^{\bot}}$ show that the inverse system $\{KO^{n-d-1}(\mathrm{Th} (V_{d,n}^{\bot}))\}_n$ is pro-isomorphic to the inverse system $\{KO^{-1}(\Gr^\Spin_{d,n})\}_n$. If we let $E\Spin(d)^{(k)}$ be the $k$-fold join of $\Spin(d)$, which is the $k$th step in Milnor's model for $E\Spin(d)$, with quotient $B\Spin(d)^{(k)} := E\Spin(d)^{(k)}/\Spin(d)$, then as $\Gr^\Spin_{d,n} \to B\Spin(d)$ is $(n-d-1)$-connected we can find a lift $B\Spin(d)^{(k)} \to \Gr^\Spin_{d,n}$ as long as $n \gg k$. This gives a map of direct systems of spaces with homotopy 
equivalent direct limits, and so a pro-homomorphism
$$\Phi: \{KO^{-1}(\Gr^\Spin_{d,n})\}_n \lra \{KO^{-1}(B\Spin(d)^{(k)})\}_k$$
with an associated map of Milnor exact sequences
\begin{equation*}
\xymatrix{
\lim\limits_n{\!}^1 KO^{-1}(\Gr^\Spin_{d,n}) \ar@{^{(}->}[r] \ar[d]^-{\lim^1 \Phi} & KO^{0}(B\Spin(d)) \ar@{->>}[r] \ar@{=}[d]& \lim\limits_n KO^{0}(\Gr^\Spin_{d,n}) \ar[d]^-{\lim \Phi}\\
\lim\limits_k{\!}^1 KO^{-1}(B\Spin(d)^{(k)}) \ar@{^{(}->}[r] & KO^{0}(B\Spin(d)) \ar@{->>}[r] & \lim\limits_k KO^{0}(B\Spin(d)^{(k)}).
}
\end{equation*}
But $\{KO^{-1}(B\Spin(d)^{(k)})\}_k$ is Mittag-Leffler by \cite[Corollary 2.4]{AtiyahSegal} (or rather its analogue for $KO$-theory, which may be deduced from \cite[Theorem 7.1]{AtiyahSegal}) so has vanishing $\lim^1$, so by the commutativity of the left-hand square the inverse system $\{KO^{-1}(\Gr^\Spin_{d,n})\}_n$ also has vanishing $\lim^1$.
\end{remark}

The infinite loop map of $\kothom{d}$ is denoted $\Ahat_d: =\loopinf{}\kothom{d}: \MTSpin (d) \to \loopinf{+d} \KO$.
With these definitions, we arrive at the following version of the index theorem.

\begin{thm}\label{cl-index-2}
Let $\pi: E \to X$ be a bundle of closed $d$-dimensional spin manifolds. Then the maps
$$\ind{E}, (\loopinf{} \kothom{d}) \circ \alpha_E: X \lra \loopinf{+d} \KO$$
are weakly homotopic.
\end{thm}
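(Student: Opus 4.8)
The plan is to deduce this from the geometric form of the index theorem, Theorem \ref{cl-index-1}, by unwinding the definitions of $\alpha_E$ and of $\kothom{d}$ and recognising the composite $(\loopinf{}\kothom{d})\circ\alpha_E$ as the homotopy-theoretic realisation of the class $\psi(\lambda_{N_v E})$ occurring there. Since we only need weak homotopy, I would first reduce to a compact base: for a finite CW complex $K$ and a map $g\colon K\to X$, pull $\pi$ back to a spin manifold bundle $\pi'\colon E'\to K$, and note that by naturality of the family index and of the Pontrjagin--Thom construction, $\ind{E}\circ g$ is represented by $\ind{E'}$ and $(\loopinf{}\kothom{d})\circ\alpha_E\circ g$ by $(\loopinf{}\kothom{d})\circ\alpha_{E'}$. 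So it suffices to treat the case $X=K$ compact, where, by Theorem \ref{atsingkar} and representability, homotopy classes of maps $X\to\loopinf{+d}\KO$ are classified by $KO^{-d}(X)$; hence it is enough to show that the two classes in $KO^{-d}(X)$ agree.

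Next I would make $(\loopinf{}\kothom{d})\circ\alpha_E$ explicit. Choose an embedding $E\subset X\times\bR^n$ with vertical normal bundle $N_v E$ (of rank $n-d$, since $T_v E\oplus N_v E=E\times\bR^n$), a tubular neighbourhood $U$, and the spin structure on $N_v E$ induced from $T_v E$. By definition the level-$n$ representative of $\alpha_E$ is adjoint to
$$S^n\wedge X_+\xrightarrow{c}\Th(N_v E)\xrightarrow{\mathrm{Gauss}}\Th(V_{d,n}^{\bot}),$$
where $c$ collapses the complement of $U$ and the Gauss map classifies the splitting $E\times\bR^n=T_v E\oplus N_v E$ together with its spin structures, while $\kothom{d}$ is realised at level $n$ by the $KO$-theory Thom class $\lambda_{V_{d,n}^{\bot}}\in\widetilde{KO}^{n-d}(\Th(V_{d,n}^{\bot}))$. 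Therefore $(\loopinf{}\kothom{d})\circ\alpha_E$ corresponds to the class $c^{*}\,\mathrm{Gauss}^{*}\,\lambda_{V_{d,n}^{\bot}}\in\widetilde{KO}^{n-d}(S^n\wedge X_+)$, identified with $KO^{-d}(X)$ via the $n$-fold suspension isomorphism.

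Then I would identify this pullback with $\psi(\lambda_{N_v E})$. Naturality of $KO$-theory Thom classes under the bundle map covering the Gauss map gives $\mathrm{Gauss}^{*}\lambda_{V_{d,n}^{\bot}}=\lambda_{N_v E}$, the Thom class of $N_v E$; and pulling back along the Pontrjagin--Thom collapse $c$ is, by the standard description of the collapse map on compactly supported $KO$-theory, the wrong-way homomorphism induced by the open embedding $N_v E\cong U\hookrightarrow X\times\bR^n$. Composing with Bott periodicity, this is exactly the homomorphism $\psi$ of Theorem \ref{cl-index-1} applied to $\lambda_{N_v E}$, once the Atiyah--Bott--Shapiro identifications between the $KO$-theory of the Thom spaces involved and the $KR$-groups of \cite{AtKR} are inserted. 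Hence $(\loopinf{}\kothom{d})\circ\alpha_E$ realises $\psi(\lambda_{N_v E})$, which equals $\ind{E}$ by Theorem \ref{cl-index-1}, completing the argument.

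I expect the main obstacle to be precisely this last identification: reconciling, on the nose, the periodicity, suspension and Thom isomorphisms built into $\kothom{d}$ and into the Pontrjagin--Thom definition of $\alpha_E$ with the $KR$-theoretic apparatus underlying Theorem \ref{cl-index-1} --- in particular, matching the $n$-fold topological suspension isomorphism with Atiyah's periodicity isomorphisms for $KR$, and checking that the Gauss map is compatible with the reflection involutions implicit in the Atiyah--Bott--Shapiro construction of $\lambda_{V_{d,n}^{\bot}}$. Granting these essentially formal compatibilities, Theorem \ref{cl-index-1} does the substantive work and no further analytic input is needed.
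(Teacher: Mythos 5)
Your proposal is correct and takes essentially the same approach as the paper, whose entire proof of this theorem is the single remark that the translation from Theorem \ref{cl-index-1} is ``exactly parallel'' to the translation of the complex family index theorem in the cited reference. You are filling in precisely that unwinding: reducing to a compact base, identifying $(\loopinf{}\kothom{d})\circ\alpha_E$ as the homotopy-theoretic realisation of $\psi(\lambda_{N_v E})$ via the Pontrjagin--Thom collapse and naturality of Thom classes, and then invoking Theorem \ref{cl-index-1}.
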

 
The translation of Theorem \ref{cl-index-1} into Theorem \ref{cl-index-2} is exactly parallel to the translation of the complex family index theorem described in \cite{Eb09}.

For manifold bundles with nonempty boundary, we need a psc metric $h \in \Riem^+ (M)$ to be able to talk about $\ind{E,h}:=\ind{E,dt^2+ h}\in KO^{-d} (X)$. To express the index in this situation in terms of homotopy theory, an additional hypothesis on $h$ is needed.
\begin{thm}\label{cl-index-3}
Let $\pi: E \to X$ be a bundle of $d$-dimensional spin manifolds, with trivialised boundary $X \times M$. Let $h \in \Riem^+ (M)$ and let $V:M \rightsquigarrow \emptyset$ be a nullbordism which carries a psc metric $g \in \Riem^+ (V)_{h}$. Then the maps
$$\ind{E,h}, (\loopinf{} \kothom{d}) \circ \alpha_{E,V}: X \lra \loopinf{+d} \KO$$
are weakly homotopic.
\end{thm}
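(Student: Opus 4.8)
The plan is to reduce this statement to the closed-fibre case, Theorem~\ref{cl-index-2}, by gluing the nullbordism $V$ (together with its psc metric $g$) onto $E$ along the boundary, and then identifying the resulting Pontrjagin--Thom map with $\alpha_{E,V}$. First I would form the bundle $\hat E := E \cup_{X \times M} (X \times V) \to X$ of \emph{closed} $d$-dimensional manifolds, using the given trivialisation $\partial E \cong X \times M$ to glue in the trivial $V$-bundle. The fibrewise spin structure on $T_v E$, which is a product near $\partial E$, glues with the fixed spin structure on $V$ (which restricts to the chosen one on $M$) to a fibrewise spin structure on $T_v\hat E$. Fix any fibrewise metric $g_E$ on $E$ which equals $dt^2 + h$ on the collar of $\partial E$, and equip $\hat E$ with $\hat g := g_E \cup g$; this has positive scalar curvature near the gluing locus and over all of $X \times V$.

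Next I would apply the additivity theorem in the form of Corollary~\ref{additivity-concordance-version}, with $M_0 = M_2 = \emptyset$, $M_1 = M$, applied to the $E$-bundle (with metric $g_E$), the trivial $V$-bundle (with metric $g$), and $Y = \emptyset$. It yields
\[
\ind{E, g_E} + \ind{X \times V, g} = \ind{\hat E, \hat g} \in KO^{-d}(X).
\]
Since $g$ has positive scalar curvature on all of $V$, the Schr\"odinger--Lichnerowicz formula~\eqref{weitzenboeck} and the Bochner argument show that the $(d,0)$-cycle representing $\ind{X \times V, g}$ is fibrewise invertible, hence acyclic, so $\ind{X \times V, g} = 0$. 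As $\hat E$ is a bundle of closed manifolds, $\ind{\hat E, \hat g}$ is independent of the fibrewise metric (Lemma~\ref{index:independent-of-metric}), and by the same lemma $\ind{E, g_E}$ is independent of the interior part $g_E$; so $\ind{E, h} = \ind{\hat E}$. Applying Theorem~\ref{cl-index-2} to the closed-fibre bundle $\hat E$ then shows that $\ind{\hat E}$ is weakly homotopic to $(\loopinf{} \kothom{d}) \circ \alpha_{\hat E}$.

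It remains to identify $\alpha_{\hat E}$ with $\alpha_{E,V}$ up to homotopy. By definition $\alpha_{E,V} = \theta_V \circ \alpha_E$, where $\alpha_E \colon X \to \Omega_{[\emptyset, M]} \Omega^{\infty-1}\MTSpin(d)$ is the relative Pontrjagin--Thom map of $(E, \partial E \cong X \times M)$ and $\theta_V$ is concatenation with the path from $[M]$ to $\emptyset$ determined by $V$. Unwinding the Pontrjagin--Thom construction (see~\cite{GMTW}), $\alpha_E$ sends $x \in X$ to the path obtained by scanning $E_x$ along a height function adapted to the collar, and $\theta_V$ appends to this the scanning path of $V$; their concatenation is precisely the loop representing the Pontrjagin--Thom invariant of the closed manifold $\hat E_x = E_x \cup_M V$, i.e.\ $\alpha_{\hat E}(x)$. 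Hence $\theta_V \circ \alpha_E \simeq \alpha_{\hat E}$, and the theorem follows.

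The step I expect to be the main obstacle is this last identification $\alpha_{\hat E} \simeq \alpha_{E,V}$. Conceptually it is merely the naturality of the Pontrjagin--Thom/scanning construction under closing up a bordism, but making it precise requires choosing compatible point-set models (height functions, tubular neighbourhoods, collars) for $\alpha_E$, $\theta_V$ and $\alpha_{\hat E}$ and verifying that the concatenation of scanning paths agrees, up to a specified homotopy, with the scanning of the glued bundle; everything else is a direct application of the additivity theorem, Bochner vanishing, and the closed-fibre index theorem established above.
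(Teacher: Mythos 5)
Your proposal is correct and follows essentially the same route as the paper's two-sentence proof: glue in $X\times V$ to obtain a closed-fibre bundle $\hat E$, invoke Corollary~\ref{additivity-concordance-version} together with Bochner vanishing to conclude $\ind{E,h}=\ind{\hat E}$, and then apply Theorem~\ref{cl-index-2}. The only point where you go further is in flagging the identification $\theta_V\circ\alpha_E\simeq\alpha_{\hat E}$ as a step requiring care; the paper treats this as part of the definition of $\alpha_{E,V}$ (it writes $\alpha_{E,V}=\alpha_{E\cup_{\partial E}(X\times V)}$ in the proof even though the formal definition given earlier is $\theta_V\circ\alpha_E$), so your observation correctly identifies where a reader must unwind the Pontrjagin--Thom construction to see that the two descriptions agree.
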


\begin{proof}
By the additivity theorem (Corollary \ref{additivity-concordance-version}), $\ind{E,h} = \ind{E \cup_{\partial E} (X \times V)}$. The result follows in a straightforward manner from Theorem \ref{cl-index-2}, by the definition of $\alpha_{E,V}$ as $\alpha_{E \cup_{\partial E} (X \times V)}$.
\end{proof}

In most cases of interest to us, we will be able to take $V=W^{op}$, where $W$ is a fibre of $E$.

\subsubsection{Spin fibre bundles and the index difference}\label{subsec:spin-fibre-bundles-and-inddiff}
We now show how to fit the index difference into this context.
Let $W$ be a $d$-dimensional spin manifold with boundary $M$ and collar $[-\epsilon,0] \times M \subset W$, such that $(W,M)$ is $1$-connected. Let $\pi : E \to X$ be a smooth fibre bundle with fibre $W$ and structure group $\Diff_\partial (W)$, the diffeomorphisms which fix the collar pointwise, and with underlying $\Diff_\partial (W)$-principal bundle $Q \to X$. We assume that $X$ is paracompact. By Lemma \ref{lem:existence-spin-structure}, there is a spin structure on the vertical tangent bundle $T_v E \to E$, which is constant along $\partial E = X \times M \subset E$. 
Let $h_0 \in \Riem^+ (M)$ be fixed and write $\Riem^+ (W)_{h_0} = \Riem^+ (W)_{h_0}^\epsilon$, on which $\Diff_\partial(W)$ acts by pullback of metrics; there is an induced fibre bundle
\begin{equation*}
p:Q \times_{\Diff_\partial (W)} \Riem^+ (W)_{h_0} \lra X.
\end{equation*}
Observe that a point in $Q \times_{\Diff_\partial (W)} \Riem^+ (W)_{g_0}$ is a pair $(x,g)$, where $x \in X$ and $g$ is a psc metric on $\pi^{-1} (x)$ with boundary condition $h_0$. Choose a basepoint $x_0 \in X$ and identify $\pi^{-1}(x_0)$ with $W$. Then $p^{-1}(x_0)$ may be identified with $ \cR^+(W)_{h_0}$ and we also choose a basepoint $g_0\in p^{-1}(x_0)$.

We will now introduce an element $\beta =\beta_{\pi,g_0}\in KO^{-d} (p)$, depending only on the bundle $\pi$ and the metric $g_0$. 
To begin the construction of $\beta$, choose a fibrewise Riemannian metric $k$ on the fibre bundle $\pi : E\to X$ such that
\begin{enumerate}[(i)]
\item on $\pi^{-1}(x_0)=W$, the metric $k$ is equal to $g_0$,
\item near $\partial E$, $k$ has a product structure and the restriction to $\partial E$ is equal to $h_0$.
\end{enumerate}
It is easy to produce such a metric using a partition of unity, and of course $k$ will typically not have positive scalar curvature. Now let $\tilde{E} \to \Cyl (p)$ be the pullback of the bundle $\pi$ along the natural map $\Cyl (p)\to X$.
The bundle $\tilde{E}$ has the following fibrewise metric: over a point $x \in X \subset \Cyl (p)$, we take the metric $k_x$, and over a point $(x,g,t) \in Q \times_{\Diff_\partial (W)} \Riem^+ (W)_{h_0} \times [0,1]$, we take the metric $(1-t)g + t k_x$. This metric satisfies the boundary condition $h_0$, and it has psc if $t=0$ or if $(x,g)=(x_0,g_0)$.
Since $E$ and hence $\tilde{E}$ is spin, there is a Dirac operator for this metric, so a well-defined element $\beta \in KO^{-d} (p)$ (defined using the elongation of the bundle $\tilde{E}$). The following properties of this construction are immediately verified (for the last one, one uses Corollary \ref{additivity-concordance-version}).

\begin{prop}\label{relative-index-class-properties}\mbox{}
\begin{enumerate}[(i)]
\item\label{it:relative-index-class-properties:1} The base class of $\beta$ is the usual family index of $E$, with the metric $k$: that is $\bas (\beta) = \ind{E,k} \in KO^d (X)$ (this class only depends on the boundary condition $h$).
\item\label{it:relative-index-class-properties:2} The transgression of $\beta$ to $p^{-1}(x_0) = \Riem^+ (W)_{h_0}$ is the index difference class $\inddiff_{g_0} \in \Omega KO^{-d} (\Riem^+ (W)_{h_0})$. 
\item The class $\beta$ is natural with respect to pullback of fibre bundles.
\item Let $V: M \leadsto M'$ be a spin cobordism and $m \in \Riem^+ (V)_{g_0,g_1 }$ be a psc metric. Let $\pi': E \cup_{\partial E} (X \times V)\to X$ be the bundle obtained by fibrewise gluing in $V$. We obtain a commutative diagram
\[
 \xymatrix{
 Q \times_{\Diff_\partial (W)} \Riem^+ (W)_{h_0} \ar[rr]^{\mu_m}\ar[dr]_{p} & &  Q \times_{\Diff_\partial (W)} \Riem^+ (W\cup V)_{h_1} \ar[dl]^{p'}\\
 & X ,& 
}
\]
and the image of $\beta_{\pi',g_0\cup m} \in KO^{-d}(p')$ in $KO^{-d}(p)$ agrees with $\beta_{\pi,g_0}$
\end{enumerate}
\end{prop}

Proposition \ref{relative-index-class-properties} gives a more precise statement of the diagram \ref{intro-index-fibre-diagram}, as follows.
Let $p$ be the universal $W$-bundle over $B  \Diff_{\partial}(W)$. The cycle $\beta$ defines a nullhomotopy of the map $ \ind{E,k} \circ p$, which can be viewed as a map $\Riem^+ (W)_h \hq \Diff_{\partial} (W) \to \Path \loopinf{+d} \KO$ to the path space. On the fibre of $\Riem^+ (W)_h \hq \Diff_{\partial} (W)\to B \Diff_{\partial} (W)$, $\beta$ induces the index difference map $\inddiff_{g_0}$. However, in the next chapter, we will use Proposition \ref{relative-index-class-properties} instead of the more informal diagram \ref{intro-index-fibre-diagram}.

\section{Proof of the main results}\label{sec:PfMain}

In this section we will prove Theorem \ref{factorization-theorem:even-case}. Before beginning the proof in 
earnest, we will establish a result (Theorem \ref{abelianness}) which will be a fundamental tool in the proof, but is 
also of independent interest.
For the purpose of exposition we have structured the proof of Theorem \ref{factorization-theorem:even-case} into three parts.
In Section \ref{sec:ConstructingMaps} the main constructive argument of the paper is carried out, which is stated as Theorem \ref{factorization-theorem}. This 
construction assumes the existence of a space $X$ which can be approximated  homologically by $\hocolim_k B \Diff_{\partial} (W_k)$, where $W_k$ is a certain sequence of spin cobordisms, and the output of this construction is a map $\Omega X \to \Riem^+ (W_k)$.
Sections \ref{sec:Starting} and \ref{finishingproof:alldim} provide the data assumed for Theorem \ref{factorization-theorem}. In Section \ref{sec:Starting}, we set up the general framework and finish the proof in the case $2n=6$ (which is done by directly quoting \cite{GRW}). Section \ref{finishingproof:alldim} deals with the general case, which instead uses \cite{GRWHomStab2}.


\subsection{Action of the diffeomorphism group on the space of psc metrics}\label{sec:action}

As in the previous section, for a manifold $W$ with boundary $\partial W$ and collar $[-\epsilon,0] \times \partial W \subset W$, and a boundary condition $h \in \cR^+(\partial W)$, we have a space $\cR^+(W)_h = \cR^+(W)_h^\epsilon$ and there is a right action of the group $\Diff_\partial(W)$ of diffeomorphisms of $W$ which are the identity on the collar on $\cR^+(W)_h$ by pullback of metrics. This in particular induces a homomorphism
\begin{equation}\label{action-homomorphism}
\Gamma (W) := \pi_0(\Diff_\partial(W)) \lra \pi_0 (\Aut(\Riem^+(W)_h)),
\end{equation}
to the group of homotopy classes of self homotopy equivalences of the space $\Riem^+(W)_h$.

\begin{thm}\label{abelianness}
Let $W$ be a compact simply-connected $d$-dimensional spin manifold with boundary $\partial W=S^{d-1}$. Assume that $d \geq 5$ and that $W$ is spin cobordant to $D^d$ relative to its boundary.
Then for $h=h_\round^{d-1}$ the image of the homomorphism {\rm(\ref{action-homomorphism})} is an abelian group.
\end{thm}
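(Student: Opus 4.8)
The goal is to show that, for $W$ as in the statement, any two mapping classes $[f_0], [f_1] \in \Gamma(W)$ act on $\gR^+(W)_h$ by self-equivalences that commute up to homotopy; since the image is a subgroup of $\pi_0(\Aut(\gR^+(W)_h))$, this commutativity exactly says the image is abelian. So I must produce, for diffeomorphisms $f_0, f_1 \in \Diff_\partial(W)$ fixing a collar of $S^{d-1}$, a homotopy between $f_0^* f_1^*$ and $f_1^* f_0^*$ as self-maps of $\gR^+(W)_h$.

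The central device will be an Eckmann--Hilton argument fuelled by the cobordism invariance theorems (Theorem \ref{chernysh-walsh-theorem}, Theorem \ref{surgery-invariance}, Theorem \ref{lemma:GluingNullCob}, Corollary \ref{cor:collarstretching}). The key geometric input is that the pullback action of $\Diff_\partial(W)$ on $\cR^+(W)_h$ can, up to homotopy, be "spread out" over disjoint regions. Concretely: since $W$ is simply-connected, spin cobordant to $D^d$ rel boundary, and $d \ge 5$, I can (after removing an interior disc and invoking Theorem \ref{lemma:GluingNullCob}) find a decomposition in which $\gR^+(W)_h$ is weakly equivalent, compatibly with the diffeomorphism action, to a space of psc metrics on $D^d \cup W_0$ where $W_0 : S^{d-1} \leadsto S^{d-1}$ is a stabilising cobordism and the gluing map $\mu_{g_0}$ is an equivalence. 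A diffeomorphism $f$ isotopic (rel $\partial$) to a given mapping class can be pushed, using the isotopy extension theorem together with the surgery-equivalence compatibilities of Theorem \ref{surgery-invariance}, so that it is supported in $W_0$ (a disc worth of room has been freed up on the $D^d$ side, and the round metric on the collar is preserved). Two such diffeomorphisms $f_0, f_1$ can then be arranged to have \emph{disjoint} supports inside two successive copies of the stabilising cobordism $W_0 \cup W_0$; pullbacks by diffeomorphisms with disjoint support literally commute on the nose, and the identification of $\cR^+(W_0 \cup W_0)_h$ with $\cR^+(W)_h$ is compatible (up to homotopy) with both actions. This yields the desired homotopy $f_0^* f_1^* \simeq f_1^* f_0^*$.

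More carefully, the steps I would carry out are: (1) replace all spaces by CW models as in the paragraph before the theorem, so "homotopy equivalence'' is unambiguous; (2) set $W_0 := W \setminus \mathrm{int}(D^d)$, observe $W_0$ is simply-connected and spin cobordant to $[0,1]\times S^{d-1}$ rel $\partial$, and apply Theorem \ref{lemma:GluingNullCob} to get $g_0 \in \cR^+(W_0)_{h,h}$ whose gluing maps are weak equivalences, together with the isotopy $g^d_{\tor} \cup g_0 \simeq$ (the image of $g_\round$), so that $\cR^+(D^d)_{h} \xrightarrow{\mu_{g_0}} \cR^+(W)_h$ is a weak equivalence; (3) show this equivalence is equivariant up to homotopy for the relevant pieces of the mapping class group — here I use that a mapping class of $W$ supported away from the chosen disc can be represented on $D^d \cup (W_0 \cup W_0)$, and that Theorem \ref{surgery-invariance} furnishes canonical homotopy classes of surgery equivalences compatible with diffeomorphisms; (4) given $[f_0],[f_1]$, isotope representatives to have disjoint supports in the two stabilising copies, so their pullbacks commute strictly; (5) transport this commutation back through the equivalences of (2)–(3) to conclude $f_0^* f_1^* \simeq f_1^* f_0^*$ in $\pi_0(\Aut(\gR^+(W)_h))$.

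\textbf{Main obstacle.} The delicate point is step (3)/(4): making the identifications $\cR^+(W)_h \simeq \cR^+(D^d \cup W_0 \cup W_0)_h$ genuinely \emph{compatible} with the diffeomorphism actions, not merely abstract weak equivalences. One has to be careful that representing a mapping class of $W$ by a diffeomorphism supported in the interior "stabilising'' region, while keeping the round metric fixed on the boundary collar, does not change the homotopy class of the induced self-equivalence of $\cR^+(W)_h$ — this requires that the stabilisation maps and surgery equivalences of Theorems \ref{surgery-invariance} and \ref{lemma:GluingNullCob} interact coherently with $\Diff_\partial$, essentially an equivariant refinement of the naturality already implicit in those theorems. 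I expect this to be where most of the real work of the proof lies; once disjointness of supports is achieved, the Eckmann--Hilton conclusion is immediate.
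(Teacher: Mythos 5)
Your broad instinct is right --- the proof does use an Eckmann--Hilton-style argument fuelled by the cobordism invariance results --- but the specific mechanism you propose in steps (3)--(4), disjointifying the supports of the two diffeomorphisms inside successive stabilising copies, is exactly the step that cannot be made to work, and this is not a matter of care in bookkeeping but a genuine obstruction. If two arbitrary mapping classes $[f_0],[f_1]\in\Gamma(W)$ could be represented by diffeomorphisms with disjoint supports (even after gluing on extra cobordism and transporting through surgery equivalences), then $[f_0]$ and $[f_1]$ would commute on the nose in the mapping class group of the stabilised manifold, and hence their images under any homomorphism would commute; in particular one would have shown that $\Gamma(W)$ maps to $\pi_0(\Aut(\gR^+(W)_h))$ through an abelian \emph{quotient of $\Gamma(W)$ obtained by commuting all pairs} --- but the actual content of the theorem is weaker and more subtle than that. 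In general a diffeomorphism of $W$ fixing $\partial W$ cannot be isotoped rel boundary to be supported in a prescribed compact sub-cobordism (think already of $W=D^d$: there is nothing to "move it into"), and the identification $\cR^+(W)_h\simeq\cR^+(D^d\cup W_0\cup W_0)_h$ gives no mechanism for sliding the \emph{diffeomorphism} $f_1$ past the first copy of $W_0$.

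What the paper actually does is never move the diffeomorphisms at all. It forms the nonunital topological category $\cC$ with $\cC(m,n)$ spaces of psc metrics on cylinders or on $(\text{cylinder})\cup V\cup(\text{cylinder})$ where $V=W\cup T$, and lets $G=\Diff_\partial(W)$ act trivially on all morphism spaces except those straddling $[0,1]$. The abstract Lemma \ref{lem:abstract-Eckmann-Hilton-trick} is then run: the two "operations" that interact \`a la Eckmann--Hilton are (a) the $G$-action on $\cC(0,1)$ and (b) composition in $\cC$, i.e.\ gluing of psc cylinders, with the key invertibility inputs supplied by Corollary \ref{cor:collarstretching} and Theorem \ref{lemma:GluingNullCob}. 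The elements $y_f\in\cC(-1,0)$ and $z_f\in\cC(1,2)$ that "simulate" the action of $f$ in a shifted slot are \emph{psc metrics}, chosen by bijections on $\pi_0$, not diffeomorphisms; the commutation $f g\simeq g f$ on $\cC(0,1)$ falls out of a purely formal chain of homotopies that exploits the fact that $G$ acts trivially off $\cC(0,1)$. So the role you were hoping "disjoint supports of diffeomorphisms" would play is instead played by "disjoint slots in a composition category of psc-metric spaces," which is available without any isotopy of diffeomorphisms. You correctly flagged steps (3)--(4) as the crux, but the fix is to abandon the attempt to disjointify supports and instead set up the formal category and apply Lemma \ref{lem:abstract-Eckmann-Hilton-trick}.
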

The conclusion can also be expressed by saying that the action in the homotopy category of $\Gamma (W)$ on $\Riem^+ (W)_{h_\round^{d-1}}$ is through an abelian group.

The proof of this theorem will consist of an application of the cobordism invariance of spaces of psc metrics (Theorem \ref{chernysh-walsh-theorem}) and a formal argument of Eckmann--Hilton flavour. We first present the formal argument, and then explain how to fit our geometric situation into this framework.

\begin{lem}\label{lem:abstract-Eckmann-Hilton-trick}
Let $\cC$ be a nonunital topological category with objects the integers, and let $G$ be a topological group which acts on $\cC$, i.e.\ $G$ acts on the morphism spaces $\cC(m,n)$ for all $m,n \in \bZ$, and the composition law in $\cC$ is $G$-equivariant. Contrary to the usual notation of category theory, let us write composition as
\begin{align*}
\cC(m,n) \times \cC(n,k) & \lra \cC(m,k)\\
(x, y) & \longmapsto x \cdot y.
\end{align*}
Suppose that
\begin{enumerate}[(i)]
\item $\cC(m,n)=\emptyset$ if $n \leq m$.
\item\label{it:EH:2} For each $m \neq 0$ there exists a $u_m \in \cC(m,m+1)$ such that the composition maps
\begin{align*}
u_m \cdot -:  \cC(m+1,n) &\lra \cC(m,n) \quad\text{for $n>m+1$}\\
- \cdot u_m :  \cC (n,m) &\lra \cC(n,m+1) \quad\text{for $n<m$}
\end{align*}
are homotopy equivalences.
\item\label{it:EH:3} There exists an $x_0 \in \cC(0,1)$ such that the composition maps
\begin{align*}
- \cdot x_0:  \cC(m,0) &\lra \cC (m,1) \quad\text{for $m<0$}\\
x_0 \cdot - :  \cC (1,n) &\lra \cC (0,n) \quad\text{for $n>1$}
\end{align*}
are homotopy equivalences.
\item The $G$-action on $\cC (m,n)$ is trivial unless $m  \leq 0$ and $1 \leq n$.
\end{enumerate}
Then for any $f, g \in G$ the maps $f,g : \cC(0,1)\to \cC(0,1)$ commute up to homotopy. 
\end{lem}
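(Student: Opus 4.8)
The statement is an Eckmann--Hilton argument relative to the $G$-action, so the first step is to set up the right "multiplication". Using the composition maps from hypotheses (ii) and (iii), I would first reduce everything to a single mapping space. Concretely, fix $x_0 \in \cC(0,1)$ and the elements $u_m$. For $p < 0 < q$ the iterated composition $u_{p}\cdot u_{p+1}\cdots u_{-1}\cdot (-) \cdot x_0 \cdot u_1 \cdots u_{q-1}$ (with appropriate bracketing, which does not matter up to homotopy by associativity) gives a homotopy equivalence $\cC(0,1) \simeq \cC(p,q)$, compatibly as $p \to -\infty$, $q \to +\infty$. Since the $G$-action is trivial on $\cC(m,n)$ unless $m \le 0 \le 1 \le n$, and these equivalences are built out of $G$-fixed morphisms $u_m, x_0$ lying in such slots, they are $G$-equivariant up to homotopy; so it suffices to prove that $f, g$ commute up to homotopy as self-maps of, say, $\cC(-1,2)$, or more precisely of the colimit $\cC(-\infty,\infty) := \operatorname{hocolim} \cC(p,q)$.

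\textbf{The Eckmann--Hilton step.} On $\cC(-\infty,\infty)$ there are now two compatible "products": horizontal composition, and a shift. Given $f,g \in G$, represent $f$ acting on a morphism supported (via the equivalences above) in the slot $\cC(-1,0)$-to-$\cC(0,1)$ range, i.e.\ think of $f$ as acting on the "left half" $\cC(p,0)$ and $g$ as acting on the "right half" $\cC(1,q)$. Because $G$ acts trivially on $\cC(m,n)$ whenever the source is $\ge 1$ or the target is $\le 0$, the action of $f$ on $\cC(p,0)$ and the action of $g$ on $\cC(1,q)$ are "independent": composing a morphism $a \in \cC(p,0)$ with $x_0 \in \cC(0,1)$ with $b \in \cC(1,q)$, the element $f$ moves $a$, fixes $x_0$ and $b$; the element $g$ moves $b$, fixes $x_0$ and $a$. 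Hence $f\cdot g$ and $g \cdot f$ both send $a \cdot x_0 \cdot b$ to $(fa)\cdot x_0 \cdot (gb)$, using $G$-equivariance of composition. Transporting this identity back through the equivalences $\cC(0,1) \simeq \cC(p,0) \times_{x_0} \cC(1,q)$ (really: $\cC(0,1)\simeq \cC(p,q)$ and then re-factoring) shows $fg \simeq gf$ on $\cC(0,1)$. I would write this out as: the map $\cC(p,0)\times \cC(1,q) \to \cC(p,q)$, $(a,b)\mapsto a\cdot x_0 \cdot b$, is a homotopy equivalence (by (ii), (iii)), it is $G\times G$-equivariant where the first factor acts via $f\mapsto$ (action on $\cC(p,0)$) and the second via $g\mapsto$(action on $\cC(1,q)$), and the diagonal $G \to G\times G$ recovers the original $G$-action on $\cC(p,q)\simeq\cC(0,1)$ (using triviality of the action on the "cross terms" $\cC(p,q)$ restricted appropriately — here one uses that the full action on $\cC(p,q)$ is determined by what it does after the equivalences, together with hypothesis (iv)). Since the two maps $G\times G \to \operatorname{Aut}$ through the diagonal obviously agree after swapping the factors, commutativity follows.

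\textbf{The weak-equivalence variant.} For the last sentence of the lemma, when (ii) and (iii) only give weak homotopy equivalences, the homotopy-category reasoning above is not immediately valid on the spaces themselves, but it is valid after applying $|\Sing_\bullet(-)|$: the singular simplicial set functor sends weak equivalences to homotopy equivalences of CW complexes, and $|\Sing_\bullet(-)|$ preserves finite products up to homotopy equivalence (the natural map $|\Sing_\bullet(A\times B)| \to |\Sing_\bullet A|\times|\Sing_\bullet B|$ is a homotopy equivalence), and it is compatible with the (continuous) composition maps and the $G$-action. So I would simply run the identical argument in the category of CW complexes obtained by applying $|\Sing_\bullet(-)|$ throughout, obtaining that $|\Sing_\bullet f|$ and $|\Sing_\bullet g|$ commute up to homotopy on $|\Sing_\bullet \cC(0,1)|$.

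\textbf{Expected main obstacle.} The genuinely fiddly point is bookkeeping the identification $\cC(0,1)\simeq \cC(p,0)\times\cC(1,q)$ and checking that under it the diagonal $G$-action corresponds to the given $G$-action — in particular making honest use of hypothesis (iv) (triviality on all slots except $m\le 0 \le 1 \le n$) to guarantee that $f$ genuinely acts only on the left factor and $g$ only on the right, with no interference on the middle piece $x_0 \in \cC(0,1)$ or on the "long" morphisms in $\cC(p,q)$ once they have been factored. Associativity and the coherence of the various composition equivalences as $p\to-\infty, q\to+\infty$ also need to be invoked carefully, but these are routine; the conceptual crux is the decoupling of the two $G$-actions, which is exactly what the Eckmann--Hilton hypothesis (iv) is engineered to provide.
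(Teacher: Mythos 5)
There is a genuine gap in the central step. You claim that the map
\[
\cC(p,0)\times \cC(1,q) \lra \cC(p,q), \qquad (a,b)\longmapsto a\cdot x_0 \cdot b,
\]
is a homotopy equivalence, and you derive it from hypotheses (ii) and (iii). This is false in general. Those hypotheses give you that composition with the \emph{specific} morphisms $u_m$ and $x_0$ is an equivalence, but they say nothing about composition with an arbitrary $a\in\cC(p,0)$ or $b\in\cC(1,q)$. Concretely, the hypotheses force every mapping space $\cC(m,n)$ with $m<n$ to have the same homotopy type, say $Z$; your claim would then say $Z\times Z\simeq Z$, which is absurd unless $Z$ is weakly contractible (take $\cC(m,n)=S^1$ for all $m<n$ with composition given by the group law of $S^1$: all the structure maps in (ii), (iii) are equivalences, but $S^1\times S^1\not\simeq S^1$). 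Your "re-factoring" parenthetical suggests you sensed this, but the re-factoring is precisely where the nontrivial content of the lemma lives and cannot be waved away.

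The paper circumvents this by \emph{not} decomposing $\cC(p,q)$ as a product. Instead, for each $f\in G$ it produces auxiliary morphisms $y_f\in\cC(-1,0)$ and $z_f\in\cC(1,2)$ --- living in slots where $G$ acts trivially --- such that $[y_f\cdot x_0]=[u_{-1}\cdot fx_0]\in\pi_0\cC(-1,1)$ and $[x_0\cdot z_f]=[fx_0\cdot u_1]\in\pi_0\cC(0,2)$. Existence of $y_f,z_f$ uses only that the relevant composition maps and the action of $f$ are bijections on $\pi_0$. The commuting of $fg$ and $gf$ is then proved by a chain of homotopies applied to the map $-\cdot u_{-1}\cdot x_0\cdot u_1:\cC(-2,-1)\to\cC(-2,2)$: one repeatedly uses $G$-equivariance of composition plus hypothesis (iv) to isolate the action on the $\cC(0,1)$-slot, and then swaps $gx_0\cdot u_1$ for $x_0\cdot z_g$ (resp.\ $u_{-1}\cdot fx_0$ for $y_f\cdot x_0$) so that the "action of $g$" (resp.\ "$f$") is absorbed into the right (resp.\ left) tail slot. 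Because those slots carry trivial $G$-action, the subsequent application of the other group element slides past. This realises the decoupling you had in mind, but at the level of path components and gluing along the inert slots, rather than via a product that does not exist. Your final paragraph on the weak-equivalence variant is fine and is essentially what the paper does (replace $\cC$ by $|\Sing_\bullet\cC|$ so the weak equivalences become genuine ones), but it rests on the main argument, which as written does not go through.
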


\begin{remark}
To understand the motivation for the above set-up and its proof, the reader may consider the following discrete analogue. Let $M$ be a unital monoid, and $X$ be a set with commuting left and right $M$-actions, and in addition an action of a group $G$ on $X$ by left and right $M$-set maps. Finally, suppose that there is an $x_0 \in X$ such that the maps $x_0 \cdot -, - \cdot x_0 : M \to X$ are both bijections. Then $G$ acts on $X$ through its abelianisation.
\end{remark}

\begin{proof}
For a point $y$ in a space $Y$, we denote by $[y] \in \pi_0 (Y)$ the path component it belongs to. For $f \in G$ there are elements $y_f \in \cC (-1,0)$ and $z_f \in \cC(1,2)$ such that 
\begin{align*}
[y_f \cdot x_0] &= [u_{-1} \cdot fx_0] \in \pi_0 (\cC (-1,1))\\
[x_0 \cdot z_f] &= [fx_0 \cdot u_1] \in \pi_0 (\cC (0,2)). 
\end{align*}
To see this, first note that $u_{-1} \cdot fx_0 = f (u_{-1} \cdot x_0)$ as the composition is $G$-equivariant and $G$ acts trivially on $\mathcal{C}(-1,0)$, and then note that the maps 
$$\cC(-1,0) \stackrel{- \cdot x_0}{\lra} \cC (-1,1) \stackrel{f}{\longleftarrow} \cC(-1,1) \stackrel{u_{-1} \cdot -}{\longleftarrow} \cC (0,1)$$
are all homotopy equivalences and hence induce bijections on $\pi_0$. Choose $y_f$ so that $[y_f]$ corresponds to $[x_0]$ under these bijections. The argument for $z_f$ is analogous, using the homotopy equivalences
$$\cC(1,2) \stackrel{ x_0 \cdot -}{\lra} \cC (0,2) \stackrel{f}{\longleftarrow} \cC(0,2) \stackrel{ - \cdot u_1}{\longleftarrow} \cC (0,1)$$
instead. 
We now claim that the maps
$$ fg (- \cdot u_{-1} \cdot x_0 \cdot u_1), \,  gf (- \cdot u_{-1} \cdot x_0 \cdot u_1) : \cC(-2,-1) \lra \cC (-2,2)$$
are homotopic, which follows by the concatenation of homotopies
\begin{align*}
fg (- \cdot u_{-1} \cdot x_0 \cdot u_1) &= f (- \cdot u_{-1} \cdot g x_0 \cdot u_1) \\
&\simeq  f (- \cdot u_{-1} \cdot  x_0 \cdot z_g) \\
&= (- \cdot u_{-1} \cdot f x_0 \cdot z_g) \\
& \simeq (- \cdot y_f \cdot  x_0 \cdot z_g) \\
& \simeq (- \cdot y_f \cdot  gx_0 \cdot u_1) \\
& =  g(- \cdot y_f \cdot  x_0 \cdot u_1) \\
& \simeq g(- \cdot u_{-1} \cdot f x_0 \cdot u_1) \\
&= gf(- \cdot u_{-1} \cdot  x_0 \cdot u_1).
\end{align*}
As the map $-\cdot u_{-1} \cdot x_0 \cdot u_1 : \cC (-2,-1) \to \cC(-2,2)$, is a homotopy equivalence, it follows that the maps $fg, gf: \cC(-2,2) \to \cC(-2,2)$ are homotopic. 
Finally, the diagram
\begin{equation*}
\xymatrix{
\cC(0,1) \ar[d]^{h} \ar[rr]^{u_{-2} \cdot u_{-1} \cdot - \cdot u_1} & &\cC(-2,2) \ar[d]^{h}\\
\cC(0,1) \ar[rr]^{u_{-2} \cdot u_{-1} \cdot - \cdot u_1} & &\cC(-2,2)
}
\end{equation*}
commutes for each $h \in G$ and the horizontal maps are homotopy equivalences. It follows that $fg \simeq gf: \cC(0,1) \to \cC(0,1)$, as required.
\end{proof}

\begin{proof}[Proof of Theorem \ref{abelianness}]
Consider a closed disc $D^d \subset S^{d-1} \times (0,1)$. By Theorem \ref{chernysh-walsh-theorem} we may find $h \in \Riem^+ (S^{d-1} \times [0,1])_{h_{\round},h_{\round}}$ which is equal to $g_{\tor}^d$ in the disc $D^d$ and is in the same path component as the cylinder metric $h_{\round}^{d-1} + dt^2$. By cutting out the disc, we obtain a psc metric on $T:= S^{d-1} \times [0,1] \setminus \inter(D^d)$, also denoted $h$. Let us denote by $P = S^{d-1}$ the boundary component of $T$ created by cutting out this disc.

Gluing in $(T, h)$ along $S^{d-1} \times \{0\}$ gives a map 
$$\mu_h : \Riem^+ (W)_{h_{\round}} \lra \Riem^+ (W \cup_{S^{d-1} \times \{0\}} T)_{h_{\round}, h_{\round}}.$$
This is a homotopy equivalence, because its composition with the map 
$$\Riem^+ (W\cup_{S^{d-1} \times \{0\}}T)_{h_\round, h_{\round}} \lra \Riem^+ (W\cup_{S^{d-1} \times \{0\}}T \cup_{P} D^d)_{h_\round},$$
which glues in the torpedo metric on $D^d$, is homotopic to the gluing map $\mu_{h_\round+dt^2}$ after identifying the target with $\Riem^+ (W\cup_{S^{d-1} \times \{0\}}S^{d-1} \times [0,1])_{h_\round}$. These last two maps are homotopy equivalences, by Theorem \ref{chernysh-walsh-theorem} and Corollary \ref{cor:collarstretching} (\ref{it:CollarStretching}) respectively. Let us write $V:= W \cup_{S^{d-1} \times \{0\}} T$, considered as a cobordism $S^{d-1}=P \leadsto S^{d-1}\times \{1\} = S^{d-1} $. 

We now let $G:= \Diff_{\partial} (W)$ and $\cC(0,1)= \Riem^+ (V)_{h_{\round},h_{\round}}$. The group $G$ acts on $V$, by extending diffeomorphisms as the identity on $T$, and the gluing map $\mu_h : \Riem^+ (W)_{h_{\round}} \to \Riem^+ (V)_{h_{\round},h_{\round}}$ is $G$-equivariant and a homotopy equivalence. To prove the theorem it is therefore enough to show that the image of the action map $\pi_0 (G) \to \pi_0 (\Aut \cC(0,1))$ is an abelian group. This will follow by an application of Lemma \ref{lem:abstract-Eckmann-Hilton-trick}. Define a nonunital category $\cC$ having objects the integers, and
\[
\cC(m,n)=
\begin{cases}
\Riem^+ ((S^{d-1} \times [m,0]) \cup V \cup (S^{d-1} \times [1,n]))_{h_{\round},h_{\round}} & m \leq 0, n \geq 1,\\
\Riem^+ (S^{d-1} \times [m,n])_{h_{\round},h_{\round}} & m <n\leq 0 \text{ or }  1 \leq m < n,\\
\emptyset & \text{otherwise.}
\end{cases}
\]
Composition is defined by the gluing maps, and the $G$-action is defined by letting $G$ act trivially on cylinders. For $m \neq 0$, we let $u_m \in \cC(m,m+1) = \Riem^+ (S^{d-1} \times [m,m+1])_{h_{\round},h_{\round}}$ be the cylinder metric $h_{\round} + dt^2$, so that assumption (\ref{it:EH:2}) of Lemma \ref{lem:abstract-Eckmann-Hilton-trick} holds by Corollary \ref{cor:collarstretching}.
Finally, note that $V$ is cobordant relative to its boundary to $S^{d-1} \times [0,1]$ and so Theorem \ref{lemma:GluingNullCob} shows that there is a psc metric $x_0 \in \cC(0,1)$ satisfying assumption (\ref{it:EH:3}) of Lemma \ref{lem:abstract-Eckmann-Hilton-trick}.
\end{proof}

\subsection{Constructing maps into spaces of psc metrics}\label{sec:ConstructingMaps}

\subsubsection*{Statement of the main construction theorem}

Let $2n \geq 6$ and suppose that
$$W : \emptyset \leadsto S^{2n-1}$$
is a simply-connected spin cobordism, which is spin cobordant to $D^{2n}$ relative to its boundary. Let
$$K := ([0,1] \times S^{2n-1}) \# (S^n \times S^n) : S^{2n-1} \leadsto S^{2n-1}.$$
For $i = 0,1,2,\ldots$ let $K\vert_i := S^{2n-1}$ and $K\vert_{[i,i+1]} : K\vert_i \leadsto K\vert_{i+1}$ be a copy of $K$. Also, consider $W$ as a cobordism to $K\vert_0$. Then we write
$$W_k:= W \cup K\vert_{[0,k]} := W \cup \bigcup_{i=0}^{k-1} K\vert_{[i,i+1]}: \emptyset\leadsto  K\vert_k$$
for the composition of $W$ and $k$ copies of $K$, so $W_0 = W$. Define the group $D_k:= \Diff_{\partial} (W_k)$, and write $B_k:= B D_k$ for the classifying space of this group and $\pi_k : E_k :=E D_k \times_{D_k} W_k \to B_k$ for the universal bundle. There is a homomorphism $D_k \to D_{k+1}$ given by extending diffeomorphisms over $K\vert_{[k, k+1]}$ by the identity, and this induces a map $\lambda_k: B_k \to B_{k+1}$ on classifying spaces. Let $B_\infty := \hocolim_k B_k$ denote the mapping telescope.

By Lemma \ref{lem:existence-spin-structure}, there is a unique fibrewise spin structure on each bundle $E_k$. Thus there is the family of Dirac operators on the fibre bundles $\pi_k$. We now list further hypotheses on the manifold $W$, which will allow us to carry out an obstruction-theoretic argument. We will later make particular choices of $W$ and construct the data assumed in these assumptions.

\begin{assumptions}\label{key-properties}
We are given
\begin{enumerate}[(i)]
\item\label{it:properties:1}  a space $X$ with  an acyclic map $\Psi:B_{\infty} \to X$,

\item\label{it:properties:2} a class $\hat{a} \in KO^{-2n} (X)$ such that $\Psi^*(\hat{a})$ restricts to $\ind{E_k,h_\round^{d-1}} \in KO^{-2n} (B_k)$, for all $k$, up to phantom maps.
\end{enumerate}
\end{assumptions}
\begin{remark}
Recall that a map $f:X \to Y$ of spaces is called \emph{acyclic} if for each $y \in Y$ the homotopy fibre $\hofib_y (f)$ has the singular homology of a point. This is equivalent to $f$ inducing an isomorphism on homology for every system of local coefficients on $Y$; if $Y$ is not simply-connected then it is stronger than merely being a homology equivalence. If $Y$ (and hence $X$) is connected and $F= \hofib_y (f)$, we get from the long exact homotopy sequence 
\[
\pi_2 (X) \to \pi_2 (Y) \to \pi_1 (F) \to \pi_1 (X) \to \pi_1 (Y) \to 1
\]
that $\ker (\pi_1 (f))$ is a quotient of the perfect group $\pi_1 (F)$ and hence itself perfect. It follows that $f: X\to Y$ may be identified with the Quillen plus construction applied to the perfect group $\ker (\pi_1 (f))$, by \cite[Theorem 5.2.2]{Ros} \cite[Theorem 3.5]{HH}.
\end{remark}
\begin{remark}
Recall that a map $f : X \to Y$ to a pointed space is called \emph{phantom} if it is weakly homotopic to the constant map to the basepoint. Maps $f_0, f_1 : X \to \Omega Z$ to a loop space are said to \emph{agree up to phantom maps} if their difference $f_0 \cdot f_1^{-1}$ is phantom: this is equivalent to $f_0$ and $f_1$ being weakly homotopic in the sense of Definition \ref{defn:intro:weaklyhomotopic}.
\end{remark}

In order to make certain homotopy-theoretic arguments we (functorially) replace certain spaces by CW complexes, by writing $\gR_k:=|\Sing_{\bullet} \Riem^+ (W_k)_{h_\round^{2n-1}}|$ and $\gX :=|\Sing_{\bullet}  X|$. The rest of this section is devoted to the proof of the following result.

\begin{thm}\label{factorization-theorem}
If the spin cobordism $W : \emptyset \leadsto S^{2n-1}$ is such that $W$ is simply-connected and is spin cobordant to $D^{2n}$ relative to its boundary,  and Assumptions \ref{key-properties} hold, then there is a map $\rho:\Omega \gX \to \gR_0$ such that the composition with $\inddiff_{m_0}: \gR_0 \to \Omega^{\infty+2n+1} \KO$ agrees with $\Omega \hat{a}$, up to phantom maps.
\end{thm}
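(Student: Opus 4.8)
The plan is to build the map $\rho$ by an obstruction-theoretic argument, transporting the fibration
$$\gR_\infty := \hocolim_k \gR_k \lra \gR_\infty \hq D_\infty \lra B_\infty$$
across the acyclic map $\Psi : B_\infty \to X$. First I would assemble the relevant data on the $W_k$-side. Using Theorem~\ref{lemma:GluingNullCob}, choose a stabilising psc metric $m$ on $K$ so that the gluing maps $\gR_k \to \gR_{k+1}$ are weak homotopy equivalences; then the inclusion $\gR_0 \to \gR_\infty$ is a weak equivalence. These gluing maps are $D_k$-equivariant (extending diffeomorphisms by the identity on $K$), so after passing to Borel constructions we obtain a tower whose homotopy colimit is a fibration sequence $\gR_\infty \to \gR_\infty \hq D_\infty \to B_\infty$. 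The relative index class of Proposition~\ref{relative-index-class-properties} (applied to the universal bundle over each $B_k$, with a fibrewise metric equal to $g_0$ and $h_\round^{2n-1}$ on the boundary) provides compatible classes $\beta_k \in KO^{2n,0}(p_k)$ whose base class is $\ind{E_k, h_\round^{2n-1}}$ and whose transgression to the fibre is $\inddiff_{m_0}$; passing to the colimit gives $\beta_\infty \in KO^{2n,0}(p_\infty)$ over $\Cyl(p_\infty)$.

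The crux is to descend the fibration $p_\infty$ along $\Psi$. Here Theorem~\ref{abelianness} enters: it guarantees that the action of $\Gamma(W_k) = \pi_0(\mathbf{Diff}_\partial(W_k))$ on $\gR_k$ in the homotopy category is through an abelian group, and this abelianness persists in the colimit. Consequently the fibration $p_\infty$, which a priori is classified by a map $B_\infty \to B\hAut(\gR_\infty)$, factors up to homotopy through the classifying space of an abelian (hence homotopy-commutative) monoid of self-equivalences; one can then appeal to the fact that $\Psi$ is \emph{acyclic}, not merely a homology equivalence, so that $\Psi$ is (up to homotopy) the Quillen plus construction killing a perfect subgroup of $\pi_1(B_\infty)$. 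A perfect group maps trivially to any abelian group, so the classifying map for $p_\infty$ kills $\ker(\pi_1\Psi)$ and therefore extends over $X$; that is, there is a fibration $\bar p : \bar E \to X$ together with a map of fibrations from $p_\infty$ to $\bar p$ inducing a weak equivalence on fibres $\gR_\infty \simeq \hofib(\bar p)$. I would phrase this extension step as a concrete obstruction-theory computation: the obstructions to extending a map out of $B_\infty$ over the plus construction $X$ live in cohomology of $X$ with coefficients in homotopy groups of the relevant function space, and vanish because the relevant $\pi_1$-action factors through an abelian group and the homotopy fibre of $\Psi$ is acyclic. Using $\Psi^* \hat a$ and the uniqueness clause, the index data $\beta_\infty$ also descends (up to phantom maps) to a relative class $\bar\beta \in KO^{2n,0}(\bar p)$ with $\bas(\bar\beta) = \hat a$.

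Finally I would define $\rho$ as a fibre transport map. Apply the relative index formalism of Section~\ref{sec:abstract-nonsense} to $\bar p : \bar E \to X$ with the class $\bar\beta$: the fibre transport $\tau : \Omega \gX \to \hofib(\bar p) \simeq \gR_\infty$, precomposed into $\Cyl(\bar p)$, together with Corollary~\ref{relativeindex:formal-lemma-homotopy-version}, yields a homotopy commutative triangle
$$\xymatrix{
\Omega \gX \ar[d]_{\tau} \ar[dr]^{\Omega \bas(\bar\beta) = \Omega \hat a} & \\
\hofib(\bar p) \ar[r]^-{\trg(\bar\beta)} & \loopinf{+2n+1}\KO.
}$$
Since $\trg(\bar\beta)$ restricted to the fibre agrees with $\inddiff_{m_0}$ (by Proposition~\ref{relative-index-class-properties}(ii), passed to the colimit), and $\gR_\infty \simeq \gR_0$, composing $\tau$ with the homotopy inverse of $\gR_0 \to \gR_\infty$ gives the desired $\rho : \Omega\gX \to \gR_0$ with $\inddiff_{m_0} \circ \rho \simeq \Omega\hat a$ up to phantom maps. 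The main obstacle is the descent step: making precise that homotopy-commutativity of the $\pi_0$-action (Theorem~\ref{abelianness}) plus acyclicity of $\Psi$ suffices to extend the fibration and the index class over $X$, keeping careful track of the weak-versus-strong homotopy issues, which is why the spaces are replaced throughout by realisations of singular simplicial sets.
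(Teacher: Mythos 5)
Your proposal matches the paper's proof step for step: stabilising metrics from Theorem~\ref{lemma:GluingNullCob}, $D_k$-equivariant gluing to produce a tower of Borel constructions, the compatible relative index classes $\beta_k$ assembled into $\beta_\infty$ via a $\lim^1$ argument, the abelianness of the monodromy (Theorem~\ref{abelianness}) combined with acyclicity of $\Psi$ to descend the fibration, and finally fibre transport together with the relative index formalism of Section~\ref{sec:abstract-nonsense}. The central insight — that $\ker(\pi_1\Psi)$ is perfect because $\Psi$ is acyclic and therefore dies in the abelian image of the monodromy, so the classifying map $\gB_\infty \to B\Aut(\gR_\infty)$ extends over $\gX$ — is exactly the argument the paper uses via the universal property of acyclic maps.

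One point is stated more loosely than it should be. You write that ``using $\Psi^*\hat a$ and the uniqueness clause, the index data $\beta_\infty$ also descends \ldots\ to a relative class $\bar\beta$ with $\bas(\bar\beta) = \hat a$.'' This runs two separate steps together. The uniqueness clause (injectivity of $(\Psi')^*$ on $KO^{-2n}$, since $\Psi'$ is acyclic) only yields a unique class $\bm{\beta}_\infty^+$ restricting to $\bm{\beta}_\infty$; it says nothing a priori about how $\bas(\bm{\beta}_\infty^+)$ relates to $\hat a$. The comparison of $\bas(\bm{\beta}_\infty^+)$ with $\hat a$ is Proposition~\ref{prop:phantom}, and it is not formal: one first checks that $\Psi^*\bas(\bm{\beta}_\infty^+)$ and $\Psi^*\hat a$ both restrict to the family index on each $\gB_k$ up to phantoms (using Assumption~\ref{key-properties}(\ref{it:properties:2}) and Proposition~\ref{relative-index-class-properties}(\ref{it:relative-index-class-properties:1})), and then invokes Lemma~\ref{preservation-of-phantoms}, which says that a map into a loop space is phantom if its precomposite with a homology equivalence is. That lemma is genuine content — phantoms are not in general preserved under arbitrary homology equivalences without the loop-space hypothesis — and it is the mechanism that transports the ``agrees with the family index up to phantoms'' data from $\gB_\infty$ out to $\gX$. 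Your sketch should keep this step separate from the construction of $\bm{\beta}_\infty^+$ rather than folding it into the uniqueness clause.
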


In Sections \ref{sec:Starting} and \ref{finishingproof:alldim} we will show how a tuple $(W, X, \hat{a})$ satisfying these hypotheses can be constructed, but in the rest of this section we will prove Theorem \ref{factorization-theorem}, and so suppose that the hypotheses of this theorem hold. 

\subsubsection*{Setting the stage for the obstruction argument}

Theorem \ref{factorization-theorem} will be proved by an obstruction-theoretic argument, which needs some preliminary constructions. Before we begin, let us collect the important consequences of our work so far.

\begin{prop}\label{abelianness-lemma}\mbox{}
\begin{enumerate}[(i)]
\item\label{it:abelianness-lemma:1} There is a surgery equivalence $\cR^+(W_0)_{h_\round^{2n-1}} \simeq \cR^+(D^{2n})_{h_\round^{2n-1}}$, and so in particular $\cR^+(W_0)_{h_\round^{2n-1}}$ is non-empty. Thus we may choose an $g_{-1} \in \cR^+(W_0)_{h_\round^{2n-1}}$ which lies in the component of $g_\tor^{2n} \in \cR^+(D^{2n})_{h_\round^{2n-1}}$ under the surgery equivalence.
\item\label{it:abelianness-lemma:2} There are metrics $g_i \in \cR^+(K\vert_{[i,i+1]})_{h_\round^{2n-1}, h_\round^{2n-1}}$ so that the gluing maps
$$\mu_{g_i} : \Riem^+ (W_i)_{h_\round^{2n-1}} \lra \Riem^+ (W_{i+1})_{h_\round^{2n-1}}$$
are homotopy equivalences. Let 
$$m_i := g_{-1} \cup g_0 \cup g_1 \cup \cdots \cup g_{i-1} \in \cR^+(W_i)_{h_\round^{2n-1}}.$$
\item\label{it:abelianness-lemma:3} The action homomorphism $\Gamma (W_k) \to \pi_0 (\Aut (\cR^+(W_k)_{h_\round^{2n-1}}))$ has abelian image.
\end{enumerate}
\end{prop}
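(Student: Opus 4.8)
This is a statement gathering together three facts, each of which is essentially a consequence of results already established. I would prove the three parts in order, as follows.

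For part (\ref{it:abelianness-lemma:1}): the manifold $W_0 = W$ is by hypothesis a simply-connected spin cobordism $\emptyset \leadsto S^{2n-1}$ which is spin cobordant to $D^{2n}$ relative to its boundary, with $2n \geq 6$. By the same argument used in the proof of Proposition \ref{inheritance-of-detection} (remove an open disc, apply Theorem \ref{lemma:GluingNullCob} and Theorem \ref{chernysh-walsh-theorem}), the spaces $\cR^+(W_0)_{h_\round^{2n-1}}$ and $\cR^+(D^{2n})_{h_\round^{2n-1}}$ are related by a weak homotopy equivalence arising from a surgery equivalence (Theorem \ref{surgery-invariance}, propagated via Smale handle cancellation as in Theorem \ref{lemma:GluingNullCob}). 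Since $g_\tor^{2n} \in \cR^+(D^{2n})_{h_\round^{2n-1}}$ is a genuine point, $\cR^+(W_0)_{h_\round^{2n-1}}$ is non-empty, and I simply choose $g_{-1}$ to be a metric in the path component corresponding to that of $g_\tor^{2n}$ under this equivalence.

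For part (\ref{it:abelianness-lemma:2}): here I apply Theorem \ref{lemma:GluingNullCob} directly to the cobordism $K = ([0,1] \times S^{2n-1}) \# (S^n \times S^n) : S^{2n-1} \leadsto S^{2n-1}$. This $K$ is simply-connected and spin (for $2n \geq 6$, connect-summing with $S^n \times S^n$ preserves simple-connectivity), and it is spin cobordant to $[0,1] \times S^{2n-1}$ relative to its boundary, since $S^n \times S^n$ bounds a spin manifold and the connect-sum region can be filled in. With $d = 2n \geq 6 \geq 5$ and $M = S^{2n-1}$ simply-connected spin, Theorem \ref{lemma:GluingNullCob} (with boundary condition $h = h_\round^{2n-1}$) furnishes, for each copy $K\vert_{[i,i+1]}$, a psc metric $g_i \in \cR^+(K\vert_{[i,i+1]})_{h_\round^{2n-1}, h_\round^{2n-1}}$ whose associated gluing map $\mu_{g_i}$ is a weak homotopy equivalence; applying the theorem with $W = W_i$, $V = \emptyset$ gives precisely that $\mu_{g_i} : \Riem^+ (W_i)_{h_\round^{2n-1}} \to \Riem^+ (W_{i+1})_{h_\round^{2n-1}}$ is a weak homotopy equivalence. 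The metrics $m_i := g_{-1} \cup g_0 \cup \cdots \cup g_{i-1}$ are then well-defined by iterated gluing ($\mu$), and by construction lie in $\cR^+(W_i)_{h_\round^{2n-1}}$.

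For part (\ref{it:abelianness-lemma:3}): this is an application of Theorem \ref{abelianness}. I must check its hypotheses for $W = W_k$: it should be compact simply-connected spin of dimension $d = 2n \geq 6 \geq 5$, with $\partial W_k = S^{2n-1}$, and spin cobordant to $D^{2n}$ relative to its boundary. The boundary is correct by construction of $W_k = W \cup K\vert_{[0,k]}$. Simple-connectivity: $W$ is simply-connected, each $K\vert_{[i,i+1]}$ is simply-connected, and van Kampen along the connected $(2n{-}1)$-dimensional gluing spheres gives $\pi_1(W_k) = 1$. Spin cobordant to $D^{2n}$ rel boundary: $W$ is so by hypothesis, and each $K$ is spin cobordant to $[0,1]\times S^{2n-1}$ rel boundary (as in part (\ref{it:abelianness-lemma:2})), so gluing these cobordisms shows $W_k$ is spin cobordant to $W$, hence to $D^{2n}$, relative to its boundary. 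Theorem \ref{abelianness} then yields that the image of $\Gamma(W_k) = \pi_0(\mathbf{Diff}_\partial(W_k)) \to \pi_0(\Aut(\gR^+(W_k)_{h_\round^{2n-1}}))$ is abelian.

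The proof is essentially bookkeeping: the only point requiring care is verifying the hypotheses of Theorems \ref{lemma:GluingNullCob} and \ref{abelianness} for the specific manifolds $W_k$ and the cobordism $K$ — in particular the cobordism-theoretic claims that $K$ is spin-nullcobordant rel boundary (equivalently, that $W_k$ stays in the spin cobordism class of $D^{2n}$ rel $S^{2n-1}$) and that the dimension bounds $2n \geq 6$ suffice for the handle-cancellation arguments underlying Theorem \ref{surgery-invariance}. I expect no genuine difficulty here, since all the analytic and geometric content has already been packaged into Theorems \ref{chernysh-walsh-theorem}, \ref{surgery-invariance}, \ref{lemma:GluingNullCob}, and \ref{abelianness}.
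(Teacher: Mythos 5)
Your proof is correct and takes the same route as the paper: part (\ref{it:abelianness-lemma:1}) is Theorem \ref{surgery-invariance} applied to the hypothesis that $W_0$ is spin cobordant to $D^{2n}$ rel boundary, part (\ref{it:abelianness-lemma:2}) is Theorem \ref{lemma:GluingNullCob} applied to $K$, and part (\ref{it:abelianness-lemma:3}) is Theorem \ref{abelianness} applied to $W_k$. The paper's own proof is just a terse version of yours — a few sentences citing these three theorems — so the extra verification of hypotheses you supply (simple-connectivity of $W_k$ via van Kampen, $K$ spin-nullcobordant rel boundary, dimension bounds) is precisely the ``bookkeeping'' the paper leaves implicit.
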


\begin{proof}
This is straightforward from the previous work:
Because $W_0 = W$ is spin cobordant to $D^{2n}$ relative to its boundary by assumption, the first part follows from Theorem \ref{surgery-invariance}. Because the manifold $K\vert_{[i,i+1]} = ([0,1] \times S^{2n-1}) \# (S^n \times S^n)$ is cobordant to a cylinder relative to its boundary, the second part follows from Theorem \ref{lemma:GluingNullCob}.
The third assertion follows from Theorem \ref{abelianness}, again using the assumption that $W_0$ is spin cobordant to $D^{2n}$ relative to its boundary.
\end{proof}

We introduce the abbreviations $R_k:= \Riem^+ (W_k)_{h_\round^{2n-1}}$, $T_k:= ED_k \times_{D_k} R_k$, write $p_k : T_k \to B_k$ for the projection map and write $\mu_k := \mu_{h_k}: R_{k} \to R_{k+1}$ for the gluing maps defined by the metrics $h_k$ of Proposition \ref{abelianness-lemma} (\ref{it:abelianness-lemma:2}). The map $\mu_k$ is $D_{k}$-equivariant (by construction), so there is an induced map between the Borel constructions
\begin{equation*}
\xymatrix{
R_{k}  \ar[r]^{\mu_k} \ar[d] & R_{k+1} \ar[d]\\
T_{k} \ar[r]^{\nu_k} \ar[d]^-{p_k} & T_{k+1}\ar[d]^-{p_{k+1}}\\
B_{k} \ar[r]^{\lambda_k} & B_{k+1}.
}
\end{equation*}
By Proposition \ref{abelianness-lemma} (\ref{it:abelianness-lemma:2}), the top map is a weak homotopy equivalence, so the lower square is weakly homotopy cartesian. 
Using the (unique) spin structure that each fibre bundle $\pi_k : E_k \to B_k$ has (cf.\ Lemma \ref{lem:existence-spin-structure}), the construction of Section \ref{sec:FibBunIndDiff} gives relative $KO$-classes $\beta_k \in KO^{-d} (p_k) $. Let $\hocolim_k p_k : \hocolim_k T_k \to \hocolim_k B_k$ be the induced map on mapping telescopes.

\begin{prop}\label{concordance-stable}
There is a relative $KO$-class $\beta_{\infty}  \in KO^{-d} (\hocolim_k p_k)$, such that the restriction to $KO^{-d}(p_k)$ is equal to $\beta_k$.
\end{prop}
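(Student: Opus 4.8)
\textbf{Proof proposal for Proposition \ref{concordance-stable}.}

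The plan is to construct $\beta_\infty$ directly on the mapping telescope by gluing together the classes $\beta_k$, using the fact that the stabilisation maps are compatible with the relative index construction up to concordance. First I would recall that $\hocolim_k p_k : \hocolim_k T_k \to \hocolim_k B_k$ is itself (up to homotopy equivalence of pairs) the relative-index situation for a bundle over the telescope, so it suffices to produce a compatible system of relative $KO$-cycles. Concretely, $KO^{-d}(\hocolim_k p_k)$ sits in a Milnor $\lim^1$ sequence
\begin{equation*}
0 \lra {\lim_k}^1 KO^{-d-1}(p_k) \lra KO^{-d}(\hocolim_k p_k) \lra \lim_k KO^{-d}(p_k) \lra 0,
\end{equation*}
so to lift the system $(\beta_k)_k$ it is enough to check that it is an element of the inverse limit, i.e.\ that $\nu_k^*\beta_{k+1} = \beta_k \in KO^{-d}(p_k)$ under the map induced by the stabilisation square. (One can even avoid the $\lim^1$ subtlety entirely by working with an explicit point-set model: choosing a fibrewise metric $k_\infty$ on the bundle over the telescope that restricts to the chosen metrics $k$ on each finite stage, and a compatible elongation, directly yields a single cycle whose restriction to stage $k$ is the cycle defining $\beta_k$.)

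The key step is therefore the identity $\nu_k^*\beta_{k+1} = \beta_k$. This is a naturality statement for the relative index class of Section \ref{sec:FibBunIndDiff} under the stabilisation map. I would prove it by unwinding the construction: $\beta_k$ is $\ind{\tilde{E}_k, \text{(interpolating metric)}}$ on $\Cyl(p_k)$, and the stabilisation map $\nu_k$ is covered by gluing in the fixed Riemannian cobordism $(K, g_k)$ of Proposition \ref{abelianness-lemma}(\ref{it:abelianness-lemma:2}). Since $(K, g_k)$ has positive scalar curvature, the additivity theorem (Corollary \ref{additivity-concordance-version}) shows that gluing it in does not change the index: more precisely, pulling $\beta_{k+1}$ back along $\nu_k$ produces a cycle obtained from the cycle defining $\beta_k$ by adjoining the Dirac operator of a psc metric on a cylinder-like piece, and by additivity together with the invertibility of that summand (Bochner vanishing via the Schrödinger--Lichnerowicz formula \eqref{weitzenboeck}) the two cycles become concordant after adding an acyclic $(d,0)$-cycle $(U,J)$. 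By Proposition \ref{prop:kuiper} this is precisely the statement that they represent the same class in $KO^{-d}(p_k)$. One must also check that the chosen interpolating metrics on $\tilde{E}_{k+1}$ and $\tilde{E}_k$ match up appropriately over the relevant parts of the mapping cylinders, which is where the compatibility of the basepoint metrics $m_k = m_{k-1} \cup g_{k-1}$ from Proposition \ref{abelianness-lemma}(\ref{it:abelianness-lemma:2}) is used; the space of such choices is convex (as in the remark that the class $\beta$ does not depend on the auxiliary metric $k$), so this causes no difficulty.

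The main obstacle I expect is bookkeeping rather than conceptual: one has to be careful that the relative index class lives over the mapping cylinder pair $(\Cyl(p_k), (R_k \times \{0\}) \cup (\{g_0\}\times I))$ and that the stabilisation maps respect these subspaces and the chosen basepoints in a way compatible with the homotopy-cartesian squares above, so that the ``restriction to $KO^{-d}(p_k)$'' statement is literally correct and not merely correct up to homotopy. Handling this cleanly is easiest via the explicit point-set model: build one bundle with one fibrewise metric over $\Cyl(\hocolim_k p_k)$ by passing all the chosen data ($k_k$, the elongations, the basepoint $m_0$) to the colimit, so that $\beta_\infty$ is defined by a single honest $(d,0)$-cycle and its restriction to each stage is the defining cycle of $\beta_k$ on the nose. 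With that model the proposition is essentially immediate, and the additivity argument above is only needed to see a posteriori that the construction is independent of the compatible choices made.
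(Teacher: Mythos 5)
Your proposal follows essentially the same route as the paper: invoke the Milnor $\lim^1$ exact sequence to reduce the problem to showing that the $\beta_k$ form a consistent inverse system, and establish that consistency via the additivity theorem (the paper cites Corollary \ref{additivity-concordance-version} for exactly this step). The paper's proof is simply more terse; your elaboration of the additivity argument and the remark about the convexity of the auxiliary-metric choices correctly fill in what the paper leaves implicit.
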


\begin{proof}
The map
$$KO^{-d}(\hocolim_k p_k) \lra  \varprojlim KO^{-d}(p_k)$$
is surjective by Milnor's $\lim^1$ sequence. The classes $\beta_k \in KO^{-d}(p_k)$ give a consistent collection by Proposition \ref{relative-index-class-properties}, and so there exists a $\beta_\infty \in KO^{-d}(\hocolim_k p_k)$ restricting to them.
\end{proof}

Let us move to the simplicial world, in which we will carry out the proof of Theorem \ref{factorization-theorem}. The simplicial group $\Sing_{\bullet} D_k$ (obtained by taking singular simplices) acts on the simplicial set $\Sing_{\bullet} R_k$. We denote the geometric realisations by $\gD_k:=|\Sing_{\bullet} D_k|$ and $\gR_k:=|\Sing_{\bullet} R_k|$, and also write $\gB_k :=B\gD_k$. Let $\gD_{\infty}$ and $\gR_{\infty}$ be the colimits of the induced maps
$$\lambda_k : \gD_k \lra \gD_{k+1} \quad \text{ and }\quad \mu_k : \gR_k \lra \gR_{k+1},$$
which, as these maps are cellular inclusions, are also homotopy colimits. Let $\gB_\infty := B\gD_\infty$ and $\Psi : \gB_\infty \to \gX := \vert \Sing_\bullet X \vert$ be a map in the homotopy class induced by
$$\gB_\infty = \underset{k \to \infty}\colim \, \gB_k \overset{\simeq}\longleftarrow \underset{k \to \infty}\hocolim \, \gB_k \lra \vert\Sing_\bullet(B_\infty)\vert \overset{\vert\Sing_\bullet \Psi\vert}\lra \vert\Sing_\bullet(X)\vert = \gX.$$
Let 
$$\gp_k:\gT_k:= E \gD_k \times_{\gD_k} \gR_k  \lra \gB_k$$
be the induced fibre bundles, for $k \in \bN \cup \{\infty\}$. 
There is a commutative diagram
\[
\xymatrix{
\gT_{\infty} \ar[d]^{\gp_{\infty}} &  \hocolim_k \gT_k  \ar[d]^{\hocolim_k \gp_k} \ar[l]_-{\simeq} \ar[r] & \hocolim_k T_k \ar[d]^{\hocolim_k p_k}\\
\gB_{\infty} & \hocolim_k \gB_k \ar[l]_-{\simeq} \ar[r] &\hocolim_k B_k.
}
\]

As the four left spaces in the diagram are all CW-complexes, there exists a class $\bm{\beta}_{\infty} \in KO^{-2n} (\gp_{\infty})$ whose pullback to $KO^{-2n} (\hocolim_k \gp_k)$ coincides with the pullback of the class $\beta_{\infty}$ constructed in Proposition \ref{concordance-stable} to $\Cyl (\hocolim_k \gp_k)$. 
The square
\begin{equation}\label{cartesian}
\begin{gathered}
\xymatrix{
\gT_k \ar[d]^{\gp_k} \ar[r] & \gT_{\infty}\ar[d]^{\gp_{\infty}} \\
\gB_k \ar[r] & \gB_{\infty}.
}
\end{gathered}
\end{equation}
is homotopy cartesian, as both maps are Serre fibrations, the map $\gR_k \to \gR_\infty$ on fibres is a homotopy equivalence by Proposition \ref{abelianness-lemma} (\ref{it:abelianness-lemma:2}), and all the spaces have the homotopy type of CW complexes. The following property of the class $\bm{\beta}_{\infty}$ is clear from the construction and Proposition \ref{concordance-stable}.

\begin{lem}\label{lem:RelCycle}
The pullback of $\bm{\beta}_{\infty}$ to each $\Cyl(\gp_k)$ agrees with the pullback of $\beta_k$ along the weak homotopy equivalence $\Cyl(\gp_k) \to \Cyl(p_k)$.
\end{lem}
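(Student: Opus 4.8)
The plan is to read off the statement of Lemma~\ref{lem:RelCycle} directly from the definitions and the two previous results already in hand, namely Proposition~\ref{concordance-stable} (which produces $\beta_\infty \in KO^{-d}(\hocolim_k p_k)$ restricting to each $\beta_k$) and the paragraph just above, which defines $\bm{\beta}_\infty \in KO^{-2n}(\gp_\infty)$ as a class whose pullback to $KO^{-2n}(\hocolim_k \gp_k)$ agrees with the pullback of $\beta_\infty$. So the proof is essentially a diagram chase: one has the commutative ladder
\[
\xymatrix{
\gT_{\infty} \ar[d]^{\gp_{\infty}} &  \hocolim_k \gT_k  \ar[d] \ar[l]_-{\simeq} \ar[r] & \hocolim_k T_k \ar[d]\\
\gB_{\infty} & \hocolim_k \gB_k \ar[l]_-{\simeq} \ar[r] &\hocolim_k B_k
}
\]
together with the maps $\gT_k \to \hocolim_k \gT_k$, $\gB_k \to \hocolim_k \gB_k$ (and similarly on the $T,B$ side), and everything in sight is compatible with the formation of mapping cylinders, so one obtains an induced map of pairs $\Cyl(\gp_k) \to \Cyl(p_k)$. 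The first step is to record that this map is a weak homotopy equivalence: the map $\gR_k \to R_k$ on fibres is the canonical weak equivalence $|\Sing_\bullet R_k| \to R_k$, the map $\gB_k \to B_k$ is likewise a weak equivalence, and hence so is the induced map on total spaces $\gT_k \to T_k$; passing to mapping cylinders of $\gp_k$ and $p_k$ preserves this.

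Next I would chase the class. By construction $\bm{\beta}_\infty$ pulls back, along $\Cyl(\hocolim_k \gp_k) \to \Cyl(\gp_\infty)$, to the pullback of $\beta_\infty$ along $\Cyl(\hocolim_k \gp_k) \to \Cyl(\hocolim_k p_k)$. Restricting further along the canonical map $\Cyl(\gp_k) \to \Cyl(\hocolim_k \gp_k)$ (induced by $\gT_k \to \hocolim_k \gT_k$ and $\gB_k \to \hocolim_k \gB_k$) and using commutativity of the ladder, the pullback of $\bm{\beta}_\infty$ to $\Cyl(\gp_k)$ equals the pullback of $\beta_\infty$ along $\Cyl(\gp_k) \to \Cyl(\hocolim_k p_k) \to \Cyl(p_k)$ — here the composite factors through $\Cyl(p_k)$ because $\gT_k \to \hocolim_k T_k$ factors through $T_k$ and likewise on base spaces. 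Finally, by Proposition~\ref{concordance-stable} the restriction of $\beta_\infty$ to $KO^{-d}(p_k)$ is exactly $\beta_k$, so the pullback of $\beta_\infty$ to $\Cyl(p_k)$ along $\Cyl(p_k) \to \Cyl(\hocolim_k p_k)$ is $\beta_k$ itself. Combining the two equalities gives that the pullback of $\bm{\beta}_\infty$ to $\Cyl(\gp_k)$ is the pullback of $\beta_k$ along the weak homotopy equivalence $\Cyl(\gp_k) \to \Cyl(p_k)$, which is the assertion.

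There is no serious obstacle here; the statement is bookkeeping, and the only thing to be careful about is naturality — that the various maps between mapping cylinders genuinely commute on the nose (or at least up to the homotopies implicit in "$KO$-class", which are harmless since $KO$-theory is homotopy-invariant) and that the factorisations $\gT_k \to T_k \to \hocolim_k T_k$ versus $\gT_k \to \hocolim_k \gT_k \to \hocolim_k T_k$ agree. All of this is immediate from the fact that mapping telescopes and Borel constructions are functorial and that $|\Sing_\bullet(-)|$ is a functor equipped with a natural transformation to the identity. I would therefore keep the written proof to two or three sentences, citing Proposition~\ref{concordance-stable} and the construction of $\bm{\beta}_\infty$, and noting that $\Cyl(\gp_k) \to \Cyl(p_k)$ is a weak homotopy equivalence because it is one on fibres and on base spaces.
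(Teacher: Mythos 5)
Your argument is correct and is exactly the diagram chase the paper has in mind when it declares the lemma ``clear from the construction and Proposition~\ref{concordance-stable}'': the paper gives no written proof at all, and your two paragraphs simply spell out the bookkeeping it treats as immediate. Your identification of the one point worth flagging (compatibility of the two routes $\gT_k \to \hocolim_k T_k$, which holds by functoriality of $\lvert\Sing_\bullet(-)\rvert$, the Borel construction, and mapping telescopes) is right, and your note that $\Cyl(\gp_k)\to\Cyl(p_k)$ is a weak equivalence because it is one fibrewise and on bases is the standard fibration-comparison argument, so nothing is missing.
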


\subsubsection*{The obstruction argument}

The topological group $\gD_\infty$ acts on $\gR_\infty$, and the map $\gp_\infty : \gT_\infty \to \gB_\infty$ is the associated Borel construction. In particular, there is an associated homomorphism
$$\gD_\infty \lra \mathrm{Aut}(\gR_\infty)$$
of topological monoids, which on classifying spaces gives a map
$$h: \gB_\infty \lra B\mathrm{Aut}(\gR_\infty).$$

\begin{lem}\label{second-abelianness-lemma}
The monodromy map $\pi_1 (\gB_{\infty}) \to \pi_0 \Aut (\gR_{\infty})$ has abelian image.
\end{lem}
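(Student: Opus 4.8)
The plan is to deduce Lemma \ref{second-abelianness-lemma} from Theorem \ref{abelianness} (applied to $W_k$) together with the compatibility of the monodromy actions along the stabilisation maps. First I would recall that $\gB_\infty = \colim_k \gB_k$, so that $\pi_1(\gB_\infty) = \colim_k \pi_1(\gB_k)$ and hence every element of $\pi_1(\gB_\infty)$ is the image of some $\gamma \in \pi_1(\gB_k) = \Gamma(W_k) = \pi_0(\mathbf{Diff}_\partial(W_k))$ under the natural map $\pi_1(\gB_k) \to \pi_1(\gB_\infty)$. For such a $\gamma$, the monodromy of $\gp_\infty$ on $\gR_\infty$ is computed as follows: by Proposition \ref{abelianness-lemma} (\ref{it:abelianness-lemma:2}) the stabilisation map $\mu_k : \gR_k \to \gR_\infty$ (the composite of the $\mu_i$'s, $i \geq k$) is a weak homotopy equivalence, and it is $\gD_k$-equivariant where $\gD_k$ acts on $\gR_\infty$ through $\gD_k \to \gD_\infty$. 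Therefore the action of $\gamma$ on $\gR_\infty$, viewed through $\mu_k$, is identified with the image of $\gamma$ under the action homomorphism $\Gamma(W_k) \to \pi_0(\Aut(\gR_k))$ of \eqref{action-homomorphism}.

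Next I would observe that two classes $\gamma, \delta \in \pi_1(\gB_\infty)$ can be pulled back to a \emph{common} stage $\pi_1(\gB_k)$ for $k$ large enough (since the colimit is filtered and both are images of classes at finite stages, we may take the larger of the two indices). So it suffices to show that for each fixed $k$, the image of $\Gamma(W_k) \to \pi_0(\Aut(\gR_k))$ is abelian. But this is exactly the content of Theorem \ref{abelianness}: the manifold $W_k = W \cup K|_{[0,k]}$ is a compact simply-connected spin manifold of dimension $2n \geq 6 \geq 5$ with boundary $S^{2n-1}$, and it is spin cobordant to $D^{2n}$ relative to its boundary, since $W$ is so by hypothesis and each copy of $K = ([0,1] \times S^{2n-1}) \# (S^n \times S^n)$ is spin cobordant to the trivial cylinder relative to its boundary (the connected sum with $S^n \times S^n$, which bounds, does not change the cobordism class). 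Hence Theorem \ref{abelianness} applies verbatim to $W_k$ with $h = h_\round^{2n-1}$, and the image of \eqref{action-homomorphism} is abelian.

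Combining these steps: given $\gamma, \delta \in \pi_1(\gB_\infty)$, choose $k$ and lifts $\tilde\gamma, \tilde\delta \in \Gamma(W_k)$. Their images in $\pi_0(\Aut(\gR_k))$ commute by Theorem \ref{abelianness}; transporting along the weak homotopy equivalence $\mu_k : \gR_k \to \gR_\infty$ (which induces an isomorphism $\pi_0(\Aut(\gR_k)) \cong \pi_0(\Aut(\gR_\infty))$ compatible with the two monodromy homomorphisms, because $\mu_k$ is $\gD_k$-equivariant), we conclude that the images of $\gamma$ and $\delta$ in $\pi_0(\Aut(\gR_\infty))$ commute as well. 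As $\gamma, \delta$ were arbitrary, the monodromy map $\pi_1(\gB_\infty) \to \pi_0(\Aut(\gR_\infty))$ has abelian image.

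The only mild subtlety — and the point I would be most careful about — is the bookkeeping that the monodromy of the fibration $\gp_\infty$ restricted to classes coming from stage $k$ really is intertwined with the finite-stage action homomorphism via $\mu_k$; this requires knowing that $\mu_k$ is not merely a weak equivalence but a $\gD_k$-equivariant one, which is how it was constructed (the gluing maps $\mu_{g_i}$ are $D_i$-equivariant, hence so is the composite, after realising singular simplicial sets). Given that, the argument is essentially a filtered-colimit reduction to Theorem \ref{abelianness}, with no further geometric input needed.
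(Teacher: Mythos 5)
Your proof is correct and is essentially the same as the paper's. The paper reduces to the finite stages by observing that a group is abelian iff all its finitely generated subgroups are and then using the homotopy cartesian square \eqref{cartesian} to identify the monodromy of $\gp_\infty$ restricted to $\pi_1(\gB_k)$ with the monodromy of $\gp_k$; this is precisely your ``pull two loops back to a common stage $k$ and transport along the $\gD_k$-equivariant equivalence $\mu_k$'' argument, phrased slightly differently. Both proofs then conclude by Theorem \ref{abelianness} applied to $W_k$ (the paper via Proposition \ref{abelianness-lemma}(iii), you by verifying the cobordism hypothesis directly).
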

\begin{proof}
Because a group is commutative if all its finitely generated subgroups are, and because the diagram \eqref{cartesian} is homotopy cartesian, it is enough to prove that the monodromy map $\pi_1 (\gB_k) \to \pi_0 (\Aut (\gR_k))$ has abelian image for each $k$. But $\pi_1 (\gB_k)=\pi_0(\gD_k) = \Gamma(W_k)$, and under this identification, the monodromy map becomes the homomorphism $\Gamma(W_k) \to \pi_0 (\Aut (\gR_k))$ induced from the action, and we have shown that this acts through an abelian group in Proposition \ref{abelianness-lemma} (\ref{it:abelianness-lemma:3}).
\end{proof}

\begin{prop}\label{result-of-obstruction-argument}
The exists a commutative and homotopy cartesian square
\begin{equation*}
\begin{gathered}
\xymatrix{
\gT_{\infty} \ar[r] \ar[d]^{\gp_{\infty}} & \gT_{\infty}^+ \ar[d]^{\gp_{\infty}^+}\\
\gB_{\infty} \ar[r]^{\Psi} & \gX.
}
\end{gathered}
\end{equation*}
Moreover, there is a unique class $\bm{\beta}_\infty^+ \in KO^{-2n}(\gp^+_\infty)$ which restricts to $\bm{\beta}_{\infty} \in KO^{-2n} (\gp_{\infty})$.
\end{prop}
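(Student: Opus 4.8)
The plan is to prove Proposition \ref{result-of-obstruction-argument} by obstruction theory, using that $\Psi : \gB_\infty \to \gX$ is acyclic and that the monodromy of $\gp_\infty$ factors through an abelian group (Lemma \ref{second-abelianness-lemma}). First I would recall the general principle: if $q : T \to B$ is a fibration with fibre $F$ whose monodromy $\pi_1(B) \to \pi_0\Aut(F)$ factors through a group $A$, then $q$ is classified by a map $B \to BA \to B\Aut(F)$ up to a choice; and if $\Psi : B \to X$ is acyclic, then $B\Aut(F)$ being a nilpotent (here abelian $\pi_1$) space means the classifying map $B \to B\Aut(F)$ extends along $\Psi$, uniquely up to homotopy. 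Concretely, I would use that an acyclic map induces an isomorphism on homology with all local coefficients, hence (since $\gX$ and $\gB_\infty$ are connected with $\pi_1 (\gB_\infty) \to \pi_1(\gX)$ surjective with perfect kernel) the Quillen plus-construction property: $\gX = \gB_\infty^+$ with respect to the perfect subgroup $P = \ker(\pi_1(\Psi))$. The key point is that $P$ acts trivially (up to homotopy) on $\gR_\infty$: indeed the monodromy image is abelian by Lemma \ref{second-abelianness-lemma}, so the perfect group $P$ maps to a trivial subgroup of it, hence the restriction of $\gp_\infty$ to (a $BP$ inside) $\gB_\infty$ has trivial monodromy; this is exactly the hypothesis needed to push a fibration through a plus-construction.

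Then I would carry out the construction in steps. Step 1: replace $B\Aut(\gR_\infty)$ by a fibrant model and let $f : \gB_\infty \to B\Aut(\gR_\infty)$ classify $\gp_\infty$. By Lemma \ref{second-abelianness-lemma} we may factor the relevant part through $BA$ with $A$ abelian, so the target is simple; in particular $f$ restricted to $BP$ is nullhomotopic. Step 2: apply the universal property of the plus construction $\Psi : \gB_\infty \to \gX = \gB_\infty^+_P$: a map out of $\gB_\infty$ into a space $Y$ extends over $\Psi$ if and only if it kills $\pi_1$ of $P$ up to conjugacy — equivalently its restriction to $BP$ is nullhomotopic — and the extension is then unique up to homotopy. (Reference the same sources as in the remark after Assumptions \ref{key-properties}, namely \cite{Ros, HH}.) This yields $\bar f : \gX \to B\Aut(\gR_\infty)$ with $\bar f \circ \Psi \simeq f$. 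Step 3: define $\gp_\infty^+ : \gT_\infty^+ \to \gX$ to be the fibration classified by $\bar f$, and $\gT_\infty \to \gT_\infty^+$ to be the induced map of total spaces over $\Psi$. Since $\gp_\infty$ is classified by $\bar f \circ \Psi$, the resulting square is commutative and homotopy cartesian by construction; that all spaces have the homotopy type of CW complexes has been arranged by passing to the simplicial models.

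For the relative $K$-theory class, Step 4: I would use that the square is homotopy cartesian to identify $\Cyl(\gp_\infty)$ with the pullback of $\Cyl(\gp_\infty^+)$ along $\Psi$; thus there is a restriction map $KO^{-2n}(\gp_\infty^+) \to KO^{-2n}(\gp_\infty)$, and I must show $\bm{\beta}_\infty$ is in its image, uniquely. For existence, observe that $KO^{-2n}(\gp_\infty^+) = \widetilde{KO}^{-2n}$ of the appropriate quotient/mapping cone built from $\gT_\infty^+$, $\gX$; the acyclicity of $\Psi$ (hence of the induced map of pairs, since the fibre transport is a homotopy equivalence and the square is cartesian) implies the restriction map is an isomorphism on $KO$-cohomology with any coefficients — more precisely, the relative pair for $\gp_\infty^+$ is obtained from that for $\gp_\infty$ by a homology isomorphism, and $KO$ is a generalised cohomology theory so by the Atiyah–Hirzebruch spectral sequence (or directly, since acyclic maps are $KO^*$-isomorphisms by \cite{AdamsPhantom}-type arguments — actually simpler: acyclic maps are stable homotopy equivalences after plus-construction, hence $E^*$-equivalences for any ring spectrum $E$) the restriction $KO^{-2n}(\gp_\infty^+) \to KO^{-2n}(\gp_\infty)$ is a bijection. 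So there is a unique $\bm{\beta}_\infty^+$ restricting to $\bm{\beta}_\infty$.

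The main obstacle I anticipate is verifying cleanly that the relevant restriction map on relative $KO$-groups is an isomorphism. The homotopy-cartesian square gives a map of "relative spaces" (mapping cylinders rel base and a section), and one needs that this map, built from the acyclic $\Psi$, is a homology isomorphism — this requires a small argument comparing the long exact sequences / Mayer–Vietoris for the two pairs and using that acyclicity is preserved under pullback along fibrations and along passing to cofibres, together with the fact that the fibre inclusion is a homotopy equivalence so the "extra" piece $\{x_0\}\times I$ contributes nothing. Once that homology isomorphism is in hand, invoking that $KO$ is a generalised cohomology theory (equivalently, that acyclic maps are $KO$-cohomology isomorphisms, which follows since $\KO$ is connective-after-... — more safely: an acyclic map becomes an equivalence after applying $\Sigma^\infty_+$ and plus-constructing, hence is an $E$-homology and $E$-cohomology isomorphism for every spectrum $E$) closes the gap. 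The rest is bookkeeping with the plus-construction and with fibrant replacements, which I would keep brief.
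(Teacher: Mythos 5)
Your proposal follows essentially the same route as the paper's proof: classify $\gp_\infty$ by a map to $B\Aut(\gR_\infty)$ (the paper invokes May's classification theorem), use the universal property of acyclic maps from \cite{HH} together with Lemma~\ref{second-abelianness-lemma} to extend the classifying map over $\Psi$ (the point being that the perfect kernel $\ker\pi_1(\Psi)$ maps into an abelian image, hence to the trivial subgroup), pull back the universal fibration to get the homotopy cartesian square, and deduce that the top map $\gT_\infty \to \gT_\infty^+$ is acyclic so that the restriction on relative $KO$-theory is an isomorphism. One small caveat: you do not need (and should not assert) that the target is simple or that $f|_{BP}$ is nullhomotopic --- the universal property only requires $\ker\pi_1(\Psi) \subset \ker\pi_1(f)$, which is exactly what the perfect-into-abelian argument gives, and this weaker condition is what the paper uses.
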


\begin{proof}
First, we invoke May's general classification theory for fibrations \cite{May}. The result (loc.\ cit.\ Theorem 9.2) is that there is a universal fibration $E \to B \Aut (\gR_{\infty})$ with fibre $\gR_{\infty}$ over the classifying space of the topological monoid $\Aut (\gR_{\infty})$ and a homotopy cartesian diagram
\begin{equation*}
 \xymatrix{
\gT_{\infty} \ar[d]^-{\gp_\infty} \ar[r] & E \ar[d]\\
 \gB_{\infty} \ar[r]^-h & B \Aut (\gR_{\infty}).
 }
\end{equation*}

By Assumption \ref{key-properties} (\ref{it:properties:1}) there is an acyclic map $\Psi : \gB_\infty \to \gX$. We claim that the obstruction problem 
\begin{equation*}
\xymatrix{
\gB_{\infty} \ar[r]^{\Psi} \ar[dr]_{h} & \gX \ar@{..>}[d]^{g}\\
 &  B \Aut (\gR_{\infty})
}
\end{equation*}
can be solved, up to homotopy. Since $\Psi$ is acyclic, by the universal property of acyclic maps \cite[Proposition 3.1]{HH} it is sufficient to prove that $\ker (\pi_1 (\Psi)) \subset \ker (\pi_1 (h))$. The group $\ker (\pi_1 (\Psi))$ is perfect, but 
by Lemma \ref{second-abelianness-lemma} the group $\pi_1 (h)$ has abelian image, and so $\pi_1 (h)(\ker (\pi_1 (\Psi)))$ is trivial; in other words, $\ker (\pi_1 (\Psi)) \subset \ker (\pi_1 (h))$. 

Therefore, we have constructed a factorisation 
$$h: \gB_\infty \overset{\Psi}\lra \gX \overset{g}\lra B \Aut (\gR_{\infty})$$
up to homotopy. 
Let us denote by $\gp_\infty^+ : \gT_\infty^+ \to \gX$ the fibration obtained by pulling $E$ back along the map $g$, so there is an induced commutative square
\begin{equation*}
\begin{gathered}
\xymatrix{
\gT_{\infty} \ar[r]^{\Psi'} \ar[d]^{\gp_{\infty}} & \gT_{\infty}^+ \ar[d]^{\gp_{\infty}^+}\\
\gB_{\infty} \ar[r]^{\Psi} & \gX.
}
\end{gathered}
\end{equation*}
By construction, the square is homotopy cartesian. Therefore, since $\Psi$ is acyclic, so is $\Psi'$. Thus the class $\bm{\beta}_\infty \in KO^{-d}(\gp_\infty)$ from Lemma \ref{lem:RelCycle} extends to a unique class $\bm{\beta}_\infty^+ \in KO^{-d}(\gp^+_\infty)$. 
\end{proof}

Now we \emph{define} the map $\rho_{\infty}: \Omega \gX \to \gR_\infty$ as the fibre transport of the fibration $\gp_{\infty}^+$.
Since $\gR_0 \to \gR_{\infty}$ is a homotopy equivalence, we can lift $\rho_{\infty}$ to a map $\rho: \Omega \gX \to \gR_0$. It remains to check that $\rho$ has the property stated in Theorem \ref{factorization-theorem}.
We apply the relative index construction (i.e. Corollary \ref{relativeindex:formal-lemma-homotopy-version}) to the class $\bm{\beta}_\infty^+$ and obtain a homotopy commutative diagram
\begin{equation*}
\xymatrix{
& \Omega \gX  \ar[dl]_{\rho}\ar[d]^{\rho_{\infty}} \ar[drr]^{\Omega \bas(\bm{\beta}_\infty^+)} &  &\\
\gR_0 \ar[r]^{\simeq} &\gR_{\infty} \ar[rr]_-{\trg (\bm{\beta}_\infty^+)} & &   \loopinf{+2n+1} \KO.
}
\end{equation*}

The composition $\gR_0 \to \gR_{\infty} \stackrel{\trg (\bm{\beta}_\infty^+)}{\to}  \loopinf{+2n+1} \KO$ is homotopic to (the pullback to $\gR_0$ of) the index difference with respect to the psc metric $m_0 \in \Riem^+ (W_0)_{h_\round^{2n-1}}$, by construction, Lemma \ref{lem:RelCycle} and Proposition \ref{relative-index-class-properties}. Recall that in Assumption \ref{key-properties} (\ref{it:properties:2}) we have chosen a map $\hat{a}: X \to \loopinf{+2n} \KO$ which restricts to the family index on each $B_k$.

\begin{prop}\label{prop:phantom}\mbox{}
\begin{enumerate}[(i)]
\item The classes $\bas(\bm{\beta}_\infty^+)$ and $\hat{a}$ in $[X,\loopinf{+2n}\KO]$ agree up to phantom maps.
\item The classes $\Omega \bas(\bm{\beta}_\infty^+)$ and $\Omega  \hat{a}$ in $[\Omega X,\loopinf{+2n+1}\KO]$ agree up to phantom maps.
\end{enumerate}
\end{prop}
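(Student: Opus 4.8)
The plan is to prove (i) and then derive (ii) formally. Throughout, identify $KO^{-2n}(X)$ with $KO^{-2n}(\gX)$ via the weak equivalence $\gX = |\Sing_\bullet X| \to X$, and put $\gamma := \bas(\bm{\beta}_\infty^+) - \hat{a} \in KO^{-2n}(\gX) = [\gX, \loopinf{+2n}\KO]$; the task is to show that $\gamma$ is a phantom map, and then that $\Omega\gamma$ is phantom as well. \emph{Step 1: the restriction of $\gamma$ along $\Psi$ is phantom.} By Proposition \ref{result-of-obstruction-argument} the class $\bm{\beta}_\infty^+$ restricts to $\bm{\beta}_\infty$ over $\Cyl(\gp_\infty)$, so $\Psi^*\bas(\bm{\beta}_\infty^+) = \bas(\bm{\beta}_\infty)$; by Lemma \ref{lem:RelCycle} the latter restricts to $\bas(\beta_k)$ over each $\Cyl(\gp_k)$, and Proposition \ref{relative-index-class-properties}(i) together with Lemma \ref{index:independent-of-metric} (the boundary metric $h_\round^{2n-1}$ being fixed) identifies $\bas(\beta_k)$ with the family index $\ind{E_k, h_\round^{2n-1}} \in KO^{-2n}(\gB_k)$. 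On the other hand, Assumption \ref{key-properties}(ii) says precisely that $\Psi^*\hat{a}$ restricts to $\ind{E_k, h_\round^{2n-1}}$ on $\gB_k$ up to phantom maps. Hence $\Psi^*\gamma$ restricts to a phantom map on every $\gB_k$; and since $\gB_\infty = \hocolim_k \gB_k$ is a mapping telescope of cellular inclusions, every map from a finite CW complex into $\gB_\infty$ factors up to homotopy through some finite stage, so $\Psi^*\gamma$ is itself a phantom map.

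\emph{Step 2: descent along the acyclic map $\Psi$.} This is the crux. Let $K$ be a finite CW complex and $g : K \to \gX$ an arbitrary map; we want $\gamma \circ g \simeq \ast$. Form the homotopy pullback $P$ of $g$ along $\Psi$, with projection $p : P \to K$ and a map $\tilde g : P \to \gB_\infty$ with $\Psi \circ \tilde g \simeq g \circ p$. Acyclic maps are stable under homotopy pullback, so $p$ is acyclic, and by the universal property of acyclic maps \cite[Proposition 3.1]{HH} the map $p^* : [K, \loopinf{+2n}\KO] \to [P, \loopinf{+2n}\KO]$ is injective. Since $p^*(\gamma\circ g) = (\Psi^*\gamma)\circ\tilde g$, it therefore suffices to show that the phantom map $\Psi^*\gamma$ becomes null after precomposition with $\tilde g$. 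Here lies the genuine work: $P$ has bounded, finitely generated homology (it is a homology isomorphism over the finite complex $K$) but is not itself finite, so this does \emph{not} follow directly. One argues with Milnor's $\lim^1$ sequence: analyse the tower $\{KO^{-2n-1}(P^{(j)})\}_j$ along $\tilde g$, using boundedness of $H_*(P)$ and the fact — established $\gB_k$-by-$\gB_k$ in Step 1, hence compatibly with the telescope filtration of $\gB_\infty$ — that $\Psi^*\gamma$ vanishes on all finite skeleta, to conclude that $(\Psi^*\gamma)\circ\tilde g$ is the zero element of $[P, \loopinf{+2n}\KO]$. Equivalently, one shows that the bijection $\Psi^* : [\gX, \loopinf{+2n}\KO] \to [\gB_\infty, \loopinf{+2n}\KO]$ furnished by the universal property of the plus construction (the fundamental group of $\loopinf{+2n}\KO$ being abelian removes the only obstruction) restricts to a bijection between phantom classes. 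This establishes (i).

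\emph{Step 3: from (i) to (ii), and the main obstacle.} Given (i), $\gamma$ is phantom, so $\Omega\bas(\bm{\beta}_\infty^+) - \Omega\hat{a} = \Omega\gamma$ (the loop map $\Omega$ of Section \ref{sec:abstract-nonsense} being additive), and it remains to observe that $\Omega\gamma$ is again phantom: if $L$ is a finite CW complex and $f : L \to \Omega\gX$ a map, its adjoint $\hat f : \Sigma L \to \gX$ has finite domain, hence $\gamma\circ\hat f \simeq \ast$ since $\gamma$ is phantom, and adjointing back gives $\Omega\gamma\circ f \simeq \ast$; thus $\Omega\gamma$ is phantom, which is (ii). I expect the substantive difficulty to be entirely in Step 2: passing from phantom-ness over $\gB_\infty$ to phantom-ness over $\gX$ genuinely needs $\Psi$ to be \emph{acyclic} rather than merely a homology equivalence, since $\KO$ is non-connective and so $KO$-cohomology is not a homology invariant — the discrepancy being measured exactly by $\lim^1$/phantom phenomena — whereas Steps 1 and 3 are bookkeeping with the construction of $\bm{\beta}_\infty^+$, the additivity theorem (via Propositions \ref{relative-index-class-properties} and \ref{concordance-stable}), and the elementary behaviour of phantom maps under looping.
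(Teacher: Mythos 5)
Your Steps 1 and 3 match the paper's argument closely: Step 1 is exactly the paper's verification that $\Psi^*\bas(\bm{\beta}_\infty^+)$ and $\Psi^*\hat{a}$ agree up to phantoms (via compactness of finite complexes mapping into the telescope $\gB_\infty$), and Step 3 is the routine loop/suspension adjunction that the paper also uses to deduce (ii) from (i). The gap is entirely in Step 2, as you suspected, but your diagnosis of what is needed there is wrong and your sketch does not close it.

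Specifically: you claim that ``passing from phantom-ness over $\gB_\infty$ to phantom-ness over $\gX$ genuinely needs $\Psi$ to be acyclic rather than merely a homology equivalence.'' This is false, and in fact the paper proves precisely the opposite. Lemma \ref{preservation-of-phantoms} states that if $f:X\to Y$ is a \emph{homology equivalence} and $h:Y\to\Omega Z$ is a map to a loop space with $h\circ f$ phantom, then $h$ is already phantom. The proof is short and elementary: the adjoint of a phantom map $X\to\Omega Z$ is a phantom map $\Sigma X\to Z$ (because any finite subcomplex of $\Sigma X$ sits inside $\Sigma L$ for some finite $L\subset X$), and a homology equivalence of connected CW complexes becomes a homotopy equivalence after a single suspension. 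The target $\loopinf{+2n}\KO$ is of course a loop space, so this applies directly. Acyclicity of $\Psi$ is needed in Proposition \ref{result-of-obstruction-argument} (to solve the lifting problem against $B\Aut(\gR_\infty)$), but not here. Your appeal to non-connectivity of $\KO$ is a red herring: it is not $KO$-cohomology that needs to be a homology invariant, but rather the phantom subgroup, and the suspension trick handles that for free once the target is a loop space.

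Your proposed replacement — pulling $\gamma\circ g$ back to the homotopy pullback $P$, using that $p:P\to K$ is acyclic so that $p^*$ is injective, then showing $(\Psi^*\gamma)\circ\tilde g$ is null — runs into exactly the problem you flag: $P$ is not finite, so phantom-ness of $\Psi^*\gamma$ does not immediately kill $(\Psi^*\gamma)\circ\tilde g$, and the $\lim^1$ argument you gesture at is not carried out. Moreover, the final assertion that $\Psi^*$ ``restricts to a bijection between phantom classes'' is precisely the nontrivial statement (the easy direction is that $\Psi^*$ preserves phantoms; the hard direction, that it reflects them, is the content of Lemma \ref{preservation-of-phantoms}); stating it does not prove it. So as written, Step 2 is a gap; the missing ingredient is the suspension argument of Lemma \ref{preservation-of-phantoms}, and once you have it, the whole pullback/$\lim^1$ machinery becomes unnecessary.
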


The proof of Theorem \ref{factorization-theorem} will be completed by the second statement of Proposition \ref{prop:phantom}. 
For the proof, we need general results about the relation between phantom maps into loop spaces and homology equivalences.

\begin{lem}\label{preservation-of-phantoms0}
If the pointed map $k:X \to \Omega Z$ is a phantom, then so is its adjoint $k^{ad}: \Sigma X \to Z$ under the loop/suspension adjunction.
If $f:X \to \Omega Z$ is a pointed phantom map, then so is the loop map $\Omega f: \Omega X \to \Omega^2 Z$. 
\end{lem}
\begin{proof}
Without loss of generality, we can assume that $X$ is a connected CW complex. Let $l:F \to \Sigma X$ be a map from a finite CW complex, and we wish to show that $k^{ad} \circ l$ is nullhomotopic. After adding a disjoint basepoint to $F$, we may assume that $l$ is a pointed map. As any finite subcomplex of $\Sigma X$ is contained in the suspension $\Sigma L$ of a finite (pointed) subcomplex $L \subset X$, we can write $l= (\Sigma i) \circ j$, where $i: L \to X$ is the inclusion and $j: F \to \Sigma L$ some map. But then $k^{ad} \circ l=k^{ad} \circ \Sigma i \circ j = (k \circ i)^{ad} \circ j$. Since $k$ is a phantom and $L$ is finite, $k \circ i$ is nullhomotopic. Because the target of $k\circ i$ is an $H$-space, $k \circ i$ is nullhomotopic as a pointed map, by \cite[4A.2, 4A.3]{Hatcher}. Hence the adjoint $(k \circ i)^{ad}$ is nullhomotopic as desired.

The second part is similar: let $g:K \to \Omega X$ be a pointed map from a finite CW complex. Then $(\Omega f )\circ g$ is adjoint to $f \circ g^{ad}$. Because $f$ is a phantom, $f \circ g^{ad}$ is nullhomotopic, and nullhomotopic as a pointed map since its target is an H-space. Therefore $(\Omega f )\circ g$ is nullhomotopic, as desired.
\end{proof}

\begin{lem}\label{preservation-of-phantoms}
Let $f:X \to Y$ be a homology equivalence and $h: Y \to \Omega Z$ be a map to a loop space. If $h \circ f$ is a phantom, then so is $h$.
\end{lem}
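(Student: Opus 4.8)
Let $f:X \to Y$ be a homology equivalence and $h: Y \to \Omega Z$ be a map to a loop space. If $h \circ f$ is a phantom, then so is $h$.

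The plan is to exploit the fact that a phantom map into a loop space (equivalently, an $H$-space of the homotopy type of a loop space) is detected by a cohomological condition, and then to transfer that condition along the homology equivalence $f$. First I would recall the standard characterisation: for a map $g : A \to \Omega Z$, being phantom (weakly nullhomotopic) is equivalent to the vanishing of $g$ on every finite CW subcomplex of $A$, and since $\Omega Z$ is an $H$-space one may further reduce matters to a statement about the skeletal filtration of $A$. More precisely, the set of phantom classes in $[A, \Omega Z]$ can be identified with $\varprojlim^1_n [\Sigma A_n, Z]$ where $A_n$ runs over the skeleta (or over all finite subcomplexes) of $A$; this is the classical Milnor--type description of phantom maps into a loop space. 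Thus "$h$ is phantom" is equivalent to "$h|_{A_n}$ is nullhomotopic for every finite subcomplex $A_n \subset A$", and the latter can be tested after smashing/suspending, where it becomes a statement about the induced maps on the groups $[\Sigma A_n, Z]$.

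Next I would use the hypothesis that $h \circ f$ is phantom, i.e.\ $h \circ f|_{K}$ is nullhomotopic for every finite complex $K \subset X$ (or every map $K \to X$ from a finite complex). Given a finite subcomplex $L \subset Y$, I want to conclude $h|_L$ is nullhomotopic. The key point is that although $f$ need not be surjective on such finite pieces, it is a homology equivalence, hence (after replacing $X, Y$ by CW models) a map which is an isomorphism on homology with all local coefficient systems is not available in general --- but here we only need ordinary homology, and the relevant target is a loop space, so obstruction theory for lifting/extending along $f$ only sees homology groups. Concretely: the obstructions to nullhomotoping $h|_L$ lie in the groups $H^{q}(L; \pi_q(\Omega Z))$, and the transgression/comparison with $X$ via $f$ shows these obstruction classes are pulled back, under $f^*$, from classes which vanish because $h \circ f$ is phantom. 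Since $f^*: H^*(Y; M) \to H^*(X; M)$ is an isomorphism for every abelian group $M$ (as $f$ is a homology equivalence), the obstruction classes for $h|_L$ must themselves vanish, and a Milnor $\varprojlim^1$ argument over the tower of finite subcomplexes then upgrades this to: $h$ restricted to every finite subcomplex of $Y$ is nullhomotopic, i.e.\ $h$ is phantom.

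The main obstacle, and the step requiring the most care, is the passage from "$h\circ f$ phantom" to "the obstruction classes for $h$ vanish": one must be careful that $f$ being merely a homology equivalence (not a weak equivalence) is genuinely enough. The cleanest way I would organise this is to factor $f$ as a cofibration followed by a homology equivalence and then observe that the relative groups $H^*(M_f, X; M)$ and $H^*(Y; M)$-comparison (via the mapping cylinder $M_f$) are controlled entirely by singular homology, which is where the hypothesis is used; combined with the fact that $[\,-, \Omega Z]$ converts the skeletal filtration into a tower of abelian groups whose $\varprojlim^1$ measures phantom-ness. If an abstract reference is preferred, one can instead cite the fact that a homology equivalence induces a bijection on $[\,-,\,W]$ for $W$ a nilpotent (in particular, simple, in particular loop) space up to the relevant $\varprojlim^1$ ambiguity, and phantom-ness is precisely that ambiguity; this makes the conclusion essentially formal. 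I would present the elementary obstruction-theoretic version, as it keeps the argument self-contained and matches the level of the rest of this section.
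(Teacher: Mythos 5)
There is a genuine gap. The central step in your argument---that the obstruction classes for nullhomotoping $h|_L$, for a finite subcomplex $L \subset Y$, are controlled by the homology isomorphism $f^*\colon H^*(Y;M) \to H^*(X;M)$---does not go through. Those obstruction classes live in $H^*(L;\pi_*(\Omega Z))$, not in $H^*(Y;\pi_*(\Omega Z))$, and there is no map relating a finite subcomplex $L$ of $Y$ to finite subcomplexes of $X$ (there is a map $X \to Y$, but not the other way). Even the "total" obstruction theory for nullhomotoping $h$ on all of $Y$ (which does live in $H^*(Y;\pi_*(\Omega Z))$) cannot be compared this way: $h\circ f$ being phantom does \emph{not} mean $h\circ f$ is nullhomotopic, and phantom maps typically have nonvanishing obstruction classes. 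Your suggested shortcut---that a homology equivalence induces a bijection on $[\,-,W]$ for nilpotent $W$ up to $\varprojlim^1$ ambiguity---is also not a correct statement in general; loop spaces need not be $H\bZ$-local.

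The paper takes a different and cleaner route that avoids all of this. After arranging for $X$, $Y$ to be connected pointed CW complexes, it first observes that the adjoint $k^{\mathrm{ad}}\colon\Sigma X\to Z$ of a phantom map $k\colon X\to\Omega Z$ is again phantom (because any finite subcomplex of $\Sigma X$ factors through $\Sigma L$ for a finite $L\subset X$). Then, since $\Sigma X$ and $\Sigma Y$ are simply connected CW complexes and $\Sigma f$ is a homology equivalence, the Whitehead theorem makes $\Sigma f$ a homotopy equivalence. For a finite $F$ and $g\colon F\to Y$, one lifts $\Sigma g$ through $\Sigma f$ to get $m\colon\Sigma F\to\Sigma X$, and then
$(h\circ g)^{\mathrm{ad}} \simeq h^{\mathrm{ad}}\circ\Sigma f\circ m = (h\circ f)^{\mathrm{ad}}\circ m$,
which is nullhomotopic because $(h\circ f)^{\mathrm{ad}}$ is phantom and $\Sigma F$ is finite. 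The essential idea you are missing is suspension: it turns the homology equivalence $f$ into an honest homotopy equivalence, after which the loop--suspension adjunction does all the remaining work, with no obstruction theory or $\varprojlim^1$ bookkeeping required.
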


\begin{proof}
Without loss of generality, we can assume that $X$ and $Y$ are connected CW complexes and that $h$ and $f$ are pointed maps. It is also enough to prove that $h$ becomes nullhomotopic when composed with \emph{pointed} maps with finite CW source. 
Let $F$ be a finite pointed complex and $g:F \to Y$ be a map; we wish to show that $h \circ g$ is nullhomotopic. It is enough to prove that $(h \circ g)^{ad}= h^{ad} \circ (\Sigma g): \Sigma F \to \Sigma Y \to Z$ is nullhomotopic. Now as $f$ was assumed to be a homology equivalence of connected CW complexes, $\Sigma f : \Sigma X \to \Sigma Y$ is a homotopy equivalence. Thus there exists a map $m: \Sigma F \to \Sigma X$ with $(\Sigma f) \circ m \simeq \Sigma g$.
Hence
$$(h \circ g)^{ad}= h^{ad} \circ (\Sigma g) \simeq h^{ad} \circ (\Sigma f) \circ m = (h \circ f)^{ad} \circ m.$$

But by assumption, $h \circ f$ is a phantom and thus by Lemma \ref{preservation-of-phantoms0}, $(h \circ f)^{ad}$ is a phantom as well. As $\Sigma F$ is finite, we find that $(h \circ g)^{ad}$ and thus $h \circ g$ is nullhomotopic, as claimed.
\end{proof}

\begin{proof}[Proof of Proposition \ref{prop:phantom}]
The first statement implies the second, by Lemma \ref{preservation-of-phantoms0}. We claim that the two compositions 
\[
\bas(\bm{\beta}_\infty^+) \circ \Psi, \hat{a} \circ \Psi: \gB_{\infty} \lra \gX \lra \loopinf{+2n}\KO
\]
agree up to phantoms. Since $\Psi$ is a homology equivalence, this will imply---by Lemma \ref{preservation-of-phantoms}---that $\bas(\bm{\beta}_\infty^+) $ and $\hat{a}$ agree up to phantoms, as desired. Let $f:K \to \gB_{\infty}$ be a map from a finite CW complex. By compactness, it lands in some $\gB_{k}$. To finish the proof, it is thus enough to check that the compositions 
\[
\gB_k \lra \gX \stackrel{\bas(\bm{\beta}_\infty^+)}{\lra} \loopinf{+2n} \KO \; \text{ and } \; \gB_k \lra \gX \overset{\hat{a}}\lra \loopinf{+2n} \KO
\]
agree up to phantoms. But by Assumption \ref{key-properties} (\ref{it:properties:2}), the composition $\gB_k \to \gX \overset{\hat{a}}\to \loopinf{+2n} \KO$ is homotopic to (the pullback to $\gB_k$ of) the family index $\ind{E_k,h_\round^{2n-1}} \in KO^{-2n} (B_k)$, up to phantoms.
Moreover, by Proposition \ref{result-of-obstruction-argument}, Lemma \ref{lem:RelCycle} and Proposition \ref{relative-index-class-properties}, the composition $\gB_k \to \gX \stackrel{\bas(\bm{\beta}_\infty^+)}{\to} \loopinf{+2n} \KO$ is homotopic to (the pullback to $\gB_k$ of) the family index $\ind{E_k,h_\round^{2n-1}} \in KO^{-2n} (B_k)$ as well. 
\end{proof}

\subsection{Starting the proof of Theorem \ref{factorization-theorem:even-case}}\label{sec:Starting}

Consider a sequence of spin cobordisms
\begin{equation}\label{eq:seqSpinCob}
\emptyset \stackrel{W}{\rightsquigarrow} K\vert_0 \stackrel{K\vert_{[0,1]}}{\rightsquigarrow} K\vert_1  \stackrel{K\vert_{[1,2]}}{\rightsquigarrow} K\vert_2  \stackrel{K\vert_{[2,3]}}{\rightsquigarrow} K\vert_3 \rightsquigarrow \cdots
\end{equation}
as in the previous section. The associated fibre bundles $\pi_k : E_k \to B_k$ admit unique spin structures---which are compatible---by Lemma \ref{lem:existence-spin-structure}. Hence we obtain a map
$$\underset{k \to \infty}\hocolim \, B_k \lra \underset{k \to \infty}\hocolim \, \Omega_{[\emptyset, K\vert_k]} \Omega^{\infty-1} \MTSpin(2n)$$
on homotopy colimits. Each of the maps
$$\Omega_{[\emptyset, K\vert_k]} \Omega^{\infty-1} \MTSpin(2n) \lra \Omega_{[\emptyset, K\vert_{k+1}]} \Omega^{\infty-1} \MTSpin(2n),$$
which concatenates a path with the path obtained from the Pontrjagin--Thom construction applied to $K\vert_{[k,k+1]}$, is a homotopy equivalence, and as $K\vert_{-1}=\emptyset$  we obtain a map
\begin{equation}\label{eq:alphainfty}
\alpha_\infty : B_\infty := \underset{k \to \infty}\hocolim \, B_k \lra \loopinf{}_0 \MTSpin (2n)
\end{equation}
well-defined up to homotopy. 

\emph{Suppose for now that the map $\alpha_\infty$ is acyclic.} Then it satisfies Assumption \ref{key-properties} (\ref{it:properties:1}), and for Assumption \ref{key-properties} (\ref{it:properties:2}) we take the class
$$\Omega^{\infty}(\lambda_{-2n}) \in KO^{-2n}(\Omega^\infty_0 \MTSpin(2n))$$
represented by the infinite loop map of the $KO$-theory Thom class of $\MTSpin(2n)$.

\begin{prop}
The maps $\ind{E_k, h_\round^{2n-1}} : B_k \to \Omega^{\infty+2n} \KO$ and $B_k \to B_{\infty} \stackrel{\alpha_{\infty}}{\to} \Omega^{\infty}_0 \MTSpin (2n) \stackrel{\Omega^\infty(\lambda_{-2n})}{\to} \Omega^{\infty+d} \KO$ agree up to phantom maps.
\end{prop}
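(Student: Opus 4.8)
The plan is to deduce this from the Clifford--linear family index theorem for manifold bundles with boundary (Theorem~\ref{cl-index-3}), after identifying the restriction of $\alpha_\infty$ to $B_k$ with the appropriate Pontrjagin--Thom map. Note that this Proposition is exactly the verification of Assumptions~\ref{key-properties}~(\ref{it:properties:2}) for the choice $X = \Omega^{\infty}_0\MTSpin(2n)$ and $\hat a = \Omega^\infty(\lambda_{-2n})$.

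First I would fix a psc metric: by Proposition~\ref{abelianness-lemma}~(\ref{it:abelianness-lemma:2}) we have the metric $m_k := g_{-1}\cup g_0\cup\cdots\cup g_{k-1}\in\cR^+(W_k)_{h_\round^{2n-1}}$. Regarding $W_k$ as a cobordism the other way round, $W_k^{op}:K\vert_k\rightsquigarrow\emptyset$, this exhibits $W_k^{op}$ as a spin nullbordism of $S^{2n-1}=K\vert_k$ carrying a psc metric with boundary condition $h_\round^{2n-1}$. Applying Theorem~\ref{cl-index-3} with $E=E_k$, $M=S^{2n-1}$, $h=h_\round^{2n-1}$, $V=W_k^{op}$ and $g=m_k$ shows that $\ind{E_k,h_\round^{2n-1}}$ and $(\loopinf{}\kothom{2n})\circ\alpha_{E_k,W_k^{op}}$ are weakly homotopic as maps $B_k\to\loopinf{+2n}\KO$, where $\alpha_{E_k,W_k^{op}}=\theta_{W_k^{op}}\circ\alpha_{E_k}$ and $\loopinf{}\kothom{2n}=\Omega^\infty(\lambda_{-2n})$ is the map underlying the class chosen in Assumptions~\ref{key-properties}~(\ref{it:properties:2}).

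The second, and main, step is to identify the composite $B_k\to B_\infty\stackrel{\alpha_\infty}{\lra}\Omega^{\infty}_0\MTSpin(2n)$ with $\alpha_{E_k,W_k^{op}}$ up to homotopy. This is a matter of unwinding the construction of $\alpha_\infty$: it is $\alpha_{E_k}$ followed by the canonical map into $\hocolim_j\Omega_{[\emptyset,K\vert_j]}\Omega^{\infty-1}\MTSpin(2n)$, followed by the identification of this homotopy colimit of homotopy equivalences with $\Omega^{\infty}_0\MTSpin(2n)=\Omega_{[\emptyset,\emptyset]}\Omega^{\infty-1}\MTSpin(2n)$. The latter identification is the homotopy inverse of the equivalence from the $(-1)$st stage (where $K\vert_{-1}=\emptyset$), which on the $k$th stage is concatenation with the reverse of the path obtained from the Pontrjagin--Thom construction applied successively to $W, K\vert_{[0,1]},\dots,K\vert_{[k-1,k]}$, i.e.\ with the path determined by the nullbordism $W_k^{op}$ --- which is precisely $\theta_{W_k^{op}}$. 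Hence $\alpha_\infty|_{B_k}\simeq\alpha_{E_k,W_k^{op}}$, and composing with $\Omega^\infty(\lambda_{-2n})$ and combining with the first step shows that $\ind{E_k,h_\round^{2n-1}}$ and the composite of the statement are weakly homotopic; in particular they agree up to phantom maps.

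I expect the only genuine obstacle to be the bookkeeping in the second step, keeping straight which nullbordism (equivalently, which path in $\Omega^{\infty-1}\MTSpin(2n)$) governs the identification of the homotopy colimit with $\Omega^{\infty}_0\MTSpin(2n)$. A variant that sidesteps this is to apply Theorem~\ref{cl-index-3} instead with the fixed nullbordism $V=D^{2n}$ and torpedo metric $g_\tor^{2n}$: then $\alpha_\infty|_{B_k}$ and $\alpha_{E_k,D^{2n}}$ differ by loop addition with the constant map at the Pontrjagin--Thom point of the closed spin manifold $W_k\cup_{S^{2n-1}}\overline{D^{2n}}$, which is null-bordant since $W_k$ is spin cobordant to $D^{2n}$ relative to its boundary; this point therefore lies in the basepoint component of $\loopinf{+2n}\KO$ after applying $\Omega^\infty(\lambda_{-2n})$, so the discrepancy vanishes up to homotopy and one concludes as above.
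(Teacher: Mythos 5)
Your proposal is correct and follows essentially the same route as the paper: the paper glues $C\times W_k^{op}$ with the metric $m_k$ onto $E_k$ to get a closed bundle $E'$, invokes Corollary~\ref{additivity-concordance-version} to identify $\ind{E_k,h_\round^{2n-1}}$ with $\ind{E'}$, observes ``by construction'' that $\alpha_\infty|_{B_k}\simeq\alpha_{E'}$, and applies the Atiyah--Singer family index theorem --- which is exactly your first step (invoking Theorem~\ref{cl-index-3}, which packages the gluing and additivity) combined with your second step (identifying $\alpha_\infty|_{B_k}$ with $\alpha_{E_k,W_k^{op}}$). One small caveat about your closing ``variant'': identifying the discrepancy between $\alpha_\infty|_{B_k}$ and $\alpha_{E_k,D^{2n}}$ as loop-addition with the Pontrjagin--Thom point of $W_k\cup\overline{D^{2n}}$ already presupposes the identification $\alpha_\infty|_{B_k}\simeq\alpha_{E_k,W_k^{op}}$ from your main argument, so it does not genuinely sidestep the bookkeeping --- it only softens the dependence on orientation conventions once that identification is granted.
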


\begin{proof}
Let $C$ be compact and $C \to B_k$ be a map which classifies a spin fibre bundle $E \to C$ with fibre $W_k$ and trivialised boundary. On the boundary bundle $C \times K\vert_{k}$, there is the psc metric $h_\round^{2n-1}$. We give the trivial fibre bundle $C \times W_{k}^{op}$ the fibrewise psc metric $m_k$ constructed in Proposition \ref{abelianness-lemma} \ref{it:abelianness-lemma:2}. We let $E':=E_k \cup_{C \times K\vert_k } (C \times W_{k}^{op})$, which is the bundle $E_k$ with a trivial bundle glued to it, and has closed fibres. Since $m_k$ is psc, we find by the additivity theorem (Corollary \ref{additivity-concordance-version}) that
$$\ind{E_k,h_\round^{2n-1}} = \ind{E'}.$$
By construction, the map $C \to B_k \to B_{\infty} \stackrel{\alpha_{\infty}}{\to} \Omega^{\infty}_0 \MTSpin (2n)$ is the map $\alpha_{E'}: C \to \Omega^{\infty}_0 \MTSpin (2n)$. By the Atiyah--Singer family index theorem (Theorem \ref{cl-index-3}), these two maps out of $C$ are homotopic. 
\end{proof}

We have verified the hypotheses of Theorem \ref{factorization-theorem}, which thus provides a map
$$\rho: \Omega^{\infty+1} \MTSpin(2n) \lra \cR^+(W_0)_{h_\round^{2n-1}}$$
such that the composition
$$\Omega^{\infty+1} \MTSpin(2n) \overset{\rho}\lra \cR^+(W_0)_{h_\round^{2n-1}} \overset{\inddiff_{h_{-1}}}\lra \Omega^{\infty+2n+1} \KO$$
is homotopic to $\Omega^\infty(\lambda_{-2n})$ up to phantom maps. This establishes Theorem \ref{factorization-theorem:even-case} for the manifold $W=W_0$, and the general case then follows from Proposition \ref{inheritance-of-detection}.


Thus in order to finish the proof of Theorem \ref{factorization-theorem:even-case} in dimension $2n$ we must produce a spin cobordism $W : \emptyset \leadsto S^{2n-1}$ such that the following three conditions are satisfied
\begin{enumerate}[(i)]
\item $W$ is 1-connected, 

\item $W$ is spin cobordant to $D^{2n}$ relative to its boundary,

\item the associated map $\alpha_\infty$ is acyclic.
\end{enumerate}

The main result of \cite{GRW} gives criteria on $W$ for the map $\alpha_\infty$ to be a homology isomorphism, as long as $2n \geq 6$. This result is enough to prove Theorem \ref{factorization-theorem:even-case} in the case $2n=6$, and we explain it first.

\subsubsection{Finishing the proof of Theorem \ref{factorization-theorem:even-case}: the 6-dimensional case}\label{finishingproof:dim6}

In this case we let $W=D^{6}$, which clearly satisfies the first two conditions. The manifold $W_k$ is then the $k$-fold connected sum $\#^k (S^3 \times S^3)$, minus a disc. To establish the acyclicity of the map $\alpha_\infty$ in this case, we use the following theorem.

\begin{thm}[Galatius--Randal-Williams]\label{grw-theorem1}
The map 
\begin{equation}\label{eq:Acyclic}
\alpha_\infty : \underset{k \to \infty}\hocolim \, B \Diff_\partial (\#^k (S^3 \times S^3)\setminus D^6) \lra  \loopinf{}_0 \MTSpin(6)
\end{equation}
is a homology equivalence.
\end{thm}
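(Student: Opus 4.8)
The plan is to deduce Theorem \ref{grw-theorem1} directly from the main homological stability and group-completion results of \cite{GRW}, rather than to prove anything new. The key point is that for $2n = 6$ the manifolds $W_k = \#^k(S^3\times S^3)\setminus D^6$ are precisely of the form covered by the Galatius--Randal-Williams theory: they are obtained from a fixed simply-connected manifold by iterated connected sum with $S^n\times S^n$, and their tangential structure (the spin structure) is of the relevant type. The role of $D^6$ as starting object guarantees conditions (i) and (ii) trivially, and one checks that the map $W_0 = D^6 \to B\Spin(6)$ is $3$-connected (indeed $6$-connected), so the hypotheses on the stable tangential structure needed by \cite{GRW} are met.

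First I would recall the precise statement of the main theorem of \cite{GRW}: for the moduli space of $2n$-manifolds with $\theta$-structure (here $\theta$ the spin structure), the scanning/parametrised Pontrjagin--Thom map $B\Diff^\theta(W_g) \to \loopinf{}\MTSpin(2n)$ induces a homology isomorphism onto the path component it hits, after passing to the limit $g\to\infty$, provided $2n\geq 6$ and the manifolds are obtained by stabilising with $S^n\times S^n$. The identification of $B\Diff_\partial(W_k)$ with the relevant moduli space of manifolds-with-boundary (using the point-set model described in the introduction, where $B\Diff_\partial(W)$ is the space of submanifolds of a half-space agreeing with a fixed collar over $\partial W$) and the identification of the stabilisation maps with gluing in $K = ([0,1]\times S^5)\#(S^3\times S^3)$ are then bookkeeping: gluing in $K$ is exactly connected sum with $S^3\times S^3$ up to the boundary collar. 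One must also check that the map $\alpha_\infty$ of \eqref{eq:alphainfty} agrees up to homotopy with the scanning map of \cite{GRW}, which is standard since both are incarnations of the parametrised Pontrjagin--Thom construction; the only subtlety is the bookkeeping with paths in $\loopinf{-1}\MTSpin(6)$ versus the based loop space, handled by the choice of nullbordism $K|_{-1} = \emptyset$.

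The one genuine point requiring care is that the theorem of \cite{GRW} a priori only gives a homology isomorphism onto \emph{one path component} of $\loopinf{}\MTSpin(6)$, namely the component hit by $\alpha_\infty$, whereas the statement here refers to $\loopinf{}_0\MTSpin(6)$, the base-point component. I would address this by observing that the bundle $E_k \to B_k$ has fibres $W_k$ which are spin-nullbordant relative to $S^5$ (since $D^6$ is, and $S^3\times S^3$ bounds $S^3\times D^4$ spin-ly), so the Pontrjagin--Thom invariant lands in the component of $\emptyset$, i.e. the zero component; alternatively this is exactly condition (ii) on $W$ feeding through. Thus the homology equivalence is onto $\loopinf{}_0\MTSpin(6)$ as claimed.

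The main obstacle — such as it is — is not mathematical depth but precise cross-referencing: one must match the hypotheses of \cite{GRW} (which are phrased in terms of $\theta$-structures, the connectivity of $\ell\colon W \to B$ where $B$ is the target of the tangential structure, and the class of admissible stabilisations) to the concrete spin setting here, and one must verify that the map called $\alpha_\infty$ really is the scanning map up to homotopy. I expect the bulk of the written proof to consist of citing \cite[Theorem ...]{GRW} and then a paragraph explaining why the moduli-space-of-manifolds formalism there specialises to the present $B\Diff_\partial$ formulation with the stated stabilisation maps, together with the remark that spin-nullbordism of the fibres pins down the path component. No new geometry or homotopy theory is needed; the work in dimension $6$ is to package the input correctly, and this is presumably why the authors single it out as the case admitting "several simplifications" compared to the general case, which instead invokes the harder homological stability theorem of \cite{GRWHomStab2}.
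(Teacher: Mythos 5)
Your proposal is correct and takes the same route as the paper, which simply cites \cite[Theorem 1.8]{GRW} and notes that the case at hand is worked out explicitly in \cite[\S 1.5]{GRW}. Two small slips are worth noting: the parenthetical claim that $D^6 \to B\Spin(6)$ is ``indeed $6$-connected'' is false, since $\pi_4(B\Spin(6)) = \pi_3(\Spin(6)) \cong \bZ$ shows this map is exactly $3$-connected (which is what is needed and what you also state); and the relevant hypothesis from \cite{GRW} is not $n$-connectivity of the classifying map of the starting manifold (that is the condition used in \cite{GRWHomStab2}) but rather that iterated gluing of $K = ([0,1]\times S^5)\#(S^3\times S^3)$ forms a \emph{universal spin end} in the sense of \cite[Definition 1.7]{GRW}, a nontrivial fact about $\MTSpin(6)$ established in \cite[\S 1.5]{GRW} which fails for $2n > 6$ and is precisely why the paper must invoke \cite{GRWHomStab2} in the general case.
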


This is a consequence of the general theorem \cite[Theorem 1.8]{GRW}, and is explicitly discussed in \cite[\S 1.5]{GRW}. There is one minor observation required: in \cite{GRW} the notation $\Diff_\partial(W)$ denotes the colimit of the groups of diffeomorphisms fixing smaller and smaller collars, whereas in this paper it denotes the group of diffeomorphisms fixing a single collar. But the natural homomorphism between these groups is a weak homotopy equivalence by an argument like that of Lemma \ref{collar-stretching}, and we will not distinguish between them.

Having a homology equivalence is not quite enough for the obstruction theory in the previous subsection, where we needed the map to be acyclic. However, we also have the following.

\begin{prop}
The space $\loopinf{}_0 \MTSpin (6)$ is simply-connected.
\end{prop}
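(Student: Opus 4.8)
The plan is to identify $\pi_1(\loopinf{}_0\MTSpin(6)) = \pi_1(\Omega^\infty\MTSpin(6)) = \pi_0(\Omega^{\infty+1}\MTSpin(6)) = \pi_1(\MTSpin(6))$ as a stable homotopy group of the spectrum $\MTSpin(6)$, and then to compute it directly. Recall $\MTSpin(6)$ is the Thom spectrum $\mathrm{Th}(-\gamma_6)$ over $B\Spin(6)$, so its homotopy is accessible via the Thom isomorphism and the Atiyah--Hirzebruch spectral sequence. Since $B\Spin(6)$ is $2$-connected (as $\Spin(6)$ is $2$-connected), the bottom cell of $\MTSpin(6)$ is in dimension $-6$, and the Thom isomorphism $H_k(\MTSpin(6);\mathbb{Z}) \cong H_{k+6}(B\Spin(6);\mathbb{Z})$ gives $H_*(\MTSpin(6);\mathbb{Z}) = 0$ for $* < -6$, with $H_{-6} = \mathbb{Z}$, $H_{-5}=H_{-4}=0$ (since $H_1(B\Spin(6))=H_2(B\Spin(6))=0$), and similarly in the relevant low range.

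First I would run the Atiyah--Hirzebruch spectral sequence $E^2_{p,q} = H_p(\MTSpin(6); \pi_q(\mathbb{S})) \Rightarrow \pi_{p+q}(\MTSpin(6))$ in total degree $1$. The contributing groups are $H_1(\MTSpin(6);\pi_0\mathbb{S}) = H_7(B\Spin(6);\mathbb{Z}) \otimes \mathbb{Z}$ twisted back by six, i.e. $H_{-6+k}$ for small $k$; concretely in total degree $1$ one needs $H_{1}$, $H_{0}$ (times $\pi_1\mathbb{S} = \mathbb{Z}/2$), and $H_{-1}$ (times $\pi_2\mathbb{S}=\mathbb{Z}/2$) of $\MTSpin(6)$. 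Under the Thom iso these are $H_7$, $H_6$, $H_5$ of $B\Spin(6)$. Now $B\Spin(6)$ has $H_*(B\Spin(6);\mathbb{Z})$ in low degrees: $H_0 = \mathbb{Z}$, $H_1=H_2=H_3=0$, $H_4 = \mathbb{Z}$ (first Pontryagin/$c_2$ class, and for $\Spin(6)\cong SU(4)$ one can use $B SU(4)$), $H_5 = 0$. So $H_5(B\Spin(6)) = 0$ kills the $\pi_2\mathbb{S}$ contribution, and $H_{-1}(\MTSpin(6)) = 0$; similarly $H_{0}(\MTSpin(6)) = 0$ since $H_6(B\Spin(6))$ need not vanish but contributes in a different total degree. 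I would carefully recheck: total degree $1$ means $p+q=1$; with $q \geq 0$ and $H_p(\MTSpin(6))$ nonzero only for $p \geq -6$, the cells are $p=1$ ($q=0$), $p=0$ ($q=1$), $p=-1$ ($q=2$), etc. Translating via Thom: $p=1 \leftrightarrow H_7(BSpin(6))$, $p=0 \leftrightarrow H_6$, $p=-1 \leftrightarrow H_5$. Using $B\Spin(6) \simeq BSU(4)$ with $H^*(BSU(4);\mathbb{Z}) = \mathbb{Z}[c_2,c_3,c_4]$, one gets $H_5 = H_6 = H_7 = 0$ (the first nonzero positive-degree homology is in degree $4$, then nothing until degree $6$ where... actually $H_6(BSU(4)) = 0$ too since there is no product of the generators in degree $6$ and no degree-$6$ generator). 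Hence all $E^2$ terms in total degree $1$ vanish, so $\pi_1(\MTSpin(6)) = 0$.

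The main obstacle will be getting the low-degree (co)homology of $B\Spin(6)$ exactly right, including torsion, and making sure I have not mis-indexed the Thom isomorphism or the spectral sequence. The cleanest route is to exploit the exceptional isomorphism $\Spin(6) \cong SU(4)$, so that $B\Spin(6) \simeq BSU(4)$ has torsion-free cohomology $\mathbb{Z}[c_2, c_3, c_4]$ with $|c_i| = 2i$, giving $\widetilde{H}^k(B\Spin(6);\mathbb{Z}) = 0$ for $k = 1,2,3,5,6,7$ and $H^4 = \mathbb{Z}\langle c_2\rangle$. Feeding this into the Thom isomorphism and the Atiyah--Hirzebruch spectral sequence for $\MTSpin(6) = \mathrm{Th}(-\gamma_6)$ then shows that $\pi_1(\MTSpin(6)) = 0$, hence $\pi_1(\loopinf{}_0\MTSpin(6)) = 0$, i.e. $\loopinf{}_0\MTSpin(6)$ is simply-connected. (One should also note $\loopinf{}_0\MTSpin(6)$ is path-connected by definition of the basepoint component, so "simply-connected" is the full statement.) An alternative, should the spectral-sequence bookkeeping prove delicate, is to use the cofibre sequence $\MTSpin(6) \to \Sigma^{-6}\MSpin$ relating $\MTSpin(6)$ to the connective spin cobordism spectrum together with the known low-degree spin bordism groups $\Omega^{\Spin}_* $, but the $BSU(4)$ computation is the most direct.
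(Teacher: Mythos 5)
There is a genuine gap: your analysis of the $E^2$-page in total degree $1$ is incomplete and contains an error, so the conclusion ``all $E^2$ terms in total degree $1$ vanish'' is not justified.

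First, $H^6(BSU(4);\bZ)\neq 0$: the generator $c_3$ has degree $6$, so $H_6(B\Spin(6);\bZ)\cong \bZ$ and hence $E^2_{0,1}=H_0(\MTSpin(6);\pi_1\mathbb{S})\cong H_6(B\Spin(6);\bZ/2)\cong\bZ/2$ is \emph{nonzero}. Second, and more seriously, you stopped the inspection at $p=-1$, but $\MTSpin(6)=\Th(-\gamma_6)$ has cells in dimensions $\geq -6$, so you must take $p$ all the way down to $-6$ (paired with $q=p+1-p$ up to $q=7$). This brings in two further nonzero contributions: at $(p,q)=(-2,3)$ one has $E^2_{-2,3}=H_4(B\Spin(6);\pi_3\mathbb{S})\cong\bZ/24$, and at $(p,q)=(-6,7)$ one has $E^2_{-6,7}=H_0(B\Spin(6);\pi_7\mathbb{S})\cong\bZ/240$ (the bottom cell of $\MTSpin(6)$ is a sphere, and $\pi_7\mathbb{S}\cong\bZ/240$). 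So the $E^2$-page along the line $p+q=1$ contains $\bZ/2\oplus\bZ/24\oplus\bZ/240$ rather than $0$, and to conclude $\pi_1\MTSpin(6)=0$ you would need to exhibit differentials killing all of these classes. That is exactly where the real work lies, and it is not straightforward; the paper avoids it by quoting the computation of $\pi_1$ from \cite[Lemma~5.7]{GRWAb} (with an alternative via \cite{BDS}). Your high-level strategy --- identify $\pi_1(\loopinf{}_0\MTSpin(6))$ with $\pi_1$ of the spectrum and exploit $B\Spin(6)\simeq BSU(4)$ --- is reasonable, but the Atiyah--Hirzebruch bookkeeping as written does not establish the claim.
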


This follows from recent calculations of Galatius and the third named author \cite[Lemma 5.7]{GRWAb}. We have been informed by 
B\"okstedt that his calculations with Dupont and Svane \cite{BDS} can be used to give an alternative proof.
It follows from this proposition that the map \eqref{eq:Acyclic} is actually acyclic (as the target is simply-connected, so there are no local coefficient systems to check), which finishes the proof of Theorem \ref{factorization-theorem:even-case} for $d=6$. 

\subsection{Finishing the proof of Theorem \ref{factorization-theorem:even-case}}\label{finishingproof:alldim}


There are three ways in which the case $2n > 6$ is more difficult to handle than the case $2n=6$. Firstly, the infinite loop space $\Omega^\infty_0 \MTSpin (2n)$ is not necessarily simply-connected, so it is not automatic that the homology equivalences coming from \cite{GRW} are acyclic. Secondly, the manifold obtained by the countable composition of the cobordisms $K = ([0,1] \times S^{2n-1}) \# (S^n \times S^n)$ does not form a ``universal spin-end" in the sense of \cite[Definition 1.7]{GRW} unless $2n = 6$, and so the results of \cite{GRW} do not apply. Thirdly, even if the results of \cite{GRW} did apply to this stabilisation, we must show that there is a spin cobordism $W : \emptyset \leadsto S^{2n-1}$ whose structure map $\ell_W : W \to B\Spin(2n)$ is $n$-connected and which in addition satisfies the conditions given in Section \ref{sec:Starting}.

The first two of these difficulties can be avoided by appealing instead to the results of \cite{GRWHomStab2}, which build on those of \cite{GRW}. These results upgrade those of \cite{GRW} to always give acyclic maps instead of merely homology equivalences, and to allow more general stabilisations than by ``universal ends". The third difficulty must be confronted directly, and we shall do so shortly. First, let us state the version of the result of \cite{GRWHomStab2} which we shall use, and show how to extract it from \cite{GRWHomStab2}.

\begin{thm}[Galatius--Randal-Williams]
Let $W : \emptyset \leadsto S^{2n-1}$ be a spin cobordism such that the structure map $\ell_W : W \to B\Spin(2n)$ is $n$-connected. Then the map
$$\alpha_\infty : \underset{k \to \infty} \hocolim \, B\Diff_\partial(W_k) \lra \Omega_0^\infty \MTSpin(2n)$$
is acyclic.
\end{thm}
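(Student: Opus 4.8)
The plan is to recognise $\alpha_\infty$ as an instance of the parametrised scanning (Pontrjagin--Thom) map of \cite{GRWHomStab2} and then to invoke their acyclicity theorem. Write $\theta\colon B\Spin(2n)\to BO(2n)$ for the fibration classifying the universal spin bundle, so that $\MTSpin(2n)=MT\theta$ and a $\theta$-structure on a $2n$-manifold is the same thing as a spin structure. The first step is to reformulate the source: since each $W_k$ and its boundary $K\vert_k=S^{2n-1}$ are simply-connected and $W_k$ is spin, the space of $\theta$-structures on $W_k$ restricting to the fixed one on the boundary is weakly contractible — it is nonempty with trivial $\pi_0$ by Lemma \ref{lem:existence-spin-structure}, and the remaining obstruction and indeterminacy groups $H^0(W_k,\partial W_k;\bZ/2)$ and $H^1(W_k,\partial W_k;\bZ/2)$ vanish because $W_k$ is simply-connected with connected non-empty boundary. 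Hence $B_k:=B\Diff(W_k,K\vert_k)$ is weakly equivalent, compatibly for varying $k$, to the moduli space $B\Diff^\theta(W_k,K\vert_k)$ of spin $W_k$-manifolds with fixed boundary condition, and under this equivalence $\alpha_\infty$ becomes the scanning map of \cite{GRWHomStab2}.

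Next I would identify the stabilisation maps. The cobordism $K=([0,1]\times S^{2n-1})\#(S^n\times S^n)$ is diffeomorphic to $S^n\times S^n$ with two open discs removed, regarded as a self-cobordism of $S^{2n-1}$; equipping it with the restriction of the (unique, as $n\geq 3$) spin structure on $S^n\times S^n$, gluing on a copy of $K$ is precisely the genus stabilisation by $(S^n\times S^n)$ studied in \cite{GRWHomStab2} — this is the stabilisation datum already used in the $2n=6$ case via \cite{GRW}. Since $(S^n\times S^n)\setminus\inter D^{2n}\simeq S^n\vee S^n$, the inclusion $W_k\hookrightarrow W_{k+1}$ is $n$-connected; combined with the hypothesis that $\ell_W\colon W\to B\Spin(2n)$ is $n$-connected, an immediate induction shows $\ell_{W_k}$ is $n$-connected for every $k$.

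With these identifications in place, I would verify the standing hypotheses of \cite{GRWHomStab2}: the base $B\Spin(2n)$ is simply-connected (indeed $2$-connected for $2n\geq 3$), and the $\theta$-structures $\ell_{W_k}$ are $n$-connected. Their main theorem — which strengthens the homology equivalence of \cite{GRW} to an \emph{acyclic} map and permits stabilisation by $(S^n\times S^n)$ in place of a universal $\theta$-end — then yields that
$$\hocolim_k B_k \lra \loopinf{}_\bullet\MTSpin(2n)$$
is acyclic onto the path component it hits. Finally I would pin down that component: it is labelled by the spin bordism class of $W_k$ relative to its boundary, which is independent of $k$, and under the running assumption that $W$ is spin cobordant to $D^{2n}$ relative to $S^{2n-1}$ this class is trivial, so the target is $\loopinf{}_0\MTSpin(2n)$, as claimed.

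The mathematical content is entirely borrowed from \cite{GRW, GRWHomStab2}, so the only real work — and the main obstacle — is the bookkeeping of this translation: confirming that $K$ with its spin structure is an admissible stabilisation datum in the sense of \cite{GRWHomStab2}, that the $n$-connectivity of $\ell_W$ is exactly the hypothesis under which their acyclicity statement (rather than merely the homology equivalence of \cite{GRW}) applies, and that replacing $B\Diff^\theta(W_k,K\vert_k)$ by $B\Diff(W_k,K\vert_k)$ via Lemma \ref{lem:existence-spin-structure} is compatible with all the structure maps appearing in the homotopy colimit.
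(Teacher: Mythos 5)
Your proposal is essentially the paper's proof: both reduce the statement to \cite[Theorem 1.5]{GRWHomStab2} by (a) replacing $B\Diff(W_k,K\vert_k)$ with a moduli space of spin $W_k$'s (using contractibility of the relevant space of boundary-fixing spin structures, which the paper phrases as contractibility of $\Bun^\theta_{\partial,n}(W_k;\hat\ell_P)$), (b) recognising the stabilisation by $K$ as the $(S^n\times S^n)$-stabilisation of \cite{GRWHomStab2}, and (c) pinning down the path component $\Omega^\infty_0\MTSpin(2n)$. The one step you explicitly defer as ``bookkeeping'' --- that the chosen spin structure on $K$ restricts to an \emph{admissible} $\theta$-structure on $W_{1,1}\subset K$ in the sense of \cite[Definition 1.2]{GRWHomStab2} --- is exactly the point the paper does spell out: since $\theta\colon B\Spin(2n)\to BO(2n)$ is $2$-co-connected and $n\geq 3$, the $\theta$-structure on $W_{1,1}$ is unique up to homotopy; admissible ones exist by \cite[\S 2.2]{GRWHomStab2}, so the unique one is admissible. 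With that observation filled in, your argument matches the paper's.
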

\begin{proof}
We adopt the notation used in the introduction of \cite{GRWHomStab2}. Let $\theta : B\Spin(2n) \to BO(2n)$ be the map classifying the universal spin bundle of dimension $2n$. Let us write $\hat{\ell}_W : TW \to \theta^*\gamma_{2n}$ for the bundle map covering $\ell_W$, write $P := S^{2n-1}$, and let $\hat{\ell}_P := \hat{\ell}_W\vert_P$. We have the cobordism $K = ([0,1] \times P) \# (S^n \times S^n)$ and we wish to choose a bundle map $\hat{\ell}_K : TK \to \theta^*\gamma_{2n}$ such that 
\begin{enumerate}[(i)]
\item\label{it:jhlg:1} $\hat{\ell}_K \vert_{\{0\} \times P} = \hat{\ell}_K \vert_{\{1\} \times P} = \hat{\ell}_P$, and

\item $\hat{\ell}_K$ restricted to $W_{1,1} \subset K$ is admissible in the sense of \cite[Definition 1.2]{GRWHomStab2}.
\end{enumerate}
As the map $\theta$ is 2-co-connected, and we have assumed that $n \geq 3$, the manifold $W_{1,1}$ has a unique $\theta$-structure. As admissible $\theta$-structures always exist by \cite[\S 2.2]{GRWHomStab2}, it must be admissible, so it is enough to find a $\hat{\ell}_K$ satisfying (\ref{it:jhlg:1}). But as $K$ may be obtained from $\{0,1\} \times P$ by attaching a single 1-cell followed by $n$-cells and higher, the only obstruction to finding a bundle map $\hat{\ell}_K$ extending given bundle maps over $\{0\} \times P$ and $\{1\} \times P$ is whether these are coherently oriented. Thus there is such a $\hat{\ell}_K$.

The space $\Bun^\theta_{\partial, n}(W_k ; \hat{\ell}_{P})$, of those $\theta$-structures on $W_k$ which restrict to $\hat{\ell}_P$ on $\partial W_k = K\vert_k = P$ and have $n$-connected underlying map, is homotopy equivalent to the space of ($n$-connected) relative lifts
\begin{equation*}
\xymatrix{
P \ar[r]^-{\ell_P} \ar@{^(->}[d] & B \ar[d]^-{\theta}\\
W_k \ar[r]^-\tau \ar@{-->}[ru] & BO(2n)
}
\end{equation*}
of a Gauss map $\tau$ of $W_k$. The obstructions to trivialising an element of the $p$th homotopy group of this space therefore lie in the groups
$$H^{i}(D^{p+1} \times W_k, \partial(D^{p+1} \times W_k) ; \pi_i(BO(2n), B)),\quad i \geq 1.$$
As the map $\theta$ is 2-co-connected these obstruction groups vanish for $i \geq 3$. As the pair $(W_k, P)$ is 1-connected, the pair $(D^{p+1} \times W_k, \partial(D^{p+1} \times W_k))$ is $(p+2)$-connected, and so these obstruction groups also vanish for $i \leq p+2$ so in particular for $i \leq 2$. Therefore the obstruction groups vanish for all $p$, so $\pi_p(\Bun^\theta_{\partial, n}(W_k ; \hat{\ell}_{P}))=0$ for all $p$. 
Thus the natural map gives a weak homotopy equivalence
\begin{equation*}
\Bun^\theta_{\partial, n}(W_k ; \hat{\ell}_{P}) \hcoker \Diff_\partial(W_k) \overset{\sim}\lra B\Diff_\partial(W_k).
\end{equation*}

On the other hand, $\Bun^\theta_{\partial, n}(W_k ; \hat{\ell}_{P}) \hcoker \Diff_\partial(W_k)$ is a path component of the space $\mathscr{N}^\theta_n(P, \hat{\ell}_P)$ of \cite[Definition 1.1]{GRWHomStab2}, and by \cite[Theorem 1.5]{GRWHomStab2} there is an acyclic map
$$\underset{k \to \infty} \hocolim \, \mathscr{N}^\theta_n(P, \hat{\ell}_P) \lra \Omega^\infty \MTSpin(2n),$$
where the homotopy colimit is formed using the endomorphism $- \cup (K, \hat{\ell}_K)$ of $\mathscr{N}^\theta_n(P, \hat{\ell}_P)$. Restricting to the path component 
$$\underset{k \to \infty} \hocolim \, \Bun^\theta_{\partial, n}(W_k ; \hat{\ell}_{P}) \hcoker \Diff(W_k, K\vert_k) \overset{\sim} \lra \underset{k \to \infty} \hocolim \, \Diff(W_k, K\vert_k)$$
this map identifies with $\alpha_\infty$.
\end{proof}

It remains to provide a spin cobordism $W$ which satisfies all conditions we needed so far. We equip $D^{2n}$ with the unique (up to isomorphism) spin structure and $S^{2n-1}$ with that induced on the boundary.
\begin{prop}
For $2n \geq 6$ there exists a spin cobordism $W: \emptyset \leadsto S^{2n-1}$ such that
\begin{enumerate}[(i)]
\item $W$ is spin cobordant relative to its boundary to $D^{2n}$,
\item the structure map $\ell_W  :W \to B \Spin (2n)$ is $n$-connected (as $B \Spin (2n)$ is simply-connected, it follows that $W$ is simply-connected).
\end{enumerate}
\end{prop}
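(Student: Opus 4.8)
The plan is to construct $W$ by surgery, starting from $D^{2n}$ and killing homotopy groups below the middle dimension. First I would recall that $D^{2n}$ is itself a spin cobordism $\emptyset \leadsto S^{2n-1}$, trivially spin cobordant to itself relative to its boundary, but its structure map $\ell_{D^{2n}} : D^{2n} \to B\Spin(2n)$ is far from $n$-connected: $D^{2n}$ is contractible, while $B\Spin(2n)$ has nontrivial homotopy in many degrees $\leq n$. The idea is to attach handles to the interior of $D^{2n}$ to build up the low-dimensional homotopy of $W$ so that $\ell_W$ becomes $n$-connected, while arranging that each handle attachment keeps $W$ spin and, crucially, does not change its relative spin bordism class. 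Since attaching a $k$-handle to the interior with $k \leq n-1$ corresponds (on the level of bordism) to a surgery on a framed embedded $(k-1)$-sphere in the interior of a bounding manifold, and surgeries in codimension $\geq 2$ preserve the spin bordism class of the manifold with its boundary fixed, each step stays within the spin bordism class of $D^{2n}$ rel $S^{2n-1}$.

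The key steps, in order, would be: (1) Choose generators for the homotopy groups $\pi_i(B\Spin(2n))$ for $2 \leq i \leq n-1$ — these are finitely generated since $B\Spin(2n)$ has the homotopy type of a CW complex with finite skeleta in this range (or one can work one prime at a time; in any case, by general position all the relevant classes can be represented). (2) For each such generator, realize it by attaching a handle of the appropriate index to the interior of the current manifold: an $i$-handle has core $D^i \times D^{2n-i}$, and its attaching sphere $S^{i-1} \subset \inter(W)$ has a normal bundle that is trivial because $i-1 < n$ so we are in the stable range and spin manifolds have the relevant obstructions vanishing; the handle can be attached compatibly with the spin structure since the attaching data lives in a $2$-connected-enough regime. (3) After attaching finitely many handles through index $n-1$, the resulting $W$ has $\ell_W : W \to B\Spin(2n)$ an isomorphism on $\pi_i$ for $i < n$ and a surjection on $\pi_n$ — this is the standard "surgery below the middle dimension" argument (cf.\ Kreck's modified surgery, or the handle-trading arguments in \cite{Kreck}); if one wants $\pi_n$-surjectivity one attaches further $n$-handles realizing a generating set of $\pi_n(B\Spin(2n))$. (4) Verify spin-ness is preserved throughout: since each handle is attached along a sphere of dimension $\leq n-1 < 2n-1$ in the interior, the tangent bundle changes in a controlled way and the spin structure on $D^{2n}$ extends over each trace cobordism; alternatively, observe that the trace of each surgery is a spin cobordism rel $(\emptyset, S^{2n-1})$, so $W$ remains spin-cobordant to $D^{2n}$ rel boundary. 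Since $B\Spin(2n)$ is simply-connected, $n$-connectedness of $\ell_W$ forces $W$ to be simply-connected, giving the parenthetical claim for free.

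The main obstacle, I expect, will be step (2)–(3): making precise that one can kill (and not merely hit) the homotopy of $B\Spin(2n)$ below the middle dimension by handle attachments in the interior \emph{without disturbing the boundary or the spin bordism class}. The subtlety is that attaching an $i$-handle for $i \leq n-1$ to make $\ell_W$ into an isomorphism on $\pi_i$ requires that the attaching sphere, together with its chosen nullhomotopy data in $B\Spin(2n)$, can actually be embedded in the interior with trivial normal bundle and in a way compatible with the existing spin structure — this is where one invokes that $2n \geq 6$ (so $n \geq 3$ and there is enough room for embeddings and isotopies by Whitney-type general position) and that the structure group $B\Spin(2n) \to BO$ is highly connected so that framings exist. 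This is exactly the content of Kreck's stable classification machinery \cite[Theorem C and its proof]{Kreck}, which the paper has already cited for the uniqueness statement; the existence half is proven by precisely this surgery-below-the-middle-dimension construction, so I would structure the proof as an application of that result rather than reproving it from scratch. A clean way to phrase it: start with $D^{2n}$, and note that a relative version of the fact that every spin manifold is spin-bordant to a highly-connected one (applied to the cobordism $D^{2n}$ from $\emptyset$ to $S^{2n-1}$, keeping the outgoing boundary and its structure fixed) produces the desired $W$, since the incoming boundary is empty and the outgoing boundary $S^{2n-1}$ already has $n$-connected structure map for $2n \geq 6$.

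Finally I would record the (routine) observation that once $W$ is constructed, all the hypotheses demanded in Section~\ref{sec:Starting} are met: condition (i) (1-connectedness) and condition (ii) (spin cobordant to $D^{2n}$ rel boundary) hold by construction, and condition (iii) (acyclicity of $\alpha_\infty$) then follows from the preceding theorem applied to this $W$, since its structure map $\ell_W : W \to B\Spin(2n)$ is $n$-connected. This completes the proof of Theorem~\ref{factorization-theorem:even-case} in all dimensions $2n \geq 6$.
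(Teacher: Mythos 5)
Your proposal follows essentially the same strategy as the paper's own proof: surgery below the middle dimension, performed in the interior of $D^{2n}$, to make the structure map $n$-connected while preserving the spin cobordism class relative to the boundary. The paper's version is more economical: it first replaces $B\Spin(2n)$ by a finite complex $X$ together with an $n$-connected map $f: X \to B\Spin(2n)$ (possible since $B\Spin(2n)$ is simply-connected with finitely generated homology), factors the structure map of $D^{2n}$ through $X$, and then invokes Wall's surgery-below-the-middle-dimension theorem \cite[Theorem 1.2]{Wall} for the finite complex target, finally composing connectivity estimates to conclude that $\ell_W$ is $n$-connected. This finite-complex reduction is the clean way to handle the fact that the target $B\Spin(2n)$ is infinite-dimensional, and it is the step your handle-by-handle sketch gestures at but doesn't quite pin down.

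One point in your write-up that is genuinely wrong and worth correcting: in your ``clean way to phrase it'' paragraph you state the result as a relative version of ``every spin manifold is spin-bordant to a highly-connected one.'' This conflates two different things. The proposition does \emph{not} ask for $W$ to be highly connected; it asks for the structure map $\ell_W : W \to B\Spin(2n)$ to be $n$-connected, which forces $\pi_i(W) \cong \pi_i(B\Spin(2n))$ for $i < n$, and these groups are very much nonzero (already $\pi_4(B\Spin(2n)) \cong \bZ$ for $2n \geq 6$). An $(n-1)$-connected $W$ would have $\ell_W$ fail to be $n$-connected as soon as $n > 4$. Your earlier description in step (3), where you explicitly state that $\ell_W$ should induce isomorphisms on $\pi_i$ for $i < n$ and a surjection on $\pi_n$, is the correct formulation; the ``clean'' rephrasing contradicts it and should be discarded. (This is also why, in the general dimension case, the paper cannot simply take $W = D^{2n}$ as it does when $2n = 6$, where the acyclicity statement comes from a different theorem that does not require $n$-connectivity of the structure map.)
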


\begin{proof}
This works for more general $\theta$-structures, but we confine ourselves to the case we need. The space $B \Spin (2n)$ is simply-connected and has finitely-generated homology in each degree, so there exists a finite complex $X$ and an $n$-connected map $f:X \to B \Spin (2n)$. Let $f^* \gamma_{2n} \to X$ be the pullback of the universal spin vector bundle. The structure map $D^{2n}\to B \Spin (2n)$ factors as
$$ D^{2n}\stackrel{h}{\lra} X \stackrel{f}{\lra} B \Spin (2n),$$
which yields an isomorphism $TD^{2n} \cong h^{*} f^* \gamma_{2n}$. 
By surgery below the middle dimension in the interior of $D^{2n}$ (compare \cite[Theorem 1.2]{Wall} which also applies to the case with boundary), we may find a cobordism relative boundary $Y$ from $D^{2n}$ to a manifold $W$, and a map $g: Y \to X$ extending $h$ such that
\begin{enumerate}[(i)]
\item the map $g|_W: W \to X$ is $n$-connected, 
\item there is a stable isomorphism $TY \cong g^* f^* \gamma_{2n} \oplus \bR$ extending the  isomorphism $TD^{2n} \cong h^* f^* \gamma_{2n}$.
\end{enumerate}
It follows from (ii) that $Y$ is a spin cobordism and so is $W$, as well. The stable tangent bundle of $W$ is classified by the map $\varphi:W \stackrel{g|_W}{\to} X \stackrel{f}{\to} B \Spin (2n) \stackrel{j}{\to} B \Spin$ and since $g|_W$ and $f$ are $n$-connected and $j:B \Spin (2n) \to B \Spin$ is $2n$-connected, it follows that $\varphi$ is $n$-connected.
There is a homotopy commutative diagram
\[
 \xymatrix{
 W \ar[rr]^{\varphi} \ar[dr]^{\ell_W} & & B \Spin\\
 & B \Spin (2n) \ar[ur]^{j}
  }
\]
from which the $n$-connectivity of $\ell_W$ follows.
\end{proof}

\section{Computational results}\label{sec:computation}

In this section, we will derive the computational consequences of Theorems \ref{factorization-theorem:even-case} and \ref{factorization-theorem:odd-case}. To do so, we will study the effect of the maps
\begin{align*}
\Omega^{\infty+1} \kothom{2n}:\Omega^{\infty+1}_0 \MTSpin(2n) &\lra  \loopinf{+2n+1} \KO\\
\Omega^{\infty+2} \kothom{2n}:\Omega^{\infty+2}_0 \MTSpin(2n) &\lra \loopinf{+2n+2} \KO
\end{align*}
on homotopy and homology, and in particular their images; Theorems \ref{factorization-theorem:even-case} and \ref{factorization-theorem:odd-case} show that these maps factor through $\cR^+(S^{2n})$ and $\cR^+(S^{2n+1})$ respectively, so the images of these maps are contained in the the images of the respective secondary index maps. This section is almost entirely homotopy-theoretic, and except for Theorem \ref{splitting-theorem}, we shall not mention spaces of psc metrics any further.

Recall that $\MTSpin (d)$ is the Thom spectrum $\Th (- \gamma_d)$ of the additive inverse of the universal vector bundle $\gamma_d \to B \Spin (d)$. For any virtual spin vector bundle $V \to X$ of rank $r \in \bZ$, we denote the $KO$-theoretic Thom class by $\lambda_{V} \in \KO^{r} (\Th (V))=[\Th (V), \Sigma^{r} \KO]$. Note that there is a unique lift of $\lambda_{V}$ to $\Sigma^{r} \ko$, the appropriate suspension of the connective $KO$-spectrum, which we denote by the same symbol. In the special case $V=-\gamma_d$, we denote the Thom class by $\kothom{d} \in [\MTSpin (d), \Sigma^{-d}(\ko)]$. We are interested in the groups
$$ J_{d,k} := \mathrm{Im}\left((\kothom{d})_*: \pi_k (\MTSpin (d))\to \pi_{d+k}(\ko)\right).$$

\subsection{Multiplicative structure of Madsen--Tillmann--Weiss spectra}

The spectrum $\ko$ has a ring structure, and the algebraic structure of $\pi_* (\ko)$ is well-known, due to Bott periodicity. There are elements $\eta \in \pi_1 (\ko)$, $\kappa\in \pi_4 (\ko)$ and $\beta \in \pi_8 (\ko)$, such that
\begin{equation}\label{homotopy-ko}
 \pi_* (\ko)=\bZ[\eta, \kappa,\beta]/(2 \eta, \eta^3, \kappa^2 - 4 \beta, \kappa \eta).
\end{equation}

Even though $\MTSpin (d)$ is not itself a ring spectrum, there is a useful product structure available to us as the collection $\{\MTSpin (d)\}_{d \geq 0}$ form what one might call a \emph{graded ring spectrum}. Namely, there are maps
$$\mu:\MTSpin (d) \wedge \MTSpin (e) \lra \MTSpin (d+e)$$
which come from the bundle maps $\gamma_d \times \gamma_e \to \gamma_{d+e}$ which cover the Whitney sum maps $B \Spin (d) \times B \Spin (e) \to B \Spin (d+e)$. The usual multiplicative property of Thom classes translates into the statement that the diagram
\begin{equation*}
\xymatrix{
\MTSpin (d) \wedge \MTSpin (e)\ar[d]^{\kothom{d} \wedge \kothom{e}} \ar[r]^-{\mu} & \MTSpin (d+e) \ar[d]^{\kothom{(d+e)}}\\
\Sigma^{-d} \ko \wedge \Sigma^{-e} \ko \ar[r] & \Sigma^{-(d+e)} \ko
}
\end{equation*}
(where the bottom horizontal map is the ring spectrum structure map) commutes up to homotopy. On the level of homotopy groups, this commutativity means that for $a \in \pi_k (\MTSpin (d))$, $b \in \pi_l (\MTSpin (e))$, we have
\begin{equation}\label{product-ahat}
( \kothom{(d+e)})_* (\mu(a,b))= (\kothom{d})_* (a)  \cdot (\kothom{e})_* (b) \in \pi_{d+e+k+l} (\ko).
\end{equation}

In order to write down elements in $\pi_k (\MTSpin (d))$, the interpretation of this homotopy group in terms of Pontrjagin--Thom theory is useful.

\begin{thm}\label{pontrjagin-thom-theorem}
The group $\pi_k (\MTSpin (d))$ is isomorphic to the cobordism group of triples $(M,V,\phi)$, where $M$ is a closed $(k+d)$-manifold, $V \to M$ a spin vector bundle of rank $d$ and $\phi: V \oplus \epsilon^k_{\bR} \cong TM$ a stable isomorphism of vector bundles.
\end{thm}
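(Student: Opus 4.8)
The statement is a standard Pontrjagin--Thom description of the homotopy groups of a Thom spectrum of a virtual bundle, so the plan is to run the parametrised transversality argument familiar from the closed case, adapted to the negative-rank bundle $-\gamma_d$.

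First I would set up the two maps that will be shown to be mutually inverse. In one direction, given a triple $(M,V,\phi)$ with $M$ a closed $(k+d)$-manifold, $V \to M$ a spin bundle of rank $d$, and a stable framing $\phi : V \oplus \epsilon^k_\bR \cong TM$, I choose an embedding $M \hookrightarrow \bR^{N+k+d}$ with normal bundle $\nu$. Then $\nu \oplus TM \cong \epsilon^{N+k+d}_\bR$, so $\nu \oplus V \oplus \epsilon^k_\bR \cong \epsilon^{N+k+d}_\bR$, i.e.\ $\nu$ is stably $-V$ plus a trivial bundle; more precisely $\nu \oplus V$ is trivial of rank $N+d$, with a preferred trivialisation of $\nu \oplus V \oplus \epsilon^k$. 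The spin structure on $V$, together with that of $TM$ (induced since $TM \cong V \oplus \epsilon^k$ is spin), gives a spin structure on $\nu$ of rank $N$, hence a classifying map $M \to B\Spin(N)$ together with a bundle map $\nu \to \gamma_N$; dually this is a bundle map $-\gamma_N \to -\nu$ over $M \to B\Spin(N)$. Collapsing outside a tubular neighbourhood of $M$ in $\bR^{N+k+d}$ and using the trivialisation of $\nu \oplus V$ gives a map $S^{N+k+d} \to \mathrm{Th}(\nu) \to \mathrm{Th}(-\gamma_d \oplus \gamma_{N}^{(\text{complementary})})$; organising this over the spin Grassmannian and taking $N \to \infty$ produces an element of $\pi_{k}(\MTSpin(d))$ via the model $\colim_n \Omega^{n} \mathrm{Th}(V_{d,n}^\perp)$ described in the introduction. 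The bookkeeping here is exactly the dictionary ``normal $d$-bundle with spin structure + stable framing of the rest'' $\leftrightarrow$ ``map into $\mathrm{Th}(-\gamma_d)$'', and I would phrase it once carefully rather than for each $n$.

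In the other direction, given a map $f : S^{N+k+d} \to \mathrm{Th}(V_{d,N}^\perp)$ representing a class in $\pi_k(\MTSpin(d))$, I make $f$ smooth and transverse to the zero section $\Gr^{\Spin}_{d,N} \subset \mathrm{Th}(V_{d,N}^\perp)$. The preimage $M := f^{-1}(\Gr^{\Spin}_{d,N})$ is a closed smooth manifold of dimension $(N+k+d) - (N-d) = k+d$, equipped with a classifying map $\bar f : M \to \Gr^{\Spin}_{d,N}$ for which the normal bundle of $M$ in $S^{N+k+d}$ is $\bar f^* V_{d,N}^\perp$. Pulling back the rank-$d$ bundle $V_{d,N}$ gives a spin bundle $V := \bar f^* V_{d,N} \to M$, and since $V_{d,N} \oplus V_{d,N}^\perp$ is canonically trivial (of rank $N$) over $\Gr^{\Spin}_{d,N}$, while $\nu_M \oplus TM = \epsilon^{N+k+d}$, we get $TM \cong V \oplus (\text{stably trivial of rank }k)$, i.e.\ a stable isomorphism $\phi : V \oplus \epsilon^k_\bR \cong TM$; the spin structures match by construction. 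This yields the triple $(M,V,\phi)$. I would then check that homotopic $f$ give cobordant triples (a transversality argument on $S^{N+k+d} \times [0,1]$), that the cobordism relation on triples is respected, and that the group operations correspond (addition of maps $\leftrightarrow$ disjoint union of triples), so both assignments descend to well-defined homomorphisms; finally that the two composites are the identity, again by a standard transversality/uniqueness-of-tubular-neighbourhood argument.

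The main obstacle is purely notational rather than conceptual: keeping the stabilisation indices, the complementary trivialisations, and the rank-shifts consistent, so that ``stable isomorphism $V \oplus \epsilon^k_\bR \cong TM$'' on the geometric side matches the $N \to \infty$ colimit on the homotopy side without sign or dimension errors, and so that the spin structures on $V$, on $\nu_M$, and on $TM$ are all induced from one another compatibly. Once the dictionary between a spin $d$-bundle-with-stable-framing-of-the-complement and a map to $\mathrm{Th}(-\gamma_d)$ is pinned down, the proof is the classical Pontrjagin--Thom theorem verbatim; I would therefore spend the bulk of the writeup on that dictionary and then cite transversality and the Pontrjagin--Thom isomorphism for the rest.
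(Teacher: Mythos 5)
Your proposal is correct in substance and is exactly the argument the paper has in mind; the paper does not spell it out but simply cites Stong's book for the classical Pontrjagin--Thom theorem. Your plan --- embed, classify the normal bundle, collapse, organise over the spin Grassmannian in one direction; smooth, make transverse, pull back in the other; then check independence of choices, compatibility of cobordism relations, and of group structures --- is the standard one, and the only real work, as you note, is in the bookkeeping.

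One indexing slip to fix before writing it up. Since
\[
\pi_k(\MTSpin(d)) \;=\; \colim_n \pi_{n+k}\bigl(\mathrm{Th}(V_{d,n}^\perp)\bigr),
\]
a representative is a map $S^{n+k} \to \mathrm{Th}(V_{d,n}^\perp)$, not $S^{n+k+d} \to \mathrm{Th}(V_{d,n}^\perp)$. As $\mathrm{rk}(V_{d,n}^\perp)=n-d$, the transverse preimage of $\Gr^{\Spin}_{d,n}$ in $S^{n+k}$ then has dimension $(n+k)-(n-d)=k+d$ as required; in the version you wrote, $(N+k+d)-(N-d)=k+2d$, so the arithmetic in that line is off. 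The version with source $S^{N+k+d}$ is consistent with your forward construction (embedding $M^{k+d}$ in $\bR^{N+k+d}$, normal bundle of rank $N$), but then the target should be $\mathrm{Th}(V_{d,N+d}^\perp)$, whose fibre has rank $N$. Either normalisation is fine once you pick one and stick with it.
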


This is just a special case of the classical Pontrjagin--Thom theorem, see e.g. \cite[Chapter II]{Stong}. There are homomorphisms
\begin{equation}\label{mtspectra-stabmaps}
\pi_{k+1} (\MTSpin (d-1)) \lra \pi_{k} (\MTSpin (d)) \lra \Omega^{\Spin}_{d+k},
\end{equation} 
where the symbol $ \Omega^{\Spin}_{d+k}$ denotes the ordinary spin cobordism group of $(d+k)$-manifolds.
The first homomorphism sends $[M,V,\phi]$ to $[M,V \oplus \bR, \phi]$, and the second forgets $V$ and $\phi$ (but keeps the spin structure on $M$ that is induced by them). The homomorphism $\pi_{k} (\MTSpin (d )) \to \Omega^{\Spin}_{d+k}$ is surjective for $k \leq 0$ and bijective for $k<0$. The image of $\pi_{k} (\MTSpin (d)) \to \Omega^{\Spin}_{d+k}$ (for $k>0$) is the group of all cobordism classes which contain manifolds whose stable tangent bundle splits off a $k$-dimensional trivial summand.
Any $d$-dimensional spin manifold $M$ defines an element $[M,TM,\id] \in \pi_0 (\MTSpin (d))$, but this construction does \emph{not} descend to a homomorphism $\Omega^{\Spin}_{d} \to \pi_0 (\MTSpin (d))$, as a $(d+1)$-dimensional spin cobordism does not generally admit a destabilisation of its tangent bundle to a $d$-dimensional vector bundle compatible with the tangent bundle along its boundary (e.g.\ $D^{d+1}$ as a nullbordism of $S^d$ for $d$ even).

The product has a pleasant description in terms of manifolds: if $[M_i,V_i,\phi_i] \in \pi_{k_i}(\MTSpin (d_i))$, $i=0,1$, then
$$[M_0 \times M_1, V_0 \times V_1, \phi_0 \times \phi_1] = \mu([M_0,V_0,\phi_0], [M_1,V_1,\phi_1]) \in \pi_{k_0+k_1}(\MTSpin (d_0+d_1)).$$
It is a consequence of the Atiyah--Singer index theorem that
$$( \kothom{d})_* ([M,V,\phi]) = \ind{\Dir_M } \in KO^{-d-k} = \pi_{k+d}(\ko)$$
for $[M,V,\phi] \in \pi_k (\MTSpin (d))$. 
From now on, we will denote this invariant by the classical notation $\hat{\mathscr{A}}(M)$. For $k+d \equiv 0 \pmod 4$, the value of $\hat{\mathscr{A}}(M)$ can be computed in terms of characteristic classes by the formula
\begin{equation}\label{ahat-genus}
\hat{\mathscr{A}}(M)  = 
\begin{cases}
\langle \hat{A}(TM), [M] \rangle  \cdot \beta^r & \text{ if $d+k=8r$},\\
\frac{1}{2}\langle \hat{A}(TM), [M] \rangle \cdot \beta^r \kappa & \text{ if $d+k=8r+4$}.
\end{cases}
\end{equation}

For each $d \geq 0$ there is a class
$$e_d := [*,\bR^d, \id] \in \pi_{-d} (\MTSpin (d)),$$
which is a generator for the group $\pi_{-d} (\MTSpin (d))\cong \bZ$. These classes clearly satisfy $\mu(e_d , e_e) = e_{d+e}$ and $\hat{\mathscr{A}} (e_d)=1$. Moreover, $e_0$ is a unit for the multiplication $\mu$. Multiplication by $e_1$ defines a map 
\begin{equation}\label{suspension-mt-spectra}
\eta_d:\mathrm{S}^{-1} \wedge \MTSpin (d) \lra \MTSpin (1) \wedge \MTSpin (d) \lra \MTSpin (d+1),
\end{equation}
which coincides with the analogous map in \cite[\S 3]{GMTW} and which on homotopy groups induces the first map in (\ref{mtspectra-stabmaps}). The composition
$$\mathrm{S}^{-d} \stackrel{e_d}{\lra} \MTSpin (d) \stackrel{\kothom{d}}{\lra} \Sigma^{-d} \ko$$
is the $d$th desuspension of the unit map of the ring spectrum $\ko$. To sum up, we obtain a homotopy commutative diagram:

\begin{equation}\label{sequence-of-mt-spectra}
\begin{gathered}
\xymatrix{
\mathrm{S}^0 \ar[rr]^{e_0} \ar[drr]^-{e_1} \ar[ddrr]^-{e_2} &  & \MTSpin (0) \ar[d]^{\eta_1}  \ar[rr]^-{\kothom{0}}& & \ko\\
& & \Sigma\MTSpin (1) \ar[d]^{\eta_2}  \ar[urr]^-{\kothom{1}} & &\\
& &\Sigma^2  \MTSpin (2) \ar[d]^{\eta_3}  \ar[uurr]_-{\kothom{2}} & &\\
& & \vdots &  &
}
\end{gathered}
\end{equation}
From \eqref{product-ahat} and \eqref{sequence-of-mt-spectra}, we obtain

\begin{cor}\label{containment-image-thomclass}
There are inclusions $J_{d,k} \supseteq J_{d-1, k+1}$ and $J_{d,k} J_{e,l} \subseteq J_{d+e,k+l}$.
\end{cor}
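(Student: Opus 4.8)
The statement is Corollary \ref{containment-image-thomclass}, asserting $J_{d,k} \supset J_{d-1,k+1}$ and $J_{d,k} J_{e,l} \subset J_{d+e,k+l}$.

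\begin{proof}[Proof of Corollary \ref{containment-image-thomclass}]
The plan is to read both inclusions directly off the homotopy-commutative diagrams \eqref{sequence-of-mt-spectra} and the multiplicativity formula \eqref{product-ahat}, together with the observation that for any classes $a \in \pi_k(\MTSpin(d))$ and $b \in \pi_l(\MTSpin(e))$ the element $\mu(a,b) \in \pi_{k+l}(\MTSpin(d+e))$ and the stabilisation $\eta_*(a) \in \pi_{k-1}(\MTSpin(d+1))$ have $\ahat$-invariants controlled by those of $a$ and $b$.

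First I would establish $J_{d,k} \subset J_{d+1,k-1}$ (equivalently, reindexing, $J_{d-1,k+1} \subset J_{d,k}$). Given $x \in J_{d,k}$, write $x = (\kothom{d})_*(a)$ for some $a \in \pi_k(\MTSpin(d))$. The map $\eta_{d+1}$ of \eqref{suspension-mt-spectra} induces on homotopy groups the stabilisation homomorphism $\pi_k(\MTSpin(d)) \to \pi_{k-1}(\MTSpin(d+1))$, and by the homotopy commutativity of the relevant triangle in \eqref{sequence-of-mt-spectra} (the maps $\kothom{d}$ factor compatibly through the $\eta$'s, since the $KO$-Thom classes are compatible under Whitney sum with a trivial line), we have $(\kothom{d+1})_*(\eta_*(a)) = x$ under the identification $\pi_{k+d}(\ko) = \pi_{(k-1)+(d+1)}(\ko)$. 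Hence $x \in J_{d+1,k-1}$. Concretely, in the Pontrjagin--Thom picture of Theorem \ref{pontrjagin-thom-theorem} this is just the statement that $[M,V,\phi]$ and $[M, V\oplus \bR, \phi]$ have the same $\ahat$-invariant, namely $\ahat(M)$, which is immediate.

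Second, for the product inclusion $J_{d,k} J_{e,l} \subset J_{d+e,k+l}$: take $x = (\kothom{d})_*(a) \in J_{d,k}$ and $y = (\kothom{e})_*(b) \in J_{e,l}$. The graded-ring-spectrum maps $\mu : \MTSpin(d)\wedge\MTSpin(e) \to \MTSpin(d+e)$ give $\mu(a,b) \in \pi_{k+l}(\MTSpin(d+e))$, and \eqref{product-ahat} yields $(\kothom{(d+e)})_*(\mu(a,b)) = (\kothom{d})_*(a)\cdot(\kothom{e})_*(b) = x\cdot y$ in $\pi_{d+e+k+l}(\ko)$. Thus $xy \in J_{d+e,k+l}$. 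Since $J_{d+e,k+l}$ is closed under addition (being the image of a homomorphism), every element of the product subset $J_{d,k}J_{e,l}$ lies in $J_{d+e,k+l}$, which is the claim.

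There is no real obstacle here; the content is entirely packaged into the commutativity of the diagram in \eqref{sequence-of-mt-spectra} and formula \eqref{product-ahat}, both already established. The only point requiring a word of care is that $J_{d,k}J_{e,l}$ denotes the set of sums of products $xy$ with $x\in J_{d,k}$, $y\in J_{e,l}$ (the subgroup generated by such products), for which closure of the target group under addition suffices.
\end{proof}
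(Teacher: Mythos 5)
Your proof is correct and is exactly the argument the paper intends: the paper's entire justification is the single line ``From \eqref{product-ahat} and \eqref{sequence-of-mt-spectra}, we obtain,'' and you have simply spelled out the two implicit steps — the compatibility of the Thom classes $\kothom{d}$ under the stabilisation maps $\eta_d$ for the first inclusion, and the multiplicativity formula \eqref{product-ahat} for the second. The Pontrjagin--Thom rephrasing and the remark about closure under addition are harmless elaborations.
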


\subsection{Proof of Theorem \ref{thm:intro:htpyGps}}

In this section we shall provide the homotopy theoretic calculations which, when combined with Theorem \ref{factorization-theorem:even-case} and \ref{factorization-theorem:odd-case}, establish Theorem \ref{thm:intro:htpyGps}. We first investigate the effect of the maps $\kothom{d}$ on rational homotopy groups.

\begin{thm}\label{rational-surjectivity-htpy-thy}
For each $d\geq 2$ and $d+k \equiv 0 \pmod 4$, the map
$$({\kothom{d}})_*:\pi_{k} (\MTSpin (d)) \otimes \bQ \lra \pi_{k+d} (\ko)\otimes \bQ \cong \bQ$$
is surjective. 
\end{thm}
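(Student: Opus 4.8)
The plan is to prove surjectivity by exhibiting explicit manifolds whose $\hat{\mathscr A}$-genus realises a rational generator of $\pi_{k+d}(\ko)\otimes\bQ$. By Bott periodicity and \eqref{ahat-genus}, $\pi_{k+d}(\ko)\otimes\bQ\cong\bQ$ is detected by the $\hat A$-genus $\langle\hat A(TM),[M]\rangle$ (up to the fixed nonzero scalars $\beta^r$ or $\tfrac12\beta^r\kappa$), so it suffices to produce, for each $m=(d+k)/4$, an element of $\pi_k(\MTSpin(d))$ represented by a triple $(M,V,\phi)$ with $M$ a closed $4m$-manifold of nonzero $\hat A$-genus. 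The key point is that we are allowed to use the stabilisation maps and the product structure assembled in \eqref{sequence-of-mt-spectra} and Corollary \ref{containment-image-thomclass}: by the inclusion $J_{d,k}\supset J_{d-1,k+1}$ it is enough to treat the extreme case, and since we need $d\ge2$ we may as well reduce (via iterated stabilisation) to building elements in $\pi_{4m-d}(\MTSpin(d))$ for the smallest relevant $d$, then stabilise up.

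First I would recall the standard fact that $\Omega^{\Spin}_{4m}\otimes\bQ$ is a polynomial algebra generated by classes of nonzero $\hat A$-genus; concretely, the quaternionic projective spaces $\mathbb{HP}^{m}$ (or, to stay within spin cobordism more carefully, suitable linear combinations of products of $\mathbb{HP}^{i}$'s and $K3$-type manifolds) have $\hat A\neq 0$ rationally. Next, observe that any closed spin $4m$-manifold $M$ whose stable tangent bundle splits off a trivial $k$-dimensional summand lifts to a class in $\pi_k(\MTSpin(4m-k))$ mapping to $[M]$ under \eqref{mtspectra-stabmaps}, and by the index-theoretic identity $(\kothom{d})_*[M,V,\phi]=\hat{\mathscr A}(M)$ together with \eqref{ahat-genus} this lift hits a nonzero rational multiple of a generator. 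To guarantee the splitting-off-a-trivial-summand condition one simply takes $M'=M\times S^k$ for a fixed $M$ with $\hat A(M)\neq0$: then $TM'\cong TM\oplus\epsilon^k$ stably after identifying $TS^k\oplus\epsilon\cong\epsilon^{k+1}$, and $\hat A(M\times S^k)=\hat A(M)\cdot\hat A(S^k)$; but $\hat A(S^k)=0$, so this naive product fails. Instead I would multiply by a small spin manifold of positive dimension that carries a stable framing, e.g. a parallelisable manifold, but with nonzero $\hat A$ — which is impossible since $\hat A$ of a parallelisable manifold vanishes — so the honest route is via the product structure \eqref{product-ahat}: take $a\in\pi_{k}(\MTSpin(0))=\pi_k^s$ a class with $(\kothom0)_*(a)$ a rational generator of $\pi_k(\ko)\otimes\bQ$ when $k\equiv0\bmod4$, and multiply by $e_d\in\pi_{-d}(\MTSpin(d))$ with $\hat{\mathscr A}(e_d)=1$, giving $\mu(a,e_d)\in\pi_{k}(\MTSpin(d))$ with $(\kothom d)_*\mu(a,e_d)=(\kothom0)_*(a)$ nonzero rationally. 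This handles $k\equiv0\bmod4$ directly; for the case $d+k\equiv0\bmod4$ with $k\not\equiv0\bmod 4$ one instead starts from a class in $\pi_{d+k}(\MTSpin(0))=\pi_{d+k}^s$, again multiplied by the appropriate $e_j$, using that $\pi_{4m}^s\otimes\bQ\to\pi_{4m}(\ko)\otimes\bQ$ is surjective (indeed an isomorphism onto a line, realised e.g. by a Milnor generator or a Bott manifold).

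The cleanest formulation of the argument, which I would adopt: choose once and for all a closed spin manifold $B_{4m}$ (a product of $\mathbb{HP}$'s or a Bott-type manifold) with $\langle\hat A(TB_{4m}),[B_{4m}]\rangle\neq0$, and let $V:=TB_{4m}$, $\phi=\id$, to get $[B_{4m},TB_{4m},\id]\in\pi_0(\MTSpin(4m))$ with $\hat{\mathscr A}=\langle\hat A,[B_{4m}]\rangle\beta^m\neq0$. Then apply iterated stabilisation \eqref{mtspectra-stabmaps}: the composite $\pi_{k}(\MTSpin(d))\to\pi_{k-1}(\MTSpin(d+1))\to\cdots\to\pi_0(\MTSpin(d+k))$ is the identity on the underlying cobordism data, and by Corollary \ref{containment-image-thomclass} we have $J_{d+k,0}\subset J_{d+k-1,1}\subset\cdots\subset J_{d,k}$; hence $\hat{\mathscr A}(B_{4m})\in J_{4m,0}\subset J_{d,k}$ whenever $d+k=4m$ and $d\ge0$. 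Since a nonzero rational number lies in $J_{d,k}$, tensoring with $\bQ$ the map $(\kothom d)_*$ has nontrivial, hence (as the target is one-dimensional) surjective, image. The only real subtlety — and the step I expect to be the main obstacle — is purely bookkeeping: making sure the manifold $B_{4m}$ is genuinely \emph{spin} (not merely oriented) and that the stabilisation maps \eqref{mtspectra-stabmaps} are applied with the correct identifications of stable tangent bundles; this is where one must be slightly careful, e.g. replacing $\mathbb{HP}^{m}$ (which is spin iff $m$ is... always, in fact, since $w_2(\mathbb{HP}^m)=0$) or using explicit generators of $\Omega^{\Spin}_{4m}\otimes\bQ$, but no genuinely hard input beyond the Atiyah--Singer formula \eqref{ahat-genus} and the multiplicativity \eqref{product-ahat} is needed.
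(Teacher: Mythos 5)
Your proposal has a fatal orientation error in the use of Corollary \ref{containment-image-thomclass}. That corollary states $J_{d,k}\supset J_{d-1,k+1}$, so with $d+k=4m$ held fixed the chain of images runs
\[
J_{4m,0}\ \supset\ J_{4m-1,1}\ \supset\ \cdots\ \supset\ J_{2,4m-2},
\]
that is, the image \emph{shrinks} as $d$ decreases. You have written the inclusions the other way round. Producing a closed spin $4m$-manifold $B_{4m}$ with $\hat{A}(B_{4m})\neq 0$ only tells you that $\hat{\mathscr A}(B_{4m})\in J_{4m,0}$; it says nothing about $J_{d,k}$ for $d<4m$, and those (the cases $k>0$) are precisely the ones the theorem needs. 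The geometric content of the obstruction is that a class of $\pi_0(\MTSpin(4m))$ lifts to $\pi_{4m-d}(\MTSpin(d))$ only if the representing manifold carries a rank-$d$ spin vector bundle stably isomorphic to its tangent bundle, which a generic $B_{4m}$ will not; the stabilisation maps in \eqref{mtspectra-stabmaps} go the wrong way for you. This is not just bookkeeping: Section \ref{sec:sharp} quantifies how large the gap between $J_{2,4m-2}$ and $\pi_{4m}(\ko)$ is, and it is typically enormous, so your claimed inclusion $J_{4m,0}\subset J_{d,k}$ is simply false.

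Your fallback via stable homotopy is also blocked: $\pi_{4m}^s\otimes\bQ=0$ for $m>0$ by Serre finiteness, and the $\hat A$-genus of a stably framed manifold vanishes, so the map $\pi_{4m}^s\otimes\bQ\to\pi_{4m}(\ko)\otimes\bQ$ is the zero map, not "an isomorphism onto a line." You in fact noticed a version of this obstruction yourself (parallelisable manifolds have $\hat A=0$), but did not draw the conclusion that this closes off the route you then settled on.

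The paper resolves the real difficulty by reducing --- via the \emph{correct} direction of $J_{d,k}\supset J_{2,k+d-2}$, valid for all $d\ge 2$ --- to the single case $d=2$ and then computing there directly. Since $\pi_{4m-2}(\MTSpin(2))\otimes\bQ\cong H_{4m}(\bC\bP^\infty;\bQ)$ is one-dimensional, one only has to check that $\kothom{2}$ pairs nontrivially with it; this is done via the Chern character identity \eqref{sinh-series}, $\ch(c(\lambda^{\Spin}_{-L^{\otimes 2}}))=-\tfrac{x}{\sinh x}\cdot u_{-2}$, whose degree-$4m$ coefficient $\pm\tfrac{(2^{2m}-2)B_m}{(2m)!}$ is nonzero for every $m\ge 1$. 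Some computation of this kind, exhibiting an explicit nonvanishing pairing over $\bC\bP^\infty$, is the step your sketch never carries out and cannot avoid.
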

\begin{proof}
The proof is a standard calculation with characteristic classes, but we present the details as they will be used later on. 

Let $\pi:V \to X$ be a complex vector bundle of rank $n$ whose underlying real bundle has a spin structure. The spin structure determines a Thom class $\lambda_V \in KO^{2n}(\mathrm{Th}(V))$, which for this proof we shall write as $\lambda_V^\Spin$. On the other hand, the complex structure determines a Thom class $\lambda_V^\bC \in K^{2n}(\mathrm{Th}(V))$. The groups $SO(2)$, $\Spin(2)$, and $U(1)$ are all isomorphic, but $\Spin(2) \to SO(2)$ is a double cover. Identifying all these groups with $U(1)$, it follows that a spin structure on a complex line bundle is precisely a complex square root. In particular, the spin structure on $V$ determines a square root $\det (V)^{1/2}$ of the complex determinant line bundle of $V$. The relation between the Thom classes $\lambda_V^\Spin$ and $\lambda_V^\bC$ under the complexification map $c: \KO \to \KU$ is given by the following formula\footnote{It is important here to adopt the correct convention for $K$-theory Thom classes of complex vector bundles: one should take 
the convention used in \cite[Theorem C.8]{SpinGeometry}, which is characterised by the identity $(\lambda_L^{\bC})^2 = (1-L) \cdot \lambda_L^{\bC} \in K^0(\mathrm{Th}(L))$ when $L \to \bC\bP^\infty$ is the universal line bundle.}, cf.\ \cite[(D.16)]{SpinGeometry}:
$$c(\lambda^{\Spin}_{V}) =  \det (V)^{-1/2} \cdot \lambda^{\bC}_{V}  \in K^{2n}(\mathrm{Th}(V)).$$
If $V \oplus V^{\bot} \cong \epsilon^n_\bC$, we obtain, using that $\det (V)\otimes  \det (W) = \det (V \oplus W)$, the formula
$$c(\lambda^{\Spin}_{V^{\bot}}) =  \det (V)^{1/2} \cdot \lambda_{V^{\bot}}^{\bC} \in K^{2n}(\mathrm{Th}(V^\perp)).$$
This relation is preserved under stabilisation, and therefore we get an equation in the $K$-theory of the Thom spectrum
$$c(\lambda^{\Spin}_{-V}) = \det (V)^{1/2} \cdot \lambda_{-V}^{\bC} \in K^{-2n}(\Th (-V)).$$

Similarly, if $u_V \in H^{2n}(\mathrm{Th}(V);\bQ)$ is the cohomological Thom class, then
$$\ch(\lambda^\bC_V) = I(V)\cdot u_V \in \prod_i H^{i+2n}(\mathrm{Th}(V);\bQ)$$
where $I(-)$ is the genus associated to $\frac{1-e^x}{x}$ cf.\ \cite[p.\ 241]{SpinGeometry}. This is also multiplicative and stable, hence gives an equation in spectrum cohomology
$$\ch(\lambda^\bC_{-V}) = \tfrac{1}{I(V)}\cdot u_{-V} \in \prod_i H^{i-2n}(\mathrm{Th}(-V);\bQ).$$

After these generalities let us begin the proof of the theorem. By Corollary \ref{containment-image-thomclass}, it is enough to consider the case $d=2$, and as $\Spin(2)$ can be identified with $U(1)$ we may identify $B\Spin(2)$ with $\bC\bP^\infty$. Under this identification the universal rank 2 spin bundle is identified with $L^{\otimes 2}$, the (realification of the) tensor square of the universal complex line bundle over $\bC\bP^\infty$. Specialising the above general theory to this case, we obtain
\begin{equation}\label{spin-thom-vs-complex-thom}
c(\lambda_{-L^{\otimes 2}}^{\Spin}) = L \cdot \lambda_{-L^{\otimes 2}}^{\bC} \in K^{-2} (\MTSpin (2)). 
\end{equation}
and
\begin{equation*}
\ch(\lambda_{-L^{\otimes 2}}^\bC) = \frac{c_1(L^{\otimes 2})}{1-e^{c_1(L^{\otimes 2})}} \cdot u_{-L^{\otimes 2}}.
\end{equation*}

If we define $x := c_1(L)$, which generates $H^2(\bC\bP^\infty;\bZ)$, then we obtain
$$\ch(c(\lambda_{-L^{\otimes 2}}^{\Spin}))= \frac{2x}{1-e^{2x}} e^x \cdot u_{-L^{\otimes 2}} = -\frac{x}{\mathrm{sinh}(x)} \cdot u_{-L^{\otimes 2}} \in H^* (\MTSpin (2); \bQ),$$
and following \cite[Appendix B]{MS} the identity $\tfrac{1}{\mathrm{sinh}(2v)} = \tfrac{1}{\mathrm{tanh}(v)} - \tfrac{1}{\mathrm{tanh}(2v)}$ yields
$$\frac{x}{\mathrm{sinh}(x)} = 1 + \sum_{m=1}^\infty (-1)^m  \frac{(2^{2m}-2) B_m}{(2m)!} x^{2m},$$
where $B_m$ is the $m$th Bernoulli number (our notation for Bernoulli numbers also follows \cite[Appendix B]{MS}). We hence obtain the formula
\begin{equation}\label{sinh-series}
\ch(c(\lambda_{-L^{\otimes 2}}^{\Spin})) = -\left(1+ \sum_{m=1}^\infty (-1)^m  \frac{(2^{2m}-2) B_m}{(2m)!} x^{2m} \right)\cdot u_{-L^{\otimes 2}}.
\end{equation}
The point of the proof is now that $\tfrac{(2^{2m}-2) B_m}{(2m)!} \neq 0$ for all $m>0$. 
More precisely, consider the diagram
\begin{equation*}
 \xymatrix{
\pi_{4m-2}(\MTSpin (2))\otimes \bQ \ar[r]^-{(\kothom{2})_*}  \ar[d]^{h}& \pi_{4m}(\ko)\otimes \bQ \ar[d]^{h}\\ 
H_{4m-2} (\MTSpin (2); \bQ) \ar[d]^{\simeq} \ar[r]^-{(\kothom{2})_*}  & H_{4m} (\ko;\bQ) \ar[d]^{\langle \ph_{4m}, - \rangle}\\ 
 H_{4m}(B \Spin (2); \bQ) \ar[r]^-{e} & \bQ.
 }
\end{equation*}
The upper vertical maps are the Hurewicz homomorphisms, which are isomorphisms by Serre's finiteness theorem; the left bottom vertical arrow is the inverse Thom isomorphism, and the right bottom vertical map is the evaluation against the $m$th component of the Pontrjagin character, which is also an isomorphism. The upper square is commutative, and the above calculation shows that the lower square commutes if $e$ is the evaluation against $(-1)^{m+1}  \frac{(2^{2m}-2) B_m}{(2m)!} x^{2m} \neq 0 \in H^{4m}(B \Spin (2); \bQ)$. As this class is nonzero, the lower horizontal map is onto, and so is the upper horizontal map, as claimed.
\end{proof}

We now describe the effect of the maps $\kothom{d}$ onto the $\bZ/2$ summands.

\begin{thm}\label{mod2-surjectivity-htpy-thy}
For each $d\geq 0$ and $0 \leq i \equiv 1,2 \pmod 8$, the map 
$$({\kothom{d}})_*:\pi_i (\Sigma^d \MTSpin (d))  \lra \pi_i (\ko)\cong \bZ/2$$
is surjective.
\end{thm}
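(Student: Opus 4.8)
The statement to prove is that for each $d \geq 0$ and each $i \equiv 1,2 \pmod 8$ with $i \geq 0$, the map $(\kothom{d})_* : \pi_i(\Sigma^d\MTSpin(d)) \to \pi_i(\ko) \cong \bZ/2$ is surjective. The generators of the $\bZ/2$ summands of $\pi_*(\ko)$ are the powers $\eta \beta^r \in \pi_{8r+1}(\ko)$ and $\eta^2 \beta^r \in \pi_{8r+2}(\ko)$, where $\eta \in \pi_1(\ko)$ is detected by the circle with its bounding (Lie group) spin structure. So the first reduction is to use the multiplicativity of the Thom classes, encoded in \eqref{product-ahat} and the diagram \eqref{sequence-of-mt-spectra}, to express the desired classes as products. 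Concretely, $\eta = \hat{\mathscr{A}}([S^1, TS^1, \id]) \in \pi_1(\ko)$, and the class $[S^1, TS^1, \id] \in \pi_1(\MTSpin(1))$ (or rather its image in $\pi_1(\Sigma\MTSpin(1))$) hits $\eta$ under $\kothom{1}$. By Corollary \ref{containment-image-thomclass}, $J_{d,k} \supset J_{d-1,k+1} \supset \cdots \supset J_{0, k+d}$, and $J_{d,k} J_{e,l} \subset J_{d+e, k+l}$; hence it suffices to produce, for each $r \geq 0$, classes in $\pi_{8r+1}(\ko)$ and $\pi_{8r+2}(\ko)$ hitting $\eta\beta^r$ and $\eta^2\beta^r$ that lie in $J_{0,*}$, i.e.\ come from $\pi_*(\MTSpin(0)) = \pi_*^{s}$ applied through $\kothom{0}$ — but $\kothom{0}$ is just the unit map $\mathrm{S} \to \ko$, so this amounts to showing $\eta\beta^r$ and $\eta^2\beta^r$ are in the image of the stable Hurewicz-type map $\pi_*^{s} \to \pi_*(\ko)$.

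First I would handle the Bott class $\beta \in \pi_8(\ko)$. The key geometric input is that there is a Bott manifold $B^8$ — a closed spin $8$-manifold with $\hat{\mathscr{A}}(B) = \beta$ (mentioned already in the introduction in connection with Theorem \ref{thm:intro:integral}). Via Theorem \ref{pontrjagin-thom-theorem}, $B$ together with $TB$ and the identity stable framing data gives a class $[B, TB, \id] \in \pi_0(\MTSpin(8))$ with $(\kothom{8})_*[B,TB,\id] = \hat{\mathscr{A}}(B) = \beta$. Then $\beta^r$ is hit by $[B^{\times r}, T(B^{\times r}), \id] \in \pi_0(\MTSpin(8r))$ by the multiplicative property. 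For $\eta$: the circle $S^1$ with its non-bounding... no — with the spin structure bounding the disc gives the generator. More precisely, $[S^1, TS^1 \oplus \bR^0, \id]$ maps to $\eta$ under $\kothom{1}$; alternatively one can already see $\eta$ in the image of $\pi_1^{s} \to \pi_1(\ko)$ since the Hopf map generates both. Then $\eta\beta^r$ is the $\mu$-product of $[S^1,\ldots] \in \pi_1(\MTSpin(1))$ with $[B^{\times r},\ldots] \in \pi_0(\MTSpin(8r))$, landing in $\pi_1(\MTSpin(8r+1))$ with $\kothom{8r+1}$-image $\eta \cdot \beta^r \neq 0$; shifting up in $d$ via the $e_j$ classes and Corollary \ref{containment-image-thomclass} gives the result for all $d \geq 8r+1$, and the cases $d < 8r+1$ follow from the inclusions $J_{d,i-d} \supset J_{d', i-d'}$ read in the other direction with $i$ fixed — wait, one must check the direction carefully: we need the statement for \emph{all} $d$ and $i \equiv 1,2$, and $J_{d,k} \supset J_{d-1,k+1}$ means $J_{d,i-d} \supset J_{d-1, i-d+1} \supset \cdots$, so larger $d$ gives a larger group; hence it is enough to establish surjectivity for $d = 0$, i.e.\ that $\eta\beta^r, \eta^2\beta^r \in \mathrm{Im}(\pi_i^{s} \to \pi_i(\ko))$. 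For $\eta^2\beta^r$, take the product of two copies of the $S^1$ class (giving $\eta^2$, realised by $T^2 = S^1 \times S^1$ with product spin structures, $[T^2, TT^2, \id] \in \pi_2(\MTSpin(2))$ mapping to $\eta^2$) with $[B^{\times r}, \ldots]$.

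\textbf{The main obstacle.} The only subtle point — and where care is needed — is verifying that the unit map $\mathrm{S} \to \ko$ surjects onto the $\bZ/2$ classes $\eta\beta^r$ and $\eta^2\beta^r$ for \emph{all} $r$, equivalently that every Bott-periodic family of $2$-torsion classes in $\pi_*(\ko)$ is in the image of the stable homotopy groups of spheres. This is classical (it follows from $\eta, \eta^2 \in \pi_*^{s}$ mapping to the corresponding classes in $\pi_*(\ko)$, together with the existence of a framed/spin manifold realising $\beta$), but in the present framework the cleanest route is to avoid $\pi_*^s$ entirely and argue directly with the Madsen--Tillmann--Weiss spectra: produce explicit manifold representatives $S^1$ (resp.\ $T^2$) and $B^8$, observe that $\hat{\mathscr{A}}(S^1) = \eta$, $\hat{\mathscr{A}}(T^2) = \eta^2$ (these are the well-known nonzero $\alpha$-invariants of the Lie-group-framed tori, cf.\ \cite[Ch.\ II]{SpinGeometry} or Hitchin), $\hat{\mathscr{A}}(B) = \beta$, and then invoke \eqref{product-ahat} for the products. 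Thus the real content is: (a) citing/recalling $\hat{\mathscr{A}}(S^1) = \eta$ and $\hat{\mathscr{A}}(T^2) = \eta^2$ — this uses that the induced spin structure on the torus factors is the one bounding nothing, i.e.\ the Lie group framing, and Hitchin's computation; and (b) citing the existence of a Bott manifold. Everything else is a formal consequence of Corollary \ref{containment-image-thomclass} and the ring structure \eqref{homotopy-ko}. I expect the write-up to be short: set up the product $e_{d-8r-1} \cdot [S^1] \cdot [B^{\times r}]$ (with the convention $e_j$ for $j \geq 0$ and, for $d < 8r+1$, simply use a representative of smaller normal-bundle rank and apply $\eta_d$), compute its $\hat{\mathscr{A}}$-invariant via \eqref{product-ahat}, and conclude.
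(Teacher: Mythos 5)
Your reduction to the case $d=0$ is correct and coincides with the paper's: since $J_{d,k}\supset J_{d-1,k+1}$ and the map in question has image $J_{d,i-d}\subset\pi_i(\ko)$, the worst case is $d=0$, i.e.\ surjectivity of the unit map $\pi_i^s\to\pi_i(\ko)$ on the $\bZ/2$'s. The paper proves this by citing Adams' theorem on the image of $J$ (\cite[Theorem 1.2]{Adams}) and noting that $\kothom{d}\circ e_d$ is the $d$-fold desuspension of the unit, after which there is nothing left to do.

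Your proof, by contrast, has a genuine gap precisely at the $d=0$ case. The explicit product class $[S^1]\cdot[B^{\times r}]$ lives in $\pi_0(\MTSpin(8r+1))$ (not $\pi_1$, incidentally: $[S^1,TS^1,\id]\in\pi_0(\MTSpin(1))$), so it only shows $\eta\beta^r\in J_{8r+1,0}$. By the containment $J_{d,k}\supset J_{d-1,k+1}$ this handles $d\geq 8r+1$ but says nothing for smaller $d$, and $d=0$ is the smallest, hence hardest, group. Your fallback --- "for $d<8r+1$, simply use a representative of smaller normal-bundle rank and apply $\eta_d$" --- does not work: $\eta_d$ increases $d$, and a representative of $J_{0,8r+1}$ is a stably \emph{framed} $(8r+1)$-manifold with $\hat{\mathscr{A}}=\eta\beta^r$, which the product $S^1\times B^{\times r}$ is not, since $B$ is merely spin.

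The deeper issue is your claim that surjectivity of the unit map on $2$-torsion "follows from $\eta,\eta^2\in\pi_*^s$ mapping to the corresponding classes in $\pi_*(\ko)$, together with the existence of a framed/spin manifold realising $\beta$." This is false as stated: a Bott manifold is spin but not framed, $\beta$ is \emph{not} in the image of $\pi_8^s\to\pi_8(\ko)$ (the source is finite, the target is $\bZ$), and hence the ring structure gives no implication $\eta\in\mathrm{Im}\implies\eta\beta^r\in\mathrm{Im}$. The statement is true, but only because Adams produced the classes $\mu_{8r+1},\mu_{8r+2}\in\pi_*^s$ with nontrivial $e$-invariant; there is no elementary manifold-theoretic shortcut around that input. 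You ultimately need to cite Adams exactly as the paper does, at which point the explicit manifold representatives become unnecessary. (As a minor additional point, the spin structure on $S^1$ with $\hat{\mathscr{A}}=\eta$ is the Lie-group/non-bounding one, not the one bounding $D^2$; your discussion of this is garbled, though it is not the main problem.)
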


\begin{proof}
That the unit map $S^0 \to \ko$ hits all $2$-torsion follows from the work of Adams on the $J$-homomorphism \cite[Theorem 1.2]{Adams}. But the $d$-fold desuspension of the unit map is $\kothom{d}\circ e_d$, by the remarks before diagram \eqref{sequence-of-mt-spectra}. 
\end{proof}

\subsection{Integral surjectivity}

The following implies Theorem \ref{thm:intro:integral}.

\begin{thm}\label{integral-surjectivity}
The homomorphism $(\kothom{d})_* :\pi_k (\MTSpin (d)) \to \pi_{d+k}(\ko)$ is surjective for all $k \leq d+1$.
\end{thm}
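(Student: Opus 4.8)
The statement to prove is that $(\kothom{d})_* : \pi_k(\MTSpin(d)) \to \pi_{d+k}(\ko)$ is surjective for all $k \leq d$. By Corollary \ref{containment-image-thomclass} we have $J_{d,k} \supset J_{d-1,k+1} \supset \cdots$, so it suffices to produce surjectivity for the \emph{smallest} available value of $d$, that is, for a class realising a given element of $\pi_{d+k}(\ko)$ we want to push it as far down the tower of spectra $\MTSpin(e)$ as we can. In fact, the cleanest route is to prove surjectivity of $(\kothom{d})_*$ onto $\pi_n(\ko)$ for \emph{all} $n$, using $d$ as large as we please, since $\pi_n(\ko) = \pi_k(\Sigma^d\MTSpin(d))$ with $k = n - d$, and the constraint $k \le d$ just says $n \le 2d$, which is no constraint once $d$ is allowed to grow. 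So the real content is: \emph{for each $n\ge 0$, the composite $\pi_n(\MTSpin(d)) \xrightarrow{(\kothom{d})_*} \pi_{n+d}(\ko)$ hits everything, provided $d$ is large enough}, and then Corollary \ref{containment-image-thomclass} transports this down.

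\textbf{Key steps.} First I would recall from \eqref{homotopy-ko} that $\pi_*(\ko) = \bZ[\eta,\kappa,\beta]/(2\eta,\eta^3,\kappa^2-4\beta,\kappa\eta)$, so a general homotopy class is a polynomial in the three multiplicative generators $\eta\in\pi_1$, $\kappa\in\pi_4$, $\beta\in\pi_8$. Using the multiplicativity formula \eqref{product-ahat}, $(\kothom{(d+e)})_*(\mu(a,b)) = (\kothom{d})_*(a)\cdot(\kothom{e})_*(b)$, it is enough to realise each generator separately by a class in some $\pi_*(\MTSpin(\bullet))$ and then take products of manifolds. For $\eta$: Theorem \ref{mod2-surjectivity-htpy-thy} (via Adams' theorem on the $J$-homomorphism, applied to $e_d$ composed with $\kothom{d}$) shows $\eta\in\pi_1(\ko)$ and $\eta^2\in\pi_2(\ko)$ are hit — indeed $\eta$ is hit by a class in $\pi_{1}(\MTSpin(0))$ or by desuspending the unit map. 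For $\kappa\in\pi_4(\ko)$ and $\beta\in\pi_8(\ko)$: I would invoke Theorem \ref{rational-surjectivity-htpy-thy}, which gives that $(\kothom{2})_*:\pi_{4m-2}(\MTSpin(2))\otimes\bQ\to\pi_{4m}(\ko)\otimes\bQ$ is onto — but rationally this only gives surjectivity up to a nonzero integer multiple, so it is not quite enough on the nose. The fix is the classical fact that the connective image-of-$J$ / Bott element $\beta\in\pi_8(\ko)$ is itself in the image of the unit $\bS^0\to\ko$ composed with nothing (it is hit by the framed $8$-manifold, i.e. $\beta$ is $\hat A$ of a Bott manifold — this is exactly the Bott manifold of Theorem \ref{thm:intro:integral}'s application), so $\beta\in J_{8,0}$, realised by $[B^8, TB^8, \id]\in\pi_0(\MTSpin(8))$ with $\hat{\mathscr{A}}(B)=\beta$. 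Similarly a suitable closed spin $4$-manifold with $\hat A$-genus $\pm 2$ (e.g. a $K3$ surface has $\hat A = 2$, giving $2\beta$... one must be careful) — more robustly, $\kappa$ is $2$-torsion-free of infinite order in $\pi_4(\ko)\cong\bZ$ and is hit by Adams' image-of-$J$ computation as well; $\pi_4(\ko)=\bZ$ is generated by a class whose image in $\pi_4(\KO)=\bZ$ under $\ko\to\KO$ is $\kappa$, and Bott periodicity plus the framed-manifold realisation gives a generator. Then $\eta^a\kappa^b\beta^c$ is realised by a product of a framed sphere (for $\eta$'s), copies of the $K3$-type $4$-manifold, and copies of the Bott $8$-manifold, sitting in $\pi_{a + \text{(stuff)}}(\MTSpin(4b+8c))$ with $\hat{\mathscr{A}}$ equal to the desired monomial by \eqref{product-ahat}; finally Corollary \ref{containment-image-thomclass} pushes this into $\MTSpin(d)$ for the target $d$, and since the range $k\le d$ forces $d+k\le 2d$ we have enough room in $d$ to absorb any of these realising spectra.

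\textbf{Main obstacle.} The delicate point is the $\bZ$-summands $\pi_{4m}(\ko)$: Theorem \ref{rational-surjectivity-htpy-thy} only provides surjectivity \emph{rationally}, i.e. up to the denominators coming from the Bernoulli numbers $(2^{2m}-2)B_m/(2m)!$, and indeed Theorem \ref{thm:intro:awayfrom2} shows that integrally one genuinely cannot hit a generator of $\pi_{4m}(\ko)$ using only $\MTSpin(2)$. The resolution — and the heart of the argument — is precisely that one is \emph{not} restricted to $d=2$: one must use higher-dimensional manifolds. Concretely, I expect the cleanest proof uses that the unit map $\bS^0\to\ko$ is surjective on all of $\pi_*$ in the range where the $J$-homomorphism/Bott elements live — this is Adams' theorem together with the fact that $\ko$ is the connective cover of $\KO$ and the image of $J$ plus the Bott-periodic elements generate $\pi_*(\ko)$ — and the unit map factors as $\bS^{-d}\xrightarrow{e_d}\MTSpin(d)\xrightarrow{\kothom{d}}\Sigma^{-d}\ko$ (the remark before diagram \eqref{sequence-of-mt-spectra}). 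Thus $J_{d,-d}\supset\pi_0(\bS)\to\pi_0(\ko)$ surjectively-onto-the-relevant-part, and the multiplicative structure \eqref{product-ahat} together with $\mu(e_d,e_e)=e_{d+e}$ then bootstraps: $J_{d,k}$ contains the image of $\pi_{d+k}(\bS)\to\pi_{d+k}(\ko)$ whenever $k\le d$ via $e_d$ and Pontrjagin--Thom representatives of stably framed manifolds, and one checks $\pi_n(\ko)$ is generated as an abelian group by classes hit from $\pi_n(\bS)$ (the Bott element $\beta$ and $\kappa$ being the only non-torsion generators, both of which are framed-manifold classes: $\beta = \hat{\mathscr{A}}(\text{Bott manifold})$ framed, and $\kappa$'s integral generator likewise). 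I would therefore structure the write-up as: (1) recall $e_d$ and the factorisation of the unit map; (2) observe $\pi_n(\ko)$ is generated by $\hat{\mathscr{A}}$-genera of stably framed $n$-manifolds (citing Adams / the solution of the Hopf-invariant and the framed cobordism computations, and the Joyce Bott manifold); (3) for $k\le d$ realise such a framed $n$-manifold, $n=d+k$, as a triple $[M, TM, \phi]\in\pi_k(\MTSpin(d))$ using a framing to split off a trivial $k$-plane bundle; (4) conclude by Theorem \ref{pontrjagin-thom-theorem} and the index-theoretic identity $(\kothom{d})_*[M,V,\phi]=\hat{\mathscr{A}}(M)$. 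The one genuinely fiddly verification is step (3): that a stably framed $n$-manifold admits a representative (after framed cobordism) whose \emph{stable} tangent bundle splits off a trivial summand of the correct rank $k\le d$, which is elementary but must be stated carefully since $\pi_k(\MTSpin(d))\to\Omega^{\Spin}_{d+k}$ has image exactly those classes — and a framed manifold certainly has stably trivial, hence suitably split, tangent bundle, so this is in fact immediate.
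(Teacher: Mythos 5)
There is a genuine gap, and it is at the heart of the argument. Your step~(2)---that $\pi_n(\ko)$ is generated by $\hat{\mathscr{A}}$-genera of \emph{stably framed} $n$-manifolds, i.e.\ that the unit map $\bS^0 \to \ko$ hits everything on homotopy groups---is false. The unit map is surjective onto $\pi_n(\ko) \cong \bZ/2$ for $n \equiv 1,2 \pmod 8$ (this is Adams' theorem, and it is precisely the content of Theorem \ref{mod2-surjectivity-htpy-thy}), but for $n = 4m > 0$ we have $\pi_{4m}(\ko) \cong \bZ$ while $\pi_{4m}(\bS^0)$ is a finite group, so the unit map is \emph{zero} there. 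The generators $\kappa \in \pi_4(\ko)$ and $\beta \in \pi_8(\ko)$ are not in the image of any framed manifold; equivalently, a framed spin $4m$-manifold has vanishing rational $\hat A$-genus because all its Pontrjagin numbers vanish. The Joyce Bott manifold $B^8$ with $\hat{\mathscr{A}}(B)=\beta$ is spin but emphatically \emph{not} framed (it must have nonzero Pontrjagin numbers to achieve $\hat A$-genus~$1$), and likewise for any spin $4$-manifold realising $\kappa$; indeed you yourself note the $K3$ surface has $\hat A$-genus~$2$, which is not $0$.

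Because of this, your step~(3)---splitting off a trivial $k$-plane via a framing---cannot be applied to the classes that matter. The real work, which the paper's proof does carry out, is to exhibit specific \emph{non-framed} spin manifolds with the correct $\hat{\mathscr{A}}$-invariant \emph{together with} an explicit rank-$d$ spin bundle $V$ and a stable isomorphism $V \oplus \epsilon^k \cong TM$. For $\kappa$ the paper takes a $K3$ surface $K$ (note $\hat{\mathscr{A}}(K)=\kappa$, not $2\kappa$, because of the $\tfrac12$ in formula \eqref{ahat-genus}) and shows via the intersection form that there is a complex line bundle $L_a$ with $L_a^{\otimes 2} \oplus \epsilon^2 \cong TK$ stably, giving $\fk \in \pi_2(\MTSpin(2))$. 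For $\beta$ it takes the Milnor $E_8$-plumbing manifold $M$ capped off with a disc (so $\hat{\mathscr{A}}(M)=\beta$ but $M$ is only parallelisable away from a point), stabilises by connected sums with $S^4 \times S^4$ to kill an obstruction in $\pi_6(S^3\times S^3)\cong \bZ/12 \oplus \bZ/12$, and builds a rank-$4$ bundle with the right Euler and Pontrjagin classes, giving $\fb \in \pi_4(\MTSpin(4))$. The constraint $k\le d$ is exactly what makes the theorem nontrivial, and it cannot be absorbed by ``taking $d$ large'' once you have fixed which $\MTSpin(e)$ your realising class lives in; the products $\fk^a\fb^c\cdot(\text{torsion classes})$ are what eventually land in the required $\pi_k(\MTSpin(d))$.

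Your reduction to the multiplicative generators via Corollary \ref{containment-image-thomclass} and formula \eqref{product-ahat}, and your treatment of the $\bZ/2$-summands via Theorem \ref{mod2-surjectivity-htpy-thy}, are both correct and match the paper. But the claim that the remaining generators $\kappa$ and $\beta$ come from framed manifolds is exactly what one cannot say, and fixing it requires the two explicit geometric constructions above rather than an appeal to Adams' theorem.
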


\begin{proof}
If $k+d \not\equiv 0 \pmod 4$, then the map is surjective by Theorem \ref{mod2-surjectivity-htpy-thy}. In particular, if $k=d+1$ then $k+d$ is odd so the claim follows in this case. For the case $k \leq d$, using the multiplicative structure \eqref{containment-image-thomclass} and \eqref{homotopy-ko} it will be enough to create elements $\fk \in \pi_2 (\MTSpin (2))$ with $\ahat (\fk)=\kappa$ and $\fb \in \pi_4 (\MTSpin (4)) $ with $\ahat (\fb)=\beta$. For both cases, we use Theorem \ref{pontrjagin-thom-theorem}, and both elements will be given by a $(2m-1)$-connected $4m$-manifold, $m = 1,2$,  
with the desired value of $\ahat(M)$, plus a spin vector bundle $V \to M$ of rank $2m$, and a stable isomorphism $V \oplus \epsilon^{2m}_\bR \cong TM$.
We will write $\mu_M \in H^{4m}(M)$ for the generator with $\langle \mu_M, [M] \rangle =1$. Recall the formulae for the low-dimensional $\hat{A}$-classes and the Hirzebruch classes
\begin{equation}\label{low-dim-ahat-l-class}
\begin{aligned}
\hat{A}_1 &= - \frac{1}{2^3 \cdot 3} p_1 \quad\quad &\hat{A}_2 &= \frac{1}{2^7 \cdot 3^2 \cdot 5} (-4 p_2 + 7 p_{1}^{2})\\
L_1 &= \frac{1}{3}p_1  & L_2 &= \frac{1}{3^2 \cdot 5}(7p_2-p_{1}^{2}). 
\end{aligned}
\end{equation}

We first construct the element $\fk \in \pi_2 (\MTSpin (2))$. Let $K$ be a $K3$ surface, which is a simply-connected spin manifold. It is well-known that the intersection form of $K$ is $q=2 (-E_8) \oplus 3 H$, the direct sum of two times the negative $E_8$-form and three hyperbolic summands. The signature of this form is $-16$, which by Hirzebruch's signature theorem means that $p_1 (TK)=  -48 \mu_K$. Therefore, $\hat{A} (TK)=2 \mu_K$ and so $\ahat (K)=\kappa$ by \eqref{ahat-genus}, as required. 
We claim that there exists a complex line bundle $L \to K$ such that $p_1 (L^{\otimes 2}) = p_1 (TK)$. Let $c \in H^2 (K)$ and $L_c$ be the line bundle with $c_1 (L_a)=c$. Since 
$$p_1 (L_{c}^{\otimes 2}) = c_1 (L_{c}^{\otimes 2})^2 = 4 q(c) \cdot \mu_K,$$
we have to pick $c$ such that $q(c)=-12$. It is easy to see that a quadratic form which contains a hyperbolic summand represents any even number, and therefore such a $c$ exists. We now claim that $TK$ and $L_c^{\otimes 2} \oplus \epsilon^2_\bR$ are stably isomorphic. To see this, we must show that the triangle in the following diagram commutes.
\begin{equation*}
\xymatrix{
 & B\Spin(2) \ar[d] \\
K \ar[r]^-{TK} \ar[ru]^{L_c^{\otimes 2}}& B\Spin \ar[r]^{p_1/2} & K(\bZ,4)
}
\end{equation*}
As the map $p_1/2 : B\Spin \lra K(\bZ,4)$ is $8$-connected, it is enough to show that $p_1(TK) = p_1(L_c^{\otimes 2} \oplus \epsilon^2) \in H^4(K;\bZ)$, but we have arranged for this to be true. Hence there is a stable isomorphism $\phi:L_{c}^{\otimes 2} \oplus \epsilon^2 \cong TK$ and the element $\fk=[K,L_c,\phi] \in \pi_2 (\MTSpin (2))$ has the desired properties.

In the $8$-dimensional case the proof is similar but more difficult (with the exception that every vector bundle on a $3$-connected $8$-manifold is spin, so we do not have to take care of this condition). Let $P$ be the $8$-dimensional $(-E_8)$-plumbing manifold, which is 3-connected as the Dynkin diagram for $E_8$ is contractible. Consider $\natural^{28} P$, the boundary connected sum of $28$ copies of $P$. This is a 3-connected 
parallelisable manifold, with signature $-2^5 \cdot 7$, and its boundary $\partial (\natural^{28} P) $ is diffeomorphic to $S^7$, by the calculation of Kervaire--Milnor \cite{KervMil} that the group of homotopy 7-spheres is a cyclic group of order 28. The manifold $M := \natural^{28} N \cup_{S^7} D^8$ is parallelisable away from a point, and therefore $p_1 (TM)=0$. The Hirzebruch signature theorem and \eqref{low-dim-ahat-l-class} shows that 
$$p_2 (TM)=-2^5 \cdot 3^2 \cdot 5  \mu_M$$
and $\hat{A}(TM) = \mu_M$, so that $\ahat (M)=\beta$, by \eqref{ahat-genus}. 
For $r \geq 0$ let $K_r:= \sharp^r (S^4 \times S^4)$ and $M_r := M \sharp K_r$. This is still parallelisable away from a point, whence $p_1 (TM_r)=0$, and since $M_r$ is cobordant to $M$, we still have $\ahat(M_r)=\beta$. We now claim that for $r=12$, we can find a $4$-dimensional vector bundle $V_r \to K_r$ so that the connected sum of $V_r$ with the trivial bundle over $M$ yields a vector bundle over $M_r$ with the same Pontrjagin classes as $TM_r$. 
Consider the exact sequence
$$KO^{-1} (M^{(4)}_r) \stackrel{\delta}{\to} \widetilde{KO}^{0} (S^{8}) \to KO^0 (M_r) \to KO^{0}(M^{(4)}_r) \to \widetilde{KO}^1 (S^{8}) =0$$
coming from the cofibre sequence $M_r^{(4)} \to M_r \to S^8$. As $M^{(4)}_r$ is a bouquet of $4$-spheres we have $KO^{-1} (M^{(4)}_r) \cong \bZ/2$, and since $\widetilde{KO}^{0} (S^{8}) \cong \bZ$ it follows that the map $\delta $ is zero. Because $KO^{0}(M^{(4)}_r)$ is free abelian, it follows that $KO^0 (M_r)$ is torsion-free, and this implies that the Pontrjagin character $\mathrm{ph} : KO^0 (M_r) \to H^* (M_r; \bQ)$ is injective. Therefore, a stable vector bundle on $M_r$ is determined by its Pontrjagin classes. Thus if we are able to find a 4-dimensional vector bundle $V_r \to K_r$ such that $\epsilon^4_\bR \sharp V_r \to M \sharp K_r = M_r$ has the same Pontrjagin classes as $TM_r$, then it will be stably isomorphic to $TM_r$.

Isomorphism classes of $4$-dimensional stably trivial vector bundles on $K_r- *\simeq \bigvee^{r} (S^4 \vee S^4)$ are in bijection with $2 H^4 (K_r)\cong(\bZ^2 )^r$, the group of cohomology classes that are divisible by $2$. The bijection is given by the Euler class. For each such vector bundle, there is an obstruction in 
$$ \pi_7 (B \Spin (4)) \cong \pi_6 (S^3 \times S^3) \cong \bZ/12 \oplus \bZ/12$$
against extending the vector bundle over $K_r$ (the isomorphism $\pi_6 (S^3)\cong \bZ/12$ is classical, see \cite[p. 186]{Toda}). To explain this obstruction, write $x_1, y_1, \ldots, x_r, y_r \in \pi_4(\bigvee^{r} (S^4 \vee S^4))$ for the inclusions of the wedge summands, so that the attaching map for the 8-cell of $K_r$ is the sum of Whitehead products $\sum_{i=1}^r [x_i, y_i]$. Thus, if we write $\rho \in \pi_4(B\Spin(4))$ for the class with Euler class 2 and first Pontrjagin class zero, then the stably trivial vector bundle on $K_r - *$ with Euler class $(a_1, b_1, \ldots, a_r, b_r) \in (\bZ^2)^r \cong 2H^4(K_r)$ has obstruction
$$\sum_{i=1}^r [a_i \cdot \rho, b_i \cdot \rho] = \left(\sum_{i=1}^r a_i b_i\right) \cdot [\rho,\rho] \in \pi_7(B\Spin(4))$$
against extending over $K_r$. Therefore, if we take $r=12$, all $a_i$ to be equal and all $b_i$ to be equal, then the obstruction is zero and the vector bundle can be extended. 

Thus, for each $c \in  2 H^4 (S^4 \times S^4)$, we obtain a $4$-dimensional vector bundle $V_c$ on $K_{12} = \sharp^{12} (S^4 \times S^4)$ with Euler class
$$(c, c, \ldots, c) \in H^4 (K_1 ; \bZ)^r \cong H^4 (K_r ; \bZ).$$ 
When restricted to the $4$-skeleton, the bundle $V_c$ is stably trivial and so $p_1 (V_c)=0$. 

Now we take the connected sum of $V_c$ with the trivial vector bundle on $M$ and get a vector bundle $W_{12}\to M_{12} = M \sharp K_{12}$, with $p_1 (W_{12}) =0$ and Euler class
$$e(W_{12}) = (c,c, \ldots, c,0)\in H^4 (K_1 )\oplus \cdots \oplus H^4 (K_1) \oplus H^4 (M)=H^4 (M_{12}).$$
 Let $q_0$ be the intersection form on $H^4 (S^4 \times S^4)$, and compute
$$ p_2 (W_{12}) = e(W_{12})^2 = q (c,c, \ldots, c,0) \mu_{M_{12}}= 12 q_0 (c)\mu_{M_{12}}.$$
In order to achieve that $p_2 (W_{12})=p_2 (TM_{12})=2^5 \cdot 3^2 \cdot 5  \mu_{M_{12}}$, we have to find an even $c$ so that $q_0 (c) = -2^3 \cdot 3 \cdot 5 $. As in the four-dimensional case, we can find an even $c$ with $q_0(c)=8s$, for each $s \in \bZ$, and picking $s=-15$ finishes the proof.
\end{proof}

\subsubsection{Homological conclusions}

We can use Theorem \ref{integral-surjectivity} to obtain results on the image of $H_* (\loopinf{+1} \MTSpin (d); \bF) \to H_* (\loopinf{+d+1} \ko; \bF)$ when $\bF$ is a field. When $\bF = \bQ$ or $\bF_2$, the result is particularly nice. For $\bF=\bQ$ and $d \geq 2$, we find that 
$$H_* (\loopinf{+1} \MTSpin (d); \bQ) \lra H_* (\loopinf{+d+1} \ko; \bQ)$$
is surjective, using Theorem \ref{rational-surjectivity-htpy-thy}. 
For $\bF = \bF_2$, we have the following result, proving Theorem \ref{thm:intro:mod2}.

\begin{prop}\label{surjectivity-mod2-homology}
For $n \geq 0$, the Thom class maps
\begin{align*}
\loopinf{+1} \kothom{2n}:\Omega^{\infty+1} \MTSpin(2n) &\lra \loopinf{+2n+1} \ko\\
\loopinf{+2} \kothom{2n}:\Omega^{\infty+2} \MTSpin(2n) &\lra \loopinf{+2n+2} \ko
\end{align*}
are surjective on $\bF_2$-homology.
\end{prop}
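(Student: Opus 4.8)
The statement we wish to prove is that the two maps
$$\loopinf{+1} \kothom{2n}:\Omega^{\infty+1} \MTSpin(2n) \lra \loopinf{+2n+1} \ko, \qquad \loopinf{+2} \kothom{2n}:\Omega^{\infty+2} \MTSpin(2n) \lra \loopinf{+2n+2} \ko$$
are surjective on $\bF_2$-homology. The plan is to reduce this to a statement about infinite loop maps of spectra and then to invoke Theorem \ref{integral-surjectivity} together with the structure of $H_*(\loopinf{}\ko;\bF_2)$ as a Hopf algebra. First I would recall the classical fact (due to the splitting principle for infinite loop spaces, or more directly to work of Kochman and others on Dyer--Lashof operations) that for a connective spectrum $E$ the mod-$2$ homology $H_*(\Omega^\infty E;\bF_2)$ is generated as an algebra under the Pontryagin product and Dyer--Lashof operations by the image of the Hurewicz map $\pi_*(E)\otimes \bF_2 \to H_*(\Omega^\infty E;\bF_2)$, together with the homology of the base point component; more precisely, by the free $E_\infty$ (or at least $H_\infty$) structure, the homology of $\Omega^\infty_0 E$ is the free graded-commutative algebra with Dyer--Lashof operations on the reduced homology of a wedge of spheres realizing a set of generators of $\pi_*E$ modulo torsion issues. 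The cleanest route is: the Thom class $\kothom{2n}: \MTSpin(2n)\to \Sigma^{-2n}\ko$ is a map of spectra, hence $\loopinf{+l}\kothom{2n}$ is a map of infinite loop spaces and thus commutes with all Dyer--Lashof operations and the Pontryagin product on $\bF_2$-homology.

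The key steps, in order, would be the following. (1) Reduce to showing that the map on $\bF_2$-homology hits a set of algebra-plus-Dyer--Lashof-operation generators of $H_*(\loopinf{+2n+l}_0\ko;\bF_2)$ for $l=1,2$; by the infinite-loop-map property this follows once the map hits generators of $\pi_{*}(\Sigma^{2n+l}\ko)\otimes\bF_2$ under the Hurewicz homomorphism, since the homology of the relevant component of $\loopinf{}\ko$ is generated under product and Dyer--Lashof operations by the Hurewicz image (this is where one uses that $\ko$ is an $E_\infty$-ring spectrum and the explicit description of $H_*(\loopinf{}\ko;\bF_2)$, e.g. from the work of Priddy or the Adams--Priddy splitting). (2) Observe that a generating set for $\pi_*(\ko)\otimes\bF_2$ in the relevant degrees consists of the classes $\eta$, $\kappa$ (or rather its mod $2$ reduction is $0$, so one needs $\eta^2$, and the Bott class $\beta$), and products thereof; concretely in degrees $\equiv 0,1,2 \pmod 8$ where $\pi_*\ko$ is nonzero. (3) Apply Theorem \ref{integral-surjectivity}: the homomorphism $(\kothom{2n})_*:\pi_k(\MTSpin(2n))\to\pi_{2n+k}(\ko)$ is surjective for all $k\le 2n$. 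Shifting by the loop coordinate, for the first map we are looking at $\pi_{k}(\Omega^{\infty+1}\MTSpin(2n)) = \pi_{k+1}(\MTSpin(2n))$ mapping to $\pi_{k+1}(\Sigma^{2n}\ko)=\pi_{2n+k+1}(\ko)$; surjectivity on $\pi_{k+1}$ of the spectra holds for $k+1\le 2n$, i.e. in a range, but the homology generators we need all lie in this range or are products/Dyer--Lashof images of low-degree ones, so surjectivity on homology follows globally even though surjectivity on homotopy only holds in a range. (4) Conclude: the Hurewicz images of $\eta, \eta^2, \beta, \dots$ in $H_*(\loopinf{}\ko;\bF_2)$ are algebra-plus-operation generators, each is hit by $(\loopinf{+l}\kothom{2n})_*$ because it is hit already on homotopy in the appropriate degree (using $2n\ge 0$, so in particular $2n\ge$ the degrees of the low generators $\eta$ in degree $1$, $\eta^2$ in degree $2$, $\beta$ in degree $8$, once $l$ is added — one checks $1+l, 2+l, 8+l \le 2n + l$, which is automatic), and surjectivity on homology follows.

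\textbf{The main obstacle.} The genuinely delicate point is step (1): one needs the precise statement that $H_*(\loopinf{+m}_0\ko;\bF_2)$ is generated, as an algebra under the Pontryagin product and the mod-$2$ Dyer--Lashof operations $Q^i$, by the Hurewicz image of $\pi_*(\Sigma^m\ko)\otimes\bF_2$ plus perhaps the single class of the unit component. This is true because $\ko$ is an $E_\infty$-ring spectrum and the bottom classes of $\ko$ are detected in homotopy, so all of $H_*(\loopinf{}\ko;\bF_2)$ is built from them by the free $E_\infty$ structure, but citing it cleanly requires pointing to the literature on Dyer--Lashof operations for $\ko$ (e.g. the computations of the homology of $BO$, $BSpin$, and of $\loopinf{}\ko$, or the Adams--Priddy splitting $\ko \simeq$ a sum of suspensions of $BoP$-type spectra after $2$-completion). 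Once this structural input is in place, the rest is a bookkeeping argument with Theorem \ref{integral-surjectivity}: since an infinite loop map is surjective on $\bF_2$-homology as soon as it is surjective onto a set of algebra-and-operation generators, and such generators can be taken to live in degrees where Theorem \ref{integral-surjectivity} already gives surjectivity on homotopy groups, we are done. I would write this up by first stating the structural lemma about $H_*(\loopinf{}\ko;\bF_2)$ with a reference, then the one-line deduction; the hardest part to get right in the exposition is making sure the degree bounds in Theorem \ref{integral-surjectivity} ($k\le 2n$) actually cover all the needed generators after the degree shift by the loop coordinate $l\in\{1,2\}$, which they do because the generators of $\pi_*\ko\otimes\bF_2$ that are not decomposable (namely $\eta$ and $\beta$, in degrees $1$ and $8$) lie well within the range for all $2n\ge 0$ once one remembers $8+l \le 2n+l$ holds for $2n\ge 8$ and the low-dimensional cases $2n = 0,2,4,6$ are handled separately or by the stability/multiplicativity already encoded in Corollary \ref{containment-image-thomclass}.
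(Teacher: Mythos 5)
Your high-level plan matches the paper's: since $\loopinf{+j}\kothom{2n}$ is an infinite loop map, its image in $\bF_2$-homology contains the algebra over the Dyer--Lashof algebra generated by the Hurewicz image, and one combines this with Theorems \ref{mod2-surjectivity-htpy-thy} and \ref{integral-surjectivity}. The difference is in the form of the structural input and the resulting bookkeeping, and your version has a genuine gap there. You propose to hit the Hurewicz images of $\eta$, $\eta^2$, $\beta$ separately, and to reach $\beta$ (degree $8$) your own degree check forces $2n\geq 8$. You try to patch the cases $2n=0,2,4,6$ with an appeal to ``stability/multiplicativity already encoded in Corollary \ref{containment-image-thomclass},'' but that corollary gives the inclusion $J_{d,k}\supset J_{d-1,k+1}$, which transports surjectivity \emph{upward} in $d$; it does not help establish the small-$d$ cases from the large-$d$ ones, so the patch does not work as stated. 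The paper's proof avoids this entirely by using the sharper structural fact (Kochman, Priddy): $H_*(\loopinf{+m}\ko;\bF_2)$ is \emph{monogenic} as an algebra over the Dyer--Lashof algebra — it is generated by a \emph{single} class, namely the component class $[1]\in H_0$ (for a generator of $\pi_m(\ko)$) when $m\equiv 0,1,2,4\pmod 8$, and the unique nonzero class in the lowest nonvanishing reduced homology group in the connected cases. That single generator sits in degree $0$, or degree at most $3$ in the connected cases, which after the shift by $j=1$ or $2$ is always within the bound $k\leq 2n$ of Theorem \ref{integral-surjectivity} (the cases where $\pi_m(\ko)=\bZ/2$ are covered instead by Theorem \ref{mod2-surjectivity-htpy-thy}); the extremal cases $n=1,2$ are tight but hold. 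Once monogenicity is invoked there is nothing left to patch for any $n\geq 0$. A further small inaccuracy: the mod $2$ reduction of $\kappa$ in $\pi_4(\ko)\otimes\bF_2$ is \emph{not} zero. You may be thinking of the Hurewicz image of $\kappa$ in \emph{spectrum} homology $H_4(\ko;\bF_2)$, which does vanish for Adams-filtration reasons — but the relevant Hurewicz image in the argument is the one in the \emph{space} homology $H_*(\loopinf{+m}\ko;\bF_2)$, where the monogenic generator is hit by the (nonzero) Hurewicz image of the corresponding low-degree homotopy class.
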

\begin{proof}
We require some information about $H_*(\loopinf{+k}\ko;\bF_2)= H_*(\Omega^{k}(\bZ \times BO);\bF_2)$ as algebras over the Dyer--Lashof algebra. 
When $k \equiv 0,1,2,4 \,\mathrm{mod}\, 8$, so $\Omega^k(\bZ \times BO)$ is disconnected, the class $\xi =[1] \in H_0(\Omega^k(\bZ \times BO);\bF_2)$ of the path component corresponding to a generator of $\pi_k(\bZ \times BO)$, and its inverse $\xi^{-1} = [-1]$, generate $H_*(\loopinf{+k}\ko;\bF_2)$ as an algebra over the Dyer--Lashof algebra. For the remaining $k$ the unique nontrivial class $\xi$ in the lowest nonvanishing reduced homology group generates $H_*(\loopinf{+k}\ko;\bF_2)$ as an algebra over the Dyer--Lashof algebra. These claims follow from the calculations of Kochman \cite{Kochman} and Priddy \cite{Priddy}, a pleasant reference for which is \cite{Nagata}. 

By Theorems \ref{mod2-surjectivity-htpy-thy} and \ref{integral-surjectivity} the class $\xi$ is in the image of 
$$\pi_*(\Omega^{\infty+j} \MTSpin(2n)) \overset{{´\loopinf{+j} \kothom{2n}}_*}\lra \pi_*(\loopinf{+2n+j} \ko) \lra H_*(\loopinf{+2n+j} \ko;\bF_2)$$
for $j=1,2$, 
and if $k \equiv 0,1,2,4 \,\mathrm{mod}\, 8$ then $\xi^{-1}$ is too. Thus these classes are also in the image of $H_*(´\loopinf{+j} \kothom{2n};\bF_2)$. As $´\loopinf{+j} \kothom{2n}$ is an infinite loop map its image on $\bF_2$-homology is closed under multiplication and Dyer--Lashof operations, so the map is surjective on $\bF_2$-homology as claimed.
\end{proof}

\subsection{Away from the prime 2}\label{sec:awayfrom2}

Away from the prime $2$, we can considerably improve the surjectivity result Theorem \ref{integral-surjectivity} by applying work of Madsen and Schlichtkrull \cite{MadsenSchlichtkrull}. All spaces $X$ that occur in the sequel are infinite loop spaces, so they have localisations $X \to X_{(p)}$ which induce algebraic localisation at $p$ on homotopy and homology (see e.g.\ \cite{Sullivan}).

\begin{thm}\label{madsenschlichtkrull}
Let $p$ be an odd prime. There is a loop map $f : (\loopinf{+2}_0 \ko)_{(p)} \to \Omega^\infty_0 \MTSpin(2)_{(p)}$ such that the composition
$$(\loopinf{+2}_0 \ko)_{(p)} \overset{f}\lra \Omega^\infty_0 \MTSpin(2)_{(p)} \overset{\Omega^\infty\lambda_{-2}}\lra (\loopinf{+2}_0 \ko)_{(p)}$$
on $\pi_{4m-2}(-)$ is multiplication by a $p$-local unit times $(2^{2m-1}-1) \cdot \mathrm{Num}(\tfrac{B_m}{2m})$.
\end{thm}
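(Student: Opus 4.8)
The plan is to deduce Theorem \ref{madsenschlichtkrull} from the work of Madsen and Schlichtkrull \cite{MadsenSchlichtkrull} on the units of complex $K$-theory, after identifying $\MTSpin(2)_{(p)}$ with a more familiar $p$-local spectrum. Recall from the proof of Theorem \ref{rational-surjectivity-htpy-thy} that $B\Spin(2) \simeq \bC\bP^\infty$, with universal rank $2$ spin bundle the realification of $L^{\otimes 2}$, so $\MTSpin(2) = \Th(-L^{\otimes 2})$ over $\bC\bP^\infty$. Away from $2$, the squaring map $\bC\bP^\infty \to \bC\bP^\infty$ is a homotopy equivalence, so $\MTSpin(2)_{(p)} \simeq \Th(-L)_{(p)} = (\bC\bP^\infty_{-1})_{(p)}$, the classical Madsen--Tillmann spectrum $\MTSO(2)_{(p)} \simeq \MTU(1)_{(p)}$. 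The $KO$-theory Thom class $\kothom{2}$ corresponds, via the formula \eqref{spin-thom-vs-complex-thom} relating spin and complex Thom classes, to the $\ko$-theory (or after complexification $\ku$-theory) Thom class of $-L$ twisted by $\det(L)^{1/2}$; the plan is to keep careful track of this twist so that the resulting numerical factor comes out correctly.

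First I would set up the dictionary: identify $\Omega^\infty_0 \MTSpin(2)_{(p)}$ with $\Omega^\infty_0 \Th(-L)_{(p)}$ and recall that Madsen--Schlichtkrull \cite{MadsenSchlichtkrull} construct, for odd $p$, a $p$-local splitting expressing $\Th(-L)_{(p)}$ in terms of the fibre of a stable map related to $\mathrm{SL}_1(\ku)$ or equivalently to $\Sigma^{-2}\ku_{(p)}$ and the $J$-homomorphism. Concretely, their analysis of the Becker--Gottlieb transfer for $\bC\bP^\infty \to BU(1) \to *$ and the resulting maps between $\Th(-L)$ and $\ku$ produces, after $p$-localisation, a loop map $(\loopinf{+2}_0\ku)_{(p)} \to \Omega^\infty_0\Th(-L)_{(p)}$ whose composite with the Thom-class map $\Omega^\infty\lambda_{-L}$ is, on homotopy groups, multiplication by a specific number. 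Then I would transport this along the complexification $c : \ko \to \ku$ (which on $\pi_{4m}$ is injective, an isomorphism onto an index-$2$ or full subgroup depending on $m \bmod 2$, but a $p$-local isomorphism since $p$ is odd) to land in the $\ko$ statement, absorbing the comparison into the ``$p$-local unit'' ambiguity.

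The heart of the argument is the identification of the numerical factor as $(2^{2m-1}-1)\cdot\mathrm{Num}(B_m/2m)$ up to a $p$-local unit. The factor $\mathrm{Num}(B_m/2m)$ is exactly the denominator-type invariant governing the image of $J$ in dimension $4m-1$ / the order of the relevant $\ko$-theoretic obstruction, and it arises in \cite{MadsenSchlichtkrull} from the Adams operation eigenvalue analysis of $\psi^k - 1$ on $\ku^0(\bC\bP^\infty)$. The extra factor $(2^{2m-1}-1)$ is precisely the ``twisting by $\det(L)^{1/2}$'' contribution: tracing through \eqref{spin-thom-vs-complex-thom}, the Chern character computation in the proof of Theorem \ref{rational-surjectivity-htpy-thy} shows the spin Thom class differs from the complex one by the series $\tfrac{x}{\sinh x}$ rather than $\tfrac{2x}{1-e^{2x}}$, and the ratio of the relevant coefficients is $\tfrac{2^{2m}-2}{2} = 2^{2m-1}-1$ against the even Bernoulli normalisation. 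So the plan is: (i) write down the Madsen--Schlichtkrull map for $\MTU(1)_{(p)}$ with its known effect on homotopy in terms of $\psi^\ell$-eigenvalues, (ii) compute that effect explicitly via the Mellin-transform / $\tfrac{x}{\sinh x}$ identity already appearing in the excerpt, (iii) use the $d=1$ analogue of diagram \eqref{sequence-of-mt-spectra} and Corollary \ref{containment-image-thomclass} if needed to pass between $\MTSpin(1)$ and $\MTSpin(2)$, and (iv) invoke the von Staudt--Clausen theorem to see that the denominator of $B_m/2m$ contributes only via its numerator after clearing $p$-locally permissible factors.

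The main obstacle I anticipate is purely bookkeeping-but-delicate: getting the $\det(L)^{1/2}$-twist exactly right so that the factor is $(2^{2m-1}-1)$ and not, say, $(2^{2m-1}-1)/(\text{something})$ or $2^{2m}-2$, and simultaneously confirming that every discrepancy between the $\ko$- and $\ku$-level statements, and between $\MTSpin(2)$ and the classical $\MTU(1)$, is genuinely a $p$-local unit rather than hiding an extra factor of $p$. This requires matching the normalisation conventions of \cite{MadsenSchlichtkrull} (which works complex-orientedly) with the real/spin conventions here, using the footnoted Thom-class convention $(\lambda_L^\bC)^2 = (1-L)\lambda_L^\bC$ consistently. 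Once the two series $\tfrac{x}{\sinh x}$ (spin side) and the Madsen--Schlichtkrull eigenvalue product (their side) are written down, the identity $(2^{2m}-2)B_m/(2m)! \cdot (2m)!/(\text{$J$-order})= (2^{2m-1}-1)\cdot\mathrm{Num}(B_m/2m) \cdot (\text{unit})$ should fall out of classical number theory, but verifying it prime-by-prime is where care is needed.
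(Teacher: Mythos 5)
Your overall strategy is the same as the paper's: localise at an odd $p$ to identify $\MTSpin(2)_{(p)}$ with $\MTSO(2)_{(p)} = \Th(-L)_{(p)}$, invoke Madsen--Schlichtkrull to produce a map from $(\loopinf{+2}_0\ko)_{(p)}$ back to $\Omega^\infty_0\MTSpin(2)_{(p)}$, and track the $\det(L)^{1/2}$-twist via the relation \eqref{spin-thom-vs-complex-thom} and the $x/\sinh x$ identity to compute the scalar. The paper does exactly this, building a $2\times 3$ homotopy-commutative diagram whose outer composite is $\Omega^\infty\lambda_{-2}\circ f$, and evaluates it against $\ch_{2m-1}$.

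However, your attribution of the two numerical factors is reversed, and this is more than cosmetic. In the actual computation, the \emph{entire} Bernoulli contribution $(-1)^{m+1}(2^{2m}-2)B_m/(2m)!$ comes from the $\ch$ of the twisted Thom class $r(t)\cdot\lambda_{-L}^\bC$ (the $x/\sinh x$ series), while the Adams operation $\Omega^2(\psi^k-1)$ contributes the \emph{separate} factor $k^{2m}-1$, and the transfer map $\Omega s$ contributes $(2m-1)!$. You state that $\mathrm{Num}(B_m/2m)$ ``arises from the Adams operation eigenvalue analysis of $\psi^k-1$'' and call it a ``denominator-type invariant governing the image of $J$'' --- but the Adams side produces $k^{2m}-1$, which (by Adams's Lemma 2.12 and Theorem 2.6 of \cite{AdamsII} plus von Staudt) is a $p$-local unit multiple of $\mathrm{Den}(B_m/2m)$, not the numerator. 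The numerator $\mathrm{Num}(B_m/2m)$ is what survives on the spin side after the denominator is cancelled. Getting this bookkeeping right is exactly the heart of the calculation, so the reversal is a genuine gap.

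Two further points. First, you omit the single most load-bearing choice: $k$ must be taken to generate $(\bZ/p^2)^\times$; for an arbitrary $k$ the quotient $(k^{2m}-1)/\mathrm{Den}(B_m/2m)$ is \emph{not} a $p$-local unit, so the theorem would simply be false as stated. Your closing caveat about ``confirming every discrepancy is a $p$-local unit'' acknowledges the issue, but a proof needs the explicit hypothesis. Second, your step (iii) invoking $\MTSpin(1)$ and diagram \eqref{sequence-of-mt-spectra} is a red herring: the proof stays entirely in dimension $2$, using the $p$-local equivalence $B\Spin(2)\to BSO(2)$ directly (the confusion is perhaps because $\MTSO(2)\simeq\MTU(1)$, but that has nothing to do with $\MTSpin(1)$). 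Similarly, ``the Becker--Gottlieb transfer for $\bC\bP^\infty\to BU(1)\to *$'' is notationally incoherent (these are the same space); what Madsen--Schlichtkrull actually supply is the square relating $\Omega^2(\psi^k-1)$ on $BU$ to the $S^1$-transfer $\Omega^\infty\omega:\Omega^\infty_0\MTSO(2)\to Q_0(\bC\bP^\infty_+)$, via maps $\Omega\tilde s$ and $\Omega s$, together with the identity $\Omega s\circ(L-1)\simeq\mathrm{inc}-[1]$.
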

\begin{proof}
Let us write $\Omega^2_0(\bZ \times BO)$ for $\loopinf{+2}_0 \ko$, $\beta : BU \overset{\sim}\to \Omega^2_0(\bZ \times BU)$ for the Bott equivalence and $\beta^{-1}$ for its homotopy inverse. The proof will be based on work of Madsen--Schlichtkrull \cite{MadsenSchlichtkrull}. Their work allows us to construct the following homotopy commutative diagram, where all infinite loop spaces are implicitly localised at $p$, and $k \in \bN$ is chosen such that its residue class generates $(\bZ/p^2)^\times$.
\begin{equation*}
\xymatrix{
 & \bC\bP^\infty \ar[d]^{L - 1} & & \bC\bP^\infty \ar[d]^-{L-1}\\
\Omega_0^2(\bZ \times BO) \ar@{}[dr]|{\mbox{\CircNum{1}}} \ar[r]^-{\rho}\ar[d]_-f  & BU \ar@{}[drr]|{\mbox{\CircNum{2}}} \ar[rr]^-{\beta^{-1} \circ \Omega^2(1-\psi^k) \circ \beta} \ar[d]^-{\Omega \tilde{s}}& &  BU \ar[d]^-{\Omega s}\\
\Omega^\infty_0 \MTSpin(2) \ar@{}[dr]|{\mbox{\CircNum{3}}} \ar[d]_-{\Omega^\infty\lambda_{-2}} \ar[r]^-\simeq & \Omega^\infty_0 \MTSO(2) \ar[d]^-{\beta^{-1} \circ \Omega^\infty(r(t)\cdot\lambda_{-L}^\bC)} \ar[rr]^-{\Omega^\infty\omega}& & Q_0(\bC\bP^\infty_+) \\
\Omega_0^2(\bZ \times BO) \ar[r]^-{\rho} & BU}
\end{equation*}
Here $\MTSO(2) = \Th(-\gamma_2)$ is the Thom spectrum of minus the tautological bundle $\gamma_2 \to BSO(2)$; it receives a spectrum map from $\MTSpin(2)$ coming from the map $B\Spin(2) \to BSO(2)$. Identifying $BSO(2)$ with $\bC\bP^\infty$, and $\gamma_2$ with the tautological complex line bundle $L \to \bC\bP^\infty$, the map $\omega$ is the ``inclusion'' map $\Th(-L) \to \Th(-L \oplus L) = \Sigma^\infty \bC\bP^\infty_+$. The map $\rho = \beta^{-1} \circ \Omega^2 c$ is obtained by looping the complexification map twice then using the inverse Bott equivalence, and $\psi^k$ is the $k$th Adams operation. The diagram is constructed as follows.
\begin{enumerate}[(i)]
\item The square $\CircNum{2}$ is constructed in \cite[Section 7]{MadsenSchlichtkrull}, specifically in the proof of Theorem 7.8 of that paper, and the only property we require of it is the fact that $\Omega s \circ (L-1) \simeq \mathrm{inc} - [1]$, where $\mathrm{inc} : \bC\bP^\infty \to Q_1(\bC\bP^\infty_+)$ is the standard inclusion, and $-[1]$ denotes its translation from the 1 component to the 0 component, cf.\ proof of Lemma 7.5 in \cite{MadsenSchlichtkrull}.

\item The map $f$ is defined so as to make the square $\CircNum{1}$ commute. This uses the fact that the lower map in $\CircNum{1}$ is an equivalence, as we are working at an odd prime so $B\Spin(2) \to BSO(2)$ is a $p$-local equivalence.

\item In the square $\CircNum{3}$ the left hand map is the infinite loop map of $\lambda_{-2}$, the $KO$-theory Thom class of $\MTSpin(2)$.

\item In the square $\CircNum{3}$ the right hand map is the infinite loop map which corresponds under the Bott isomorphism to the class $r(t) \cdot \lambda_{-L}^\bC \in K^{-2}_{(p)}(\MTSO(2))$ in the $p$-local $K$-theory of $\MTSO(2)$, where $\lambda_{-L}^\bC \in K^{-2}_{(p)}(\MTSO(2))$ is the Thom class, and
$$r(t) = \sqrt{1 +t} \in K^0_{(p)}(BSO(2)) = \bZ_{(p)}[[t]] \quad \quad t = L-1$$
is the formal power series expansion of $\sqrt{1 +t}$, whose coefficients lie in $\bZ[\tfrac{1}{2}]$ so are $p$-local integers for any odd prime $p$, so this defines an element in $p$-local $K$-theory. Under the map $B\Spin(2) \to BSO(2)$ the line bundle $L$ pulls back to $L^{\otimes 2}$, and so $r(t)$ pulls back to $L$.

\item The commutativity of square $\CircNum{3}$ is another way of expressing formula \eqref{spin-thom-vs-complex-thom} from the proof of Theorem \ref{rational-surjectivity-htpy-thy}.
\end{enumerate}

We wish to compute the effect of the composition $\Omega^\infty\lambda_{-2} \circ f$ on $\pi_{4m-2}$, where it must be multiplication by some $p$-local integer $A_m \in \bZ_{(p)}$. As the map
$$\rho: \pi_{4m-2}(\Omega^2_0(\bZ \times BO)) \lra \pi_{4m-2}(BU)$$
is an isomorphism, the effect of the composition $\beta^{-1} \circ \Omega^\infty(r(t)\cdot \lambda_{-L}^\bC) \circ \Omega \tilde{s}$ on $\pi_{4m-2}$ must also be multiplication by $A_m$. Because the map $\beta^{-1} \circ \Omega^\infty(r(t)\cdot \lambda_{-L}^\bC) \circ \Omega \tilde{s} : BU \to BU$ is a loop map it sends primitives in $H^*(BU;\bQ)$ to primitives, and so sends the Chern character class $\ch_{2m-1}$ to a multiple of $\ch_{2m-1}$. As classes in $\pi_{4m-2}(BU)$ are detected faithfully by their evaluations against $\ch_{2m-1}$, it follows that the map $\beta^{-1} \circ \Omega^\infty(r(t)\cdot \lambda_{-L}^\bC) \circ \Omega \tilde{s}$ sends $\ch_{2m-1}$ to $A_m \cdot \ch_{2m-1}$. Thus we may compute in rational cohomology.

Let us identify $BSO(2)$ with $\bC\bP^\infty$, so the tautological bundle is given by the universal complex line bundle $L$. Write $x := c_1(L) \in H^2(\bC\bP^\infty;\bZ)$. Firstly, by the same technique used to establish formula \eqref{sinh-series} in the proof of Theorem \ref{rational-surjectivity-htpy-thy} we have
$$\ch(r(t) \cdot \lambda_{-L}^\bC) = e^{x/2} \cdot \frac{x}{1-e^x} \cdot u_{-L} = -\frac{x/2}{\sinh(x/2)} \cdot u_{-L}.$$
and so (as the Bott isomorphism corresponds to the (double) suspension isomorphism under the Chern character) we have
$$(\beta^{-1} \circ \Omega^\infty(r(t)\cdot \lambda_{-L}^\bC))^* \ch_{2m-1} = (-1)^{m+1}\frac{(2^{2m}-2) \cdot B_m}{(2m)! \cdot 2^{2m}} \sigma^*(x^{2m} \cdot u_{-L}),$$
where $\sigma^* : H^*(\MTSO(2)) \to H^*(\Omega^\infty_0\MTSO(2))$ denotes the cohomology suspension. As $\omega^*(x^i) = x^{i+1}\cdot u_{-L}$, we may write the above equation as
\begin{equation}\label{eq:1}
(\beta^{-1} \circ \Omega^\infty(r(t)\cdot \lambda_{-L}^\bC))^* \ch_{2m-1} = (-1)^{m+1}\frac{(2^{2m}-2) \cdot B_m}{(2m)! \cdot 2^{2m}}(\Omega^\infty \omega)^*(\sigma^*(x^{2m-1})).
\end{equation}

Secondly, we wish to compute $(\Omega s)^*(\sigma^*(x^{2m-1}))$. The class $\sigma^*(x^{2m-1})$ is primitive and $\Omega s$ is a loop map, so this class will again be primitive and so a multiple of $\ch_{2m-1}$. To determine which multiple, we may pull it back further to $\bC\bP^\infty$, where
$$(\Omega s \circ (L-1))^*(\sigma^*(x^{2m-1})) = (\mathrm{inc}-[1])^*(\sigma^*(x^{2m-1})) = x^{2m-1}.$$
As $(L-1)^*\ch_{2m-1} = x^{2m-1} / (2m-1)!$, we find that
\begin{equation}\label{eq:2}
(\Omega s)^*(\sigma^*(x^{2m-1})) = (2m-1)! \cdot \ch_{2m-1}.
\end{equation}

Thirdly, we wish to compute $(\beta^{-1} \circ \Omega^2(1-\psi^k) \circ \beta)^* \ch_{2m-1}$. Again, this will be primitive and we may find which multiple of $\ch_{2m-1}$ it is by pulling back further to $\bC\bP^\infty$. We have $\beta^{-1} \circ (1-\psi^k) \circ \beta \simeq 1-k \cdot \psi^k$, and so 
the map $\beta^{-1} \circ \Omega^2(1-\psi^k) \circ \beta \circ (L-1)$ is homotopic to $L - k\cdot L^{\otimes k} +k-1$ and so pulls $\ch_{2m-1}$ back to $\tfrac{1-k^{2m}}{(2m-1)!} \cdot x^{2m-1}$. Thus
\begin{equation}\label{eq:3}
(\beta^{-1} \circ \Omega^2(1-\psi^k) \circ \beta)^* \ch_{2m-1} = (1-k^{2m}) \cdot \ch_{2m-1}.
\end{equation}

Fourthly, the commutativity of $\CircNum{2}$ along with \eqref{eq:1}, \eqref{eq:2}, and \eqref{eq:3} gives
$$(\Omega^\infty(r(t)\cdot \lambda_{-L}^\bC)\circ \Omega \tilde{s})^* \ch_{2m-1} = (-1)^{m+1}\frac{(2^{2m}-2) \cdot B_m}{2m \cdot 2^{2m}} \cdot (1-k^{2m}) \cdot \ch_{2m-1}$$
and so
$$A_m = (2^{2m-1}-1) \cdot \mathrm{Num}(\tfrac{B_m}{2m}) \cdot \left ( (-1)^{m+1} \cdot \frac{1}{2^{2m-1}} \cdot \frac{(1-k^{2m})}{\mathrm{Den}(\tfrac{B_m}{2m})} \right ).$$
Finally, it is well-known that $\frac{(1-k^{2m})}{\mathrm{Den}(B_m/2m)}$ is a $p$-local unit whenever $p$ is an odd prime and $k$ generates $(\bZ/p^2)^\times$. This follows from Lemma 2.12 and Theorem 2.6 of \cite{AdamsII}, together with von Staudt's theorem. Hence $A_m$ is $(2^{2m-1}-1) \cdot \mathrm{Num}(\tfrac{B_m}{2m})$ times a $p$-local unit, as claimed.
\end{proof}

There are two types of consequences of Theorem \ref{madsenschlichtkrull}. The first concerns the index of the groups $J_{d,k} [\tfrac{1}{2}] \subset \pi_{d+k} (\ko)[\tfrac{1}{2}] $, and the other is a splitting result for the index difference. We begin with the homotopy group statements. One derives from Theorem \ref{madsenschlichtkrull}:

\begin{cor}\label{image-in-htpy-away-from2}
The subgroup $J_{2,4m-2}[\tfrac{1}{2}] \subset \pi_{4m}(\ko)[\tfrac{1}{2}]$ has finite index which divides $(2^{2m-1}-1) \cdot \mathrm{Num}(\tfrac{B_m}{2m})$.
\end{cor}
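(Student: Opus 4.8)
\textbf{Proof proposal for Corollary \ref{image-in-htpy-away-from2}.}

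The plan is to extract this purely homotopy-theoretic statement directly from Theorem \ref{madsenschlichtkrull}, using the multiplicative framework of Corollary \ref{containment-image-thomclass}. First I would recall what $J_{2,4m-2}[\tfrac12]$ is: by definition it is the image of $(\lambda_{-2})_* : \pi_{4m-2}(\MTSpin(2))[\tfrac12] \to \pi_{4m}(\ko)[\tfrac12]$. Since $\pi_{4m}(\ko) \cong \bZ$ and we have inverted $2$, the target is $\bZ[\tfrac12]$, and ``finite index dividing $N$'' means exactly that $N \cdot \bZ[\tfrac12] \subseteq J_{2,4m-2}[\tfrac12]$, equivalently that the generator $N$ of this ideal lies in the image. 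So the entire content is: the element $(2^{2m-1}-1)\cdot\mathrm{Num}(\tfrac{B_m}{2m}) \in \pi_{4m}(\ko)[\tfrac12] = \bZ[\tfrac12]$ is in the image of $(\lambda_{-2})_*$.

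Next I would localise at an arbitrary odd prime $p$ and invoke Theorem \ref{madsenschlichtkrull}. That theorem provides a map $f : (\loopinf{+2}_0\ko)_{(p)} \to \Omega^\infty_0\MTSpin(2)_{(p)}$ such that $\Omega^\infty\lambda_{-2} \circ f$ acts on $\pi_{4m-2}$ as multiplication by a $p$-local unit $u_p$ times $(2^{2m-1}-1)\cdot\mathrm{Num}(\tfrac{B_m}{2m})$. Taking a generator $\iota \in \pi_{4m-2}((\loopinf{+2}_0\ko)_{(p)}) \cong \bZ_{(p)}$, the class $f_*(\iota) \in \pi_{4m-2}(\Omega^\infty_0\MTSpin(2))_{(p)} = \pi_{4m-2}(\MTSpin(2))_{(p)}$ maps under $(\lambda_{-2})_*$ to $u_p\cdot(2^{2m-1}-1)\cdot\mathrm{Num}(\tfrac{B_m}{2m})$ times a generator of $\pi_{4m}(\ko)_{(p)}$. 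Hence $J_{2,4m-2}\otimes\bZ_{(p)} \supseteq (2^{2m-1}-1)\cdot\mathrm{Num}(\tfrac{B_m}{2m})\cdot\bZ_{(p)}$ for every odd prime $p$, because the unit $u_p$ can be absorbed. Since a subgroup $H \subseteq \bZ[\tfrac12]$ satisfies $H \supseteq N\bZ[\tfrac12]$ as soon as $H\otimes\bZ_{(p)} \supseteq N\bZ_{(p)}$ for all odd $p$ (the inclusion can be checked rationally and at each odd prime), it follows that $(2^{2m-1}-1)\cdot\mathrm{Num}(\tfrac{B_m}{2m}) \in J_{2,4m-2}[\tfrac12]$, which is the claim. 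One minor point to address is the bookkeeping relating homotopy groups of the infinite loop space $\Omega^\infty_0\MTSpin(2)$ with the spectrum homotopy groups $\pi_{4m-2}(\MTSpin(2))$ in positive degrees, and the identification of $\pi_{4m-2}((\loopinf{+2}_0\ko)_{(p)})$ with $\pi_{4m}(\ko)_{(p)}$ via the degree shift; both are routine.

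I do not expect any serious obstacle here: all the real work is hidden in Theorem \ref{madsenschlichtkrull}, whose proof (via the Madsen--Schlichtkrull diagram and the characteristic class computations of Theorem \ref{rational-surjectivity-htpy-thy}) does the heavy lifting. The only thing requiring a little care is the passage from ``$p$-locally, for every odd $p$, the index divides $N$'' to the global statement with $2$ inverted; this is a standard arithmetic square / local-to-global argument for subgroups of $\bZ[\tfrac12]$, using that $\bZ[\tfrac12] = \bigcap_{p \text{ odd}} \bZ_{(p)}$ inside $\bQ$, and that an ideal of $\bZ[\tfrac12]$ is determined by its localisations at odd primes together with its rationalisation (which is automatic here since $N \neq 0$). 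I would state this final step as a one-line lemma and then conclude.
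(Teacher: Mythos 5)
Your proposal is correct and takes essentially the same approach as the paper: the corollary is stated there as an immediate consequence of Theorem \ref{madsenschlichtkrull}, and you have simply filled in the (routine) details — localising at each odd prime, using $f$ to produce a class in $\pi_{4m-2}(\MTSpin(2))_{(p)}$ whose image under $(\lambda_{-2})_*$ is a $p$-local unit times $(2^{2m-1}-1)\cdot\mathrm{Num}(\tfrac{B_m}{2m})$, and then assembling the local statements into the global divisibility claim for the odd part of the index.
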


Next, we adopt the convention that $(2^{2m-1}-1)\cdot \mathrm{Num}(\tfrac{B_{m}}{2m})=1$ for $m=0$ and set 
\begin{equation}\label{eq:arm}
A(m, n):=\gcd\left\{ \prod_{i=1}^n(2^{2m_i-1}-1) \cdot \mathrm{Num}\left(\tfrac{B_{m_i}}{2m_i}\right)  \,\Bigg\vert\, m_i \geq 0,  \sum_{i=1}^n m_i = m\right\}.
\end{equation}
Using products and Corollary \ref{image-in-htpy-away-from2}, we find that $J_{2n,4m-2n} [\tfrac{1}{2}] \subset \pi_{4m}(\ko)[\tfrac{1}{2}]$ has finite index dividing $A(m,n)$. Using the maps \eqref{suspension-mt-spectra} we arrive at the following conclusion, which proves Theorem \ref{thm:intro:awayfrom2}.

\begin{cor}\label{index-estimate}
For each $n,m,q \geq 0$, $J_{2n+q,4m-2n-q} [\tfrac{1}{2}] \subset \pi_{4m}(\ko)[\tfrac{1}{2}]$ has finite index dividing $A(m,n)$.
\end{cor}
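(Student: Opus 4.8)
<br>

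The plan is to deduce Corollary \ref{index-estimate} from Corollary \ref{image-in-htpy-away-from2} by combining the multiplicative structure on the graded family $\{\MTSpin(d)\}$ with the suspension maps \eqref{suspension-mt-spectra}. The starting point is that Corollary \ref{image-in-htpy-away-from2} tells us $J_{2,4m-2}[\tfrac12]$ has index dividing $(2^{2m-1}-1)\cdot\mathrm{Num}(\tfrac{B_m}{2m})$ in $\pi_{4m}(\ko)[\tfrac12]$; more precisely, since $\pi_{4m}(\ko)[\tfrac12]\cong\bZ[\tfrac12]$ is generated by $\beta^r$ or $\beta^r\kappa$ depending on the residue of $m$ mod $2$, and since $\hat{\mathscr A}(e_2)=1$ gives that $e_2\in\pi_{-2}(\MTSpin(2))$ maps to the unit, we know that $(2^{2m-1}-1)\cdot\mathrm{Num}(\tfrac{B_m}{2m})$ times a generator lies in $J_{2,4m-2}[\tfrac12]$.

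First I would assemble the $n$-fold products. For any partition $m=m_1+\cdots+m_n$, pick elements $a_i\in\pi_{4m_i-2}(\MTSpin(2))[\tfrac12]$ with $(\lambda_{-2})_*(a_i)=(2^{2m_i-1}-1)\cdot\mathrm{Num}(\tfrac{B_{m_i}}{2m_i})$ times a generator of $\pi_{4m_i}(\ko)[\tfrac12]$. Applying the product map $\mu$ repeatedly gives $\mu(a_1,\dots,a_n)\in\pi_{4m-2n}(\MTSpin(2n))[\tfrac12]$, and by \eqref{product-ahat} its image under $(\lambda_{-2n})_*$ is the product $\prod_{i=1}^n (2^{2m_i-1}-1)\cdot\mathrm{Num}(\tfrac{B_{m_i}}{2m_i})$ times a generator of $\pi_{4m}(\ko)[\tfrac12]$, using that $\pi_*(\ko)[\tfrac12]$ is a domain and the ring structure on $\pi_*(\ko)$ from \eqref{homotopy-ko} identifies the relevant products of generators with generators after inverting $2$ (the powers of $2$ dividing $\kappa^2-4\beta$ and so on become units). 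Ranging over all partitions, the subgroup $J_{2n,4m-2n}[\tfrac12]\subset\pi_{4m}(\ko)[\tfrac12]$ contains all these multiples of a generator, hence has index dividing their gcd, which is exactly $A(m,n)$ by \eqref{eq:arm}. This is essentially Corollary \ref{containment-image-thomclass}: $J_{2n,4m-2n}\supset J_{2,4m_1-2}\cdot J_{2,4m_2-2}\cdots J_{2,4m_n-2}$, combined with the divisibility bound on each factor.

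Next I would handle the extra suspension parameter $q$. The maps \eqref{suspension-mt-spectra} $\eta_d:\mathrm S^{-1}\wedge\MTSpin(d)\to\MTSpin(d+1)$ are compatible with the Thom classes by \eqref{sequence-of-mt-spectra}, so Corollary \ref{containment-image-thomclass} gives $J_{d,k}\supset J_{d-1,k+1}$; iterating $q$ times yields $J_{2n+q,4m-2n-q}\supset J_{2n,4m-2n}$, and therefore $J_{2n+q,4m-2n-q}[\tfrac12]$ also has index dividing $A(m,n)$. This completes the proof. The only point requiring a little care — and the main (minor) obstacle — is checking that after inverting $2$ the products of the ring generators $\eta,\kappa,\beta$ appearing in \eqref{homotopy-ko} behave as claimed, i.e.\ that multiplication $\pi_{4m_1}(\ko)[\tfrac12]\otimes\cdots\otimes\pi_{4m_n}(\ko)[\tfrac12]\to\pi_{4m}(\ko)[\tfrac12]$ is surjective (indeed sends generator$\,\otimes\cdots\otimes\,$generator to a generator); this follows directly from \eqref{homotopy-ko} since modulo the relation $\kappa^2=4\beta$ the only obstruction to surjectivity is a power of $2$, which is a unit in $\bZ[\tfrac12]$. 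With that settled, all the inclusions are immediate from the multiplicative formalism already established in Corollary \ref{containment-image-thomclass}, and no further input is needed.
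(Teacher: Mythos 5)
Your proof is correct and follows essentially the same route as the paper: combine Corollary \ref{image-in-htpy-away-from2} with the product structure of Corollary \ref{containment-image-thomclass} to bound the index of $J_{2n,4m-2n}[\tfrac{1}{2}]$ by $A(m,n)$, then use the suspension maps \eqref{suspension-mt-spectra} to extend to $J_{2n+q,4m-2n-q}$. You are more explicit than the paper about why the products of generators in $\pi_*(\ko)[\tfrac{1}{2}]$ behave well (the $\kappa^2 = 4\beta$ relation becoming harmless after inverting $2$), which is a correct and reasonable point of care, but the argument is the one the paper has in mind.
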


The strength of this result can be demonstrated by some concrete calculations.
Recall that $\mathrm{Num}(\tfrac{B_{m}}{2m})=\pm 1$ for $m \in \{1,2,3,4,5,7\}$ and $\mathrm{Num}(\tfrac{B_{6}}{12})=-691$, so any prime $p$ dividing $A(m,2)$ in particular divides each of
\begin{align*}
1 \cdot (2^{2m-1}-1) &\cdot \mathrm{Num}(\tfrac{B_{m}}{2m})\\
1 \cdot (2^{2m-3}-1) &\cdot \mathrm{Num}(\tfrac{B_{m-1}}{2(m-1)})\\
7 \cdot (2^{2m-5}-1) &\cdot \mathrm{Num}(\tfrac{B_{m-2}}{2(m-2)})\\
31 \cdot (2^{2m-7}-1) &\cdot \mathrm{Num}(\tfrac{B_{m-3}}{2(m-3)})\\
127 \cdot (2^{2m-9}-1) &\cdot \mathrm{Num}(\tfrac{B_{m-4}}{2(m-4)})\\
7 \cdot 73 \cdot (2^{2m-11}-1) &\cdot \mathrm{Num}(\tfrac{B_{m-5}}{2(m-5)})\\
691 \cdot 23 \cdot 89 \cdot (2^{2m-13}-1) &\cdot \mathrm{Num}(\tfrac{B_{m-6}}{2(m-6)})\\
8191 \cdot (2^{2m-15}-1) &\cdot \mathrm{Num}(\tfrac{B_{m-7}}{2(m-7)})
\end{align*}
for $m \geq 7$, and the appropriately truncated list for $m \leq 6$. Computer calculation (for which we thank Benjamin Young) shows that for $m \leq 45401$ the first four conditions (or the first $m$ for $m < 4$) already imply that $A(m,2)=1$. We therefore have that $J_{4,4m-4}[\tfrac{1}{2}] = \pi_{4m}(\ko)[\tfrac{1}{2}]$ for $m \leq 45401$. Therefore, further taking products, we see that 
$$J_{4\ell+q,4k-q}[\tfrac{1}{2}] = \pi_{4\ell+4k}(\ko) [\tfrac{1}{2}] $$
for $q \geq 0$ and $k \leq 45400 \cdot \ell$.

\subsubsection{A homotopy splitting}

The final of our computational results is the following splitting theorem. To state it, we introduce a condition on prime numbers.

\begin{defn}
Recall that an odd prime number is called \emph{regular} if it does not divide any of the numbers $\mathrm{Num}(\tfrac{B_m}{2m})$. We say that an odd prime is \emph{very regular} if in addition it does not divide any of the numbers $(2^{2m-1}-1)$.
\end{defn}

\begin{remark}
The usual definition of a regular prime $p$ is one which does not divide $\mathrm{Num}(B_m)$ for any $2m \leq p-3$. This is equivalent to not dividing $\mathrm{Num}(\tfrac{B_m}{2m})$ for any $2m \leq p-3$, and is also equivalent to not dividing $\mathrm{Num}(\tfrac{B_m}{2m})$ for any $m$: If such a $p$ divided $\mathrm{Num}(\tfrac{B_n}{2n})$ for some $2n > p-3$ then we cannot have $p-1 \mid 2n$ (as then $p \mid \mathrm{Den}(\tfrac{B_n}{2n})$ by von Staudt's theorem) so we must have $p-1 \nmid 2n$. Thus we may find $0 \neq 2n \equiv 2m \, \mathrm{mod} \, (p-1)$ with $2m \leq p-3$. Kummer's congruence (of $p$-integers) $\tfrac{B_n}{2n} \equiv \tfrac{B_m}{2m} \, \mathrm{mod} \, p$ then contradicts $p$ being regular in the usual sense.
\end{remark}

\begin{remark}
By Fermat's little theorem, if a prime $p$ divides $(2^{2m-1}-1)$ for some $m$, then it also divides $(2^{2m'-1}-1)$ for some $m' \leq \tfrac{p-1}{2}$. Hence it is easy to check the condition of not dividing any $2^{2m-1}-1$, and for example of the regular primes less than 100 the primes 7, 23, 31, 47, 71, 73, 79 and 89 are not very regular, and the remaining primes 3, 5, 11, 13, 17, 19, 29, 41, 43, 53, 61, 83, and 97 are very regular.
\end{remark}

The space $\Riem^+ (S^d)$ is an $H$-space by \cite{WalshH}, and the component $\Riem^{+}_{\round} (S^d)$ of the round metric is a grouplike $H$-space. Therefore for each prime number $p$ there is a $p$-localisation $\Riem^{+}_{\round} (S^d) \to \Riem^{+}_{\round} (S^d)_{(p)}$, which induces the corresponding algebraic localisation on homotopy and homology.

\begin{thm}\label{splitting-theorem}
For $d \geq 6$ and each odd very regular prime, there is a weak homotopy equivalence
\[
\Riem^{+}_{\round} (S^d)_{(p)} \simeq \loopinf{+d+1}_{0} \ko_{(p)} \times F_{(p)}
\]
where $F$ is the homotopy fibre of the index difference map 
\[
\inddiff_{g_\round^d}:\Riem^{+}_{\round} (S^d) \lra \loopinf{+d+1}_{0} \ko. \]
\end{thm}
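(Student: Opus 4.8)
\textbf{Proof strategy for Theorem \ref{splitting-theorem}.}
The plan is to construct a $p$-local section of the index difference map and then invoke the standard fact that a grouplike $H$-space which splits off one infinite loop factor does so via a product decomposition of the homotopy fibre. Concretely, let $p$ be an odd very regular prime. Theorem \ref{thm:intro:split} (equivalently, the map $f$ of Theorem \ref{madsenschlichtkrull} composed with the map $\rho$ of Theorem \ref{factorization-theorem:odd-case}, suitably looped) provides a map
$$ f : (\loopinf{+d+1}\ko)_{(p)} \lra \Riem^+_\round(S^d)_{(p)} $$
such that $(\inddiff_{g_\round})_{(p)} \circ f$ induces multiplication by $(2^{2m-1}-1)\cdot\mathrm{Num}(\tfrac{B_m}{2m})$ times a $p$-local unit on $\pi_{4m-d-1}$. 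The very regularity of $p$ means precisely that for every $m$ the integer $(2^{2m-1}-1)\cdot\mathrm{Num}(\tfrac{B_m}{2m})$ is a $p$-local unit: regularity handles the numerator of the Bernoulli number, and the extra hypothesis handles the factor $2^{2m-1}-1$. Hence $(\inddiff_{g_\round})_{(p)}\circ f$ is an isomorphism on all homotopy groups, and since source and target are both $p$-local simple spaces (indeed infinite loop spaces up to the $H$-space structure on the source side), it is a weak homotopy equivalence. In particular $f$ exhibits $(\loopinf{+d+1}_0\ko)_{(p)}$ as a retract of $\Riem^+_\round(S^d)_{(p)}$, with the retraction given by $(\inddiff_{g_\round})_{(p)}$ followed by the homotopy inverse of $(\inddiff_{g_\round})_{(p)}\circ f$.

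Now I would promote this retract statement to a splitting. Write $\phi := (\inddiff_{g_\round})_{(p)} : \Riem^+_\round(S^d)_{(p)} \to \loopinf{+d+1}_0\ko_{(p)}$ and let $u$ be a homotopy inverse of $\phi\circ f$, so that $s := f\circ u$ is a section of $\phi$ up to homotopy. Let $F$ denote the homotopy fibre of $\inddiff_{g_\round}$ over the round metric. Since $\Riem^+_\round(S^d)$ is a grouplike $H$-space by \cite{WalshH} and $\inddiff_{g_\round}$ is (weakly) an $H$-map --- here one should note that it suffices to work up to weak equivalence, and after the geometric realisation reductions used throughout Section 4 one may take honest $H$-space models --- the fibre sequence $F_{(p)} \to \Riem^+_\round(S^d)_{(p)} \xrightarrow{\phi} \loopinf{+d+1}_0\ko_{(p)}$ is a principal fibration of $H$-spaces. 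The section $s$ then defines an $H$-map splitting, and the shearing map
$$ F_{(p)} \times \loopinf{+d+1}_0\ko_{(p)} \lra \Riem^+_\round(S^d)_{(p)}, \qquad (x,y) \longmapsto \iota(x)\cdot s(y), $$
where $\iota : F_{(p)}\hookrightarrow \Riem^+_\round(S^d)_{(p)}$ is the fibre inclusion, is the desired weak homotopy equivalence: it is compatible with the projections to $\loopinf{+d+1}_0\ko_{(p)}$ and restricts to the identity on fibres, so it induces an isomorphism on homotopy groups by the five lemma applied to the two fibre sequences.

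The main obstacle is the $H$-space (or infinite-loop) bookkeeping needed to make ``retract'' into ``product splitting'': one must be careful that $\inddiff_{g_\round}$ and the Walsh $H$-structure are genuinely compatible, and that the localisation $\Riem^+_\round(S^d)_{(p)}$ behaves well with respect to the fibre sequence. This is where the hypotheses $d\ge 6$ (needed for Theorems \ref{factorization-theorem:even-case} and \ref{factorization-theorem:odd-case}, hence for the existence of $f$) and the very regularity of $p$ (needed for $\phi\circ f$ to be an equivalence rather than merely a rational one) enter essentially; everything else is formal. A secondary point worth spelling out is that the target in Theorem \ref{thm:intro:split} is the periodic $\KO$, whereas the statement here uses the connective $\ko$ --- but $\inddiff_{g_\round}$ lands in the $(d{+}1)$-connective cover for degree reasons (the relevant homotopy groups of the fibre of $\KO \to \ko$ vanish in the range that the map $S^d$-bundles can detect), so the two targets agree after restricting to the basepoint component, and no information is lost. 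Finally, the cohomological corollary that $\inddiff_{g_\round}$ is injective on $\bF_p$-cohomology for very regular $p$ follows immediately from the splitting, since a retract induces a surjection on homology, hence an injection on cohomology.
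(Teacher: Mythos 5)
Your overall plan coincides with the paper's: construct a $p$-local section of $\inddiff_{g_\round^d}$ using Theorem \ref{madsenschlichtkrull} and Theorem \ref{factorization-theorem:even-case}, observe that very regularity makes $(2^{2m-1}-1)\cdot\mathrm{Num}(\tfrac{B_m}{2m})$ a $p$-local unit, and split off an infinite loop factor via a shearing map $(x,y)\mapsto \iota(x)\cdot s(y)$ (the paper's $\mu\circ(\theta\times j)$). Where your argument has a genuine gap is in the verification that the shearing map is a weak equivalence. You invoke the five lemma applied to a map of fibre sequences, for which you need the shear to be compatible with the projection $\phi := (\inddiff_{g_\round^d})_{(p)}$, i.e.\ $\phi(\iota(x)\cdot s(y))\simeq y$. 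This forces you to claim that $\phi$ is an $H$-map with respect to Walsh's $H$-structure on $\Riem^+_\round(S^d)$ and that $s$ is an $H$-map splitting. Neither claim is established in the paper or in \cite{WalshH}; the $H$-structure comes from a connected-sum type operation and its compatibility with $\inddiff$ would amount to a non-obvious $H$-space refinement of the additivity theorem. Your remark that one can ``take honest $H$-space models after the geometric realisation reductions'' addresses a model-theoretic issue, not this one --- passing to $\lvert\Sing_\bullet(-)\rvert$ does not make a given map into an $H$-map.

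The paper avoids this difficulty entirely. It does not assert that $\inddiff$ is an $H$-map. Instead it shows that $\theta_*\oplus j_* : \pi_*(\loopinf{+d+1}_0\ko_{(p)})\oplus\pi_*(F_{(p)})\to\pi_*(\Riem^+_\round(S^d)_{(p)})\oplus\pi_*(\Riem^+_\round(S^d)_{(p)})$ followed by addition is an isomorphism (from the split long exact sequence of the fibration), and then uses the Eckmann--Hilton lemma to identify addition on $\pi_*$ of the $H$-space $\Riem^+_\round(S^d)_{(p)}$ with the map induced by $\mu$. Thus $\mu\circ(\theta\times j)$ is a $\pi_*$-isomorphism using only that the source is an $H$-space, with no compatibility of $\inddiff$ with $\mu$ required. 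This is the step you should substitute for the five-lemma argument; once you do, your proof becomes essentially the paper's.

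Two smaller points. First, your parenthetical ``the map $f$ of Theorem \ref{madsenschlichtkrull} composed with the map $\rho$ of Theorem \ref{factorization-theorem:odd-case}, suitably looped'' does not match the paper's actual construction, which passes through $\loopinf{+d-5}_0\MTSpin(6)_{(p)}$, applies Theorem \ref{factorization-theorem:even-case} for $2n=6$, and then iterates the dimension-raising map $T$ of Theorem \ref{dimension-increasing} together with Theorem \ref{additivity-index-difference} $(d-6)$ times to reach $\Riem^+_\round(S^d)$. Either route plausibly produces a usable section, but the cited ingredients differ. Second, your aside about connective versus periodic $\KO$ reaches the right conclusion (the basepoint components of $\loopinf{+d+1}\KO$ and $\loopinf{+d+1}\ko$ agree) but the justification in terms of a ``$(d+1)$-connective cover'' is off; the correct reason is simply that $\pi_k(\ko)=\pi_k(\KO)$ for $k\geq 0$, so the two basepoint components have the same homotopy groups and the natural map $\ko\to\KO$ induces an equivalence on them.
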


\begin{proof}
We consider the composition
\begin{equation}\label{splitting-theorem-sequence}
\begin{aligned}
\loopinf{+d+1}_{0} \ko_{(p)} &\stackrel{\eqref{madsenschlichtkrull}}{\lra} \loopinf{+d-1}_{0} \MTSpin(2)_{(p)} \stackrel{}{\lra} \loopinf{+d-5}_{0} \MTSpin(6)_{(p)}\\
&\stackrel{}{\lra} \Omega^{d-6} \Riem^{+}_{\round} (S^6)_{(p)} \overset{L}\lra \Riem^{+}_{\round} (S^{d})_{(p)} \stackrel{\inddiff_{g_\round^d}}{\lra} \loopinf{+d+1}_{0} \ko_{(p)}.
\end{aligned}
\end{equation}
The first map is from Theorem \ref{madsenschlichtkrull} and the second is the map \eqref{suspension-mt-spectra}. The third map is the map from Theorem \ref{factorization-theorem:even-case}, looped $(d-6)$ times. To understand the fourth map $L$, apply Theorem \ref{dimension-increasing}, which gives the left triangle of the (weakly) homotopy commutative diagram
\begin{equation*}
\xymatrix{
\Omega_{g_\round^{m-1}}\Riem^+ (S^{m-1}) \ar[r]^{T}  \ar[dr]_*!/l15pt/-{\labelstyle \Omega \inddiff_{g_\round^{m-1}}}   & \Riem^+ (D^m)_{h_\round^{m-1}}  \ar[d]_{\inddiff_{g_\tor^m}} \ar[r]^{\mu_{g_{\tor}^m}} &  \Riem^+ (S^m) \ar[dl]^*!/r30pt/-{\labelstyle \inddiff_{g_{\dtor}^m} \simeq \inddiff_{g_{\round}^m}}\\
 &  \loopinf{+m+1}\KO. & 
}
\end{equation*}

The (homotopy) commutativity of the right triangle follows from the additivity theorem, more precisely Theorem \ref{additivity-index-difference}. The $(d-6)$-fold iteration of maps along the top of this diagram (and $p$-localisation) yields the map $L$ in \eqref{splitting-theorem-sequence}; the commutativity up to homotopy of this diagram shows that $\inddiff_{g_\round^d} \circ L$ is homotopic to $\Omega^{d-6} \inddiff_{g_\round^6}$. By Theorem \ref{factorization-theorem:even-case}, the composition 
$$\loopinf{+d-5}_{0} \MTSpin(6)_{(p)} \stackrel{}{\lra} \Omega^{d-6} \Riem^{+}_{\round} (S^6)_{(p)} \overset{L}\lra \Riem^{+}_{\round} (S^{d})_{(p)} \stackrel{\inddiff_{g_\round^d}}{\lra} \loopinf{+d+1}_{0} \ko_{(p)}$$
is the same as $\loopinf{+d-5} \kothom{6}$, and so precomposing with the map 
$$\loopinf{+d-1}_{0} \MTSpin(2)_{(p)} \lra \loopinf{+d-5}_{0} \MTSpin(6)_{(p)}$$
gives $\loopinf{+d+1}\kothom{2}$. Therefore, by Theorem \ref{madsenschlichtkrull} the composition \eqref{splitting-theorem-sequence} induces multiplication by a $p$-local unit times $(2^{2m-1}-1) \cdot \mathrm{Num}(\tfrac{B_m}{2m})$, which proves Theorem \ref{thm:intro:split}. If $p$ is in addition very regular then the composition \eqref{splitting-theorem-sequence} is a weak homotopy equivalence, so composing all but the last map gives a composition
$$\loopinf{+d+1}_{0} \ko_{(p)} \stackrel{\theta}{\lra} \Riem^{+}_{\round} (S^{d})_{(p)} \stackrel{\inddiff_{g_\round^d}}{\lra} \loopinf{+d+1}_{0} \ko_{(p)}$$
which is a weak homotopy equivalence. 

Let $j:F \to \Riem^{+}_{\round} (S^{d})$ be the inclusion of the homotopy fibre of the map $\inddiff_{g_\round}$. Implicitly localising all spaces at $p$, it follows that the map 
$$\pi_* (\loopinf{+d+1}_{0} \ko) \oplus \pi_* (F) \stackrel{\theta_*\oplus j_*}{\to} \pi_* ( \Riem^{+}_{\round} (S^{d})) \oplus \pi_* ( \Riem^{+}_{\round} (S^{d}))\stackrel{\Sigma}{\to} \pi_* ( \Riem^{+}_{\round} (S^{d}))$$
is an isomorphism. But $\Riem^{+}_{\round} (S^{d})_{(p)}$ is an $H$-space, and therefore, by the classical Eckmann--Hilton lemma, addition on $ \pi_* ( \Riem^{+}_{\round} (S^{d})_{(p)})$ is induced by the $H$-space multiplication $\mu$ on $ \Riem^{+}_{\round} (S^{d})_{(p)}$. Therefore, the map 
$$\mu \circ (\theta \times j):\loopinf{+d+1}_{0} \ko_{(p)} \times F_{(p)} \lra \Riem^{+}_{\round} (S^{d})_{(p)} $$
is a weak homotopy equivalence.
\end{proof}

\subsubsection{Sharpness}\label{sec:sharp}

Recall that we write $c: \ko \to \ku$ for the complexification map, and that the Chern character map $\ch_{2m} : \pi_{4m}(\ku) \to \bQ$ is an isomorphism onto $\bZ \subset \bQ$. The identity (cf.\ \cite[Proposition 12.5]{SpinGeometry})
$$\ch(c(\lambda_{-2n})) = (-1)^n \hat{A}(\gamma_{2n}) \cdot u_{-2n} \in H^*(\MTSpin(2n);\bQ),$$
where $u_{-2n} \in H^{-2n}(\MTSpin(2n);\bQ)$ is the cohomological Thom class, yields the commutative diagram
\begin{equation*}
\xymatrix{
\pi_{4m-2n}(\MTSpin(2n)) \ar[d]^-h \ar[r]^-{\lambda_{-2n}}& \pi_{4m-2n}(\Sigma^{-2n} \ko) \ar[r]^-{c} & \pi_{4m-2n}(\Sigma^{-2n} \ku) \ar[d]^-{\ch_{2m}}\\
H_{4m-2n}(\MTSpin(2n);\bZ) & H_{4m}(B\Spin(2n);\bZ) \ar[l]^-{\simeq}_-{-\cdot u_{-2n}} \ar[r]^-{(-1)^n\hat{A}_{4m}} & \bQ,
}
\end{equation*}
and the composition along the top of the diagram has image in $\bZ \subset \bQ$. Furthermore, the complexification map $c$ is an isomorphism with $\bZ[\tfrac{1}{2}]$-module coefficients.

Let $p$ be an odd prime number such that $4m < 2p-3$. Then the map $h_{(p)}$ is an isomorphism by the Atiyah--Hirzebruch spectral sequence
$$E^2_{s,t} = H_s(\MTSpin(2n) ; \pi_t(\mathrm{S}^0)_{(p)}) \Longrightarrow \pi_{s+t}(\MTSpin(2n))_{(p)},$$
as 
the first $p$-torsion in the stable homotopy groups of spheres is in degree $2p-3$. This shows that the image of the map
\begin{equation}\label{eq:Ahat}
\hat{A}_{4m} : H_{4m}(B\Spin(2n);\bZ_{(p)}) \lra \bQ
\end{equation}
lies in $\bZ_{(p)} \subset \bQ$. In addition, if we write $j_{d,k}$ for the index of $J_{d,k}$ in $KO_{d+k}$ then its $p$-adic valuation $\nu_p(j_{d,k})$ is equal to that of the index of the image of \eqref{eq:Ahat} inside $\bZ_{(p)}$ (because the map $c$ is an isomorphism with $\bZ_{(p)}$-coefficients). We will proceed to analyse this index.

The map $\prod_{i=1}^{n} B\Spin(2) \to B\Spin(2n)$ induces a surjection on homology with $\bZ[\tfrac{1}{2}]$-module coefficients (as with these coefficients $B\Spin(k) \to BSO(k)$ is an isomorphism), so the image of \eqref{eq:Ahat}
is the same as that of
$$\hat{A}_{4m} : \left (\bigotimes_{i=1}^{n}H_{*}(B\Spin(2);\bZ_{(p)}) \right )_{4m} \lra \bQ.$$
Identify $B\Spin(2)$ with $\bC\bP^\infty$, so that $L^{\otimes 2}$ is the universal spin rank 2 bundle, and let $x = c_1(L)$. Write $x_i$ for the pullback of $x$ to the $i$th factor of $\prod_{i=1}^{n} B\Spin(2)$. Then the $\hat{A}$-class of the direct sum of the pullbacks of the $n$ universal bundles to $\prod_{i=1}^{n} B\Spin(2)$ is, by multiplicativity,
$$\prod_{i=1}^{n}\frac{x_i}{\sinh(x_i)},$$
and if we let $A_\ell := (-1)^\ell \frac{(2^{2\ell}-2)B_\ell}{(2\ell)!}$ be the coefficient of $x^{2\ell}$ in $\frac{x}{\sinh(x)}$ then we find that the image of \eqref{eq:Ahat} is precisely the $\bZ_{(p)}$-linear span
$$\left\langle \prod_{i=1}^{n} A_{m_i} \, \Bigg\vert \, \sum m_i = m\right\rangle_{\bZ_{(p)}} \subset \bQ.$$

Each $A_\ell$ with $\ell \leq m$ is a $p$-integer: our assumption $2\ell \leq 2m < p$ shows that $(2\ell)!$ is a $p$-local unit, and von Staudt's theorem shows that $\mathrm{Den}(\tfrac{B_\ell}{2\ell})$ is a $p$-local unit. (Alternatively this follows from the commutative diagram in the case $n=1$ and localised at $p$, using that composition along the top is a $p$-integer, and that composition along the bottom has image the $\bZ_{(p)}$-linear span of $A_\ell$.) Thus
$$\nu_p(j_{2n,4m-2n}) = \min_{\sum m_i = m}\nu_p\left(\prod_{i=1}^{n} (2^{2m_i-1}-1) \cdot \mathrm{Num}(\tfrac{B_{m_i}}{2m_i})\right).$$
Thus, in terms of the constants $A(m,n)$ defined in \eqref{eq:arm}, there is an identity 
$$j_{2n,4m-2n} = A(m, n) \cdot \frac{A}{B}$$
where (by Corollary \ref{index-estimate}) $A$ is a power of 2, and $B$ is an integer all of whose prime factors $p$ satisfy $p \leq 2m+2$.

\bibliographystyle{plain}
\bibliography{psc}

\begin{thebibliography}{10}

\bibitem{AdamsII}
J.~F. Adams.
\newblock On the groups {$J(X)$}. {II}.
\newblock {\em Topology}, 3:137--171, 1965.

\bibitem{Adams}
J.~F. Adams.
\newblock On the groups {$J(X)$}. {IV}.
\newblock {\em Topology}, 5:21--71, 1966.

\bibitem{AdamsPhantom}
J.~F. Adams.
\newblock A variant of {E}.\ {H}.\ {B}rown's representability theorem.
\newblock {\em Topology}, 10:185--198, 1971.

\bibitem{AtKR}
M.~F. Atiyah.
\newblock {$K$}-theory and reality.
\newblock {\em Quart. J. Math. Oxford Ser. (2)}, 17:367--386, 1966.

\bibitem{ABS}
M.~F. Atiyah, R.~Bott, and A.~Shapiro.
\newblock Clifford modules.
\newblock {\em Topology}, 3(suppl. 1):3--38, 1964.

\bibitem{AtiyahSegal}
M.~F. Atiyah and G.~B. Segal.
\newblock Equivariant {$K$}-theory and completion.
\newblock {\em J. Differential Geometry}, 3:1--18, 1969.

\bibitem{AS69}
M.~F. Atiyah and I.~M. Singer.
\newblock Index theory for skew-adjoint {F}redholm operators.
\newblock {\em Inst. Hautes \'Etudes Sci. Publ. Math.}, 37:5--26, 1969.

\bibitem{AtSiV}
M.~F. Atiyah and I.~M. Singer.
\newblock The index of elliptic operators. {V}.
\newblock {\em Ann. of Math. (2)}, 93:139--149, 1971.

\bibitem{BDS}
M.~B{\"o}kstedt, J.~Dupont, and A.~M. Svane.
\newblock Cobordism obstructions to independent vector fields.
\newblock {\em Q. J. Math.}, 66(1):13--61, 2015.

\bibitem{BMMS}
R.~R. Bruner, J.~P. May, J.~E. McClure, and M.~Steinberger.
\newblock {\em {$H_\infty $} ring spectra and their applications}, volume 1176
  of {\em Lecture Notes in Mathematics}.
\newblock Springer-Verlag, Berlin, 1986.

\bibitem{Bunke1995}
U.~Bunke.
\newblock A {$K$}-theoretic relative index theorem and {C}allias-type {D}irac
  operators.
\newblock {\em Math. Ann.}, 303(2):241--279, 1995.

\bibitem{Chernysh}
V.~Chernysh.
\newblock On the homotopy type of the space $\mathcal{R}^{+}({M})$.
\newblock arXiv:math/0405235, 2004.

\bibitem{Chernysh2}
V.~Chernysh.
\newblock A quasifibration of spaces of positive scalar curvature metrics.
\newblock {\em Proc. Amer. Math. Soc.}, 134(9):2771--2777 (electronic), 2006.

\bibitem{Crowley2013}
D.~Crowley and T.~Schick.
\newblock The {G}romoll filtration, {KO}--characteristic classes and metrics of
  positive scalar curvature.
\newblock {\em Geom. Topol.}, 17(3):1773--1789, 2013.

\bibitem{DixDou}
J.~Dixmier and A.~Douady.
\newblock Champs continus d'espaces hilbertiens et de {$C^{\ast} $}-alg\`ebres.
\newblock {\em Bull. Soc. Math. France}, 91:227--284, 1963.

\bibitem{Eb09}
J.~Ebert.
\newblock A vanishing theorem for characteristic classes of odd-dimensional
  manifold bundles.
\newblock {\em J. Reine Angew. Math.}, 684:1--29, 2013.

\bibitem{Eb13}
J.~Ebert.
\newblock The two definitions of the index difference.
\newblock {\em {T}ransactions of the {A}mer. {M}ath. {S}oc., to appear}, 2017.
\newblock preprint arXiv:1308.4998.

\bibitem{Gajer}
P.~Gajer.
\newblock Riemannian metrics of positive scalar curvature on compact manifolds
  with boundary.
\newblock {\em Ann. Global Anal. Geom.}, 5(3):179--191, 1987.

\bibitem{GRWHomStab2}
S.~Galatius and O.~Randal-Williams.
\newblock Homological stability for moduli spaces of high dimensional
  manifolds. {II}.
\newblock ArXiv preprint arXiv:1601.00232v1.

\bibitem{GRW}
S.~Galatius and O.~Randal-Williams.
\newblock Stable moduli spaces of high-dimensional manifolds.
\newblock {\em Acta Math.}, 212(2):257--377, 2014.

\bibitem{GMTW}
S.~Galatius, U.~Tillmann, I.~Madsen, and M.~Weiss.
\newblock The homotopy type of the cobordism category.
\newblock {\em Acta Math.}, 202(2):195--239, 2009.

\bibitem{GRWAb}
S{\o}ren Galatius and Oscar Randal-Williams.
\newblock Abelian quotients of mapping class groups of highly connected
  manifolds.
\newblock {\em Math. Ann.}, 365(1-2):857--879, 2016.

\bibitem{GL}
M.~Gromov and H.~B. Lawson.
\newblock The classification of simply connected manifolds of positive scalar
  curvature.
\newblock {\em Ann. of Math. (2)}, 111(3):423--434, 1980.

\bibitem{GL2}
M.~Gromov and H.~B. Lawson.
\newblock Positive scalar curvature and the {D}irac operator on complete
  {R}iemannian manifolds.
\newblock {\em Inst. Hautes \'Etudes Sci. Publ. Math.}, 58:83--196, 1983.

\bibitem{HSS}
B.~Hanke, T.~Schick, and W.~Steimle.
\newblock The space of metrics of positive scalar curvature.
\newblock {\em Publ. Math. Inst. Hautes \'Etudes Sci.}, 120:335--367, 2014.

\bibitem{Hatcher}
A.~Hatcher.
\newblock {\em Algebraic topology}.
\newblock Cambridge University Press, Cambridge, 2002.

\bibitem{HH}
J.-C. Hausmann and D.~Husemoller.
\newblock Acyclic maps.
\newblock {\em Enseign. Math. (2)}, 25(1-2):53--75, 1979.

\bibitem{HigRoe}
N.~Higson and J.~Roe.
\newblock {\em Analytic {$K$}-homology}.
\newblock Oxford Mathematical Monographs. Oxford University Press, Oxford,
  2000.
\newblock Oxford Science Publications.

\bibitem{HitchinSpin}
N.~Hitchin.
\newblock Harmonic spinors.
\newblock {\em Advances in Math.}, 14:1--55, 1974.

\bibitem{Joyce}
D.~D. Joyce.
\newblock Compact {$8$}-manifolds with holonomy {${\rm Spin}(7)$}.
\newblock {\em Invent. Math.}, 123(3):507--552, 1996.

\bibitem{karoubi-espaces}
M.~Karoubi.
\newblock Espaces classifiants en {$K$}-th\'eorie.
\newblock {\em Trans. Amer. Math. Soc.}, 147:75--115, 1970.

\bibitem{KervMil}
M.~A. Kervaire and J.~W. Milnor.
\newblock Groups of homotopy spheres. {I}.
\newblock {\em Ann. of Math. (2)}, 77:504--537, 1963.

\bibitem{Kochman}
S.~O. Kochman.
\newblock Homology of the classical groups over the {D}yer--{L}ashof algebra.
\newblock {\em Trans. Amer. Math. Soc.}, 185:83--136, 1973.

\bibitem{Kreck}
M.~Kreck.
\newblock Surgery and duality.
\newblock {\em Ann. of Math. (2)}, 149(3):707--754, 1999.

\bibitem{SpinGeometry}
H.~B. Lawson and M.-L. Michelsohn.
\newblock {\em Spin geometry}, volume~38 of {\em Princeton Mathematical
  Series}.
\newblock Princeton University Press, Princeton, NJ, 1989.

\bibitem{Lich}
A.~Lichnerowicz.
\newblock Spineurs harmoniques.
\newblock {\em C. R. Acad. Sci. Paris}, 257:7--9, 1963.

\bibitem{MadsenSchlichtkrull}
I.~Madsen and C.~Schlichtkrull.
\newblock The circle transfer and {$K$}-theory.
\newblock In {\em Geometry and topology: {A}arhus (1998)}, volume 258 of {\em
  Contemp. Math.}, pages 307--328. Amer. Math. Soc., Providence, RI, 2000.

\bibitem{May}
J.~P. May.
\newblock Classifying spaces and fibrations.
\newblock {\em Mem. Amer. Math. Soc.}, 1(1, 155):xiii+98, 1975.

\bibitem{MS}
J.~W. Milnor and J.~D. Stasheff.
\newblock {\em Characteristic classes}.
\newblock Princeton University Press, Princeton, N. J., 1974.
\newblock Annals of Mathematics Studies, No. 76.

\bibitem{Nagata}
M.~Nagata.
\newblock On the uniqueness of {D}yer-{L}ashof operations on the {B}ott
  periodicity spaces.
\newblock {\em Publ. Res. Inst. Math. Sci.}, 16(2):499--511, 1980.

\bibitem{PalaisInfMan}
R.~S. Palais.
\newblock Homotopy theory of infinite dimensional manifolds.
\newblock {\em Topology}, 5:1--16, 1966.

\bibitem{Priddy}
S.~Priddy.
\newblock Dyer--{L}ashof operations for the classifying spaces of certain
  matrix groups.
\newblock {\em Quart. J. Math. Oxford Ser. (2)}, 26(102):179--193, 1975.

\bibitem{RobSal}
J.~Robbin and D.~Salamon.
\newblock The spectral flow and the {M}aslov index.
\newblock {\em Bull. London Math. Soc.}, 27(1):1--33, 1995.

\bibitem{Ros}
J.~Rosenberg.
\newblock {\em Algebraic {$K$}-theory and its applications}, volume 147 of {\em
  Graduate Texts in Mathematics}.
\newblock Springer-Verlag, New York, 1994.

\bibitem{Rosenberg}
J.~Rosenberg.
\newblock Manifolds of positive scalar curvature: a progress report.
\newblock In {\em Surveys in differential geometry. {V}ol. {XI}}, volume~11 of
  {\em Surv. Differ. Geom.}, pages 259--294. Int. Press, Somerville, MA, 2007.

\bibitem{SchickICM}
T.~Schick.
\newblock The topology of positive scalar curvature.
\newblock {\em Proc.\ ICM}, 2014.
\newblock arXiv:1405.4220.

\bibitem{Schroed}
E.~Schr{\"o}dinger.
\newblock {Diracsches Elektron im Schwerefeld. I.}
\newblock {\em {Sitzungsber. Preu{\ss}. Akad. Wiss., Phys.-Math. Kl.}},
  1932:105--128, 1932.

\bibitem{Stolz}
S.~Stolz.
\newblock Simply connected manifolds of positive scalar curvature.
\newblock {\em Ann. of Math. (2)}, 136(3):511--540, 1992.

\bibitem{Stong}
R.~E. Stong.
\newblock {\em Notes on cobordism theory}.
\newblock Mathematical notes. Princeton University Press, Princeton, N.J.;
  University of Tokyo Press, Tokyo, 1968.

\bibitem{Sullivan}
D.~Sullivan.
\newblock Genetics of homotopy theory and the {A}dams conjecture.
\newblock {\em Ann. of Math. (2)}, 100:1--79, 1974.

\bibitem{Toda}
H.~Toda.
\newblock {\em Composition methods in homotopy groups of spheres}.
\newblock Annals of Mathematics Studies, No. 49. Princeton University Press,
  Princeton, N.J., 1962.

\bibitem{TW}
W.~Tuschmann and D.~Wraith.
\newblock {\em Moduli spaces of Riemannian Metrics}.
\newblock Oberwolfach Seminars 46. Birkh\"auser, 2015.

\bibitem{Wall}
C.~T.~C. Wall.
\newblock {\em Surgery on compact manifolds}.
\newblock Academic Press, London, 1970.
\newblock London Mathematical Society Monographs, No. 1.

\bibitem{WallGC}
C.~T.~C. Wall.
\newblock Geometrical connectivity. {I}.
\newblock {\em J. London Math. Soc. (2)}, 3:597--604, 1971.

\bibitem{Walsh01}
M.~Walsh.
\newblock Metrics of positive scalar curvature and generalised {M}orse
  functions, {P}art {I}.
\newblock {\em Mem. Amer. Math. Soc.}, 209(983):xviii+80, 2011.

\bibitem{WalshC}
M.~Walsh.
\newblock Cobordism invariance of the homotopy type of the space of positive
  scalar curvature metrics.
\newblock {\em Proc. Amer. Math. Soc.}, 141(7):2475--2484, 2013.

\bibitem{WalshH}
M.~Walsh.
\newblock {$H$}-spaces, loop spaces and the space of positive scalar curvature
  metrics on the sphere.
\newblock {\em Geom. Topol.}, 18(4):2189--2243, 2014.

\bibitem{Walsh4}
M.~Walsh.
\newblock The space of positive scalar curvature metrics on a manifold with
  boundary.
\newblock arXiv:1411.2423, 2014.

\end{thebibliography}

\end{document}